\newtheorem{theorem}{Theorem}[section]
\newtheorem{proposition}[theorem]{Proposition}
\newtheorem{corollary}[theorem]{Corollary}
\newtheorem{lemma}[theorem]{Lemma}
\theoremstyle{remark}
\newtheorem{remark}[theorem]{Remark}
\theoremstyle{definition}
\newtheorem{definition}[theorem]{Definition}
\renewcommand{\tocsection}[3]{\indentlabel{\@ifnotempty{#2}{\ignorespaces #1 #2.\,}}#3}
\def\l@subsection{\@tocline{2}{0pt}{2em}{}{}}
\def\l@subsection{\@tocline{2}{0pt}{2em}{}{}}
\numberwithin{equation}{section}
\newcommand{\RR}{{\mathbb R}}
\newcommand{\eps}{{\varepsilon}}
\def\1{\raisebox{2pt}{\rm{$\chi$}}}
\def\undertilde#1{\mathord{\vtop{\ialign{##\crcr
$\hfil\displaystyle{#1}\hfil$\crcr\noalign{\kern1.5pt\nointerlineskip}
$\hfil\tilde{}\hfil$\crcr\noalign{\kern1.5pt}}}}}
\DeclareMathAlphabet{\mathpzc}{OT1}{pzc}{m}{it}
\title{
Finding the convex hull of a set using the flow by minimal curvature with an obstacle. A game theoretical 
approach
}
	\author{Irene Gonz\' alvez, Alfredo Miranda, Julio D. Rossi, Jorge Ruiz-Cases}
	 \address{ 
		 I. Gonz\'alvez and J. Ruiz-Cases. Departamento de Matem\'{a}ticas, Universidad Aut\'{o}noma de Madrid,
		Campus de Cantoblanco, 28049 Madrid, Spain.
		\newline
		\texttt{~irene.gonzalvez@uam.es,~jorge.ruizc@uam.es}; 
		\bigskip
		\newline
		 \indent A. Miranda and J. D. Rossi. Departamento de Matem\'{a}ticas, FCEyN, Universidad  de Buenos Aires, Pabell\'{o}n I, Ciudad Universitaria (1428), Buenos Aires, Argentina.  \newline 
		 \texttt{~amiranda@dm.uba.ar,~jrossi@dm.uba.ar }		
}
	     \keywords{Minimal curvature flow, Viscosity solutions, Convex hull. \\
\indent AMS-Subj Class: 53E10, 35D40, 35K65, 91A05. }
\begin{document}

\begin{abstract} In this paper we look for the convex hull of a set 
using the geometric evolution by minimal curvature of a hypersurface that surrounds the set. To find the convex hull,
we study the large time behavior of solutions to an obstacle problem for the level set formulation of the geometric flow driven by the minimum of the principal curvatures (that coincides with the mean curvature flow only in two dimensions). We prove that the superlevel set where the solution to this obstacle problem is positive converges as time goes to infinity to the convex hull of the obstacle. Our approach is based on a game-theoretic approximation
for this geometric flow that is inspired by previous results for the mean curvature flow. 
\end{abstract}

\maketitle

\begin{center}\begin{minipage}{12cm}{\tableofcontents}\end{minipage}\end{center}

\section{Introduction}

\subsection{Description of the main goal}

The aim of this paper is to study the asymptotic behaviour of the solution to the obstacle problem for a degenerate fully nonlinear parabolic equation that models the motion by minimal curvature of a hypersurface.  
We prove (under certain conditions on the initial condition and the obstacle) that the positivity set of this solution converges to the convex hull of the set where the obstacle is positive as $t$ goes to infinity. 

To obtain our results we use a deterministic two-person zero-sum game (inspired by previous results by \cite{KS} and \cite{Misu}) and the theory of viscosity solutions (we will rely on previous theory developed for viscosity solutions to geometric flows, see \cite{CIL} and \cite{GGIS}).	

Now, let us describe the main ingredients that we need to state and prove our results. First we introduce a parabolic nonlinear partial differential equation (PDE) related to the geometric motion by minimal curvature with an obstacle and next we describe an associated game whose value functions
approximate solutions to the PDE. 

\subsubsection{The movement by minimal curvature with an obstacle
and its associated parabolic equation} 
Our aim is to describe how a hypersurface that is the boundary of a connected domain, $S=\partial \Omega_{0}\subset \mathbb{R}^N$,  $N\geq 2$, evolves in time 
according to the minimal curvature flow. We will use a level set
approach to describe this geometric evolution. Assume that there is a 
real valued function, $u(x,t)$, defined for $(x,t) \in \mathbb{R}^N \times [0,\infty)$, and consider
the zero superlevel sets of $u(x,t)$, 
$$
\Omega_t = \{ x : u (x,t) >0 \}.
$$
In what follows we denote by $\nabla u (x,t)$
and by $D^2 u (x,t)$ the gradient and the Hessian of $u$ 
with respect to the spatial variable, $x$. 
Assume that $\partial \Omega_t$ is smooth. 
At a point $(x,t)\in \partial \Omega_t$ we have 
$\nabla u (x,t) \perp \partial \Omega_t$ (also assume that $(x,t)$ is a regular point and that we have  
$|\nabla u |(x,t)=1$) and 
for a unitary vector $v \perp \nabla u (x,t)$ (notice that $v$ is
tangential to the hypersurface $\partial \Omega_t$)
the quantity $-\langle D^2 u (x,t) v , v \rangle$ gives the  
curvature of $\partial \Omega_t$ in the direction of $v$.  
Therefore, under these conditions,
the minimum of the curvatures of $\partial \Omega_t$ at a regular point is given by
$$
\kappa_{min} =\min_{\substack{|v|=1 \\ v \mbox{ tangent to } \partial \Omega_t }} \Big\{ \kappa (v) \Big\}
= \inf_{\substack{|v|=1 \\ v\perp \nabla  u (x,t) }} \Big(-\langle   D^2  u (x,t) v, v 
\rangle \Big).
$$
Then, we consider the geometric evolution of the hypersurface
$\partial \Omega_t$ moving its points in the direction of the normal vector (pointing
inside the set $\Omega_t$) with speed given by the minimal curvature,  
$$
V = -\kappa_{min} \qquad \mbox{ on } \partial \Omega_t,
$$
and we obtain, as the associated level set formulation for this geometric evolution, the parabolic equation 
\begin{equation}
\label{PE}	\partial_t u(x,t)  + \mathcal{L}(u(x,t)) = 0,
\end{equation}
associated with the elliptic and degenerate operator $\mathcal{L}$ given by
\begin{equation}\label{1} 
    \mathcal{L}(f(x)) :=  \inf_{\substack{|v|=1 \\ v\perp \nabla f(x)}} \Big(-\langle   D^2f(x) v, v \rangle \Big).
\end{equation}

In this paper we add to this geometric evolution the presence of an open and bounded obstacle $K$ (the 
hypersurface is stopped when it touches $K$). 
We assume that 
our geometric evolution starts with the prescribed set inside, that is, we have $K\subset\subset \Omega_{0}$. 
To include the obstacle in the level set formulation, we consider
a continuous function $\psi$ such that
$$
K = \{ x : \psi (x) > 0\}
$$ 
and we assume that the initial condition for our evolution satisfies $$u_0 (x)=u(x,0) \geq \psi (x)$$
in the whole $\mathbb{R}^N$. Notice that under this assumption
we have $$K = \{ x : \psi (x) > 0\} \subset \{ x : u_0 (x) >0 \}
=
\Omega_0,$$ that is, the initial hypersurface $ \partial
\Omega_0$ surrounds the obstacle $K$. 
Therefore, we get that
the parabolic problem that corresponds to the level set formulation for the movement by minimum curvature with an obstacle is the following:
	\begin{equation}
		\label{ParabolicPb}\tag{$P$}
\begin{cases}
	 u(x,t) \geq   \psi (x),& \;\;(x,t)\in
	 \mathbb{R}^N \times\{ t>0\},   \\
	 \partial_t u(x,t)  + \mathcal{L}(u(x,t)) \geq 0,& \;\;(x,t)\in
	  \mathbb{R}^N \times\{ t>0\}, \\
	\partial_t u(x,t)  + \mathcal{L}(u(x,t)) = 0,& \;\;(x,t)\in\{(x,t) : u (x,t) > \psi (x)\}, \\
	u(x,0) = u_0(x),&\;\;x\in\mathbb{R}^{N}.
	\end{cases}
   \end{equation}
This problem can also be written as 
\begin{equation}
\begin{cases}
    \max \Big\{ - \partial_t u(x,t)  - \mathcal{L}(u(x,t)) , \psi (x) -u (x,t) \Big\} = 0, 
    & \;\;(x,t)\in  \mathbb{R}^N \times\{ t>0\}, \\
    u(x,0) = u_0(x), &\;\;x\in\mathbb{R}^{N}.
    \end{cases}
\end{equation}
For this obstacle problem we will prove that there exists a unique viscosity solution. To this end, we first show that a comparison principle holds for \eqref{ParabolicPb}, let
$\overline{u}$ be a supersolution
and $\underline{u}$ be a subsolution of \eqref{ParabolicPb}, then
$
\overline{u} (x,t) \geq \underline{u} (x,t)$
 for every $(x,t)\in\mathbb{R}^N \times\{ t>0\}$.
Next, we will obtain the existence and uniqueness of the solution via Perron's method, that is, we obtain the unique solution of the obstacle problem as the infimum of supersolutions to $u_{t}+\mathcal{L}u=0$ in $\mathbb{R}^{n}\times (0,\infty)$ that are greater or equal than the obstacle $\psi$ and are also greater or equal than $u_0$ at time $t=0$.

Once we have existence and uniqueness of solutions we turn our attention
to their asymptotic behavior.
Now, we will assume that 
$\overline{K} \subset \Omega_0$ (for instance, this happens 
when $\psi (x) < u_0(x)$).
 Let us consider again the positivity sets of $u(x,t)$, 
$\Omega_t = \{ x : u(x,t) >0 \}$.
According to our previous discussion, these sets encode the evolution
by minimal curvature of the hypersurface $\partial \Omega_0 = \partial \{x:u_0(x)>0\}$ with the obstacle 
$K= \{ x : \psi (x) > 0\}$. 

Our main goal is to prove that the sets $\Omega_t = \{ x : u(x,t) >0 \}$ approximate the convex hull of $K$ as $t \to \infty$. To get some insight on why this result holds let us look at a simple example. 
In $\mathbb{R}^3$ let $K$ be the union of two disjoint balls, 
$$
K = B_1 (2,0,0) \cup B_1 (-2,0,0). 
$$
For this set $K$ its convex hull, that we denote by $\textrm{co}(K)$, is given by a tube that joins the two balls,
see Figure \ref{evolucionacilindro}.
Now, take a set $\Omega_0$ that is convex and contains $\textrm{co}(K)$. When the set evolves by minimal curvature it 
shrinks until it touches the obstacle and then converges to the convex hull. In fact, remark that 
at points on the boundary of the convex hull that are not in $K$ we have that
one the principal curvatures is positive but the one that corresponds to the 
horizontal direction is zero. Therefore, the flow  by minimal curvature with 
$K$ as obstacle has this
convex hull as a stationary solution, see Figure \ref{evolucionacilindro} below. 

\begin{figure}[H] 
	\centering
	\includegraphics[width=14cm]{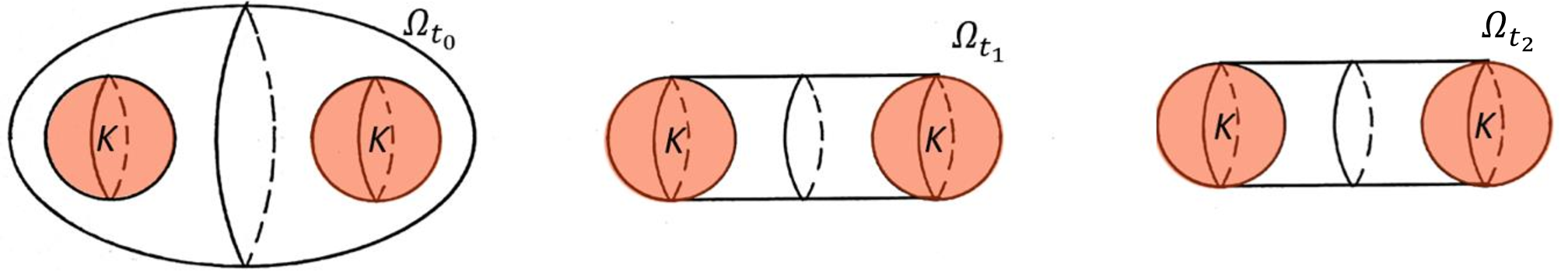}
	\caption{Motion by minimal curvature flow.}
	\label{evolucionacilindro}
\end{figure}

Notice that this does not happens for the usual mean curvature flow.
In fact, the mean curvature of a point 
on the boundary of the convex hull that is not in $K$ is strictly positive
(one of the main curvatures is zero but the other one is strictly positive), and
therefore we have that the evolution by mean curvature flow with obstacle $K$ continues shrinking
the set when it reaches the convex hull of $K$, see Figure \ref{evolucionachurro} below.
\begin{figure}[H]
	\centering
	\includegraphics[width=14cm]{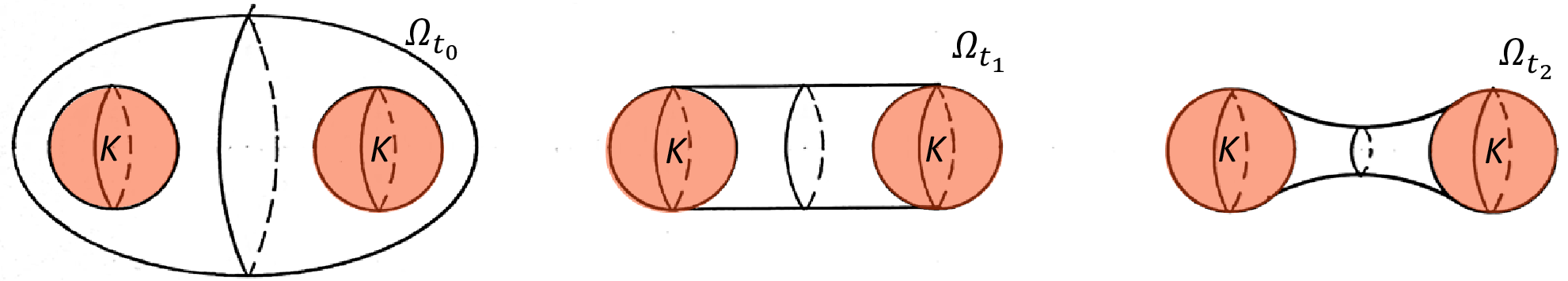}
	\caption{Motion by mean curvature flow.}
	\label{evolucionachurro}
\end{figure}

The motion by mean curvature is nowadays a classical subject, we refer to~\cite{2,ES,CGG,Giga,8,15,16} and references therein. Since there is only one curvature for a curve in dimension two our evolution problem coincides with the mean
curvature flow with an obstacle studied in~\cite{9,Mercier,19,Misu}. However,
for higher dimensions, $N\geq 3$, the minimal curvature flow studied here is
quite different from the mean curvature flow.

In the special case that the initial domain is a ball $\Omega_0=B_R (0) \subset \mathbb{R}^N$, since all the curvatures are equal, the evolution 
according to minimal curvature coincides with the evolution by
mean curvature (and is given by $B_{R(t)} (0)$
with $R'(t) = 1/R(t)$) until the first time when the surface touches
the obstacle, from that time on, the two evolutions are different. 
Then, when there is no obstacle, $K =\emptyset$, and the initial domain is a ball $B_R (0) \subset \mathbb{R}^N$ we get that the surface shrinks to a point in finite time. Since there is a comparison principle for the motion by minimal curvature we have that when $K =\emptyset$ the 
hypersurface 
shrinks and disappears in finite time.

\subsubsection{A game approximation for the parabolic problem}
Next, let us describe a game whose value function approximates 
the solution to \eqref{ParabolicPb}.

The game is a deterministic two-person zero-sum game. 
As in \cite{KS} we use Paul for the name of the first player and Carol for the second player. Take $\varepsilon>0$ (a parameter that controls the size of the possible movements in the game), $\Omega_0 \subset \mathbb{R}^N$ an open, bounded set and $K_\varepsilon \subset \Omega_0$ an open set
(it will be convenient that the obstacle for the game also depends on the parameter $\varepsilon$). We also have two functions, $u_0 :\mathbb{R}^N \mapsto \mathbb{R}$ and $\psi_\varepsilon :\mathbb{R}^N \mapsto \mathbb{R}$
that will give us the final payoff of the game. We will assume that 
the obstacle satisfies
$$
K_\varepsilon = \{ x : \psi_\varepsilon (x) >0\} .
$$
On the other hand, the function $u_0(x)$ will give the initial condition for the limit equation and then we assume that $u_0(x)
\geq \psi_\varepsilon (x)$. The initial condition $u_0$ may also
depend on $\varepsilon$ but, to simplify, we drop this dependence here. 

Let $x=x_0$ be the initial position of the game and $t = t_0>0$ the initial time. The game is played as follows: at the $i-$th round, Paul chooses a unitary vector $v_i$ and Carol chooses a sign $b_i = \pm 1$ after Paul's choice. Then, the next position of the game is given by 
\[x_i = x_{i-1} + b_i v_i \varepsilon,\]
while time decreases as 
\[t_i = t_{i-1}- \frac12 \varepsilon^2.\]
As a remark we point out that one can also choose to play with 
$x_i = x_{i-1} + \sqrt{2} b_i v_i \varepsilon$ and 
$t_i = t_{i-1}-  \varepsilon^2$ as in \cite{KS,Misu}. We prefer to keep
the spatial steps of size $\eps$ in order to simplify some
computations when analyzing the game. 

After $n = \displaystyle \left\lceil \frac{2 t_0}{\varepsilon^2} \right\rceil$ rounds, the game ends and Carol pays the terminal cost $u_0(x_n)$ to Paul. 
We also add an extra stopping rule. At any round Paul has the right to quit the game and Carol pays him the value given by the obstacle, $\psi_\varepsilon(x_i)$. We define $u^\varepsilon (x,t)$ as the value of this game starting at $x$ with time $t$ (the value of the game is just the final cost optimized by both players, Paul wants to maximize the expected outcome, while Carol aims to minimize it). The value of the game is given by the following formula, 
\[u^\varepsilon (x_0,t_0) \! = \! \max \! \Bigg\{ \psi_{\varepsilon}(x_0), \sup_{|v_1|=1} \min_{b_1=\pm 1} \bigg\{\cdots \max \Big\{\psi(x_{n-1}),\sup_{|v_n|=1} \min_{b_n=\pm 1} u_0 (x_{n-1} + b_n v_n \varepsilon )  \Big\}  \bigg\}\Bigg\}\]
for $n=\displaystyle \left\lceil \frac{2 t_0}{\varepsilon^2} \right\rceil$.
Notice that the rule that allows Paul to stop the game at any time and obtain $\psi_\varepsilon$ ensures that the value of the game satisfies $$u^\varepsilon (x,t) \geq \psi_\varepsilon(x).$$ 
After only one round of the game starting from $(x,t)$ we have that the outcome is given by 
\begin{equation}
	\label{DPP} \tag{$P_{\varepsilon}$}
	\left\{
	\begin{array}{ll}
	\displaystyle
	u^\varepsilon(x,t) = \max \left\{\psi_\varepsilon(x), \sup_{|v|=1} \min_{b = \pm 1} u^\varepsilon \big(x + b v\varepsilon, t- \frac12\varepsilon^2 \big) \right\}, \qquad & x\in \mathbb{R}^N, \ t>0,\\[8pt]
	u^\varepsilon(x,t) = u_0 (x), \qquad & x\in \mathbb{R}^N, \ t\leq 0.
	\end{array} \right.
\end{equation}

The equation that appears in \eqref{DPP} is known as the Dynamic Programming Principle (DPP) in the game literature,
see \cite{MS2}, and reflects the rules of the game. In fact, Paul wants to
maximize the outcome and he can choose between getting $\psi_\varepsilon (x_0)$
or the value after making one move (that is given by the supremum among directions (Paul's choice) of the minimum 
among a sign (Carol's choice) of the value at the new position at time $t_0-\frac{1}{2}\varepsilon^2$). 
The successive positions of the game are illustrated in Figure \ref{cilindroconamebas} below.  

\begin{figure}[H]
	\centering
	\includegraphics[width=13cm]{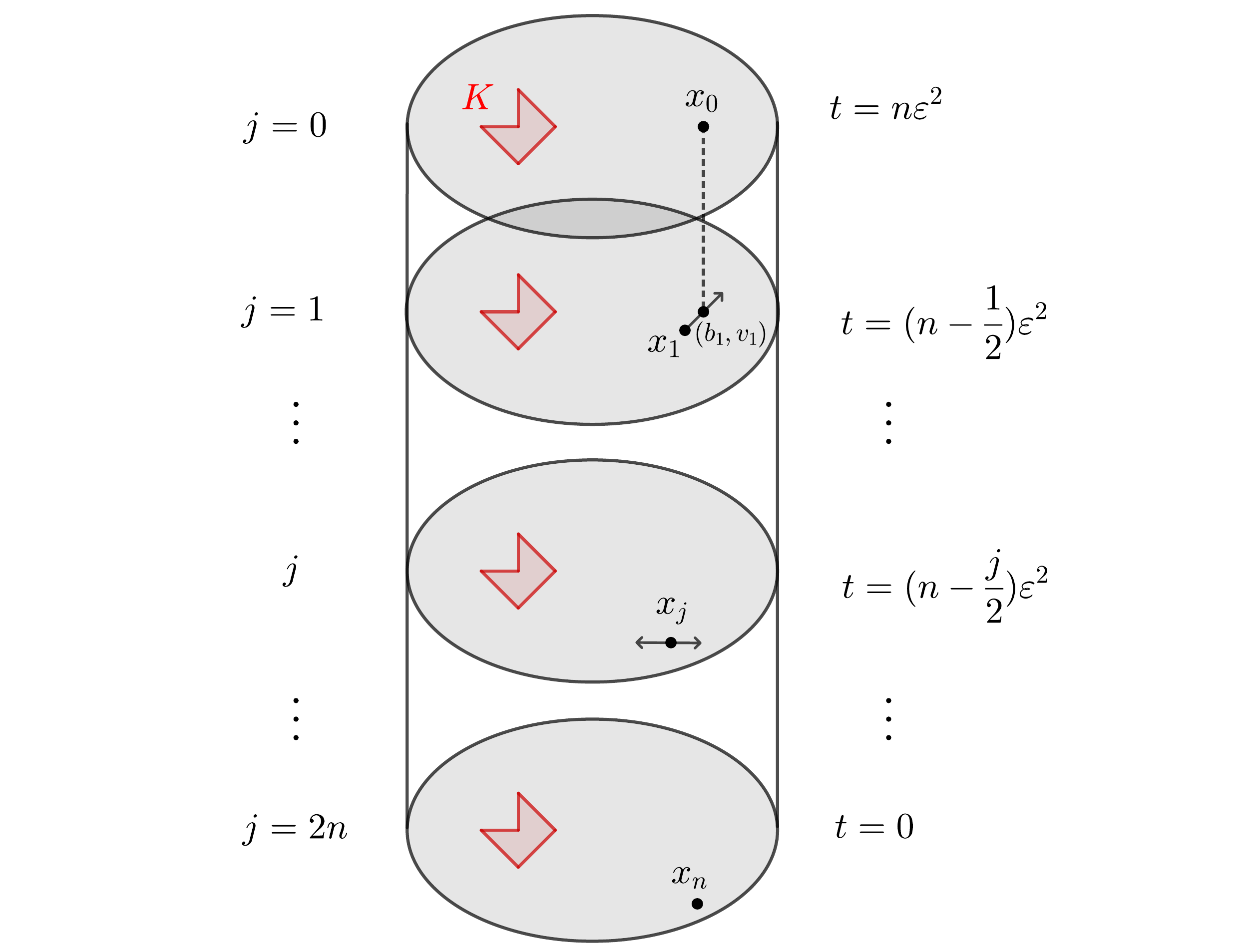}
	\caption{Positions playing the game.}
	\label{cilindroconamebas}
\end{figure}

We observe that Paul plays aiming to stay inside $\Omega_0$
and also trying to reach the obstacle $K_\eps$. Notice that the  continuous and bounded terminal payoff, $u_0$, is such that $u_0 (x) > 0$ for $x \in \Omega_0$ and $u_0(x) < 0$ for $x \in \mathbb{R}^N \setminus \overline{\Omega_0}$. Hence, Paul has a strong motivation to remain inside $\Omega_0$
when the game ends. On the other hand,
under this setting, Carol will try to stay outside $\Omega_0$
and plays her best in order to
evade the set $K_\eps$ at every move.

Now, to look for a partial differential equation related to this game, we argue formally (one of the main goals of this paper is to make rigorous what follows). 
Let us find the asymptotic behaviour as $\varepsilon \approx 0$ of the 
DPP, \eqref{DPP}, evaluated at a smooth function with an obstacle $\psi$ that is independent of 
$\varepsilon$ (we will have that $\psi_\eps \to \psi$ uniformly as $\eps\to 0$, hence we can assume here that $\psi$ does not depend on $\eps$). Assume that for a $C^{2,1}$ function $\phi$ we have
\begin{equation}
	\label{DPP-22} \phi (x,t) \approx \max \left\{\psi(x), \sup_{|v|=1} \min_{b = \pm 1} \phi (x + bv\varepsilon, t- \frac12 \varepsilon^2 ) \right\} ,
\end{equation}
that is, 
\begin{equation}
	\label{DPP-33} 0 \! \approx \! \max \left\{\psi(x) - \phi (x , t), \sup_{|v|=1} \min_{b = \pm 1} 
	\frac{\phi (x + bv\varepsilon, t- \frac12 \varepsilon^2 ) - \phi (x , t- \frac12\varepsilon^2 ) }{\varepsilon^2} - \frac{\phi (x,t) - \phi (x , t- \frac12 \varepsilon^2 )}{\varepsilon^2}\right\}.
\end{equation}
Using that $\phi \in C^{2,1}$ and neglecting higher order terms, after multiplying by 2 the second term, we arrive to 
\begin{equation}
	\label{DPP-44} 0 \approx \max \left\{\psi(x) - \phi (x , t), \sup_{|v|=1} \min_{b = \pm 1} 
	\frac{2}{\varepsilon} b \langle \nabla \phi (x,t), v \rangle 
	+ \langle D^2 \phi (x,t) v , v \rangle - \frac{\partial \phi}{\partial t} (x,t)
\right\} .
\end{equation}
Now, assume that $\phi(x,t)>\psi(x)$. In this case, we notice that when computing the supremum we want to use a vector $v$ orthogonal
to $\nabla \phi (x,t)$ (otherwise, we can select the right sign when we compute the minimum in order to obtain 
$-  2\varepsilon^{-1} |\langle \nabla \phi (x,t), v \rangle |$, that is negative and large for 
$\varepsilon \approx 0$). Then, assuming that the vector $v$ is orthogonal
to $\nabla \phi (x,t)$ we obtain 
\begin{equation}
	\label{DPP-55} 0
	 \approx \max \Big\{\psi(x)-\phi(x,t), \sup_{\substack{|v|=1, \\ v\perp \nabla  \phi (x,t)}}  
	  \langle D^2 \phi (x,t) v , v \rangle - \frac{\partial \phi}{\partial t} (x,t)
 \Big\} ,
\end{equation}
that is, we find the obstacle problem for the equation 
\begin{equation} \label{jj}
\frac{\partial \phi}{\partial t} (x,t) = \sup_{\substack{|v|=1, \\ v\perp  \nabla \phi (x,t)} } 
	 \langle D^2 \phi (x,t) v , v \rangle 
\end{equation}
with $\psi$ as obstacle. 
Notice that the right hand side of our equation \eqref{jj} is given by
the minimum of the curvatures of $\partial \Omega_t$,
$$
 \sup_{\substack{|v|=1, \\ v\perp \nabla \phi (x,t)}} \langle   D^2  \phi (x,t) v, v \rangle 
= -  \inf_{\substack{|v|=1, \\ v\perp \nabla  \phi (x,t) }} \Big(-\langle   D^2  \phi (x,t) v, v \rangle \Big)
= - \min_{\substack{|v|=1, \\ v \mbox{ tangent to } \partial \Omega_t }} \{ \kappa (v)\}.
$$
Then, we have that \eqref{jj} is just the level set formulation of the geometric evolution 
of a hypersurface by the minimum curvature with an obstacle.
Hence, we conclude that the DPP associated to the game, \eqref{DPP}, is related to the obstacle problem (with obstacle $\psi$) for the geometric flow of a hypersurface driven by the minimum curvature. That is, the limit as $\varepsilon \to 0$ for the value of the game, gets us back to
the parabolic problem \eqref{ParabolicPb}.
In fact, we prove that the value function of our game converges
 locally uniformly to 
the viscosity solution to \eqref{ParabolicPb} as $\varepsilon \to 0$. This is an alternative proof of existence of viscosity solutions for problem \eqref{ParabolicPb} that avoids the use of Perron's method.

For similar results for the classical mean curvature flow we refer to 
\cite{KS,Misu}. Again, we point out that, in 
dimension two, our game coincides with the one studied in \cite{KS}
(without the obstacle)
and in \cite{Misu} (with an obstacle). However, in dimensions
bigger than three the game presented here and 
the game for the mean curvature flow are quite different.

Since Paul aims to maximize the payoff and 
Carol wants to minimize it, we introduce the sets
$$
\Omega_t^{\varepsilon} = \{ x: u^\varepsilon (x,t) >0\},
$$
that is, the set of positions in $\mathbb{R}^N$ where starting at time $t$ Paul expects to win (and Carol to loose) a positive payoff 
after $\displaystyle \left\lceil \frac{2t}{\varepsilon^2} \right\rceil$ rounds. Notice that $
K_\varepsilon = \{ x : \psi_\varepsilon (x) >0\} \subset \Omega^\varepsilon_t$ for every $t$ (Paul can end the game at any round and obtain a positive
payoff inside the set $\{ x : \psi_\varepsilon (x) >0\}$).

As a consequence of the previous locally uniform convergence
of $u^\eps$ to $u$, we have that  for each $t>0$, the set where the value function of the $\varepsilon$-game is positive at time $t$, $\Omega_t^\eps = \{x : u^\eps (x,t)>0\}$ approximates as $\varepsilon \to 0$ the set where the solution of the obstacle problem is positive, $\Omega_t = \{x : u (x,t)>0\}$.

Finally, for $\eps$ fixed, we also study the shape of the sets $\Omega^\varepsilon_t$ for $t$ large. We find that the limit is related to the convex hull of the set $K_\eps$ with respect to a notion of
convexity that takes into account that we have a fixed particular length,
$\eps$, for the movements of the game. Taking the limit $\varepsilon \to 0$ yields the results for the asymptotic limit of $\Omega_t$ when $t \to \infty$ and in this way we show our main result, that $\Omega_t$ approaches the convex hull of $K$ as $t \to \infty$.

\subsection{Statements of the main results}

Let us state rigorously the main theorems that are included in this
paper. 

In Section \ref{sect.PE} we analyze 
the parabolic problem  \eqref{ParabolicPb} for which we will prove the following results:
First, a comparison principle holds for viscosity sub and supersolutions to 
 \eqref{ParabolicPb} (for the precise definition of being 
 a sub or a supersolution we refer to Section \ref{sect.PE}).

\begin{theorem}[Comparison Principle]\label{comp.pp}
	Assume that the obstacle $\psi$ is bounded and has a uniform modulus of continuity $\omega$, that is, $$|\psi(x)-\psi(y)|\leq \omega(|x-y|), \qquad \mbox{ for all } x, y \in \mathbb{R}^N$$
	for an increasing continuous function $w : [0,\infty) \to [0,\infty)$ such that $w(0)=0$. Also assume that the initial datum $u_{0}\in C(\mathbb{R}^{N})$ is such that  $$\lim_{|x|\to\infty}u_{0}(x)=\mu\in\mathbb{R}_{<0}$$ and $$u_0 (x)
	 \geq \psi(x)$$ for $x\in \mathbb{R}^N$. If $\underline{u}$ is a  viscosity subsolution and $\overline{u}$ a supersolution to problem
	 \eqref{ParabolicPb}, then $$\underline{u} (x,t)\leq \overline{u} (x,t) \qquad \mbox{ in }\mathbb{R}^{N}\times [0,\infty).$$
\end{theorem}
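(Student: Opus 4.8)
The plan is to follow the classical Crandall--Ishii--Lions doubling-of-variables technique adapted to the obstacle problem for the degenerate geometric operator $\mathcal{L}$, using the structure of the equation written in the form $\max\{-\partial_t u - \mathcal{L}(u),\psi-u\}=0$. First I would reduce to a finite time horizon: it suffices to prove $\underline{u}\le\overline{u}$ on $\mathbb{R}^N\times[0,T]$ for every $T>0$. I would argue by contradiction, assuming $M:=\sup_{\mathbb{R}^N\times[0,T]}(\underline{u}-\overline{u})>0$. The hypothesis $\lim_{|x|\to\infty}u_0(x)=\mu<0$, together with the fact that $\mathcal{L}$ admits large balls (or paraboloids) $-C-\lambda|x|^2+\eta t$ type barriers moving slowly as supersolutions, should give a priori bounds forcing $\underline{u}\le 0<$ something near spatial infinity; more precisely I would use a barrier argument to show that the supremum defining $M$ is attained on a bounded region, uniformly in the doubling parameter. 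This localization in space is essential because we work on all of $\mathbb{R}^N$.

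Next I would introduce the standard penalization
\[
\Phi_{\alpha,\varepsilon,\sigma}(x,y,t)=\underline{u}(x,t)-\overline{u}(y,t)-\frac{\alpha}{2}|x-y|^2-\varepsilon(|x|^2+|y|^2)-\frac{\sigma}{T-t},
\]
where the term $\varepsilon(|x|^2+|y|^2)$ (combined with the spatial-infinity behavior of $u_0$) guarantees that the supremum of $\Phi_{\alpha,\varepsilon,\sigma}$ is attained at an interior point $(x_\alpha,y_\alpha,t_\alpha)$, and the $\sigma/(T-t)$ term handles the terminal time and produces a strict supersolution. Standard arguments give $\alpha|x_\alpha-y_\alpha|^2\to 0$ and, after fixing $\varepsilon,\sigma$ small and sending $\alpha\to\infty$, $t_\alpha$ stays away from $0$ (using $u_0\ge\psi$ and continuity of $u_0$ at $t=0$). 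At the maximum point one of two things happens: either $\underline{u}(x_\alpha,t_\alpha)=\psi(x_\alpha)$, in which case I use $\overline{u}\ge\psi$ plus the modulus of continuity $\omega$ of $\psi$ to bound $\underline{u}(x_\alpha,t_\alpha)-\overline{u}(y_\alpha,t_\alpha)\le\psi(x_\alpha)-\psi(y_\alpha)\le\omega(|x_\alpha-y_\alpha|)\to 0$, contradicting $M>0$; or $\underline{u}(x_\alpha,t_\alpha)>\psi(x_\alpha)$, so the subsolution inequality for the PDE part $-\partial_t u-\mathcal{L}(u)\le 0$ is active and can be combined with the supersolution inequality.

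In the PDE case I would apply the Crandall--Ishii maximum principle (theorem on sums) to obtain limiting parabolic superjets: numbers $a,b$ with $a-b=\sigma/(T-t_\alpha)^2$ and symmetric matrices $X,Y$ controlled by the Hessian of the penalization, with $X\le Y$ up to the usual $O(\alpha)$ error and the block bound involving $\alpha\begin{pmatrix}I&-I\\-I&I\end{pmatrix}$. The key algebraic point is to show the ellipticity/monotonicity of $\mathcal{L}$ is compatible with this ordering: for any unit $v$, $-\langle Xv,v\rangle\le -\langle Yv,v\rangle$ plus error, and since $\mathcal{L}(f)=\inf_{|v|=1,\,v\perp p}(-\langle D^2 f\, v,v\rangle)$ is an infimum of such linear expressions, one deduces $\mathcal{L}$ evaluated on the subsolution jet is $\le$ the value on the supersolution jet up to a controlled error — but here one must be careful with the gradient constraint $v\perp p$, since the gradients $p=\alpha(x_\alpha-y_\alpha)+2\varepsilon x_\alpha$ and $q=\alpha(x_\alpha-y_\alpha)-2\varepsilon y_\alpha$ differ. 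I expect this to be the main obstacle: the discontinuity of $\mathcal{L}$ at $p=0$ (and the fact that the admissible set of test directions jumps) must be handled, either by the standard trick of treating separately the case where the common gradient direction degenerates (using that when $|x_\alpha-y_\alpha|$ is small the two constraint sets are close, or invoking the fact that the equation can be tested with the relaxed semicontinuous envelopes $\mathcal{L}^*,\mathcal{L}_*$ as in \cite{CGG,GGIS}), or by noting that $\mathcal{L}$ is geometric and using the established theory for geometric equations of \cite{GGIS}. Subtracting the two viscosity inequalities then yields
\[
\frac{\sigma}{(T-t_\alpha)^2}\le \mathcal{L}\text{-terms}\le C(\varepsilon)+o_\alpha(1),
\]
and letting $\alpha\to\infty$ and then $\varepsilon\to 0$ contradicts $\sigma>0$. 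Hence $M\le 0$, proving the comparison principle.
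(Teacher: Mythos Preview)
Your overall architecture---reduce to finite horizon, penalize in time with $\sigma/(T-t)$, double variables with a spatial localizer $\varepsilon(|x|^2+|y|^2)$, split into the obstacle case (handled via $\omega$) and the PDE case (handled via the theorem on sums)---matches the paper's proof. However, there is one substantive point where your penalization choice would cause trouble, and one difference in the order of limits worth noting.

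\medskip

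\textbf{The quartic vs.\ quadratic penalty.} You use $\tfrac{\alpha}{2}|x-y|^2$; the paper uses $\tfrac{\alpha}{4}|x-y|^4$. This is not cosmetic. With the quartic penalty the spatial gradient at the maximum is $\hat p=\alpha|\hat x-\hat y|^2(\hat x-\hat y)$ and the matrix bound from the theorem on sums reads $\|X\|+\|Y\|\le C(\alpha|\hat x-\hat y|^2+\varepsilon)$. Consequently, if along the limit $\hat p\to 0$ then necessarily $|\hat x-\hat y|\to 0$ and hence $X,Y\to 0$, so one lands exactly at $(0,O)$ where Proposition~\ref{eprop1}(iii) gives $\mathcal{L}^*(0,O)=\mathcal{L}_*(0,O)=0$. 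With your quadratic penalty the gradient is $\alpha(\hat x-\hat y)$ but the matrix bound is only $\|X\|+\|Y\|\le C\alpha$, independent of $|\hat x-\hat y|$; thus $\hat p\to 0$ no longer forces $X\to 0$, and you are left with $\mathcal{L}^*(0,X_0)-\mathcal{L}_*(0,X_0)$ which, as the paper shows explicitly (the Corollary after Proposition~\ref{eprop1}), can be strictly positive for $X_0\neq O$. This is precisely the ``main obstacle'' you flag; the resolution in the geometric-equation literature \cite{CGG,GGIS} \emph{is} the quartic (or higher even power) penalty, so your appeal to that theory would in effect redirect you to change the test function.

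\medskip

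\textbf{Order of limits.} The paper fixes $\alpha$ large (large enough to rule out $\hat t\to 0$ and to force $\underline u_\eta(\hat x,\hat t)>\psi(\hat x)$ via $\omega(C\alpha^{-1/4})<\delta$) and then sends $\varepsilon\to 0$; the contradiction comes from $(\mathcal{L}^*-\mathcal{L}_*)(p_0,X_0)=0$ at the $\varepsilon\to 0$ limit point. Your proposed order ($\alpha\to\infty$ first, then $\varepsilon\to 0$) is the more common one for uniformly elliptic problems, but here it interacts badly with the degeneracy: as $\alpha\to\infty$ with $\varepsilon>0$ fixed, the gradients $p=\alpha(x_\alpha-y_\alpha)+2\varepsilon x_\alpha$ and $q=\alpha(x_\alpha-y_\alpha)-2\varepsilon y_\alpha$ need not converge, and the mismatch in the constraint sets $\{v\perp p\}$ vs.\ $\{v\perp q\}$ is harder to control. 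The paper's order, combined with the quartic penalty, makes both gradients converge to the \emph{same} $p_0$ and dispatches the two cases $p_0\neq 0$ (continuity of $\mathcal L$) and $p_0=0$ (matrix collapses to $O$) cleanly.
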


From now on and along the whole paper, we will assume that $u_{0}$ and $\psi$ 
satisfy the hypotheses in Theorem \ref{comp.pp}. We will omit the explicit mention to these hypotheses in the following statements.

With this comparison result we can show existence and uniqueness
of solutions to \eqref{ParabolicPb}.

\begin{theorem}[Existence and uniqueness for the PDE]\label{etheoremExistencePerron} There exists a unique solution $u$ to the Obstacle Problem \eqref{ParabolicPb} with obstacle $\psi$ and initial datum $u_{0}$.  Moreover, $u$ is a continuous function in $\mathbb{R}^{N}\times [0,\infty)$ that  is given by
	\begin{equation}
		u(x,t)=\inf\big\{ w(x,t)\,:\;\;w\in \mathcal{A}\big\}
	\end{equation}
	where 
	\begin{align}
		\mathcal{A}:=  \Big\{ & w(x,t)\;:\;\;w\;\;\textrm{is a viscosity supersolution to \eqref{ParabolicPb}, that is, }  \\ & w_{t}+\mathcal{L}w\geq 0\;\;\textrm{in}\;\;\mathbb{R}^{N}\times(0,\infty), \ w(\cdot, t)\geq \psi\;\;\textrm{for all }\;\;t>0\;\;\textrm{and}\;\;\;w(\cdot,0)\geq u_{0}\Big\}.
	\end{align}
\end{theorem}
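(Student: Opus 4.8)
\emph{Strategy.} The plan is to obtain the candidate solution $u(x,t):=\inf\{w(x,t):w\in\mathcal{A}\}$ via Perron's method, using the comparison principle (Theorem~\ref{comp.pp}) as the only external input; for the viscosity-solution machinery adapted to the singular operator $\mathcal{L}$ (discontinuous where the gradient vanishes) I would lean on \cite{CIL,GGIS}. First I would check that $u$ is well defined and bounded. The constant function $M:=\max\{\|\psi\|_{\infty},\|u_{0}\|_{\infty}\}$ (finite by hypothesis) is a classical, hence viscosity, supersolution of $w_{t}+\mathcal{L}w=0$ with $M\ge\psi$ and $M\ge u_{0}$, so $M\in\mathcal{A}$ and $u\le M$; on the other hand every $w\in\mathcal{A}$ satisfies $w(\cdot,t)\ge\psi\ge-\|\psi\|_{\infty}$ for $t>0$ and $w(\cdot,0)\ge u_{0}\ge-\|u_{0}\|_{\infty}$, so $u\ge-M$. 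By construction $u\ge\psi$ and $u(\cdot,0)\ge u_{0}$.

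\emph{$u$ is a supersolution.} The first (easy) half of Perron's argument is stability of supersolutions under infima: the lower semicontinuous envelope $u_{*}$ of a pointwise infimum of viscosity supersolutions of $w_{t}+\mathcal{L}w=0$ is again a viscosity supersolution (the only delicate point, the behaviour at points where $\nabla\phi=0$, being handled through the lower relaxed operator as in \cite{GGIS}). Since moreover $u_{*}\ge\psi$ and $u_{*}(\cdot,0)\ge u_{0}$ are inherited from the pointwise bounds above, $u_{*}\in\mathcal{A}$, hence $u\le u_{*}\le u$ and so $u=u_{*}$: $u$ is lower semicontinuous, a supersolution of $w_{t}+\mathcal{L}w=0$ on $\mathbb{R}^{N}\times(0,\infty)$, lies above $\psi$, and satisfies $u(\cdot,0)\ge u_{0}$.

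\emph{$u$ is a subsolution.} This is the step I expect to be the main obstacle. Argue by contradiction: if the upper semicontinuous envelope $u^{*}$ fails the subsolution inequality for \eqref{ParabolicPb} at some $(x_{0},t_{0})$ with $t_{0}>0$, there is $\phi\in C^{2,1}$ touching $u^{*}$ from above there with $-\phi_{t}-\mathcal{L}\phi>0$ at $(x_{0},t_{0})$ (the alternative $\psi-u^{*}>0$ is impossible since $u^{*}\ge u\ge\psi$). In the main case $u^{*}(x_{0},t_{0})>\psi(x_{0})$, after the usual perturbation $\phi\mapsto\phi+|x-x_{0}|^{4}+(t-t_{0})^{2}$ the function $\phi-\delta$ is a strict classical supersolution of $w_{t}+\mathcal{L}w=0$ on a small parabolic cylinder $Q=B_{r}(x_{0})\times(t_{0}-r,t_{0}+r)$, still lies above $\psi$ on $Q$ (shrinking $r$), and satisfies $\phi-\delta>u$ near $\partial_{p}Q$ for $\delta$ small; then $w:=\min\{u,\phi-\delta\}$ on $Q$ and $w:=u$ elsewhere is a viscosity supersolution of $w_{t}+\mathcal{L}w=0$ (the patching across the overlap is legitimate because $w=u$ near $\partial_{p}Q$), lies above $\psi$, and equals $u(\cdot,0)\ge u_{0}$ at $t=0$ (here $t_{0}>0$ is used), so $w\in\mathcal{A}$; evaluating along $(x_{k},t_{k})\to(x_{0},t_{0})$ with $u(x_{k},t_{k})\to u^{*}(x_{0},t_{0})=\phi(x_{0},t_{0})$ gives $w(x_{k},t_{k})\le\phi(x_{k},t_{k})-\delta<u(x_{k},t_{k})$ for large $k$, contradicting $u=\inf\mathcal{A}$. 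The genuinely delicate case is a contact point $u^{*}(x_{0},t_{0})=\psi(x_{0})$: there $\phi_{t}(x_{0},t_{0})=0$ (since $t\mapsto\phi(x_{0},t)\ge\psi(x_{0})=\phi(x_{0},t_{0})$ has an interior minimum), the failing inequality reduces to $-\mathcal{L}\phi(x_{0},t_{0})>0$, and one must close it with a barrier compatible with the constraint $w\ge\psi$; this interplay between the degenerate operator $\mathcal{L}$ and the obstacle in the patching is the heart of the argument.

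\emph{Initial layer and conclusion.} It remains to upgrade $u(\cdot,0)\ge u_{0}$ to an equality and to obtain continuity down to $t=0$. Using the (uniform) continuity of $u_{0}$ and its modulus, the behaviour $u_{0}(x)\to\mu<0$ at infinity, and the boundedness of $\mathcal{L}$ on quadratic test functions, one builds near each $x_{0}$ an upper barrier $\overline{w}\in\mathcal{A}$ with $\overline{w}(x_{0},0)=u_{0}(x_{0})$, forcing $u^{*}(x_{0},0)\le u_{0}(x_{0})$. Then $u^{*}$ (resp. $u_{*}=u$) is a subsolution (resp. supersolution) of \eqref{ParabolicPb} with the correct data at $t=0$ and at infinity, so Theorem~\ref{comp.pp} yields $u^{*}\le u_{*}$; since always $u_{*}\le u\le u^{*}$ we conclude $u=u_{*}=u^{*}\in C(\mathbb{R}^{N}\times[0,\infty))$ and that $u$ solves \eqref{ParabolicPb}. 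Uniqueness is immediate: a second solution is simultaneously a sub- and a supersolution of \eqref{ParabolicPb}, so Theorem~\ref{comp.pp} forces it to equal $u$; and the representation $u=\inf\{w:w\in\mathcal{A}\}$ is the definition we started from.
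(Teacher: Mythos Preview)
Your outline is essentially the paper's own route: Perron's method, with the infimum over $\mathcal{A}$ shown to be a supersolution (the paper's Lemma~\ref{elemma1} and Proposition~\ref{eprop2}), then the subsolution property via the standard bump argument (Proposition~\ref{eprop3'}), and finally continuity and uniqueness from the comparison principle. Two corrections are worth making. First, in your subsolution step the sign is reversed: failure of the subsolution inequality for \eqref{ParabolicPb} at a point with $u^{*}>\psi$ means $\max\{-\phi_{t}-\mathcal{L}_{*}\phi,\,\psi-u^{*}\}<0$, hence $\phi_{t}+\mathcal{L}_{*}\phi>0$ (not $-\phi_{t}-\mathcal{L}\phi>0$); this is precisely what makes $\phi-\delta$ a strict \emph{supersolution} and lets you patch $\min\{u,\phi-\delta\}$ into $\mathcal{A}$. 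Second, the ``genuinely delicate'' contact case $u^{*}(x_{0},t_{0})=\psi(x_{0})$ is in fact vacuous: in the obstacle formulation the subsolution condition is $\max\{-\phi_{t}-\mathcal{L}_{*}\phi,\,\psi-u^{*}\}\ge 0$, and at a contact point the second entry of the max is already $0$, so there is nothing to verify. The paper accordingly restricts to $W(x_{0},t_{0})>\psi(x_{0})$.

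For the initial layer the paper makes the barrier explicit: with $h_{z}(x,t)=\alpha|x-z|^{2}+\beta t$ (a classical strict supersolution for $\beta>2\alpha$) and $H_{z}(s)=\max\{u_{0}(y):\alpha|y-z|^{2}\le s\}$, the composition $H_{z}\circ h_{z}$ is a supersolution because $\mathcal{L}$ is geometric (Proposition~\ref{eprop1}~\emph{v)} together with \cite[Theorem~5.2]{CGG}), and $w^{+}=\inf_{z}H_{z}\circ h_{z}\in\mathcal{A}$ satisfies $w^{+}(\cdot,0)=u_{0}$. This is exactly the ``upper barrier near each $x_{0}$'' you allude to, and it is where the geometric property of $\mathcal{L}$ is actually used.
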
 

Now, let us describe our results for the game.
To this end, we first need to be precise in the way that we
enlarge the obstacle. Let us consider 
$K_\varepsilon = K + B_{N \varepsilon}(0)$. Notice that here we are using the dimension $N$ to enlarge the obstacle by adding a ball or radius $N \eps$. That the added ball has exactly radius $N \eps$ has a purpose later on, but let us mention that adding any ball $B_{r(\varepsilon)}(0)$ with
$r(\varepsilon)\geq N \eps$ and $r(\varepsilon) \to 0$ as $\eps \to 0$
is enough for our arguments. 
	We assume that $$K_\varepsilon = K + B_{N \varepsilon}(0) \subset \Omega_0.$$ This holds for $\eps $ small when $\overline{K} \subset \Omega_0$.
	Recall that $\psi$ is a continuous function such that 
	\[K= \{x\in \RR^N : \psi(x) > 0\}.\]
	We need to characterize the set $K_\varepsilon = K + B_{N \varepsilon} (0)$ in a similar way, with a function $\psi_\varepsilon$.
We want to choose a function $\psi_\varepsilon$ that is positive when $x$ is an interior point of $K_\varepsilon$, that is, 
	\[K_\varepsilon = \{x\in \RR^N : \psi_\varepsilon(x) > 0\},\]
	and such that, in addition,
	$\psi_\varepsilon \rightarrow \psi$ uniformly when $\varepsilon \rightarrow 0$. We use this function $\psi_\varepsilon$ as
	the obstacle for our game. To see that there is a function
	$\psi_\varepsilon$ that fulfills our requirements we argue as follows:
	We assumed that
	$\psi$ is bounded and has a uniform modulus of continuity $\omega$, that is, $$|\psi(x)-\psi(y)|\leq \omega(|x-y|), \qquad \mbox{ for all } x, y \in \mathbb{R}^N.$$
	Now, assuming that $\omega$ is continuous, let $h_\varepsilon$
	be given by 
	\[h_\varepsilon(x) = \left\{
	\begin{aligned}
		& \min_{y \in \partial K_\varepsilon} ( \psi (y) + 2 \omega (|x-y|) ) \qquad \text{ if } x\in K_\varepsilon, \\
		& \max_{y \in \partial K_\varepsilon} ( \psi (y) - 2\omega (|x-y|) ) \qquad \text{ if } x\in \RR^N \setminus K_\varepsilon .
	\end{aligned}
	\right.\]
	Notice that $h_\varepsilon$ is uniformly continuous, $h_\varepsilon(x) = \psi(x)$ for $x\in \partial K_{\varepsilon}$,
	$ \psi(x) \leq h_\varepsilon(x)$ for  
	$x\in K_{\varepsilon}$ and $h_\varepsilon (x) \leq \psi (x)$ for  
	$x\in \mathbb{R}^N \setminus K_{\varepsilon}$. 
	Also notice that there is a constant $C_\eps > 0$ that goes to zero as $\eps \to 0$ such that 
	$0<h_\varepsilon(x) - \psi (x)< C_\eps$ for $x\in K_\varepsilon
	\setminus K$ and 
	$0>h_\varepsilon(x) - \psi (x)> - C_\eps$ for $x\in K_{2\varepsilon}
	\setminus K_\eps$.
	Now, we choose 
	\begin{equation} \label{psi-eps}
	\psi_\varepsilon (x) :=
	 \left\{
	\begin{array}{ll}
	\displaystyle	\max \Big\{  \psi (x) , \min \{h_\varepsilon (x) - \psi (x), C_\eps \} \Big\}  \qquad & \text{ if } x\in K_\varepsilon, \\[7pt]
	\displaystyle	 \min \Big\{  \psi (x) , \max \{h_\varepsilon (x) - \psi (x), - C_\eps \} \Big\}  \qquad & \text{ if } x\in \RR^N \setminus K_\varepsilon .
	\end{array} \right.
	\end{equation}

 In Section \ref{sect.GameAndEq} we include the following results:

\begin{theorem}\label{maintheorem.juego.conv}
	Let $\psi_\varepsilon$
	be as above, and $u_{0} $ such that $u_0 \geq \psi_\varepsilon$ for every $\varepsilon$
	small. Let $\{u^{\varepsilon}\}_{\varepsilon>0}$  be a family of value functions of the game with obstacle $\psi_\varepsilon$
	and final payoff $u_0$,
	that is, for each $\varepsilon>0$,  $u^{\varepsilon}$ is a solution to \eqref{DPP}. 
	
	Then, $u^{\varepsilon}$ converges locally
	uniformly as $\varepsilon \to 0$ to $u$, the unique solution to \eqref{ParabolicPb}.
\end{theorem}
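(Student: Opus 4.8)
The plan is to prove Theorem \ref{maintheorem.juego.conv} via the standard Barles--Perthame half-relaxed limits method, combined with the comparison principle (Theorem \ref{comp.pp}). First I would establish that the family $\{u^\varepsilon\}_{\varepsilon>0}$ is \emph{uniformly bounded}: since $u_0$ is bounded and $\psi_\varepsilon$ is bounded uniformly in $\varepsilon$ (it stays between $\psi$ and $C_\eps$-perturbations of $\psi$), the game value, being an iterated max/sup/min of evaluations of $u_0$ and the $\psi_\varepsilon$'s, inherits the bound $\inf u_0 \wedge \inf_\eps \inf \psi_\eps \le u^\varepsilon \le \sup u_0 \vee \sup_\eps \sup \psi_\eps$. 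With this in hand I would define the half-relaxed upper and lower limits
\begin{equation}
\overline u(x,t) = \limsup_{\substack{\varepsilon\to 0 \\ (y,s)\to(x,t)}} u^\varepsilon(y,s), \qquad
\underline u(x,t) = \liminf_{\substack{\varepsilon\to 0 \\ (y,s)\to(x,t)}} u^\varepsilon(y,s),
\end{equation}
which are respectively an upper- and lower-semicontinuous bounded function with $\underline u \le \overline u$ everywhere.

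The core of the argument is to show that $\overline u$ is a viscosity subsolution and $\underline u$ a viscosity supersolution of \eqref{ParabolicPb}. I would do this by the usual touching-test-function argument: suppose $\overline u - \phi$ has a strict local maximum at $(x_0,t_0)$ with $\phi \in C^{2,1}$; by a standard lemma there are points $(x_\varepsilon,t_\varepsilon)\to(x_0,t_0)$ at which $u^\varepsilon-\phi$ has a local maximum and $u^\varepsilon(x_\varepsilon,t_\varepsilon)\to\overline u(x_0,t_0)$. Plugging $\phi(x_\varepsilon,t_\varepsilon)+(u^\varepsilon(x_\varepsilon,t_\varepsilon)-\phi(x_\varepsilon,t_\varepsilon))$ into the DPP \eqref{DPP} at $(x_\varepsilon,t_\varepsilon)$ and using that $u^\varepsilon \le \phi + (u^\varepsilon-\phi)(x_\varepsilon,t_\varepsilon)$ near $(x_\varepsilon,t_\varepsilon)$, one gets the inequality
\begin{equation}
\phi(x_\varepsilon,t_\varepsilon) \le \max\Big\{\psi_\varepsilon(x_\varepsilon),\ \sup_{|v|=1}\min_{b=\pm1}\phi\big(x_\varepsilon+bv\varepsilon,\, t_\varepsilon-\tfrac12\varepsilon^2\big)\Big\}.
\end{equation}
Then I would run the formal Taylor-expansion computation already sketched in \eqref{DPP-22}--\eqref{DPP-55} rigorously: divide by $\varepsilon^2$, multiply the non-obstacle term by $2$, and let $\varepsilon\to0$. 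The key point is that if $\nabla\phi(x_0,t_0)\ne0$ the supremum over $v$ in the DPP is, up to an error, attained near the orthogonal complement of $\nabla\phi$ (otherwise the $2b\varepsilon^{-1}\langle\nabla\phi,v\rangle$ term drives the min to $-\infty$), which yields in the limit
\begin{equation}
\max\Big\{\psi(x_0)-\phi(x_0,t_0),\ \sup_{\substack{|v|=1\\ v\perp\nabla\phi(x_0,t_0)}}\langle D^2\phi(x_0,t_0)v,v\rangle - \partial_t\phi(x_0,t_0)\Big\}\ \ge\ 0,
\end{equation}
which is exactly the subsolution property; here I use $\psi_\varepsilon\to\psi$ uniformly. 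The degenerate case $\nabla\phi(x_0,t_0)=0$ must be handled with the usual geometric-flow convention for the operator $\mathcal L$ (as in \cite{CGG,ES,GGIS}), using that $D^2\phi(x_0,t_0)\ge0$ there (or the appropriate modification of the test function). The supersolution property for $\underline u$ is symmetric, replacing max by min; one must be careful that the extra stopping rule only contributes to one side, so for $\underline u$ one checks the inequality $\partial_t\phi + \mathcal L\phi \le 0$ at points where $\underline u > \psi$.

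Finally I would check the initial condition: $\overline u(x,0)\le u_0(x)$ and $\underline u(x,0)\ge u_0(x)$, using the continuity of $u_0$, the bound $n=\lceil 2t_0/\varepsilon^2\rceil$ on the number of rounds each of spatial size $\varepsilon$ (so after $n$ rounds the position lies within $O(\sqrt{t_0})$ of the start, but for $t_0\to0$ within $o(1)$, giving $|x_n-x_0|\le n\varepsilon = O(t_0/\varepsilon)$ — more carefully, one uses a barrier/parabola argument as in \cite{KS,Misu} to control the terminal position), together with $\psi_\varepsilon(x)\le u_0(x)$. With $\overline u$ a subsolution and $\underline u$ a supersolution sharing the same initial datum $u_0$, the comparison principle of Theorem \ref{comp.pp} gives $\overline u \le \underline u$ on $\mathbb R^N\times[0,\infty)$; combined with the trivial $\underline u\le\overline u$ this forces $\overline u=\underline u=:u$, a continuous function, and the equality of the half-relaxed limits upgrades the convergence $u^\varepsilon\to u$ to locally uniform convergence. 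By Theorem \ref{etheoremExistencePerron}, $u$ is the unique solution to \eqref{ParabolicPb}. The main obstacle I anticipate is making the DPP-to-PDE passage fully rigorous at the obstacle interface and at points where $\nabla\phi$ vanishes — i.e., verifying that the $\max$-structure of the DPP passes correctly to the $\max$-structure of \eqref{ParabolicPb} in the viscosity sense, and controlling the error terms in the Taylor expansion uniformly, which is where the precise choice of $\psi_\varepsilon$ and the enlargement $K_\varepsilon=K+B_{N\varepsilon}(0)$ become relevant.
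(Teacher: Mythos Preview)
Your proposal is correct and follows essentially the same approach as the paper: uniform bounds, half-relaxed limits, DPP-to-PDE via Taylor expansion to get sub/supersolution, initial condition via game barriers, then comparison principle and upgrade to locally uniform convergence. Two minor slips: for the supersolution $\underline u$ the inequality should read $\partial_t\phi + \mathcal L^*\phi \ge 0$ (not $\le 0$), and the enlargement $K_\varepsilon = K+B_{N\varepsilon}(0)$ plays no role in this convergence proof---only the uniform convergence $\psi_\varepsilon\to\psi$ is used here, the enlargement matters later for the long-time behaviour.
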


As a consequence, we can obtain the behaviour of
the positivity sets of $u^\varepsilon$ as $\varepsilon \to 0$.

\begin{corollary}\label{cor1} 
Consider  
	\begin{equation}
	\label{OmegaSets}	\Omega^{\varepsilon}_{t}:=\{x\,:\;\;u^{\varepsilon}(x,t)>0\}\;\;\textrm{and}\;\;\Omega_{t}:=\{x\,:\;\;u(x,t)>0\}.
	\end{equation}Then, for each $t>0$, we have that 
	\begin{equation*}
		\Omega_t\subset \liminf_{\varepsilon \to 0} \Omega^\varepsilon_t \subset \limsup_{\varepsilon \to 0} \Omega^\varepsilon_t\subset \overline{\Omega}_t.
	\end{equation*}
\end{corollary}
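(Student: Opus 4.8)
The plan is to deduce Corollary \ref{cor1} directly from the locally uniform convergence $u^\varepsilon \to u$ established in Theorem \ref{maintheorem.juego.conv}, using only elementary facts about positivity sets of converging continuous functions. Fix $t>0$ throughout; since $u(\cdot,t)$ and each $u^\varepsilon(\cdot,t)$ are continuous (the continuity of $u$ coming from Theorem \ref{etheoremExistencePerron}), the sets $\Omega_t$ and $\Omega_t^\varepsilon$ are open.

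The two containments are proved separately. For $\Omega_t \subset \liminf_{\varepsilon\to 0}\Omega_t^\varepsilon$: take $x \in \Omega_t$, so $u(x,t)=:\delta>0$. By local uniform convergence on a fixed compact neighborhood of $(x,t)$, there is $\varepsilon_0$ such that $|u^\varepsilon(x,t)-u(x,t)|<\delta/2$ for all $\varepsilon<\varepsilon_0$, hence $u^\varepsilon(x,t)>\delta/2>0$, i.e. $x \in \Omega_t^\varepsilon$ for all small $\varepsilon$. This is exactly the statement $x\in\liminf_{\varepsilon\to 0}\Omega_t^\varepsilon$ (the set of points lying in $\Omega_t^\varepsilon$ for all sufficiently small $\varepsilon$). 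For $\limsup_{\varepsilon\to 0}\Omega_t^\varepsilon \subset \overline{\Omega}_t$: argue by contraposition. If $x \notin \overline{\Omega}_t$, then $x$ lies in the open set $\{u(\cdot,t)\le 0\}^{\,\circ}$... more carefully: $x\notin\overline{\Omega}_t$ means there is a ball $B_r(x)$ with $u(\cdot,t)\le 0$ on $B_r(x)$, but to get a strict sign we use that $u(\cdot,t)$ is continuous and $x\notin\overline{\{u(\cdot,t)>0\}}$, so in fact $u(x,t)\le 0$; if $u(x,t)<0$ we are done as above, and if $u(x,t)=0$ we note that $x$ is not in the closure of the positivity set, so on a small ball $u(\cdot,t)\le 0$ and $x$ is an interior point of $\{u(\cdot,t)\le0\}$ — then a short argument (or simply passing to a nearby point where $u(\cdot,t)<0$, using that such points are dense near $x$ when $x\in\partial\{u\le0\}^c$... ) shows $x\notin\limsup\Omega_t^\varepsilon$. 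The clean way: show $\RR^N\setminus\overline{\Omega}_t\subset \liminf_{\varepsilon\to 0}(\RR^N\setminus\Omega_t^\varepsilon)$, equivalently every $x$ with $u(x,t)<0$ has $u^\varepsilon(x,t)<0$ for small $\varepsilon$ by uniform convergence, and then use that $\RR^N\setminus\overline{\Omega}_t$ is contained in the \emph{closure} of $\{u(\cdot,t)<0\}$ is false in general, so instead one states the conclusion as: for $x$ with $u(x,t)<0$, $x\notin\Omega_t^\varepsilon$ eventually, giving $\bigcup\{x:u(x,t)<0\}\cap\limsup\Omega_t^\varepsilon=\emptyset$; combined with $\Omega_t\subset\liminf\Omega_t^\varepsilon$ and the trivial $\liminf\subset\limsup$, the stated chain follows once we observe $\overline{\Omega}_t$ differs from $\{u(\cdot,t)\ge0\}$ only on the zero-level set, where the inclusion into $\overline{\Omega}_t$ is automatic because $\overline{\Omega}_t$ is closed and we only claim $\limsup\Omega_t^\varepsilon\subset\overline{\Omega}_t$, not equality.

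I expect the only genuinely delicate point to be the handling of the zero-level set $\{x:u(x,t)=0\}$ in the upper containment: local uniform convergence gives no sign information there, so one cannot conclude $x\notin\Omega_t^\varepsilon$ pointwise. The resolution is that $\limsup_{\varepsilon\to 0}\Omega_t^\varepsilon$ is defined as $\bigcap_{\varepsilon_0>0}\overline{\bigcup_{\varepsilon<\varepsilon_0}\Omega_t^\varepsilon}$ (or the analogous ``infinitely often'' set), so it is automatically closed, and any point of it is a limit of points $x_k\in\Omega_t^{\varepsilon_k}$ with $\varepsilon_k\to 0$ and $x_k\to x$; then $u^{\varepsilon_k}(x_k,t)>0$ together with local uniform convergence and continuity of $u$ forces $u(x,t)\ge 0$, i.e. $x\in\{u(\cdot,t)\ge0\}$. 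Since $\Omega_t$ is open with $\Omega_t\subset\{u(\cdot,t)\ge0\}$ and the latter is closed, we get $\overline{\Omega}_t\subset\{u(\cdot,t)\ge0\}$; the reverse may fail, but we only need $\limsup\Omega_t^\varepsilon\subset\overline{\Omega}_t$, which would require $\{u(\cdot,t)\ge0\}\subset\overline{\Omega}_t$ — this is where one must invoke a property of the specific solution (e.g. that $u(\cdot,t)$ has no interior zeros, or that $\{u(\cdot,t)>0\}$ is dense in $\{u(\cdot,t)\ge0\}$, which for this geometric flow follows from the structure of $\Omega_t$ as the evolving open region). I would therefore either (i) cite/prove that $\overline{\{u(\cdot,t)>0\}}=\{u(\cdot,t)\ge0\}$ for the solution $u$ of \eqref{ParabolicPb}, or (ii) if that is not available, weaken the target to $\limsup\Omega_t^\varepsilon\subset\{u(\cdot,t)\ge0\}$ and note this coincides with $\overline{\Omega}_t$ under the no-interior-zeros condition, which holds in the setting under consideration. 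Everything else is a two-line $\varepsilon/2$ argument.
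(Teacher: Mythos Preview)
Your argument for the first inclusion $\Omega_t \subset \liminf_{\varepsilon\to 0}\Omega_t^\varepsilon$ is correct and is exactly what the paper does: $u(x,t)>0$ plus pointwise convergence forces $u^\varepsilon(x,t)>0$ for all small $\varepsilon$.

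For the second inclusion, the paper takes the direct route you begin with and then abandon: it argues by contraposition, takes $x\notin\overline{\Omega_t}$, and simply asserts that this gives $u(x,t)\leq -\mu<0$ for some $\mu>0$, after which pointwise convergence yields $u^\varepsilon(x,t)\leq -\mu/2$ for all small $\varepsilon$, hence $x\notin\limsup_{\varepsilon\to 0}\Omega_t^\varepsilon$. You are right that the implication ``$x\notin\overline{\{u(\cdot,t)>0\}}\Rightarrow u(x,t)<0$'' is not automatic from continuity alone: it requires that the zero level set $\{u(\cdot,t)=0\}$ has empty interior (equivalently $\overline{\Omega_t}=\{u(\cdot,t)\geq 0\}$). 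The paper does not justify this step explicitly; it is taken for granted in the level-set formulation. So your instinct that something extra is being used here is correct, and your option (i) --- invoking that $\overline{\{u>0\}}=\{u\geq 0\}$ for the solution of \eqref{ParabolicPb} --- is precisely what the paper is tacitly assuming.

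Two corrections to your write-up. First, the paper uses the \emph{set-theoretic} limsup $\bigcap_{\varepsilon_0>0}\bigcup_{0<\varepsilon\leq\varepsilon_0}\Omega_t^\varepsilon$ (the ``infinitely often'' set, no closure), not the Kuratowski upper limit with a closure; your detour through limits of sequences $x_k\to x$ is therefore unnecessary and in fact proves a different inclusion. Second, your exposition is far too exploratory: you cycle through three or four formulations before settling on one. Commit to the contrapositive argument from the start, state cleanly the assumption that $x\notin\overline{\Omega_t}$ forces $u(x,t)<0$, and flag it as the point needing the no-interior-zeros property --- then the whole proof is four lines, as in the paper.
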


Next, we drive our attention to the behaviour of the positivity
sets of $u^\varepsilon$ as $t \to \infty$. 
We need to introduce a graph $G=(\mathcal{V},\mathcal{E})$
associated with the geometric configuration 
of $K$ and $\Omega_0$. The vertices of the graph $G$
are the connected components of $K$ and two vertices are
connected by an edge if there is a segment in $\mathbb{R}^N$ with endpoints
in two connected components of $K$ that correspond to the two vertices, that is contained in $\Omega_0$.  
The main hypothesis in the next results is going to be the following:
\begin{equation} \tag{G} \label{(G)}
\mbox{The graph $G$ is connected.}
\end{equation} 
Notice that \eqref{(G)} holds when we have the stronger
condition $\textrm{co}(K) \subset \Omega_0$.

In the next statement recall that we denoted by $\textrm{co}(A)$ the convex hull of a set $A$.
In Section \ref{sect.LongTimeBehaviour} we prove:

 \begin{theorem}\label{Th.Cadenas}
 Assume that $K\subset \mathbb{R}^{N}$ and $\Omega_0\subset \mathbb{R}^{N}$ satisfy condition \eqref{(G)}. If $\text{co}(K) \subset \Omega_0$ or $N=2$,  then there exists $\varepsilon_{0}>0$ such that 
 	\begin{equation}
 		co(K)\subset \liminf\limits_{t\to\infty}\Omega_{t}^{\varepsilon}\subset\limsup\limits_{t\to\infty}\Omega_{t}^{\varepsilon}\subset co(K_{\varepsilon})\subset co(K)+B_{N\varepsilon} (0)
 	\end{equation}
	for every $0<\varepsilon\leq \varepsilon_{0}$.
 \end{theorem}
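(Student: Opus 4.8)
The plan is to prove the two nontrivial inclusions, namely $\mathrm{co}(K)\subset\liminf_{t\to\infty}\Omega^\varepsilon_t$ (the lower bound) and $\limsup_{t\to\infty}\Omega^\varepsilon_t\subset\mathrm{co}(K_\varepsilon)$ (the upper bound), the last inclusion $\mathrm{co}(K_\varepsilon)\subset\mathrm{co}(K)+B_{N\varepsilon}(0)$ being elementary since $K_\varepsilon=K+B_{N\varepsilon}(0)$ and the right-hand side is already convex. I would work at the level of the game, fixing $\varepsilon$ small and analyzing the value function $u^\varepsilon$ as $t\to\infty$ via the Dynamic Programming Principle \eqref{DPP}, and only at the very end invoke Theorem \ref{maintheorem.juego.conv} and Corollary \ref{cor1} to pass $\varepsilon\to 0$ if one wants the companion statement about $\Omega_t$.

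\emph{Upper bound.} For $\limsup_{t\to\infty}\Omega^\varepsilon_t\subset\mathrm{co}(K_\varepsilon)$, I would exhibit a strategy for Carol guaranteeing that if the game starts at $x\notin\mathrm{co}(K_\varepsilon)$ with $t$ large, the final payoff is negative. Pick a hyperplane strictly separating $x$ from the convex set $\mathrm{co}(K_\varepsilon)$, with unit normal $\nu$. Carol's rule: whatever unit vector $v_i$ Paul picks, choose the sign $b_i$ so that $b_i\langle v_i,\nu\rangle\le 0$; then $\langle x_i,\nu\rangle$ is nonincreasing in $i$, so the trajectory never enters $\{\langle y,\nu\rangle>\langle x,\nu\rangle\}$, in particular never reaches $K_\varepsilon$, so Paul's stopping option yields $\psi_\varepsilon\le 0$ there. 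To force the terminal payoff $u_0(x_n)<0$ one needs the walk to exit $\overline{\Omega_0}$; here Carol refines the strategy to also push $\langle x_i,\nu\rangle$ strictly down by a definite amount at each round — choose $b_i$ with $b_i\langle v_i,\nu\rangle=-|\langle v_i,\nu\rangle|$, and note Paul, needing $v_i\perp$ something to avoid other bad effects, cannot keep $\langle v_i,\nu\rangle$ near $0$ for all rounds without the position drifting; more simply, since $n=\lceil 2t/\varepsilon^2\rceil\to\infty$ and the component along $\nu$ performs (under Carol) a monotone non-increasing deterministic sequence bounded below by $\langle x,\nu\rangle - n\varepsilon$ only if Paul always plays tangentially, one shows that keeping $x_i$ in a bounded set while $\langle x_i,\nu\rangle$ cannot increase forces, for $t$ large, the trajectory out of $\Omega_0$. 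This is the familiar separating-hyperplane argument from the mean curvature game papers \cite{KS,Misu}, adapted to the present step sizes, and I expect it to go through with minor bookkeeping.

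\emph{Lower bound.} For $\mathrm{co}(K)\subset\liminf_{t\to\infty}\Omega^\varepsilon_t$ I would give Paul a strategy driving the token into $K_\varepsilon$ (where he stops and collects $\psi_\varepsilon>0$) from any starting point in $\mathrm{co}(K)$, provided $t$ is large. Using condition \eqref{(G)}, $\mathrm{co}(K)=\mathrm{co}(\bigcup_j C_j)$ is covered by the components $C_j$ together with the segments realizing the edges of the connected graph $G$, all of which lie in $\Omega_0$ (and, when $\mathrm{co}(K)\subset\Omega_0$, one can use the whole convex hull directly). Paul's plan: from $x\in\mathrm{co}(K)$ first steer toward the nearest component $C_j$ along an admissible polygonal path inside $\Omega_0$; at each round Paul picks $v_i$ pointing from $x_{i-1}$ toward the current target, so that regardless of Carol's sign, $x_i=x_{i-1}\pm v_i\varepsilon$ stays within $\varepsilon$ of the segment and the distance to the target decreases by $\varepsilon$ in one of the two cases and increases by $\varepsilon$ in the other — so after a bounded number of rounds (bounded in terms of $\mathrm{diam}\,\Omega_0$ and $\varepsilon$) the token is within $N\varepsilon$ of a point of $K$, i.e.\ inside $K_\varepsilon=K+B_{N\varepsilon}(0)$, at which moment Paul stops. (The thickening of the obstacle by exactly $B_{N\varepsilon}$ is what gives Paul the slack to absorb Carol's adversarial sign choices; here the crude bound ``each round changes the distance by at most $\varepsilon$'' combined with being able to re-aim every round gives net progress $\varepsilon$ every \emph{two} rounds, which suffices once $t$, hence $n$, is large.) Since $u_0\ge\psi_\varepsilon$ and $\psi_\varepsilon>0$ on the interior of $K_\varepsilon$, Paul's outcome is strictly positive, so $x\in\Omega^\varepsilon_t$ for all large $t$; taking the interior / a compact exhaustion of $\mathrm{co}(K)$ gives $\mathrm{co}(K)\subset\liminf_t\Omega^\varepsilon_t$.

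\emph{Main obstacle.} The delicate point is the upper bound: Carol must simultaneously prevent the walk from ever meeting $K_\varepsilon$ (easy, via the separating hyperplane) \emph{and} force it out of $\overline{\Omega_0}$ in finitely many — indeed $O(1/\varepsilon^2)$ many — rounds, against a Paul who will try to ``stall'' by playing nearly tangent to $\nu$. Making the drift quantitative requires showing that near-tangential play cannot be sustained: if $\langle v_i,\nu\rangle$ is always tiny then the orthogonal component of the displacement is order $\varepsilon$ each round and, over $n\sim t/\varepsilon^2$ rounds, the token would have to leave any bounded set anyway; balancing these two escape mechanisms is where the argument needs care, and is exactly the place where the $N\varepsilon$-thickening and the precise choice of $u_0$ (positive on $\Omega_0$, negative outside $\overline{\Omega_0}$) are used. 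Once both game-level bounds are in hand, Theorem \ref{maintheorem.juego.conv} and Corollary \ref{cor1} transfer them to the limit, completing the proof.
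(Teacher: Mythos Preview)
Your proposal has genuine gaps in both the lower and the upper bound, and the lower bound in particular rests on a misconception about the adversarial nature of the game.

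\textbf{Lower bound.} The strategy you give Paul --- point $v_i$ from $x_{i-1}$ toward a single target and claim ``net progress $\varepsilon$ every two rounds'' --- does not work. Carol sees $v_i$ before choosing the sign, and she will pick $b_i$ so that $x_i$ moves \emph{away} from your target at every single round; there is no averaging and no two-round netting in a deterministic zero-sum game. The paper's mechanism is completely different: Paul never aims at one target. Instead, he plays the direction of a segment $\ell_{a,b}$ with \emph{both} endpoints $a,b$ good (in $K_\varepsilon$, or inductively in a lower level of the $\varepsilon$-convex hull $\mathrm{co}_\varepsilon(K_\varepsilon)$). Then whichever sign Carol picks, the new position moves toward one of the two good endpoints; Paul either reaches one of them or stays on the segment (hence in $\Omega_0$) when time runs out. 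The $N\varepsilon$-thickening is not used to ``absorb adversarial signs'' in the way you describe; it is used to prove $\mathrm{co}(K)\subset\mathrm{co}_\varepsilon(K_\varepsilon)$, because the $\varepsilon$-grid points along a segment between two points of $K$ need not land in $K$, but the overlapping balls $B_{N\varepsilon}$ ensure they land in $K_\varepsilon$. In the case $N=2$ without $\mathrm{co}(K)\subset\Omega_0$, a further ingredient is needed: a Jordan-curve/concentric argument that pushes Paul from the interior of a region bounded by a polygonal in $L$ and an arc through $x$ onto the polygonal itself. Your sketch does not touch this.

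\textbf{Upper bound.} Your hyperplane rule keeps $\langle x_i,\nu\rangle$ nonincreasing, but when Paul plays $v_i\perp\nu$ the rule gives Carol no preference between the two signs, and your proposed escape (``the orthogonal displacement must leave any bounded set'') is false: Paul can pick the \emph{same} $v\perp\nu$ every round, and if Carol alternates signs the position oscillates between two points forever. The paper's fix is to abandon the hyperplane as the primary tool and use a \emph{concentric} strategy instead: choose $z$ with $\mathrm{co}(K_\varepsilon)\subset B_R(z)$ and $x\in\partial B_R(z)$, and let Carol pick the sign that increases $|x_i-z|$. One line gives $|x_n-z|^2\ge |x_0-z|^2+n\varepsilon^2$, which drives the trajectory out of any bounded set (hence out of $\{u_0\ge -\mu/2\}$) while never entering $K_\varepsilon$. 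The hyperplane appears in the paper only for the boundary case $x\in\partial\,\mathrm{co}(K_\varepsilon)$, and even there it is immediately combined with a concentric strategy inside the hyperplane.
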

 
 In the proof of the previous result is where we use that we enlarged
 the obstacle and considered the set $K_\varepsilon = K + B_{N \varepsilon}(0)$
and its associated function $\psi_\varepsilon$ as the obstacle 
for our game. Notice that we have $\mbox{co}(K)\subset \liminf\limits_{t\to\infty}\Omega_{t}^{\varepsilon}$ and
$\limsup\limits_{t\to\infty}\Omega_{t}^{\varepsilon}\subset \mbox{co}(K_{\varepsilon})$ when we play with $K+B_{N\varepsilon}(0)$
and its associated function $\psi_\varepsilon$ as obstacle, but, in general, these inclusions are not true if we play with $K$ and $\psi$ as the obstacle. We can give a slightly better result when we assume that $\textrm{co}(K) \subset \Omega_0$, see Section \ref{sect.LongTimeBehaviour}.

Finally, we use our previous results and some extra arguments to recover exactly  the 
convex hull of $K$ when we take the limit as $t\to \infty$ of the geometric evolution by minimal curvature. The following result is valid for any dimension.

 \begin{theorem} \label{cor2} 
 Assume the $K$ and $\Omega_0$ satisfy condition \eqref{(G)}. The family of positivity sets of the unique viscosity solution $u(x,t)$ to the obstacle problem~\eqref{ParabolicPb}, $(\Omega_{t})_{t>0}$ defined in~\eqref{OmegaSets},
 verifies
 \begin{equation}
	\text{co}(K) = \lim_{t\to \infty} \Omega_{t}.
 \end{equation}
\end{theorem}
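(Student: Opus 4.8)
The plan is to reduce Theorem \ref{cor2} to Theorem \ref{Th.Cadenas}, which already contains the statement at the level of the $\varepsilon$-games under the extra hypothesis $\textrm{co}(K)\subset\Omega_0$ (and unconditionally for $N=2$). Two ingredients must be supplied: (a) the passage from the $\varepsilon$-games back to the PDE, i.e. from $\liminf/\limsup_{t\to\infty}\Omega^\varepsilon_t$ to $\liminf/\limsup_{t\to\infty}\Omega_t$ where $\Omega_t=\{u(\cdot,t)>0\}$; and (b) the removal of the assumption $\textrm{co}(K)\subset\Omega_0$ when $N\ge 3$, using only connectedness of the graph $G$. Write $L^-:=\liminf_{t\to\infty}\Omega_t$ and $L^+:=\limsup_{t\to\infty}\Omega_t$; the goal is $\textrm{co}(K)\subset L^-$ and $L^+\subset\textrm{co}(K)$, which together with $L^-\subset L^+$ give $\lim_{t\to\infty}\Omega_t=\textrm{co}(K)$.

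For (a), assume first $\textrm{co}(K)\subset\Omega_0$ (or $N=2$). For the outer bound, fix $x\notin\textrm{co}(K)$ and choose $\varepsilon$ small enough that $x\notin\textrm{co}(K)+B_{N\varepsilon}(0)$; by Theorem \ref{Th.Cadenas} there is $T_\varepsilon$ with $x\notin\Omega^\varepsilon_t$, and hence $u^\varepsilon(x,t)\le 0$, for $t\ge T_\varepsilon$ (reading off Carol's strategy of walking straight away from $K_\varepsilon$ one even gets $u^\varepsilon(x,t)\le -c(x)<0$ with $c(x)$ independent of $\varepsilon$). Using $u^\varepsilon\to u$ locally uniformly (Theorem \ref{maintheorem.juego.conv}, whose fixed-time consequence is Corollary \ref{cor1}) this forces $u(x,t)\le 0$ for $t$ large, so $x\notin L^+$; thus $L^+\subset\textrm{co}(K)$. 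For the inner bound, fix a compact $C\subset\textrm{int}\,\textrm{co}(K)$; by Theorem \ref{Th.Cadenas}, $C\subset\Omega^\varepsilon_t$ for small $\varepsilon$ and $t$ large, and from the game-theoretic proof (Paul driving into a fixed compact piece of $K$ where $\psi_\varepsilon\ge c_0>0$ and then using the stopping rule) one extracts a uniform-in-$\varepsilon$ bound $u^\varepsilon\ge c_0$ on $C$ for $t$ large; passing to the limit gives $u\ge c_0>0$ on $C$ for $t$ large, so $C\subset L^-$. Letting $C$ exhaust $\textrm{int}\,\textrm{co}(K)$ yields $\textrm{co}(K)\subset L^-$, which finishes this case.

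For (b), let $N\ge 3$ and $\textrm{co}(K)\not\subset\Omega_0$. The outer bound is easy: enlarge $\Omega_0$ to a convex $\Omega_0^+\supset\Omega_0$, so that $\textrm{co}(K)\subset\Omega_0^+$; apply Theorem \ref{Th.Cadenas} to $(K,\Omega_0^+)$ and compare (Theorem \ref{comp.pp}) with a suitable larger initial datum to get $u\le u^+$, hence $L^+\subset\limsup_{t\to\infty}\{u^+(\cdot,t)>0\}\subset\textrm{co}(K)$. The inner bound $\textrm{co}(K)\subset L^-$ is the genuinely new point, and this is where \eqref{(G)} enters. I would prove it via a \emph{neck-fattening lemma}: if $B_\delta(a)\cup B_\delta(b)\subset\Omega_t$ for all $t\ge T$ and the segment $[a,b]$ has, throughout, been "captured" by $\Omega_t$ inside a thin tube, then $\textrm{co}\big(B_\rho(a)\cup B_\rho(b)\big)\subset\Omega_t$ for all $t\ge T'$, for some $\rho>0$, $T'$ depending only on $\delta,|a-b|$. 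This is proved by comparison from below with an explicit expanding "capsule" that, while thin enough, is a subsolution of \eqref{ParabolicPb} (near a thin neck the minimal curvature is $\le 0$, so the surface is pushed outward), with the obstacle keeping the ends anchored. One then runs this along a spanning tree of $G$: since $K\subset\bigcap_{t>0}\Omega_t$ (the obstacle), a small ball around a point of each connected component $K_i$ of $K$ lies in $\Omega_t$ for all $t$; applying the lemma edge by edge over a spanning tree and iterating the fattening on the resulting thicker sets, one captures an arbitrary prescribed point of $\textrm{int}\,\textrm{co}(K)$ after finitely many steps, i.e. $\textrm{co}(K)\subset L^-$ (equivalently, $L^-$ is convex and contains $K$, hence contains $\textrm{co}(K)$).

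The main obstacle is twofold. First, the interchange of $\varepsilon\to 0$ and $t\to\infty$ in part (a): Theorem \ref{Th.Cadenas} is stated for each fixed $\varepsilon$, so one must extract from its proof that the trapping time after which $\Omega^\varepsilon_t$ lies between $\textrm{co}(K)$ and $\textrm{co}(K)+B_{N\varepsilon}(0)$ can be taken uniform in $\varepsilon$ (or uniform along a sequence $\varepsilon_n\to0$), together with the quantitative sign estimates on $u^\varepsilon$ used above — otherwise the limit cannot be taken at any fixed $t$. Second, and I expect this to be the harder half, the convexification needed for $\textrm{co}(K)\subset L^-$ when $\textrm{co}(K)\not\subset\Omega_0$: one cannot simply restart the flow at a later time and quote Theorem \ref{Th.Cadenas}, because the minimal curvature flow from a non-convex configuration generally does not contain $\textrm{co}(K)$ in finite time (filling a reflex region is an $t\to\infty$ phenomenon). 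Thus the neck-fattening lemma has to be arranged so that only a small positive neck radius is demanded of the capsule subsolution (making each step finite-time), and the spanning-tree induction must be organised so that finitely many fattening steps reach any given point of $\textrm{int}\,\textrm{co}(K)$.
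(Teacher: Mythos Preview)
Your outer bound is essentially the paper's argument: Carol's concentric strategy gives $u^\varepsilon(x,t)\le -\mu/2$ for all $t\ge\tau_x$ and all small $\varepsilon$ (Lemma~\ref{glemma2}), and local uniform convergence transfers this to $u$. Note that this already works for every $x\notin\overline{\textrm{co}(K)}$ regardless of whether $\textrm{co}(K)\subset\Omega_0$, so your case split and the enlargement $\Omega_0^+$ in (b) are unnecessary. The boundary case $x\in\partial\textrm{co}(K)$ is handled in the paper by the same concentric strategy applied within a supporting hyperplane.

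For the inner bound you take a genuinely different route, and your proposed route has a real obstruction. Your neck-fattening lemma asserts that a thin tube between two balls expands under the obstacle flow because ``near a thin neck the minimal curvature is $\le 0$''. But a straight circular cylinder of any radius has minimal principal curvature exactly $0$ (the axial direction), so it is \emph{stationary} for the minimal-curvature flow; there is no expanding capsule subsolution whose neck pushes outward away from the ends. At best you get negative minimal curvature only in the saddle region where the tube meets the balls, and turning that into propagation along the whole tube is a traveling-wave problem you have not addressed. Your spanning-tree induction therefore does not get off the ground as written.

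The paper avoids PDE subsolutions entirely and stays at the game level. It introduces the operator
\[
\mathcal{T}^1(\Gamma)=\big\{z\in\ell_{x,y}:x\in\ell_{a,b}\subset\Gamma,\ y\in\ell_{a,c}\subset\Gamma\big\},
\]
proves that for polygonally connected $\Gamma$ one has $\textrm{co}(\Gamma)=\mathcal{T}(\Gamma):=\bigcup_n\mathcal{T}^n(\Gamma)$, and applies this with $\Gamma=L$ (the union of segments in $\Omega_0$ joining points of $K$). The point is that each $z\in\mathcal{T}^1(L)$ lies in a two-dimensional triangle with two sides in $L$, so Paul can play the $\RR^2$ concentric strategy (Proposition~\ref{misudim2}) inside that plane and reach $L$ in finitely many steps; iterating handles $\mathcal{T}^k(L)$. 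This reduces $N\ge 3$ to the two-dimensional case without ever needing $\textrm{co}(K)\subset\Omega_0$, and the game-level positivity then passes to $u$ just as for the outer bound. Your worry about uniformity in $\varepsilon$ is legitimate and the paper is terse on it, but the needed quantitative bound is available: once Paul reaches $K_\varepsilon$ he collects $\psi_\varepsilon\ge c_0>0$ with $c_0$ independent of small $\varepsilon$, which is exactly the uniform lower bound you identified in part (a).
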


\subsection{An alternative game}

Let us describe an alternative game. As before, we have two players,
Paul (who wants to maximize the final payoff) and Carol
(who wants to minimize it). Again as before, the final payoff is given by 
$u_0$ and an obstacle $\psi_ \eps$.

Now, the rules of the game change. 
At each turn, Carol chooses a subspace $S_i$ of dimension $N-1$, then Paul chooses a unit vector in that subspace, $v_i \in S$, $|v_i|=1$,
and the new position of the game is $$x_{i} = x_{i-1} + \eps v_i$$
and 
time decreases as before,
\[t_i = t_{i-1}- \frac12 \varepsilon^2.\]
Again, we have the stopping rule and the terminal rule. Paul has the right to end the game at any turn and he gets the value given by the obstacle, $\psi_\varepsilon(x_i)$ and if time is consumed (after
$n = \displaystyle \left\lceil \frac{2 t_0}{\varepsilon^2} \right\rceil$ rounds) Paul gets the final payoff $u_0(x_n)$.

Let $u^\varepsilon (x,t)$ be the value of this game starting at $x$ with time $t$. For this game the corresponding  
Dynamic Programming Principle (DPP) is given by
\begin{equation}
	\label{DPP-alternativo} \tag{$P_{\varepsilon}$}
	\left\{
	\begin{array}{ll}
	\displaystyle
	u^\varepsilon(x,t) = \max \Big\{\psi_\varepsilon(x), 
	\inf_{\substack{S\subset \mathbb{R}^N, \\ dim(S) = N-1}}
	\sup_{\substack{v \in S, \\ |v|=1}}  u^\varepsilon \big(x + v\varepsilon, t- \frac12\varepsilon^2 \big) \Big\}, \qquad & x\in \mathbb{R}^N, \ t>0,\\[8pt]
	u^\varepsilon(x,t) = u_0 (x), \qquad & x\in \mathbb{R}^N, \ t\leq 0.
	\end{array} \right.
\end{equation}

The partial differential equation related to this game is exactly the
level set formulation of the geometric evolution 
of a hypersurface by the minimum curvature with an obstacle.
To see this, formally, we find the asymptotic behaviour as $\varepsilon \approx 0$ of the 
DPP, \eqref{DPP-alternativo}, evaluated at a smooth function 
$\phi$ assuming again that the obstacle $\psi$ is independent of 
$\varepsilon$. We have
\begin{equation}
	\label{DPP-22-alter}
	\phi (x,t) \approx
	\max \Big\{\psi_\varepsilon(x), 
	\inf_{\substack{S\subset \mathbb{R}^N, \\ dim(S) = N-1}}
	\sup_{\substack{v \in S, \\ |v|=1}}  \phi \big(x + v\varepsilon, t- \frac12\varepsilon^2 \big) \Big\}.
\end{equation}
Using similar Taylor expansions as we did before 
we get
\begin{equation}
	\label{DPP-44-alter} 0 \approx \max \Big\{\psi(x) - \phi (x , t), 
	\inf_{\substack{S\subset \mathbb{R}^N, \\ dim(S) = N-1}}
	\sup_{\substack{v \in S, \\ |v|=1}}
	\frac{2}{\varepsilon} \langle \nabla \phi (x,t), v \rangle 
	+ \langle D^2 \phi (x,t) v , v \rangle - \frac{\partial \phi}{\partial t} (x,t)
\Big\} .
\end{equation}
Now, in the second term, we notice that when computing the infimum we  use a subspace $S$ orthogonal
to $\nabla \phi (x,t)$ (otherwise, we can select a unit vector $v \in S$ such that 
$2\varepsilon^{-1} \langle \nabla \phi (x,t), v \rangle$ is positive and large for 
$\varepsilon \approx 0$). Then, assuming that the subspace $S$ is orthogonal
to $\nabla \phi (x,t)$ we arrive to 
\begin{equation}
	\label{DPP-55-alter} 
	0 \approx \max \Big\{\psi(x)-\phi(x,t), \sup_{\substack{|v|=1, \\ v\perp \nabla  \phi (x,t)}}  
	  \langle D^2 \phi (x,t) v , v \rangle - \frac{\partial \phi}{\partial t} (x,t)
 \Big\} ,
\end{equation}
that is, we find the obstacle problem for the level set formulation of the geometric evolution 
by the minimum curvature with an obstacle.

For this game, one can show
(using similar computations to the ones gathered in
Section \ref{sect.GameAndEq}) that the limit as $\varepsilon \to 0$ for the value of the game, gives the solution to 
the parabolic problem \eqref{ParabolicPb}.

We have chosen to include 
the details of the proofs of our results concerning
the asymptotic behaviour of the positivity set for the value function 
for our 
previous game and leave
to the reader the analysis of this alternative game.

\subsection{Comments on the ideas involved in the proofs and on our hypotheses}

The idea of considering the flow  by minimal curvature with an 
obstacle in order to recover the convex hull of the obstacle is natural
since the boundary of the convex hull of a set can be touched by
hyperplanes and when these hyperplanes touch the convex hull at points that are not in the original set we obtain that one of the main curvatures
of the boundary has to be zero at those points, while all the other curvatures need to be nonnegative in order for the convex hull to be a convex set. 
Therefore, we have that the convex hull of a set is a solution to 
the minimal curvature equal to zero at boundary points that are not in the closure of the original set (where the surface does not touch the obstacle). Since our 
target (the convex hull of a set) is a stationary solution to our evolution
equation it seems natural to expect that we find the convex hull in the
long time behaviour of the solutions to our parabolic problem. 

We use viscosity theory to deal with the parabolic problem \eqref{ParabolicPb}
using ideas from \cite{Mercier,19}. Here we assume that 
the obstacle $\psi$ has a uniform modulus of continuity
and that the initial condition is continuous and bounded with
a limit as $|x|\to \infty$. These assumptions allow us to use
a doubling of variables technique in order to show the comparison
principle for our problem.  Once we have a comparison principle, 
existence and uniqueness follow from Perron's method. 

The analysis of the game relies on ideas from \cite{KS,Misu}. 
However, here we enlarge the  
obstacle set and we introduce the idea of $\eps$-convexity in order to study the long time behaviour of the solutions. These 
modifications of the arguments in \cite{KS,Misu} prove to be quite useful
for the analysis of the asymptotic behaviour of the positivity sets, but introduce new difficulties in the analysis. 

The proof of the convergence of the values of the game $u^\eps$ to the unique viscosity solution to \eqref{ParabolicPb} is based on the half-relaxed limits. Some care needs to be taken here since taking these limits we want to obtain
sub and supersolutions to the degenerate parabolic equation \eqref{PE}.

Concerning the study of the asymptotic behaviour of the solutions 
to \eqref{ParabolicPb} and the values of the game we borrow some 
geometric ideas from \cite{Misu} and use that the positivity set 
of the initial condition $u_0$ contains some segments that connect
the connected components of the obstacle $K$ (condition \eqref{(G)}). Without this assumption the general asymptotic result may be false. For example, in the case where there are different connected components of $K$ contained in different connected components of $\Omega_0$ the positivity set of a solution to the evolution problem may remain disconnected and  we can only recover in the limit the union of the convex hulls of the components that are connected. The study of the long time behaviour of the 
positivity set of the value function of the game relies on the construction
of clever strategies for Paul and Carol that allow us to obtain lower an upper bounds on the sets where Paul wins (that is, for the set where
the value of the game is strictly positive). 

We want to highlight that our hypothesis that the obstacle set $K$ is open can be relaxed at some points, in particular when we have that $\textrm{co}(K) \subset \Omega_0$. We will make further comments on this along the text when appropriate.

To see the utility of enlarging the obstacle considering 
$K_\eps = K + B_{N \eps}(0)$ and that we can deal with sets $K$ that are not necessarily open, let us describe a concrete example.
Assume that we want to recover the convex hull of only two
points, that is, we take $K=\{x,y\}$. In this simple case the
convex hull of $K$ is just the segment that connects $x$ and $y$.
Assume that we are in the most favorable case, the positivity set of
the initial condition contains the segment and also assume that the initial position
$x_0$
is inside the segment.
In this situation, since Paul wins when the position of the game 
arrives to the obstacle or remains in the segment, starting at a point inside the segment 
Paul will choose that direction to play. However, if $\eps$ is such that
the starting point $x_0$ is inside the segment but is such that $|x-x_0|/\eps$ or $|y-x_0|/\eps$
is not a natural number, then Carol may choose the signs in such a way
that the position of the game for large times exits any large ball without touching the
obstacle (and then Paul loses). See Figure \ref{dospuntosFig}.
Therefore, we get that $\liminf\limits_{t\to\infty}\Omega_{t}^{\varepsilon}=
K=\{x,y\}$ and in this limit we do not recover the convex hull of $K$ (the segment).

Nevertheless, when we enlarge the set $K$, taking $K_\eps = B_{N\eps} (x) \cup B_{N\eps} (y)$, it is simple to observe that 
when $x_0$ is inside the segment that connects $x$ and $y$ and Paul chooses that direction, then no matter what Carol does the position will
reach the obstacle or remain in the segment and hence Paul wins. 
See Figure \ref{dospuntosFig}.
In this case we have that the segment $\mbox{co} (K)$ is included in $\liminf\limits_{t\to\infty}\Omega_{t}^{\varepsilon}$ as we wanted.
 
 \begin{figure}[H]
	\centering
	\includegraphics[width=8cm]{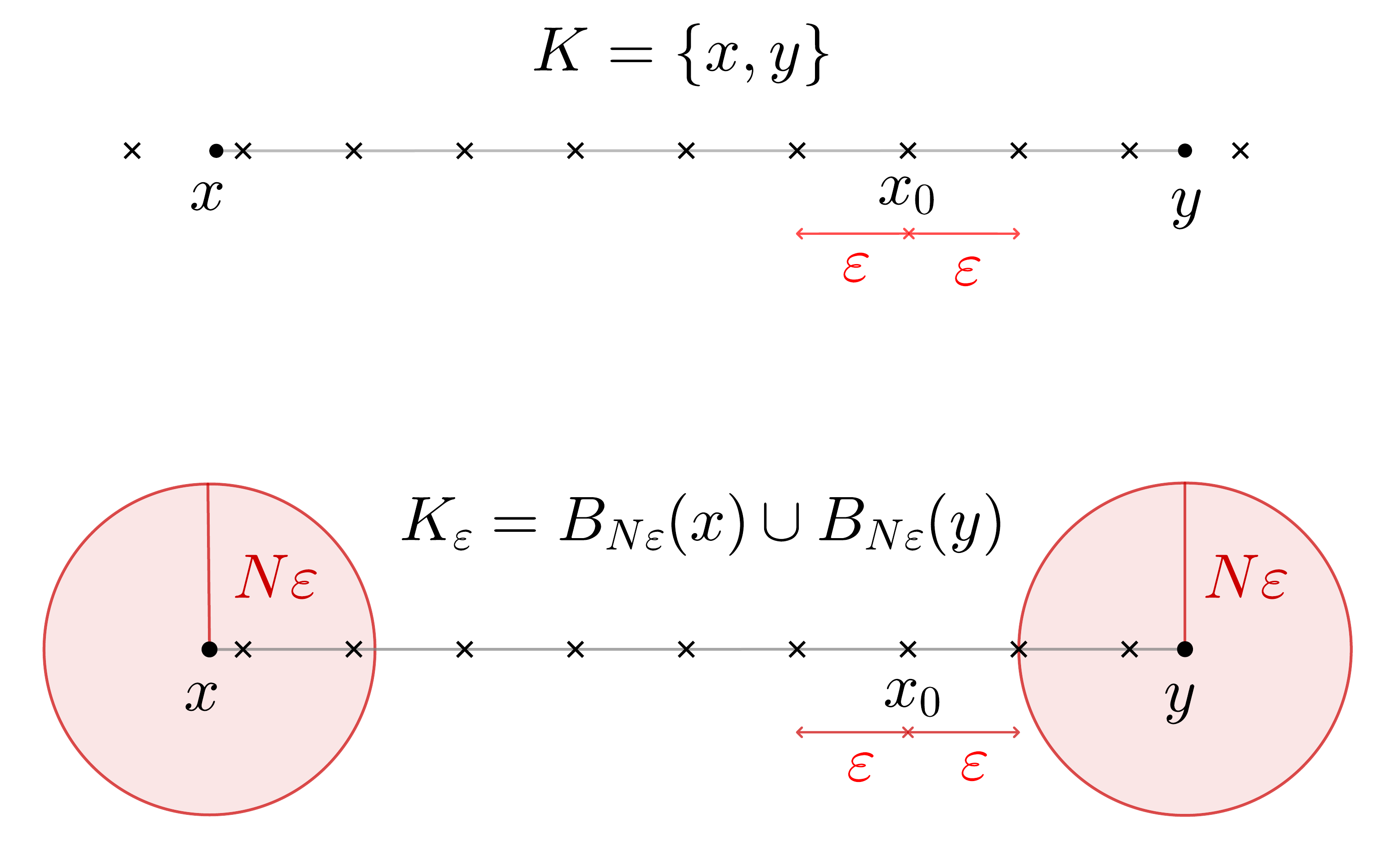}
	\caption{Positions of the game with or without enlarging the obstacle.}
	\label{dospuntosFig}
\end{figure}

\subsection{Precedents} 
Let us end this introduction with a brief description of previous results in the literature. 

Our immediate precedent and source of inspiration is \cite{Misu} where the
author studies the mean curvature flow with an obstacle. In two space dimensions the mean curvature flow is the well known curve shortening problem and the curve evolution was analyzed in 
\cite{5,10} and references therein. Without obstacles, it is known that the curve becomes convex at some time and then it converges to a single point in finite time, provided
the initial curve is closed and smooth (\cite{5,10}).
Since there is only one curvature for a curve in 
dimension two our evolution problem coincides with the one studied in \cite{Misu}. However, for dimensions bigger than two
the two problems are different in nature and, as we explained before, the evolution by minimal curvature studied here recovers the convex hull of the obstacle in the limit as $t \to \infty$; while the mean curvature flow does not.

The level set method for surface evolution equations was first rigorously analyzed in 
\cite{CGG,ES}. The level set method is applied to various surface evolution equations including the mean curvature flow equation
(with or without an obstacle), see \cite{Almeida,CGG,ES}. For general references on 
level sets formulation of geometric flows we refer to \cite{2,6,8,9,15,16,19} and the book \cite{Giga}.

On the other hand, the relation between game theory and nonlinear PDEs 
is quite rich and has attracted considerable attention in recent years. We quote \cite{BR,KS,KS2,Luiro,MPR,MPR2,ML,MiRo,Misu,PSSW,PS,R} and the books \cite{BRLibro,Lewicka}.
As we have mentioned, in \cite{KS} a game for the motion by mean curvature was studied and in \cite{Misu} the same game with an obstacle was tackled. We use some ideas for the game in dimension two that were obtained in \cite{Misu}. In particular, we extend the geometric condition on the geometric configuration of the obstacle $K$ and the initial set $\Omega_0$ from \cite{Misu} to several space dimensions.
We refer to \cite{17,ML} for games for obstacle problems and to \cite{MiRo}
for a game for the two membranes problem.

\section{The level set formulation for the movement of a hypersurface by minimal curvature with an obstacle} \label{sect.PE}

In this section we gather the results concerning the evolution problem
\eqref{ParabolicPb}. First, we need to be precise in the 
definition of what we understand by a viscosity sub and supersolution. We recall the definition of semicontinuous functions, since they are useful when working in the viscosity framework.   

\begin{definition}\label{defEnvelope}
	We say that $f:\mathbb{R}^{N}\times (0,\infty)\mapsto \RR$ is a \emph{lower semicontinuous function}, l.s.c.,  at $(x,t)\in\mathbb{R}^{N}\times (0,\infty) $ if for each $\varepsilon>0$ there exists $\delta>0$ such that 
	\begin{equation*}
		f(x,t)\leq f(y,s)+\varepsilon\;\;\textrm{for all}\;\;(y,s)\in B_{\delta}(x,t)\setminus \{(x,t)\}, 
	\end{equation*}
	or, equivalently, 
	\begin{equation}
		f(x,t) \leq \liminf_{(y,s) \to (x,t) } f(y,s).
	\end{equation}
	If a function $f$ is l.s.c. at every $(x,t)\in\mathbb{R}^{N}\times (0,\infty)$, we say that $f$ is l.s.c. in $\mathbb{R}^{N}\times (0,\infty)$. 
	
	On the other hand, we say that
	$g:\mathbb{R}^{N}\times (0,\infty)\mapsto \RR$ is an \emph{upper semicontinuous function}, u.s.c,  at $(x,t)\in\mathbb{R}^{N}\times (0,\infty)$ if for each $\varepsilon>0$ there exists $\delta>0$ such that 
	\begin{equation*}
		g(y,s)\leq g(x,t)+\varepsilon\;\;\textrm{for all}\;\;(y,s)\in B_{\delta}(x,t)\setminus \{(x,t)\},
	\end{equation*}
	or, equivalently, 
	\begin{equation}
		\limsup_{(y,s) \to (x,t) } f(y,s) \leq f(x,t).
	\end{equation}
	If a functions $g$ is u.s.c. at every $(x,t)\in\mathbb{R}^{N}\times (0,\infty)$, we say that it is u.s.c. in $\mathbb{R}^{N}\times (0,\infty).$
\end{definition}
	
We will also require the concepts of lower and upper semicontinuous envelopes of a function. 

\begin{definition}
		The \emph{lower semicontinuous envelope} of $f$ is defined as
	\begin{equation}
		f_{*}(x,t)=\liminf\limits_{(y,s)\to(x,t)}f(y,s).
	\end{equation}
	It verifies that is the largest lower semicontinuous function smaller or equal than $f$, i.e, 
	\begin{equation*}
		f_{*}=\sup \Big\{h:\mathbb{R}^{n}\times (0,\infty)\mapsto \RR\;\;\textrm{l.s.c.}\,:\;\;h(x,t)\leq f(x,t)\;\;\textrm{for all}\;\;(x,t)\in\mathbb{R}^{N}\times (0,\infty)\Big\}.
	\end{equation*}
	
	The \emph{upper semicontinuous envelope} of $f$ is
	\begin{equation}
		f^{*}(x,t)=\limsup\limits_{(y,s)\to(x,t)}f(y,s).
	\end{equation}
	It verifies that is the smallest upper semicontinuous function larger or equal than $f$, i.e, 
	\begin{equation*}
		g^{*}=\inf \Big\{h:\mathbb{R}^{N}\times (0,\infty)\mapsto \RR\;\;\textrm{u.s.c.}\,:\;\;h(x,t)\geq f(x,t)\;\;\textrm{for all}\;\;(x,t)\in\mathbb{R}^{N}\times (0,\infty)\Big\}.
	\end{equation*}
\end{definition}
	
	Now, to use viscosity theory for our operator
	\begin{equation}\label{1.77} 
    \mathcal{L}(f(x)) :=  \inf_{\substack{|v|=1 \\ v\perp \nabla f(x)}} \Big(-\langle   D^2f(x) v, v \rangle \Big).
\end{equation}
we look at it using the function $ \mathcal{L} : \mathbb{R}^N \setminus\{0\}\times 
\mathbb{S}^{N\times N} \mapsto \mathbb{R}$ 
(here $\mathbb{S}^{N\times N}$ denotes the set of real symmetric matrices) 
given by
\begin{equation}\label{1.88} 
    \mathcal{L}(p, X) :=  \inf_{\substack{|v|=1 \\ v\perp p}} \Big(-\langle  X v, v \rangle \Big).
\end{equation} 
Observe that, with an abuse of notation, we use $\mathcal{L}$ to denote both the operator and the function associated with it. 
When $p=0$, we take the infimum over all the unitary vectors. The function $\mathcal{L}$ is discontinuous at $p=0$. Therefore, when we define sub and supersolutions we will need to use $\mathcal{L}_{*}$, the lower semicontinuous envelope and $\mathcal{L}^{*}$, the upper semicontinuous envelope of~$\mathcal{L}$.

Now, let us state the definition of being a viscosity subsolution and
a viscosity supersolution (we refer to \cite{CIL} for more details on
viscosity theory).  

\begin{definition}\label{edefvisc1}
	Let $\mathcal{D}\subset\mathbb{R}^{N}\times (0,\infty)$ be an open set.  A function $u:\mathbb{R}^{N}\times (0,\infty) \rightarrow \mathbb{R}$ is called a \emph{viscosity subsolution} of the parabolic equation \eqref{PE} in $\mathcal{D}$ if its upper semicontinuous envelope, $u^{*}$, satisfies that  for every $\phi\in C^{2,1}(\mathbb{R}^{N}\times (0,\infty))$ such that $\phi$ touches $u^{*}$ at $(x,t)\in \mathcal{D}$ strictly from above, that is, $\phi-u^{*}$ has a strict minimum at $(x,t)$, we have,
	\begin{equation}
		\partial_t \phi(x,t)  + \mathcal{L}_* (\nabla \phi(x,t),
		D^2 \phi (x,t)) \leq 0.
	\end{equation}
	Conversely, $u:\mathbb{R}^{N}\times (0,\infty) \rightarrow \mathbb{R}$ is called a \emph{viscosity supersolution} of the parabolic equation \eqref{PE} in $\mathcal{D}$ 
	if its lower semicontinuous envelope function, $u_{*}$, satisfies that
	for every $\phi\in C^{2,1}(\mathbb{R}^{N}\times (0,\infty))$ such that $\phi$ touches $u_{*}$ at $(x,t)\in \mathcal{D}$ strictly from below, that is, $u_{*}-\phi$ has a strict minimum at $(x,t)$, we have 
	\begin{equation}
		\partial_t \phi(x,t)  +\mathcal{L}^* (\nabla \phi(x,t),
		D^2 \phi (x,t)) \geq 0.
	\end{equation}
	If $u$ is both a sub and supersolution of \eqref{PE}, we say that $u$ is a viscosity solution to \eqref{PE}.
\end{definition}

To motivate the following equivalent notion of viscosity solution, suppose that $u$ is a viscosity subsolution to the equation \eqref{PE}. Assume that $\phi\in C^{2,1}(\mathbb{R}^{N}\times (0,\infty))$ is such that $\phi-u^{*}$ has a strict minimum at $(x,t)$. Then,
\begin{equation}
	\phi(x,t)-u^{*}(x,t)<\phi(y,s)-u^{*}(y,s)\;\;\textrm{for all}\;\;(y,s)\in\mathbb{R}^{N}\times (0,\infty).
\end{equation}
With this in mind, using the Taylor's expansion of $\phi(y,s)$ on $(x,t)$ we get
\begin{equation}
	\label{e6}	u^{*}(y,s)\leq u^{*}(x,t)+a(s-t)+\langle p,y-x\rangle+\frac{1}{2}\langle X(y-x),y-x\rangle+ o(|t-s|+|y-x|^{2})\;\;\textrm{as}\;(y,s)\to(x,s)
\end{equation}
where 
\begin{equation*}
	a=\partial_t \phi(x,t),\;\;p=\nabla \phi (x,t)\;\;\textrm{and}\;\;X=D^{2}\phi(x,t).
\end{equation*} 
Moreover, if $ u\in C^{2,1}(\mathbb{R}^{N}\times (0,\infty))$ and \eqref{e6} holds for some 
$(p,X)\in\mathbb{R}^{N}\times \mathbb{S}^{N\times N}$, then 
\begin{equation*}
	a=\partial_t u(x,t),\;\;p=\nabla u(x,t)\;\;\textrm{and}\;\;X=D^{2}u(x,t).
\end{equation*}

\begin{definition} \label{defsuperjet}
	Consider a function $v:\mathbb{R}^{N}\times (0,\infty)\mapsto \mathbb{R}$. The parabolic super 2-jet of $v$ at $(x,t)$, called $ \mathcal{P}^{2,+}v(x,t)$, is the set of all $(a,p,X)\in \mathbb{R}\times \mathbb{R}^{N}\times \mathbb{S}^{N\times N}$ such that 
	\begin{equation*}
		v(y,s)\leq v(x,t)+a(s-t)+\langle p, y-z\rangle+\frac{1}{2}\langle X(y-x),y-x\rangle+\textrm{o}(|s-t|+|y-x|^{2})\;\;\textrm{as}\;\;(y,s)\to (x,t).
	\end{equation*}
	Similarly, the parabolic lower super 2-jet of $v$ at $(x,t)$, that we call $ \mathcal{P}^{2,-}v(x,t)$, is given by 
	\begin{equation*}
		\mathcal{P}^{2,-}v(x,t)= -\mathcal{P}^{2,+}(-v)(x,t).
	\end{equation*}
\end{definition}

Furthermore, as explained in \cite{CIL}, if $w:\mathbb{R}^{N}\times (0,\infty)\mapsto \mathbb{R}$ is an upper semicontinuous function, there is a correspondence between the tuples $(a,p,X)\in	\mathcal{P}^{2,-}w(x,t)$ and the test functions that touch $w$ at $(x,t)$ strictly from above: $(a,p,X)\in \mathcal{P}^{2,-}w(x,t)$ if and only if there exists a $\phi\in C^{2,1}(\mathbb{R}^{N}\times(0,\infty)$ such that $(a,p,X)=(\partial_t \phi (x,t), \nabla\phi(x,t),D^{2}\phi (x,t))$ and $\phi$ touches $w$ at $(x,t)$ strictly form above. That is, the parabolic upper super 2-jet of the upper semicontinuous function $w$ at $(x,t)$ can be expressed as 
\begin{equation*}
	\mathcal{P}^{2,+}w(x,t)=\Big\{ (\partial_t \phi (x,t), \nabla\phi(x,t),D^{2}\phi (x,t))\,:\;\;
	\begin{array}{l}
	\displaystyle 
	\phi\in C^{2}(\mathbb{R}^{n}\times(0,\infty))\;\;\textrm{and}\\ 
	\displaystyle \phi-w\;\;\textrm{has an strict minimun at}\;\;(x,t)
	\end{array}
	\Big \}.
\end{equation*}

With the aim of applying  \cite[Theorem 8.2 ]{CIL} in the proof of the comparison principle, we reformulate the definition of viscosity super and subsolutions in terms of inequalities that involves elements of parabolic super and lower 2-jets of a function.

\begin{definition}
	\label{edefvisc2}Let $\mathcal{D}$ be  non-empty subset of $\mathbb{R}^{N}\times (0,\infty)$. Let $\mathcal{L}$ be defined as before. Consider the equation
	\begin{equation}
		\label{e5}	w_{t}+\mathcal{L}(\nabla w, D^{2}w)=0\;\;\text{in}\;\;\mathcal{D} .
	\end{equation}
	The function $u:\mathbb{R}^{N}\times (0,\infty)\mapsto \mathbb{R}$ is called a viscosity subsolution of the above equation \eqref{e5} in $\mathcal{D}$
	if its upper semicontinuous envelope, $u^{*}$, satisfies that 
	\begin{equation}
		a+\mathcal{L}_{*}(p,X)\leq 0\;\;\textrm{for all}\;\;(a,p,X)\in \mathcal{P}^{2,+}u^{*}(x,t).
	\end{equation}
	Reciprocally, we say that $u$ is a  viscosity supersolution of \eqref{e5} in $\mathcal{D}$ if its lower semicontinuous envelope, $u_{*}$, satisfies that	
	\begin{equation}
		a+\mathcal{L}^{*}(p,X)\geq 0\;\;\textrm{for all}\;\;(a,p,X)\in \mathcal{P}^{2,-}u_{*}(x,t).
	\end{equation}
	  Moreover, if $u$ is both a sub and supersolution to \eqref{e5}, we say that $u$ is a viscosity solution to \eqref{e5} in $\mathcal{D}$.
\end{definition}

Next we will prove some important properties of $\mathcal{L}$. Subsequently, as a consequence, we will establish the equivalence 
between  Definition \ref{edefvisc1} and Definition \ref{edefvisc2}.

\begin{proposition}\label{eprop1}
	The function $\mathcal{L}$ satisifies the following properties:
	\begin{itemize}
		\item[\emph{i)}] $\mathcal{L}$ is continuous in $\big(\mathbb{R}^{N}\setminus\{0\}\big)\times \mathbb{S}^{N\times N}$,
		
		\medskip
		
		\item[\emph{ii)}] $\lambda(-X)\leq \mathcal{L}_{*}(p,X)\leq \mathcal{L}^{*}(p,X)\leq \Lambda(-X)$ for all $(p,X)\in\mathbb{R}^{N}\times 
		\mathbb{S}^{N\times N}$
		where \begin{equation}
			\label{lambda}	\lambda(Y)=\min\Big\{\mu\in\mathbb{R}\,;\;\;\mu\;\;\textrm{eigenvalue of }\;\;Y\Big\}=\min\limits_{|v|=1}\,\langle Yv,v\rangle
		\end{equation}
		and 
		\begin{equation}
			\label{Lambda}	\Lambda(Y)=\max\Big\{\mu\in\mathbb{R}\,;\;\;\mu\;\;\textrm{eigenvalue of }\;\;Y\Big\}=\max\limits_{|v|=1}\,\langle Yv,v\rangle
		\end{equation}
		for $Y\in \mathbb{S}^{N\times N}$.
		
			\medskip
			
		\item[\emph{iii)}] $\mathcal{L}_{*}(0,O)=\mathcal{L}^{*}(0,O)=0$ where $O$ is the null matrix in $\mathbb{S}^{N\times N}$.
		
			\medskip
			
		\item[\emph{iv)}] $\mathcal{L}$ is degenerate elliptic, i.e., $$\mathcal{L}(p, Y)\leq \mathcal{L}(p,X)$$ whenever $X\leq Y$ and $p\not =0$.
		
			\medskip
			
		\item[\emph{v)}] $\mathcal{L}$ is geometric, i.e, for all $\alpha>0$ and $\sigma\in\mathbb{R}^{N}$, $$\mathcal{L}(\alpha p,\alpha X+\sigma p\otimes p )=\alpha \mathcal{L}(p,X)$$ for all $(p, X)\in \big(\mathbb{R}^{N}\setminus\{0\}\big)\times \mathbb{S}^{N\times N}$.
	\end{itemize}
\end{proposition}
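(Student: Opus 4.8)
The plan is to verify the five assertions essentially by unwinding the definition of $\mathcal{L}(p,X) = \inf_{|v|=1,\,v\perp p}(-\langle Xv,v\rangle)$ and exploiting that on the sphere this is an infimum of continuous functions over a compact constraint set that varies continuously in $p$ away from $p=0$. For item i), I would fix $p_0 \neq 0$ and $X_0$, and note that the set-valued map $p \mapsto \{v : |v|=1, v\perp p\}$ is continuous (in the Hausdorff sense) near $p_0$ since the orthogonal complement of $p$ varies continuously; combined with the joint continuity of $(X,v)\mapsto -\langle Xv,v\rangle$ and compactness, a standard maximum-theorem / uniform-continuity argument gives that $\mathcal{L}$ is continuous on $(\mathbb{R}^N\setminus\{0\})\times \mathbb{S}^{N\times N}$. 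In particular $\mathcal{L}_* = \mathcal{L}^* = \mathcal{L}$ there, so items ii)–v) only carry content at (or relative to) $p=0$ via the envelopes.

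For item ii), the middle inequality $\mathcal{L}_* \le \mathcal{L}^*$ is automatic. For the outer bounds, observe that for any fixed $p$ (including $p=0$, where the constraint $v\perp p$ is vacuous and the infimum runs over all unit vectors), the constrained infimum satisfies
\[
\lambda(-X) = \min_{|v|=1}\langle -Xv,v\rangle \le \inf_{|v|=1,\,v\perp p}(-\langle Xv,v\rangle) = \mathcal{L}(p,X) \le \max_{|v|=1}\langle -Xv,v\rangle = \Lambda(-X),
\]
since restricting the feasible set only increases an infimum and decreases a supremum — wait, more precisely $\mathcal{L}(p,X)$ is an infimum over a subset, so $\mathcal{L}(p,X) \ge \lambda(-X)$, and it is bounded above by the value at any feasible $v$, hence by $\Lambda(-X)$ (here one should check the constraint set is nonempty, which holds for $N\ge 2$). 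Since $\lambda(-X)$ and $\Lambda(-X)$ are continuous in $X$ and do not depend on $p$, passing to the liminf and limsup that define $\mathcal{L}_*$ and $\mathcal{L}^*$ preserves these inequalities, giving ii). Item iii) is then immediate: taking $X=O$ in ii) forces $\mathcal{L}_*(0,O)=\mathcal{L}^*(0,O)=0$ since $\lambda(O)=\Lambda(O)=0$.

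Item iv) is the monotonicity built into the formula: if $X\le Y$ then $\langle Xv,v\rangle \le \langle Yv,v\rangle$ for every $v$, hence $-\langle Yv,v\rangle \le -\langle Xv,v\rangle$, and taking the infimum over the same feasible set $\{v:|v|=1,v\perp p\}$ (here $p\ne 0$) gives $\mathcal{L}(p,Y)\le \mathcal{L}(p,X)$. Item v) is the scaling/shift invariance: for $\alpha>0$, $\sigma\in\mathbb{R}^N$ and feasible $v$ (so $v\perp p$), we have $\langle (\alpha X + \sigma p\otimes p)v,v\rangle = \alpha\langle Xv,v\rangle + \sigma\langle p,v\rangle^2 = \alpha\langle Xv,v\rangle$ because $\langle p,v\rangle = 0$; moreover the constraint $v\perp \alpha p$ is the same as $v\perp p$, so $\mathcal{L}(\alpha p, \alpha X + \sigma p\otimes p) = \inf_{|v|=1,v\perp p}(-\alpha\langle Xv,v\rangle) = \alpha\,\mathcal{L}(p,X)$. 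I expect the only genuinely delicate point to be the continuity in item i), specifically the uniform control as $p$ moves and the feasible sphere $\mathbb{S}^{N-1}\cap p^\perp$ rotates — this is where a clean argument (e.g. writing $v = R_p w$ for a rotation $R_p$ depending continuously on $p/|p|$ that maps a fixed reference hyperplane onto $p^\perp$, thereby converting the moving constraint into a fixed one) is worth spelling out; everything else is a direct consequence of the infimum definition.
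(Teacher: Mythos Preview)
Your proposal is correct, and for items ii)--v) it coincides essentially with the paper's argument (same pointwise bounds, same passage to envelopes, same one-line monotonicity and scaling computations). The only real divergence is in item i): you invoke the Berge maximum-theorem framework (Hausdorff continuity of the constraint set $p\mapsto \mathbb{S}^{N-1}\cap p^\perp$ together with joint continuity of $(X,v)\mapsto -\langle Xv,v\rangle$), or alternatively a local rotation parametrization $v=R_p w$ to freeze the moving constraint. The paper instead argues directly and by hand: it splits off the $X$-dependence via the trivial Lipschitz bound $|\mathcal{L}(p,X_1)-\mathcal{L}(p,X_2)|\le\|X_1-X_2\|$, and then proves lower and upper semicontinuity in $p$ separately by contradiction, extracting (for the lower part) a convergent subsequence of maximizers $w_n\perp q_n$ whose limit is forced to lie in $p^\perp$, and constructing (for the upper part) the explicit approximants $w_n = (w_0 - \langle w_0,q_n\rangle q_n/|q_n|^2)/|\cdot|$, i.e.\ the normalized projection of the optimal $w_0$ onto $q_n^\perp$. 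Your approach is cleaner and packages the geometry into a single continuity-of-correspondences statement; the paper's approach is more self-contained and avoids citing the maximum theorem, at the cost of a longer sequential argument. One small caveat on your rotation idea: a \emph{globally} continuous choice of $R_p$ on all of $\mathbb{S}^{N-1}$ need not exist, but a local one near any fixed $p_0\neq 0$ does, and that is all continuity requires.
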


\begin{proof}
	We begin with the proof of \emph{i)}. Let $p_{1},p_{2}\in \mathbb{R}^{N}\setminus\{0\}$ and $X_{1},X_{2}\in \mathbb{S}^{N\times N}$. Notice that
	\begin{equation}
		|\mathcal{L}(p_{1},X_{1})-\mathcal{L}(p_{2},X_{2})|\leq |\mathcal{L}(p_{1},X_{1})-\mathcal{L}(p_{1},X_{2})|+|\mathcal{L}(p_{1},X_{2})-\mathcal{L}(p_{2},X_{2})|
	\end{equation}
	and 
	\begin{equation}
		|\mathcal{L}(p_{1},X_{1})-\mathcal{L}(p_{1},X_{2})|\leq \sup_{\substack{|v|=1 \\ v \perp p_{1}}} |\langle (X_{1}-X_{2})v,v\rangle|\leq \|X_{1}-X_{2}\|.
	\end{equation}
	Then, to prove that $\mathcal{L}$ is continuous in $\big(\mathbb{R}^{N}\setminus\{0\}\big)\times \mathbb{S}^{N\times N}$, it remains to prove that $\mathcal{L}(\cdot, X)$ is continuous in $\mathbb{R}^{N}\setminus\{0\}$ for every $X\in \mathbb{S}^{N\times N}$.
	We fix $(p,X)\in\big(\mathbb{R}^{N}\setminus\{0\}\big)\times 
	\mathbb{S}^{N\times N}$. We have to show that
	\begin{equation}
		\mathcal{L} (p,X)=\lim\limits_{q\to p} \mathcal{L} (q,X).
	\end{equation}
	Let us prove first that $\mathcal{L}$ is lower semicontinuous in $(p,X)\in\big(\mathbb{R}^{N}\setminus\{0\}\big)\times 
	\mathbb{S}^{N\times N}$ on the first variable, that is,
\begin{equation}  \label{lislower}
		\liminf_{q\to p}\mathcal{L}(q,X)\geq \mathcal{L}(p,X).
\end{equation}
	We argue by contradiction. Suppose that there is a $\delta>0$ such that 
	\begin{equation}
		\liminf_{q\to p}\mathcal{L}(q,X)+\delta \leq \mathcal{L}(p,X).
	\end{equation}
	Note that, due to the definition of $\mathcal{L}$, $\mathcal{L}(q,X)\geq \lambda(-X)$ for all $q\not =0$ with $\lambda$ given by \eqref{lambda}. Therefore, there is a sequence of vectors $\{q_{n}\}_{n\in\mathbb{N}}$ with $|q_{n}|=1$ for every $n\in\mathbb{N}$ such that $q_{n}\to p$ and
	\begin{equation}
		\mathcal{L} (q_{n},X)\to \liminf_{q\to p} \mathcal{L}(q,X)
	\end{equation} 
	as $n\to \infty$. Then, there exists $n_{0}\in\mathbb{N}$ such that 
	\begin{equation}
		\mathcal{L}(q_{n},X)-\frac{\delta}{2}\leq \liminf_{q\to p}\mathcal{L}(q,X)\qquad \;\textrm{for all}\;\;n\geq n_{0}.
	\end{equation}
	Putting all together, and using the definition of $\mathcal{L}$, we have
	\begin{equation}
		\label{e2}	\sup_{\substack{|v|=1 \\ v \perp p}} \langle Xv,v\rangle +\frac{\delta}{2}\leq \sup_{\substack{|v|=1 \\ v \perp q_{n}}} \langle Xv,v\rangle \qquad \;\textrm{for all}\;\;n\geq n_{0}.
	\end{equation}
	Moreover, since $$\sup_{\{|v|=1 \;\;v \perp q_{n}\}} \langle Xv,v\rangle \leq \Lambda(X)$$ and the set $\{v\in \mathbb{S}^{N}\,:\;\;v\perp q_{n}\}$ is a compact set for all $n\in\mathbb{N}$, there exists a sequence of vectors in $\{w_{n}\}\subset \mathbb{S}^{N\times N}$  with $w_{n}\perp q_{n}$ such that the maximums are attained,
	\begin{equation}
		\label{e3} \langle Xw_{n},w_{n}\rangle =\sup_{\substack{|v|=1 \\ v \perp q_{n}}} \langle Xv,v\rangle \qquad \;\textrm{for all}\;\;n\in\mathbb{N}.
	\end{equation}
	In addition, there is a subsequence $\{w_{n_{k}}\}\subset\{w_{n}\}$ so that $w_{n_{k}}\to w_{0}$ for some $w_{0}\in \mathbb{S}^{N\times N}$ as $k\to\infty.$
	Observe, since  $w_{n_{k}}\perp q_{n_{k}}$ and $|w_{n_{k}}|=1$, we get 
	\begin{equation}
		|\langle w_{n_{k}},p\rangle|=|\langle w_{n_{k}},p-q_{n_{k}}\rangle|+|\langle w_{n_{k}}, q_{n_{k}}\rangle|\leq |p-q_{n_{k}}|
	\end{equation}
	and this implies that the limit must satisfy
	\begin{equation}
		w_{0}\perp p
	\end{equation}
	due to  the fact that $q_{n_{k}}\to p$ as $k$ goes to infinity. As a consequence, using \eqref{e2} and \eqref{e3},
	\begin{equation*}
		\langle X w_{0},w_{0}\rangle+\frac{\delta}{2}\leq \sup_{\substack{|v|=1 \\ v \perp p}} \langle Xv,v\rangle +\frac{\delta}{2}\leq \langle Xw_{k_{n}},w_{k_{n}}\rangle 
		\qquad \;\textrm{for all}\;\;n_{k}\geq n_{0}
	\end{equation*}
	and this is a contradiction, because $w_{n_{k}}\to w_{0}$ as $k$ goes to infinity.
	
	We now need to prove that $\mathcal{L}$ is upper semicontinuous in the first variable for $p\neq0$, that is
\begin{equation} \label{lisupper}
		\limsup \limits_{q\to p}\mathcal{L}(q,X)\leq \mathcal{L}(p,X).
\end{equation}
	To see this, we proceed by contradiction, arguing in a similar way to the one used before. Suppose that there is a $\delta>0$ such that 
	\begin{equation*}
		\mathcal{L}(p,X)+\delta\leq \limsup \limits_{q\to p}\mathcal{L}(q,X).
	\end{equation*}
	Observe that $\mathcal{L}(q,X)\leq \Lambda(-X)$ for all $q\not =0$ with $\Lambda$ given by \eqref{Lambda}. For that reason, there is a family of vectors $\{q_{n}\}_{n\in\mathbb{N}}$ such that $q_{n}\to p$, 
	\begin{equation}
		\mathcal{L}(q_{n},X)\longrightarrow\limsup_{q\to p}\mathcal{L}(q,X)
	\end{equation}  as $n$ goes to infinity and $n_{0}\in\mathbb{N}$ for which
	\begin{equation*}
		\sup_{\substack{|v|=1 \\ v \perp q_{n}}}\langle Xv,v\rangle+\frac{\delta}{2}\leq \sup_{\substack{|v|=1 \\ v \perp p}}\langle Xv,v\rangle\qquad\;\textrm{for all}\;\;n\geq n_{0}.
	\end{equation*}
	Moreover, there exists a vector $w_{0}$ with $w_{0}\perp p$ and $|w_{0}|=1$ such that 
	\begin{equation*}
		\langle Xw_{0},w_{0}\rangle=\sup_{\substack{|v|=1 \\ v \perp p}}\langle Xv,v\rangle.
	\end{equation*}
	Therefore, 
	\begin{equation}
		\label{e4}	\langle Xw_{n},w_{n}\rangle+\frac{\delta}{2}\leq \sup_{\substack{|v|=1 \\ v \perp q_{n}}}\langle Xv,v\rangle+\frac{\delta}{2}\leq \langle Xw_{0},w_{0}\rangle\;\;\textrm{for all}\;\;n\geq n_{0}
	\end{equation}
	with 
	\begin{equation}
		w_{n}=\frac{w_{0}-\frac{\langle w_{0},q_{n}\rangle}{|q_{n}|^{2}}q_{n}}{|w_{0}-\frac{\langle w_{0},q_{n}\rangle}{|q_{n}|^{2}}q_{n}|},
	\end{equation}
	due to the fact that $w_{n}\perp q_{n}$ and $|w_{n}|=1$. Notice that , since $q_{n}\to p$ as $n\to \infty$ and $w_{0}\perp p$, we get that $w_{n}\to w_{0}$ as $n$ goes to infinity. Therefore, \eqref{e4} gives a contradiction. Thus, from \eqref{lislower} and \eqref{lisupper} we have that
	\begin{equation*}
		\mathcal{L}(p,X)=\lim\limits_{q\to p}\mathcal{L}(q,X)\;\;\textrm{for all}\;\;(p,X)\in\big(\mathbb{R}^{N}\setminus\{0\}\big)\times 
		\mathbb{S}^{N\times N},
	\end{equation*}
	and, as a consequence, we conclude that
	$\mathcal{L}$ is continuous in $\big(\mathbb{R}^{N}\setminus\{0\}\big)\times \mathbb{S}^{N\times N}$.
	
	Next, we prove \textit{ii)}. Since, $p\not =0$,
	\begin{align*}
		\mathcal{L}_{*}(p,X)&=\lim\limits_{\varepsilon\to 0^{+}}\inf\,
		\Big\{\mathcal{L}(q,Y)\,:\;\;0<|p-q|\leq \varepsilon,\;\;0<|X-Y|\leq \varepsilon\Big\},\\
		\mathcal{L}^{*}(p,X)&=\lim\limits_{\varepsilon\to 0^{+}}\sup\,
		\Big\{\mathcal{L}(q,Y)\,:\;\;0<|p-q|\leq \varepsilon,\;\;0<|X-Y|\leq \varepsilon\Big\}
	\end{align*} 
	and 
	\begin{equation}
		\lambda(-Y)\leq \mathcal{L}(q,Y)\leq \Lambda(-Y)\;\;\textrm{for all}\;\;(q,Y)\in \big(\mathbb{R}^{N}\setminus\{0\}\big)\times \mathbb{S}^{N\times N},
	\end{equation}
	we have that, 
	\begin{equation*}
	\begin{array}{l}
	\displaystyle 
		\lim\limits_{\varepsilon\to 0^{+}}\inf\,\Big\{\lambda(-Y)\,:\;\;0<|X-Y|\leq \varepsilon\Big\}\leq \mathcal{L}_{*}(p,X) \\[8pt]
		\qquad \qquad \displaystyle \leq \mathcal{L}^{*}(p,X)\leq 	\lim\limits_{\varepsilon\to 0^{+}}\sup\,\Big\{\Lambda(-Y)\,:\;\;0<|X-Y|\leq \varepsilon\Big\}
		\end{array}
	\end{equation*}
	for all $(p,X)\in\mathbb{R}^{N}\times \mathbb{S}^{N\times N}$. Therefore, we have
	\begin{equation*}
		\lambda(-X)\leq \mathcal{L}_{*}(p,X)\leq \mathcal{L}^{*}(p,X)\leq \Lambda (-X)\qquad\;\textrm{for all}\;\;(p,X)\in\mathbb{R}^{N}\times \mathbb{S}^{N\times N}.
	\end{equation*}
	Moreover, the previous arguments imply that $\mathcal{L}_{*}(0,O)=\mathcal{L}^{*}(0,O)$. Thus, we have also proved \textit{iii)}.
	
	Now, we continue the proof with \textit{iv)}. If $X, Y\in \mathbb{S}^{N\times N}$ are such that $0\leq Y-X$, then $0\leq\langle (Y-X)v,v\rangle$ for all $v\in\mathbb{R}^{N}$. Therefore,
	\begin{equation*}
		\mathcal{L}(p,Y)=\inf_{\substack{|v|=1 \\ v\perp p}}\Big\{- \langle Y v,v\rangle\Big\}\leq \inf_{\substack{|v|=1 \\ v\perp p}}\Big\{- \langle X v,v\rangle\Big\}=\mathcal{L}(p,X)
	\end{equation*}
	for all $p\not =0$. So, $\mathcal{L}$ is degenerate elliptic.

	Finally, we prove \textit{v)}. Consider 
	$p\in\mathbb{R}^N\setminus\{0\}$. For $v\perp p$ it holds that 
	\begin{equation}
		\label{e1} (p \otimes p )v=0,
	\end{equation}
	and hence we have that 
	\begin{equation*}
		\langle Xv,v\rangle=\langle Xv,\big(I-\frac{p \otimes p }{|p|^{2}}\big)v\rangle =\langle \big(I-\frac{p \otimes p }{|p|^{2}}\big) Xv, v\rangle\;\;
	\end{equation*}
	for every $v\perp p$ and every $X\in \mathbb{S}^{N\times N}$, due to the fact that $\big(I-\frac{p \otimes p }{|p|^{2}}\big)$ is a symmetric matrix.  Then, $\mathcal{L}$ can be also to be  expressed as 
	\begin{equation}
		\mathcal{L}(p,X)= \inf_{\substack{|v|=1 \\ v\perp p}}\Big\{- \langle \big(I-\frac{p \otimes p }{|p|^{2}}\big)X v,v\rangle\Big\}\qquad\;(p,X)\in \big(\mathbb{R}^{N}\setminus\{0\}\big)\times \mathbb{S}^{ N \times N}.
	\end{equation}
	Now, since $\alpha p\otimes \alpha p=\alpha^{2}( p\otimes p)$ and \eqref{e1} holds, we have that 
	\begin{equation}
		\langle \big(I-\frac{\alpha p\otimes \alpha p}{|\alpha p|^{2}}\big)(\alpha X+\sigma p\otimes p)v,v\rangle 
		=\alpha \langle \big(I-\frac{p \otimes p }{|p|^{2}}\big)Xv,v\rangle
	\end{equation}
	and, therefore, we conclude that $\mathcal{L}$ is geometric.
\end{proof}

In the following result, we aim to provide an intuition for the scenario when the gradient is zero.  In this case, as we will see in the proof, we cannot  define $\mathcal{L}(0,X)$ properly as the first eigenvalue of $-X$. This is due to the fact that $\mathcal{L}_{*}(0,X)=\lambda(-X)$ but in general $\mathcal{L}^{*}(0,X)\not=\lambda(-X)$.

\begin{corollary}
	Let $X\in \mathbb{S}^{N\times N}\setminus\{O\}$. Then $\mathcal{L}_{*}(0,X)=\lambda(-X)$. However,  
	there exists $X\neq O$ such that, $\mathcal{L}^{*}(0,X)\not=\lambda(-X)$.
\end{corollary}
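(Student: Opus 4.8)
The plan is to handle the two envelopes separately, using the fact that the only discontinuity of $\mathcal{L}$ at $p=0$ comes from \emph{which} hyperplane $v\perp q$ is allowed in the infimum: sending $q\to 0$ along different directions leaves different hyperplanes in the limit. So the whole argument is a matter of choosing the degeneration direction of $q$ wisely and exhibiting explicit approximating sequences; there is no genuinely hard step.

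For the identity $\mathcal{L}_{*}(0,X)=\lambda(-X)$, the inequality $\mathcal{L}_{*}(0,X)\geq\lambda(-X)$ is already contained in Proposition~\ref{eprop1}~\emph{ii)} (or one can note directly that $\mathcal{L}(q,Y)=\inf_{|v|=1,\,v\perp q}(-\langle Yv,v\rangle)\geq\inf_{|v|=1}(-\langle Yv,v\rangle)=\lambda(-Y)$ for every $q$, and then use the continuity of $\lambda$ to pass to the $\liminf$). For the reverse inequality I would build an explicit sequence: choose a unit vector $w$ with $\langle -Xw,w\rangle=\lambda(-X)$ (an eigenvector of $-X$ for its smallest eigenvalue) and, using $N\geq 2$, a unit vector $e\perp w$. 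Then with $q_n:=e/n\to 0$ and $Y=X$ held fixed, $w$ is admissible in the constraint $v\perp q_n$, so
\[
\mathcal{L}(q_n,X)=\inf_{|v|=1,\ v\perp q_n}\big(-\langle Xv,v\rangle\big)\leq-\langle Xw,w\rangle=\lambda(-X),
\]
and taking $\liminf$ over $n$ gives $\mathcal{L}_{*}(0,X)\leq\lambda(-X)$, hence equality.

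For the last assertion I would exhibit a concrete matrix. Fix any unit vector $w_0\in\mathbb{R}^N$ and set $X:=w_0\otimes w_0\neq O$, so $\langle Xv,v\rangle=\langle w_0,v\rangle^2$; then $\Lambda(X)=1$ and, since $N\geq 2$, $\lambda(X)=0$, so that $\lambda(-X)=-\Lambda(X)=-1$. Now take $q_n:=w_0/n\to 0$: every unit vector $v\perp q_n$ satisfies $\langle w_0,v\rangle=0$, hence $\mathcal{L}(q_n,X)=\inf_{|v|=1,\,v\perp q_n}(-\langle w_0,v\rangle^2)=0$. Consequently
\[
\mathcal{L}^{*}(0,X)=\limsup_{(q,Y)\to(0,X)}\mathcal{L}(q,Y)\ \geq\ \lim_{n\to\infty}\mathcal{L}(q_n,X)=0\ >\ -1=\lambda(-X),
\]
so $\mathcal{L}^{*}(0,X)\neq\lambda(-X)$, which is what we wanted.

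As noted above, the only thing that requires care is the choice of the direction along which $q_n$ collapses to $0$: to make the infimum as small as possible (and recover $\lambda(-X)$ for the lower envelope) one degenerates $q_n$ orthogonally to a maximal eigenvector of $X$, keeping that eigenvector feasible; to show that the upper envelope overshoots $\lambda(-X)$ one does the opposite, degenerating $q_n$ along the maximal eigendirection so that the constrained infimum no longer sees the largest eigenvalue. (If one wanted the exact value, the same comparison with the Courant--Fischer min-max principle yields $\mathcal{L}^{*}(0,X)=-\mu_2(X)$, minus the second-largest eigenvalue of $X$, so that equality $\mathcal{L}^{*}(0,X)=\lambda(-X)$ holds precisely when the top eigenvalue of $X$ is not simple; this refinement is not needed for the statement, but it makes the choice of counterexample transparent.)
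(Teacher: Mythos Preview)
Your proof is correct and follows essentially the same approach as the paper: for $\mathcal{L}_{*}(0,X)$ you use the lower bound from Proposition~\ref{eprop1}~\emph{ii)} and then approach $0$ along a direction orthogonal to a top eigenvector of $X$ to keep that eigenvector feasible; for $\mathcal{L}^{*}(0,X)$ you approach $0$ along the top eigendirection itself to exclude it from the constraint. The only cosmetic difference is that the paper states the counterexample abstractly (any $X$ whose largest eigenvalue is simple) while you exhibit the explicit rank-one matrix $X=w_0\otimes w_0$, which is an instance of that same class.
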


\begin{proof}
	Let $X\in \mathbb{S}^{N\times N}\setminus\{O\}$.
	By \textit{i)} and \textit{ii)} of the previous Proposition \ref{eprop1}, we have
	\begin{align}
		\lambda(-X)\leq  \mathcal{L}_{*}(0,X)=\lim\limits_{\varepsilon\to 0^{+}}\inf\,&\{ \mathcal{L}(q,Y)\,:\;\;0<|q|\leq \varepsilon,\;\;0<|X-Y|\leq \varepsilon\}\\&= \lim\limits_{\varepsilon\to 0^{+}}\inf\,\{ \mathcal{L}(q,X)\,:\;\;0<|q|\leq \varepsilon\}\leq \lim\limits_{\varepsilon\to 0^{+}}  \mathcal{L}(q_{\varepsilon},X)
	\end{align}
	for every family of vectors $\{q_{\varepsilon}\}_{\varepsilon>0}$ such that  $q_{\varepsilon}\neq 0$ and $|q_{\varepsilon}|\leq \varepsilon$. Since $X\in \mathbb{S}^{N\times N}$, there is a unit vector $v_{1}$ that satisfies $\langle -Xv_{1},v_{1}\rangle =\lambda(-X)$. We take a unit vector $v_{2}$ such that $v_{2}\perp v_{1}$ and define $q_{\varepsilon}=\varepsilon v_{2}$. Therefore, we obtain
	\begin{equation*}
		\lambda(-X)\leq  \mathcal{L}_{*}(0,X)\leq \lim\limits_{\varepsilon\to 0^{+}} \inf_{\substack{|v|=1 \\ v\perp \varepsilon v_{2}}}\big \{- \langle X v,v\rangle\big \}=\min_{\substack{|v|=1 \\ v\perp v_{2}}}\big \{- \langle X v,v\rangle\big \}=\lambda(-X)
	\end{equation*}
	due to the fact that $v_{1}\perp v_{2}$.
	
	Now, we will show that $ \mathcal{L}^{*}(0,X)\not=\lambda(-X)$ in general. Let $X \in \mathbb{S}^{N\times N}$ such that the eigenvalue $\lambda(-X)$ has multiplicity one. By definition of the upper semicontinuous envelope and \textit{i)} of Proposition~\ref{eprop1}, it holds that
	\begin{equation}
		 \mathcal{L}^{*}(0,X)=\lim\limits_{\varepsilon\to 0^{+}}\sup\,
		 \Big\{ \mathcal{L}(q,Y)\,:\;\;0<|q|\leq \varepsilon,\;\;0<|X-Y|\leq \varepsilon\Big\}\geq \lim_{\varepsilon \to 0^+}  \mathcal{L}(p_{\varepsilon},X)
	\end{equation}
	for every family of vectors $\{p_{\varepsilon}\}_{\varepsilon>0}$ such that  $p_{\varepsilon}\not =0$ and $|p_{\varepsilon}|\leq \varepsilon$. Let $v_{1}$ be the unit eigenvector associated to the eigenvalue $\lambda(-X)$ as before. We take $p_{\varepsilon}=\varepsilon v_{1}$. Since the eigenvalue $\lambda(-X)$ has multiplicity one, 
	\begin{equation}
		 \mathcal{L}^{*}(p,X)\geq \lim_{\varepsilon \to 0^+}  \mathcal{L}(\varepsilon v_{1},X)=\min_{\substack{|v|=1 \\ v\perp v_{1}}}\Big \{- \langle X v,v\rangle\Big \}>\lambda(-X).
	\end{equation}
	This ends the proof.
\end{proof}

Now, we can state that the two definitions of being a viscosity 
solution to our evolution problem coincide when the test function
has a nonvanishing gradient, that is, at regular points of $\mathcal{L}$.
The proof follows from the fact that the function $\mathcal{L}$ is continuous in $\big(\mathbb{R}^{N}\setminus\{0\}\big)\times \mathbb{S}^{N\times N}$.

\begin{theorem}\label{g6} Since $ \mathcal{L}$ is continuous in $\big(\mathbb{R}^{N}\setminus\{0\}\big)\times \mathbb{S}^{N\times N}$,
	the definitions \ref{edefvisc1} and  \ref{edefvisc2} are equivalent for regular points of $\mathcal{L}$. That is, if $u$ is viscosity subsolution  (resp. supersolution) to \eqref{PE} and $\phi\in C^{2,1}(\mathbb{R}^{N}\times (0,T))$ touches $u^{*}$ ($u_{*}$) strictly from above (from below) and $\nabla\phi(x,t)\not =0$, then 
	\begin{equation}
		\partial_t \phi(x,t)  + \mathcal{L}(\phi(x,t) )\leq0\;\;(\geq 0)
	\end{equation}
	if and only if 
	\begin{equation*}
		\partial_t\phi(x,t)+ \mathcal{L} (\nabla\phi(x,t),D^{2}\phi(x,t))\leq 0\;\;(\geq 0).
	\end{equation*}
\end{theorem}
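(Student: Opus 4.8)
The statement is proved by unravelling the two notions of solution and invoking the continuity of $\mathcal{L}$ away from $p=0$ established in Proposition \ref{eprop1} \emph{i)}. The plan is to reduce everything to the single elementary fact that a function coincides with both of its semicontinuous envelopes at any point of continuity.

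First I would record that, by Proposition \ref{eprop1} \emph{i)}, for every $(p,X)\in(\mathbb{R}^{N}\setminus\{0\})\times\mathbb{S}^{N\times N}$ one has $\mathcal{L}_{*}(p,X)=\mathcal{L}(p,X)=\mathcal{L}^{*}(p,X)$. Hence, whenever a test function $\phi\in C^{2,1}$ satisfies $\nabla\phi(x,t)\neq 0$, the quantities $\mathcal{L}_{*}(\nabla\phi(x,t),D^{2}\phi(x,t))$, $\mathcal{L}^{*}(\nabla\phi(x,t),D^{2}\phi(x,t))$ and $\mathcal{L}(\nabla\phi(x,t),D^{2}\phi(x,t))=\mathcal{L}(\phi(x,t))$ (recall the abuse of notation between \eqref{1.77} and \eqref{1.88}) all coincide. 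This already yields the \emph{if and only if} asserted in the statement: the inequality written with $\mathcal{L}_{*}$ (resp. $\mathcal{L}^{*}$) that enters Definition \ref{edefvisc1}, evaluated at a point where $\nabla\phi$ does not vanish, is literally the inequality $\partial_{t}\phi(x,t)+\mathcal{L}(\phi(x,t))\leq0$ (resp. $\geq 0$).

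Next I would make the connection with Definition \ref{edefvisc2} explicit. If $u$ is a subsolution in the sense of Definition \ref{edefvisc1} and $\phi$ touches $u^{*}$ strictly from above at $(x,t)$ with $\nabla\phi(x,t)\neq0$, then $\partial_{t}\phi(x,t)+\mathcal{L}_{*}(\nabla\phi(x,t),D^{2}\phi(x,t))\leq0$, which by the previous paragraph is $\partial_{t}\phi(x,t)+\mathcal{L}(\phi(x,t))\leq0$; conversely, the latter implies the former since $\mathcal{L}_{*}=\mathcal{L}$ there. The supersolution case is identical, working with $\mathcal{L}^{*}$, $u_{*}$, strict minima of $u_{*}-\phi$ and the lower parabolic jet $\mathcal{P}^{2,-}$. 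To pass between the test-function formulation (Definition \ref{edefvisc1}) and the jet formulation (Definition \ref{edefvisc2}) I would simply quote the dictionary recalled just before the statement (following \cite[Section 8]{CIL}): $(a,p,X)\in\mathcal{P}^{2,+}u^{*}(x,t)$ if and only if there is $\phi\in C^{2,1}$ with $(a,p,X)=(\partial_{t}\phi(x,t),\nabla\phi(x,t),D^{2}\phi(x,t))$ touching $u^{*}$ strictly from above at $(x,t)$, and symmetrically for $\mathcal{P}^{2,-}$ and $u_{*}$. Combining this bijection with the equality of the envelopes at regular points shows that, at points with nonvanishing gradient, the conditions in Definitions \ref{edefvisc1} and \ref{edefvisc2} are the same.

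There is essentially no obstacle here: the whole argument collapses to the observation that $\mathcal{L}$, being continuous on $(\mathbb{R}^{N}\setminus\{0\})\times\mathbb{S}^{N\times N}$, equals its semicontinuous envelopes there. The only mildly technical ingredient is the jet/test-function dictionary—namely producing from a tuple $(a,p,X)\in\mathcal{P}^{2,+}u^{*}(x,t)$ a genuine $C^{2,1}$ function with that exact $2$-jet which touches $u^{*}$ \emph{strictly} from above (one adds a higher-order term such as $|y-x|^{4}+(s-t)^{2}$ to make the contact strict without altering the $2$-jet). This is classical and is borrowed from \cite{CIL}, so I would not reproduce it in detail.
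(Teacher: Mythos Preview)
Your proposal is correct and follows exactly the approach indicated by the paper, which offers only the one-line remark that the result ``follows from the fact that the function $\mathcal{L}$ is continuous in $\big(\mathbb{R}^{N}\setminus\{0\}\big)\times \mathbb{S}^{N\times N}$.'' You have simply unpacked that sentence: at regular points $\mathcal{L}_{*}=\mathcal{L}=\mathcal{L}^{*}$, so the inequalities in the two definitions coincide, and the jet/test-function dictionary from \cite{CIL} does the rest.
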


Finally, we introduce the notion of viscosity sub and supersolution of the obstacle problem \eqref{ParabolicPb}.

\begin{definition}
	We say that $u:\mathbb{R}^{N}\times[0,\infty)\mapsto \mathbb{R}$ is a viscosity subsolution to \eqref{ParabolicPb} 
	 if its upper semicontinuous envelope, $u^{*}$, satisfies that
	 $u^{*}(x,0) \leq u_0(x)$ and for every $\phi\in C^{2,1}(\mathbb{R}^{N}\times (0,\infty))$ such that $\phi$ touches $u^{*}$ at $(x,t)\in\mathbb{R}^{N}\times (0,\infty) $ strictly from above, that is, $\psi-u^{*}$ has a strict minimum at $(x,t)$, we have that, 
	\begin{equation}
		\max \Big\{ - \partial_t \phi(x,t)  - \mathcal{L}_{*}(\phi(x,t)) , \psi (x) -u (x,t) \Big\} \geq 0.
	\end{equation}
	That means, $u^{*}(x,0) \leq u_0(x)$ and, if $\psi (x) <u(x,t)$, then,
	\begin{equation}
		\partial_t \phi(x,t)  + \mathcal{L}_{*}(\phi(x,t) \leq 0.
	\end{equation}
	Equivalently, in terms of the jets $\mathcal{P}^{2,+}u^{*}(x,t)$, $u$ is a viscosity subsolution to the problem \eqref{ParabolicPb} if 
	$u_{*}(x,0) \leq u_0(x)$ and 
	\begin{equation}
		\max \Big\{ - a - \mathcal{L}_{*}(p,X) , \psi (x) -u (x,t) \Big\} \geq 0\qquad\;\textrm{for all}\;\;(a,p,X)\in \mathcal{P}^{2,+}u^{*}(x,t).
	\end{equation}

	Conversely,  $u$ is called a viscosity supersolution to \eqref{ParabolicPb}
 if its lower semicontinuous envelope, $u_{*}$, satisfies the following:
$u_{*}(x,0) \geq u_0(x)$ and for every $\phi\in C^{2,1}(\mathbb{R}^{N}\times (0,\infty))$ such that $\phi$ touches $u_{*}$ at $(x,t)\in\mathbb{R}^{N}\times (0,\infty) $ strictly from below, that is, $u_{*}-\phi$ has a strict minimum at $(x,t)$, we have that, 
	\begin{equation}
		\max \Big\{ - \partial_t \phi(x,t)  - \mathcal{L}^{*}(\phi(x,t)) , \psi (x) -u (x,t) \Big\} \leq 0.
	\end{equation}
	That is,
	\begin{equation}
		u(x,t) \geq \psi (x,t), \;\;\partial_t \phi(x_0,t_0)  +\mathcal{L}^{*}(\phi(x_0,t_0)) \geq 0\;\;\textrm{and}\;\;u_{*}(x,0) \geq u_0(x).
	\end{equation}
	Equivalently, in terms of the jets $\mathcal{P}^{2,-}u(x,t)$, $u$ is a viscosity supersolution to \eqref{ParabolicPb} if $u_{*}(x,0) \geq u_0(x)$
	and
	\begin{equation}
		\max \Big\{ - a - \mathcal{L}^{*}(p,X) , \psi (x) -u (x,t) \Big\} \leq 0\qquad\;\textrm{for all}\;\;(a,p,X)\in \mathcal{P}^{2,-}u_{*}(x,t).
	\end{equation}

	Finally, $u$ is a viscosity solution to \eqref{ParabolicPb}
	 if it is both a viscosity supersolution and a viscosity subsolution.
\end{definition}

That our problem \eqref{ParabolicPb} has a comparison principle is due to the fact that the problem without the obstacle also has a comparison principle. In \cite{Mercier} the author adapted the proof of Theorem~8.2 in~\cite{CIL} incorporating an obstacle to the problem. For completeness, we introduce and prove the version stated in Theorem \ref{comp.pp} of the comparison principle for our problem.

\begin{proof}[Proof of the Comparison Principle, Theorem \ref{comp.pp}]
	We prove this result arguing by contradiction,
	assuming that we have a viscosity supersolution $\overline{u}$ and a viscosity subsolution $\underline{u}$ such that $\overline{u} \not \geq \underline{u}$. The contradiction that we are pursuing comes from the application of \cite[Theorem 8.3]{CIL} combined with the properties of $\mathcal{L}$.
	
	We split the proof into two steps. 
	In the first step of the proof 
	we include the necessary conditions in order to apply \cite[Theorem 8.2]{CIL}  to the function 
	$$w(x,y,t)=\underline{u}_{\eta}(x,t)- \overline{u}(x,t)-\widetilde{\phi}_{\epsilon,\alpha}(x,y,t),$$ with $(x,y,t)\in \mathbb{R}^{N} \times\mathbb{R}^{N}\times (0,T)$.
	Here $\underline{u}_{\eta}$ is a modification of $\underline{u}$ and $\widetilde{\phi}_{\epsilon,\alpha}$ is an admisible test function with doubled variables, both will be defined later. We will look at $w(x,y,t)$ 
	at a family of points $$\{(\hat{x}_{\varepsilon,\alpha},\hat{y}_{\varepsilon,\alpha},\hat{t}_{\varepsilon,\alpha})\}\subset\mathbb{R}^{N} \times\mathbb{R}^{N}\times (0,T).$$ We will also prove that, at these points, $\underline{u}_\eta$ is larger than the obstacle as well as useful bounds for $\hat{x}_{\varepsilon,\alpha}$ and $\hat{y}_{\varepsilon,\alpha}$. Finally, in the second step, we apply \cite[Theorem 8.2]{CIL} and, using the properties of $\mathcal{L}$ stated in Proposition \ref{eprop1}, we obtain the desired  contradiction.

	\textsc{Step 1.}   
	For each $\eta>0$, we take  $$\underline{u}_{\eta} (x,t):=\max\Big\{\underline{u} (x,t)- \frac{\eta}{(T-t)}, \psi (x)\Big\}.$$ Since $\underline{u}$ is a viscosity subsolution to \eqref{ParabolicPb}, $\underline{u}_{\eta}$ is a viscosity subsolution to
	\begin{equation}
		\max\Big\{- \partial_t w(x,t)  - \mathcal{L}(\nabla w(x,t), D^{2}w(x,t))- \frac{\eta}{T^2} ,\psi -w \Big\} = 0
	\end{equation} 
	in $\mathbb{R}^{N}\times (0,T)$.

	We will prove that $\underline{u}_{\eta}\leq \overline{u}$. Once we get this result, taking the limit $\eta\to 0^{+}$ we obtain the desired conclusion,  $\underline{u}
	\leq \overline{u}$. To show this,
	we argue by contradiction. Suppose that there exists a point $(\bar{x},\bar{t})\in \mathbb{R}^{N}\times (0,T)$ such that $$\underline{u}_{\eta}(\bar{x},\bar{t}) - \overline{u}(\bar{x},\bar{t})\geq 2\delta>0.$$ We consider the function $\phi_{\alpha,\varepsilon}:\,\mathbb{R}^{N}\times \mathbb{R}^{N}\times (0,T)\mapsto \mathbb{R}$, given by
	\begin{equation}
		\phi_{\alpha,\varepsilon}(x,y,t):=\underline{u}_{\eta}(x,t)-\overline{u}(y,t)-\frac{\alpha}{4}|x-y|^{4}-\frac{\varepsilon}{2}\Big(|x|^{2}+|y|^{2}\Big).
	\end{equation}
	Since $\phi_{\varepsilon,\alpha}$ is an upper semicontinuous function, it attains maximums in compact sets. In addition, due to the penalization term $-\frac{\varepsilon}{2}(|x|^{2}+|y|^{2})$ and the definition of $\underline{u}_{\eta}$, we get that, for each $\varepsilon>0$ and $\eta>0$, there exist $R_{\varepsilon}>0$ and $0<T_{\eta}<T$ such that 
	\begin{equation}
		\phi_{\alpha,\varepsilon}(\hat{x}_{\varepsilon,\alpha},\hat{y}_{\varepsilon,\alpha},\hat{t}_{\varepsilon,\alpha})=\sup_{\mathbb{R}^{N}\times\mathbb{R}^{N}\times (0,T)}\{\phi_{\alpha,\varepsilon}(\cdot,\cdot,\cdot)\}
	\end{equation}
	 for some $(\hat{x}_{\varepsilon,\alpha},\hat{y}_{\varepsilon,\alpha},\hat{t}_{\varepsilon,\alpha})\in \bar{B}_{R_{\varepsilon}}(0)\times \bar{B}_{R_{\varepsilon}}(0)\times [0,T_{\eta}]$.
Observe that $\phi_{\varepsilon,\alpha}$ and $(\hat{x}_{\varepsilon,\alpha},\hat{y}_{\varepsilon,\alpha},\hat{t}_{\varepsilon,\alpha})$ also depend on $\eta$ but, to simplify the notation, we are not making this dependence
explicit.

	The distance between $\hat{x}_{\varepsilon,\alpha}$ and $\hat{y}_{\varepsilon,\alpha}$ remains uniformly bounded in $\varepsilon$ for each positive $\eta>0$ and $\alpha>0$:
	if $\varepsilon$ is small enough 
	\begin{equation}
		\label{13}	\phi_{\alpha,\varepsilon}(\hat{x}_{\varepsilon,\alpha},\hat{y}_{\varepsilon,\alpha},\hat{t}_{\varepsilon,\alpha}) \geq \phi_{\varepsilon,\alpha}(\bar{x},\bar{x},\bar{t})\geq\delta 
	\end{equation} 
	and then
	\begin{equation}
		\label{cp1}
		\|\underline{u}\|_{\infty}+\|\overline{u}\|_{\infty}\geq \underline{u}_{\eta}(\hat{x}_{\alpha,\varepsilon},\hat{t}_{\alpha,\varepsilon})- \overline{u}(\hat{y}_{\alpha,\varepsilon},\hat{t}_{\alpha,\varepsilon})\geq \delta +\frac{\alpha}{4}|\hat{x}_{\alpha,\varepsilon}-\hat{y}_{\alpha,\varepsilon}|^{4}+\frac{\varepsilon}{2}(|\hat{x}_{\alpha,\varepsilon}|^{2}+|\hat{y}_{\alpha,\varepsilon}|^{2}), 
	\end{equation}
	which implies that there is a positive constant $C$ depending only on the infinity norm of $\underline{u}$ and $\overline{u}$ such that 
	\begin{equation}
		\label{2}
		|\hat{x}_{\alpha,\varepsilon}-\hat{y}_{\alpha,\varepsilon}|\leq C\alpha^{-1/4}. 
	\end{equation} 
	Also, by \eqref{cp1}, the sequences $\{\varepsilon|\hat{x}_{\alpha,\varepsilon}|^{2}\}$ and  $\{\varepsilon|\hat{y}_{\alpha,\varepsilon}|^{2}\}$ are bounded (with respect to both $\varepsilon$ and $\alpha$). Then, for each $\alpha>0$, we have
	\begin{equation}
		\label{6}
		\varepsilon\hat{x}_{\alpha,\varepsilon},\;\varepsilon\hat{y}_{\alpha,\varepsilon}\to 0\qquad \;\textrm{as}\;\;\varepsilon\to 0^{+}.
	\end{equation}
	Moreover, for every $\alpha>0$ large enough and for every
	subsequence of times $\big(\hat{t}_{\alpha,\tilde{\varepsilon}}\big)_{\tilde{\varepsilon}>0}\subset\big(\hat{t}_{\alpha,\varepsilon}\big)_{\varepsilon>0}$, $\hat{t}_{\alpha,\tilde{\varepsilon}}\not \to 0$ as $\tilde{\varepsilon}\to 0$. This property comes from the uniform continuity of the initial datum $u_{0}$. Let us prove this by contradiction. Suppose that for some $\alpha_{0}>0$, $\hat{t}_{\alpha_{0},\tilde{\varepsilon}} \to 0$ as $\tilde{\varepsilon}\to 0$. Then, by \eqref{13}, 
	\begin{equation}
		0<\delta\leq \underline{u}(\hat{x}_{\alpha_{0},\tilde{\varepsilon}},\hat{t}_{\alpha_{0},\tilde{\varepsilon}})- \overline{u} (\hat{y}_{\alpha_{0},\tilde{\varepsilon}},\hat{t}_{\alpha_{0},\tilde{\varepsilon}}).
	\end{equation}
	Suppose that there exists $C'>0$ such that $|\hat{x}_{\alpha_{0},\tilde{\varepsilon}}|\leq C'$ for every $\tilde{\varepsilon}>0$. Then, by \eqref{2}, there exist $x_{\alpha_{0}},y_{\alpha_{0}}\in\mathbb{R}^{N}$ such that $\hat{x}_{\alpha_{0},\tilde{\varepsilon}}\to x_{\alpha_{0}}$ (up to a subsequence) and $\hat{y}_{\alpha_{0},\tilde{\varepsilon}}\to y_{\alpha_{0}}$ and they satisfy
	\begin{equation}
		\label{14}|\hat{x}_{\alpha_{0}}-\hat{y}_{\alpha_{0}}|\leq C\alpha_{0}^{-1/4}.
	\end{equation}
	Here $C$ is the same constant that appears in \eqref{2}. Hence, since $\underline{u}-\overline{u}$ is upper semicontinuous, we get
	\begin{equation}
		0<\delta\leq\limsup_{\tilde{\varepsilon}\to0^{+}}\big( \underline{u}(\hat{x}_{\alpha_{0},\tilde{\varepsilon}},\hat{t}_{\alpha_{0},\tilde{\varepsilon}})-\overline{u}(\hat{y}_{\alpha_{0},\tilde{\varepsilon}},\hat{t}_{\alpha_{0},\tilde{\varepsilon}})\big)\leq \underline{u}(\hat{x}_{\alpha_{0}},0)- \overline{u}(\hat{y}_{\alpha_{0}},0)\leq u_{0}(\hat{x}_{\alpha_{0}})- u_{0}(\hat{y}_{\alpha_{0}})
	\end{equation}
	since $\underline{u}(\cdot,0)\leq u_{0}(\cdot) \leq \overline{u}(\cdot,0)$.
	Therefore, since $u_{0}$ is uniformly continuous, if $\alpha_{0}$ is large enough, we obtain from \eqref{14} the following contradiction,
	\begin{equation}
		0<\delta\leq u_{0}(\hat{x}_{\alpha_{0}})- u_{0}(\hat{y}_{\alpha_{0}})\leq \frac{\delta}{2}.
	\end{equation}

	On the other hand, if $|\hat{x}_{\alpha_{0},\tilde{\varepsilon}}|\to \infty$ as $\tilde{\varepsilon}\to 0^{+}$, by \eqref{2}, $|\hat{y}_{\alpha_{0},\tilde{\varepsilon}}|\to \infty$. Then, since $$\underline{u}(\cdot,0)\leq u_{0}(\cdot) \leq \overline{u}(\cdot,0)$$ and $$\lim_{|x|\to\infty}u_{0}(x)=\mu\in \mathbb{R},$$ we have that 
	\begin{equation}
		0<\delta\leq\limsup_{\tilde{\varepsilon}\to0^{+}}\big( \underline{u}(\hat{x}_{\alpha_{0},\tilde{\varepsilon}},\hat{t}_{\alpha_{0},\tilde{\varepsilon}})- \overline{u}(\hat{y}_{\alpha_{0},\tilde{\varepsilon}},\hat{t}_{\alpha_{0},\tilde{\varepsilon}})\big)\leq \mu-\mu=0,
	\end{equation}
	which is a contradiction. Thus, if $\alpha>0$ is large enough, we have that $\liminf_{\varepsilon \to 0}\hat{t}_{\alpha,\varepsilon} > 0$. In what follows, let us write $(\hat{x},\hat{y},\hat{t})$ insted of $(\hat{x}_{\varepsilon,\alpha},\hat{y}_{\varepsilon,\alpha},\hat{t}_{\varepsilon,\alpha})$.
	
Our next goal is to prove that $\underline{u}_{\eta}$ is bigger that the obstacle at the point $(\hat{x},\hat{t})$, i.e, we aim to show that 
$$\underline{u}_{\eta}(\hat{x},\hat{t})> \psi (\hat{x}).$$ 
Let us establish this inequality arguing by contradiction. Suppose that $\underline{u}_{\eta}(\hat{x},\hat{t})=\psi (\hat{x})$. Since $\psi\leq \overline{u}$ in $\mathbb{R}^{N}\times (0,T)$, we get, 
	\begin{align}
		0<&\delta\leq \phi_{\varepsilon,\alpha}(\hat{x},\hat{y},\hat{t})\leq \underline{u}_{\eta}(\hat{x},\hat{y})-\overline{u}(\hat{y},\hat{t})=\psi(\hat{x})-\overline{u}(\hat{y},\hat{t})\\&\leq \psi(\hat{x})-\psi(\hat{y})+\psi(\hat{y})-\overline{u}(\hat{y},\hat{t})\leq \psi(\hat{x})-\psi(\hat{y})\leq \omega(|x-y|)\leq \omega(C\alpha^{-1/4}),
	\end{align} 
	where $C$ is the constant in \eqref{2}. Then, if $\alpha$ is large enough, we obtain
	\begin{equation*}
		0<\delta \leq \phi_{\varepsilon,\alpha}(\hat{x},\hat{y},\hat{t})\leq\frac{\delta}{2},
	\end{equation*}  
	and we get a contradiction. Then, we conclude that
	$$\underline{u}_{\eta}(\hat{x},\hat{t})> \psi (\hat{x})$$, which implies that for every test function $\varphi$ such that $\varphi-\underline{u}_{\eta}\geq 0$ and attains its maximum at $(\hat{x},\hat{t})$, the following inequality holds, 
	\begin{equation}
		\varphi_{t}(\hat{x},\hat{t})+\mathcal{L} (\nabla \varphi(\hat{x},\hat{t}), D^{2}\varphi(\hat{x},\hat{t})) + \frac{\eta}{T^2} \leq0
	\end{equation}
	since we have that $\hat{t}\not= 0$.

	Now, let us consider 
	\begin{equation}
		w(x,y,t)=	\underline{u}_{\eta}(x,t)-\overline{u}(y,t)-\widetilde{\phi}_{\epsilon,\alpha}(x,y,t),
		\end{equation} 
		where
	\begin{equation}
		\widetilde{\phi}_{\epsilon,\alpha}(x,y,t)=\phi_{\varepsilon,\alpha}(\hat{x},\hat{y},\hat{t})+\frac{\alpha}{4}|x-y|^{4}+\frac{\varepsilon}{2}\Big(|x|^{2}+|y|^{2}\Big).
	\end{equation}
	Notice that $w(x,y,t)\leq 0$ with equality only if $(x,y,t)=(\hat{x},\hat{y},\hat{t})$.

	\textsc{Step 2.} 
	Now, we are ready to apply \cite[Theorem 8.2 ]{CIL} to $w$, leading to the following result:
	for every $\mu>0$ and for every pair of tuples $(a,X), (b,Y)\in \mathbb{R}^N\times \mathbb{S}^{N\times N}$ such that $$(a,\nabla_{x}	\widetilde{\phi}_{\epsilon,\alpha}(\hat{x},\hat{y},\hat{t}),X)\in \mathcal{P}^{2,+} \underline{u}_{\eta}(\hat{x},\hat{t})\quad \mbox{ and }
	\quad (b,\nabla_{y}	\widetilde{\phi}_{\epsilon,\alpha}(\hat{x},\hat{y},\hat{t}),Y)\in \mathcal{P}^{2,-} \overline{u} (\hat{y},\hat{t})$$ 
	we get that $$a-b=0$$ and 
	\begin{equation}
		\label{3}-\big (\frac{1}{\mu}+||A||\big)I_{2N}\leq \begin{pmatrix}
			X&0\\0&-Y
		\end{pmatrix}\leq A+\mu A^{2},
	\end{equation}
	where 
	\begin{equation}
		A:= D^{2}_{x,y}\widetilde{\phi}_{\epsilon,\alpha}(\hat{x},\hat{y},\hat{t})=\varepsilon I_{2N}+\begin{pmatrix}
			P&-P\\-P&P
		\end{pmatrix}
		\;\;\textrm{and}\;\;P:=\frac{\alpha}{2}(\hat{x}-\hat{y})\otimes(\hat{x}-\hat{y})+\frac{\alpha}{4}|\hat{x}-\hat{y}|^{2}I_{N}.
	\end{equation}
	
	We remark that $X$ and $Y$ depend on $\varepsilon$ and $\alpha$. In particular, the inequalities in \eqref{3} give us 
	\begin{equation}
		\label{4}
		\|X\|+\|Y\|\leq C_{\mu}(\alpha|\hat{x}-\hat{y}|^{2}+\varepsilon)
	\end{equation}
	and 
	\begin{equation}
		\label{4.2}
		X-Y\leq 2\varepsilon(1+\mu\varepsilon)I_{N}
	\end{equation}
	for some constant $C_{\mu}$ that only depends on $\mu$. Furthermore, since  $\underline{u}_{\eta}$ is a viscosity subsolution to $$\partial_t w(x,t)  + \mathcal{L}(\nabla w(x,t), D^{2}w(x,t))+ \frac{\eta}{T^2}=0$$ at $(\hat{x},\hat{t})$ and $\overline{u}$ is a  viscosity supersolution to $$\partial_t w(x,t)  + \mathcal{L}(\nabla w(x,t), D^{2}w(x,t))=0$$ at $(\hat{y},\hat{t})$, we obtain 
	\begin{equation}
		a+ \mathcal{L}_{*}(\nabla_{x}	\widetilde{\phi}_{\epsilon,\alpha}(\hat{x},\hat{y},\hat{t}),X)+\frac{\eta}{T^2} \leq 0,
		\end{equation}
		and
		\begin{equation}
		b+ \mathcal{L}^{*}(-\nabla_{y}	\widetilde{\phi}_{\epsilon,\alpha}(\hat{x},\hat{y},\hat{t}),Y)\geq 0.
	\end{equation}
	Since $a=b$, using the above inequalities we obtain
	\begin{equation}
		0<	\frac{\eta}T\leq  \mathcal{L}^{*}(-\nabla_{y}	\widetilde{\phi}_{\epsilon,\alpha}(\hat{x},\hat{y},\hat{t}),Y)- \mathcal{L}_{*}(\nabla_{x}	\widetilde{\phi}_{\epsilon,\alpha}(\hat{x},\hat{y},\hat{t}),X).
	\end{equation}
	In addition, by \eqref{4.2} and the fact that $\mathcal{L}$ is  degenerate elliptic (point \textit{iv)} in Proposition \ref{eprop1}), we have
	\begin{equation}
		0<	\frac{\eta}T\leq \mathcal{L}^{*}(-\nabla_{y}	\widetilde{\phi}_{\epsilon,\alpha}(\hat{x},\hat{y},\hat{t}),X-2\varepsilon(1+\mu\varepsilon)I_{n})-\mathcal{L}_{*}(\nabla_{x}	\widetilde{\phi}_{\epsilon,\alpha}(\hat{x},\hat{y},\hat{t}),X).
	\end{equation}
	
	On the other hand, from a 
	direct computation, we get
	\begin{equation}
		\nabla_{x}	\widetilde{\phi}_{\epsilon,\alpha}(\hat{x},\hat{y},\hat{t})=\hat{p}+\varepsilon \hat{x}\;\;\textrm{and}\;\;-\nabla_{y}	\widetilde{\phi}_{\epsilon,\alpha}(\hat{x},\hat{y},\hat{t})=\hat{p}-\varepsilon \hat{y}\;\;\textrm{where}\;\;\hat{p}:=\alpha|\hat{x}-\hat{y}|^{2}(\hat{x}-\hat{y}).
	\end{equation}

	Thanks to \eqref{2} and \eqref{6}, $	\nabla_{x}	\widetilde{\phi}_{\epsilon,\alpha}(\hat{x},\hat{y},\hat{t})$ and $-\nabla_{y}	\widetilde{\phi}_{\epsilon,\alpha}(\hat{x},\hat{y},\hat{t})$ converge to the same vector $p_{0}\in\mathbb{R}^{N}$ as $\varepsilon$ goes to zero (up to extracting a subsequence if necessary). Moreover, by \eqref{2} and \eqref{4}, the matrices $X$ are uniformly bounded with respect $\varepsilon$ and therefore  $X$ and $(X-2\varepsilon(1+\mu\varepsilon)I_{N})$ converge  to some $X_{0}\in \mathbb{S}^{N\times N}$ as $\varepsilon$ goes to zero (again, up to a subsequence). Therefore, since $\mathcal{L}^{*}$ and $-\mathcal{L}_{*}$ are upper semicontinuous functions, we have
	\begin{equation}
		0< \frac{\eta}{T^2}\leq \limsup_{\varepsilon\to 0^{+}}\big ( \mathcal{L}^{*}(\hat{p}+\varepsilon\hat{y},X-2\varepsilon(1+\mu\varepsilon)I_{n})-\mathcal{L}_{*}(\hat{p}+\varepsilon\hat{x},X)\big)=\big(\mathcal{L}^{*}-\mathcal{L}_{*}\big)(p_{0},X_{0}),
	\end{equation}
	and the contradiction is straightforward. If $p_{0}\not =0$, then $\mathcal{L}^{*}(p_{0},X_{0})=\mathcal{L}_{*}(p_{0},X_{0})$ since $\mathcal{L}$ is continuous in $\big(\mathbb{R}^{N}\setminus\{0\}\big)\times \mathbb{S}^{N\times N}$ as we proved in point {\it i)} of Proposition \ref{eprop1}. If $p_{0}=0$, we have that $|\hat{x}-\hat{y}|\to 0$ as $\varepsilon\to 0^{+}$ which implies, by \eqref{4}, $X_{0}=0$, and, by point \textrm{iii)} in Proposition \ref{eprop1}, we obtain $\mathcal{L}_{*}(0,O)=0=\mathcal{L}^{*}(0,O)$. In both cases, we conclude that $\big(\mathcal{L}^{*}-\mathcal{L}_{*}\big)(p_{0},X_{0})=0$ and we get the contradiction $0<{\eta}/T^2 \leq 0$.
	
	Therefore, we have proved that $\underline{u}_{\eta}\leq \overline{u}$ in $\mathbb{R}^{N}\times (0,T)$ for every $\eta>0$. Thus, taking the limit $\eta\to 0$, we obtain the desired result, it holds that $$\underline{u}\leq \overline{u}$$ in $\mathbb{R}^N\times (0,T)$.  
	\end{proof}

Once that we know that our evolution problem \eqref{ParabolicPb} has a comparison principle (Theorem \ref{comp.pp}), we can prove that this problem has a unique solution. The proof of the existence is based on Perron's method as described in \cite{Mercier}. This proof also works for more general operators $\mathcal{L}$ that have a comparison principle and satisfy items {\it i)}-{\it v)} in Proposition \ref{eprop1}.

 Now, we will prove that there is a unique solution to \eqref{ParabolicPb}, that is, Theorem \eqref{etheoremExistencePerron}. Let us consider $W$ given by 
\begin{equation}
	\label{e9} W(x,t)=\inf\big\{ w(x,t)\,:\;\;w\in \mathcal{A}\big\}
\end{equation}
with 
  \begin{align}
  	\label{e10}	\mathcal{A}:=\Big\{ & w(x,t)\;:\;\;w\;\;\textrm{is a viscosity supersolution } \\ & w_{t}+\mathcal{L}w \geq 0\;\;\textrm{in}\;\;\mathbb{R}^{N}\times(0,\infty),\ w_{*}(\cdot, t)\geq \psi\;\;\textrm{for all }\;\;t>0\;\;\textrm{and}\;\;\;w_{*}(\cdot,0)\geq u_{0}\Big\}.
  \end{align} 
  Our aim is to show that $W$ is also a supersolution of the Obstacle Problem \eqref{ParabolicPb} and, in addition, it
  is a subsolution to $$u_{t}+\mathcal{L} u=0$$
  inside the set $\{ (x,t) : W (x,t) > \psi (x) \}$. 
 
Before doing that, we need to establish that $W$ is well defined and that it is the smallest supersolution of the Obstacle Problem \eqref{ParabolicPb} using the following lemma.

\begin{lemma}\label{elemma1}
	Let $\mathcal{F}$ be a family of viscosity supersolutions to $u_{t}+\mathcal{L}u=0$ in $\mathbb{R}^{N}\times (0,\infty)$ and assume that
	the family is bounded below. Assume 
	\begin{equation*}
		U(x,t)=\inf\Big\{u(x,t)\,:\;\;u\in\mathcal{F}\Big\}
	\end{equation*}
	is a well defined function with $U(x,t)>-\infty$ for every 
	$(x,t)\in\mathbb{R}^N\times (0,\infty).$ Then, $U$ is also a supersolution to $u_{t}+\mathcal{L}u=0$ in $\mathbb{R}^{N}\times (0,\infty)$.
\end{lemma}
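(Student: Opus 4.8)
The plan is to verify directly that the lower semicontinuous envelope $U_*$ satisfies the supersolution inequality in the jet form of Definition \ref{edefvisc2}, reducing the inequality at a point to the already known supersolution property of the elements of $\mathcal{F}$ evaluated at nearby points. First note that, since $\mathcal{F}$ is bounded below, say by a constant $C$, we have $U\geq C$, so $U_*$ is well defined with $U_*\geq C>-\infty$; and since $U\leq u$ pointwise for every $u\in\mathcal{F}$, passing to lower semicontinuous envelopes gives $U_*\leq u_*$ on $\mathbb{R}^N\times(0,\infty)$ for every $u\in\mathcal{F}$. So the only thing to prove is: for every $\phi\in C^{2,1}$ such that $U_*-\phi$ has a strict local minimum at a point $(x_0,t_0)\in\mathbb{R}^N\times(0,\infty)$ (normalized so the minimum value is $0$), one has $\partial_t\phi(x_0,t_0)+\mathcal{L}^{*}(\nabla\phi(x_0,t_0),D^2\phi(x_0,t_0))\geq 0$.

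The key localization step is standard. Choose $r>0$ small so that $\overline{Q}_r:=\overline{B_r(x_0)}\times[t_0-r,t_0+r]\subset\mathbb{R}^N\times(0,\infty)$ and $U_*-\phi>0$ on $\overline{Q}_r\setminus\{(x_0,t_0)\}$; in particular $m:=\min_{\partial Q_r}(U_*-\phi)>0$, the minimum being attained since $U_*-\phi$ is lower semicontinuous on the compact set $\partial Q_r$. By the definition of $U_*$ pick $(x_j,t_j)\to(x_0,t_0)$ with $U(x_j,t_j)\to U_*(x_0,t_0)=\phi(x_0,t_0)$, and for each $j$ pick $u_j\in\mathcal{F}$ with $u_j(x_j,t_j)<U(x_j,t_j)+\frac{1}{j}$. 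Let $(z_j,\tau_j)$ be a minimum point of the lower semicontinuous function $(u_j)_*-\phi$ over $\overline{Q}_r$. Then $0\leq (u_j)_*(z_j,\tau_j)-\phi(z_j,\tau_j)\leq u_j(x_j,t_j)-\phi(x_j,t_j)<U(x_j,t_j)+\frac1j-\phi(x_j,t_j)\to 0$, where the first inequality uses $(u_j)_*\geq U_*$ and the last uses $(u_j)_*\leq u_j$ and $(x_j,t_j)\in\overline{Q}_r$ for $j$ large. Hence $(u_j)_*(z_j,\tau_j)-\phi(z_j,\tau_j)\to 0$; since this value is $\geq m>0$ whenever $(z_j,\tau_j)\in\partial Q_r$, for $j$ large the minimizer is interior, so it is a genuine local minimum of $(u_j)_*-\phi$. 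Along a subsequence $(z_j,\tau_j)\to(z_\infty,\tau_\infty)\in\overline{Q}_r$, and lower semicontinuity of $U_*$ with $(u_j)_*\geq U_*$ gives $0\leq (U_*-\phi)(z_\infty,\tau_\infty)\leq\liminf_j ((u_j)_*-\phi)(z_j,\tau_j)=0$; by strictness $(z_\infty,\tau_\infty)=(x_0,t_0)$, hence $(z_j,\tau_j)\to(x_0,t_0)$ along the full sequence.

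To conclude, since $(u_j)_*-\phi$ has a local minimum at the interior point $(z_j,\tau_j)$, a second-order Taylor expansion of $\phi$ there shows $(a_j,p_j,X_j):=(\partial_t\phi(z_j,\tau_j),\nabla\phi(z_j,\tau_j),D^2\phi(z_j,\tau_j))\in\mathcal{P}^{2,-}(u_j)_*(z_j,\tau_j)$. Because $u_j$ is a viscosity supersolution of $u_t+\mathcal{L}u=0$, Definition \ref{edefvisc2} gives $a_j+\mathcal{L}^{*}(p_j,X_j)\geq 0$. Letting $j\to\infty$: as $\phi\in C^{2,1}$ and $(z_j,\tau_j)\to(x_0,t_0)$ we get $(a_j,p_j,X_j)\to(\partial_t\phi(x_0,t_0),\nabla\phi(x_0,t_0),D^2\phi(x_0,t_0))$, and since $\mathcal{L}^{*}$ is upper semicontinuous (being an upper semicontinuous envelope), $\limsup_j\mathcal{L}^{*}(p_j,X_j)\leq\mathcal{L}^{*}(\nabla\phi(x_0,t_0),D^2\phi(x_0,t_0))$. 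Therefore $\partial_t\phi(x_0,t_0)+\mathcal{L}^{*}(\nabla\phi(x_0,t_0),D^2\phi(x_0,t_0))\geq 0$, which is exactly the supersolution inequality for $U$ at $(x_0,t_0)$. As $\phi$ and $(x_0,t_0)$ were arbitrary, $U$ is a viscosity supersolution.

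The two points requiring care are: (i) the discontinuity of $\mathcal{L}$ at $p=0$, which is precisely why the supersolution condition is stated with $\mathcal{L}^{*}$ and why the passage to the limit above goes in the right direction — we only need the upper semicontinuity of $\mathcal{L}^{*}$, not continuity, so the argument is insensitive to whether the limiting gradient vanishes; and (ii) keeping the approximate minimizers $(z_j,\tau_j)$ away from $\partial Q_r$, which is guaranteed by the strictness of the minimum of $U_*-\phi$ at $(x_0,t_0)$. Everything else is the classical "infimum of supersolutions is a supersolution" argument, so no new estimates are needed.
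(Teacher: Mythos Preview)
Your proof is correct and follows essentially the same route as the paper's: both approximate $U_*(x_0,t_0)$ by values $u_j(x_j,t_j)$ with $u_j\in\mathcal{F}$, find nearby points where the supersolution inequality for $u_j$ applies, and pass to the limit using the upper semicontinuity of $\mathcal{L}^*$. The only cosmetic difference is that the paper outsources your localization step (finding the interior minimizers $(z_j,\tau_j)$ and the convergent jets) to a parabolic version of \cite[Proposition~4.3]{CIL}, whereas you carry that construction out by hand with the test function $\phi$ on the cylinder $\overline{Q}_r$.
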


\begin{proof}
Fix $(x_{0},t_{0})\in \mathbb{R}^N \times (0,\infty)$. Suppose that $(a,p,X)\in \mathcal{P}^{2,-}U_{*}(x_{0},t_{0})$. We want to see that 
	\begin{equation}
		a+ F^{*}(p,X)\geq 0.
	\end{equation}
	Thanks to the way in which $U$ is defined, we can take a sequence $\{(x_{n},t_{n},u_{n})\}_{n\in\mathbb{N}}$ such that $u_{n}\in \mathcal{F}$ and $(x_{n},t_{n},u_{n}(x_{n},t_{n}))\to (x_{0},t_{0},U_{*}(x_{0},t_{0}))$. By a parabolic version of \cite[Proposition 4.3]{CIL}, there exists a sequence $\{(\hat{x}_{n},\hat{t}_{n},a_{n},p_{n},X_{n})\}_{n\in\mathbb{N}}\subset\mathbb{R}^{N}\times (0,\infty)\times\mathbb{R}\times \mathbb{R}^{N}\times \mathbb{S}^{N\times N}$ such that $(a_{n},p_{n},X_{n})\in \mathcal{P}^{2,-}u_{n*}(\hat{x}_{n},\hat{t}_{n})$  for all $n\in\mathbb{N}$ and
	\begin{equation*}
		\big(a_{n},p_{n},X_{n}\big )\to (a,p,X)\;\;\textrm{as}\;\;n\to\infty.
	\end{equation*}
	Therefore, we have 
	\begin{equation}
		a_{n}+ \mathcal{L}^{*}(p_{n},X_{n})\geq 0
	\end{equation}
	due to the fact that  $u_{n}$ is a viscosity supersolution in $\mathbb{R}^{N}\times (0,\infty)$ for every $n\in\mathbb{N}$.	Since $\mathcal{L}^{*}$ is upper semicontinuous, we obtain
	\begin{equation}
		a+\mathcal{L}^{*}(p,X)\geq0
	\end{equation}
	as we wanted to show.
\end{proof}

In the following proposition, we show that $W$, defined in \eqref{e9}, is the smallest supersolution of \eqref{PE} that is larger than the obstacle and with initial datum larger than $u_{0}$. As a consequence, $W$ is a supersolution to the Obstacle Problem \eqref{ParabolicPb}. To prove this result, we strongly use Lemma \ref{elemma1} and the fact  that $\mathcal{L}$ is a geometric function (see point \textit{v)} in Proposition \ref{eprop1}).

\begin{proposition}\label{eprop2} The function $W:\mathbb{R}^N\times (0,\infty)\mapsto \mathbb{R}$ given by 
	\begin{equation}
		W(x,t)=\inf\big\{ w(x,t)\,:\;\;w\in \mathcal{A}\big\}
	\end{equation}
	with $\mathcal{A}$ as in \eqref{e10} is  well defined and
	belongs to $\mathcal{A}$. That is, $W$ a supersolution to \eqref{PE} in $\mathbb{R}^{N}\times (0,\infty)$ such that $W(\cdot,0)\geq u_{0}$ and $W\geq \psi$ or, equivalently, $W$ is a supersolution to \eqref{ParabolicPb}.  Furthermore, $W(x,0)=u_{0}(x)$ for all $x\in\mathbb{R}^{N}$ and $W$ is a lower semicontinuous function in $\mathbb{R}^{N}\times(0,\infty)$. 
\end{proposition}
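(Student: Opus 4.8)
The plan is to verify, in order, that $W$ is finite and well defined, that $W$ is a viscosity supersolution of \eqref{PE} with $W\ge\psi$, that $W$ attains the initial datum $u_0$, and that $W$ is lower semicontinuous; the first three facts say precisely that $W\in\mathcal A$, and since $W=\inf\mathcal A\le w$ for every $w\in\mathcal A$ this also exhibits $W$ as the smallest supersolution of \eqref{ParabolicPb}. I will repeatedly use that $\mathcal A$ is exactly the class of viscosity supersolutions of \eqref{ParabolicPb}: indeed, by definition a supersolution of \eqref{ParabolicPb} is a supersolution of \eqref{PE} that satisfies $w\ge\psi$ — equivalently $w_*\ge\psi$, since $\psi$ is continuous — and $w_*(\cdot,0)\ge u_0$. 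I will also use that a constant $c$ is simultaneously a viscosity sub- and supersolution of \eqref{PE}, because for $\phi\equiv c$ one has $\partial_t\phi=0$, $\nabla\phi=0$, $D^2\phi=O$, and $\mathcal L_*(0,O)=\mathcal L^*(0,O)=0$ by Proposition \ref{eprop1} iii).

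The key construction is a pair of barriers at each point. Fix $x_0\in\mathbb R^N$ and $\delta>0$; after truncating at $2\|\psi\|_\infty$ and $2\|u_0\|_\infty$ we may take a bounded modulus of continuity $\omega$ for $\psi$ and $\omega_0$ for $u_0$, and then choose $A=A(\delta,x_0)>0$ so large that $\delta+Ar^2\ge\max\{\omega(r),\omega_0(r)\}$ for all $r\ge0$, which is possible because both moduli are bounded, continuous and vanish at $0$. Pick also $m<\inf u_0$ and $M>\|u_0\|_\infty+\|\psi\|_\infty$ and set
\[
\overline w(x,t)=\min\Big\{M,\ u_0(x_0)+\delta+A|x-x_0|^2+2At\Big\},\qquad \underline w(x,t)=\max\Big\{m,\ u_0(x_0)-\delta-A|x-x_0|^2-2At\Big\}.
\]
For $\phi_{\pm}(x,t)=u_0(x_0)\pm\delta\pm A|x-x_0|^2\pm2At$ one has $\nabla\phi_{\pm}=\pm2A(x-x_0)$, $D^2\phi_{\pm}=\pm2AI$, $\partial_t\phi_{\pm}=\pm2A$; since every $(a,p,X)\in\mathcal P^{2,-}\phi_+$ has $a=2A$ and $X\le 2AI$, Proposition \ref{eprop1} ii) gives $a+\mathcal L^*(p,X)\ge 2A+\lambda(-X)\ge 2A-2A=0$, so $\phi_+$ is a classical (hence viscosity) supersolution of \eqref{PE}, and symmetrically $\phi_-$ is a subsolution. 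Thus $\overline w$, the minimum of two supersolutions, is a supersolution of \eqref{PE} by Lemma \ref{elemma1}, and $\underline w$, the maximum of two subsolutions, is a subsolution of \eqref{PE} by the dual of Lemma \ref{elemma1}. The choice of $A$ yields $\phi_+(x,0)\ge u_0(x_0)+\max\{\omega(|x-x_0|),\omega_0(|x-x_0|)\}\ge\max\{\psi(x),u_0(x)\}$ (using $\psi(x_0)\le u_0(x_0)$), and with $M\ge\|\psi\|_\infty+\|u_0\|_\infty$ and the monotonicity of $\phi_+$ in $t$ we get $\overline w(\cdot,t)\ge\psi$ for all $t\ge0$ and $\overline w(\cdot,0)\ge u_0$; hence $\overline w\in\mathcal A$, so in particular $\mathcal A\ne\emptyset$. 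Likewise $\underline w$ is continuous with $\underline w(\cdot,0)\le u_0$, so $\underline w$ is a subsolution of \eqref{ParabolicPb}, and the comparison principle (Theorem \ref{comp.pp}) gives $\underline w\le w$ for every $w\in\mathcal A$. Taking the infimum, $m\le\underline w\le W\le\overline w\le M$, so $W$ is a well-defined, bounded, real-valued function.

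By Lemma \ref{elemma1} applied to $\mathcal A$ — a family of supersolutions of $u_t+\mathcal Lu=0$ bounded below by $m$ — the function $W$ is a viscosity supersolution of \eqref{PE}, and $W\ge\psi$ is immediate, hence $W_*\ge\psi$. For the initial datum, from $\underline w\le W\le\overline w$ and the continuity of the barriers, letting $(y,s)\to(x_0,0)$ with $s>0$,
\[
u_0(x_0)-\delta=\underline w(x_0,0)\le\liminf_{(y,s)\to(x_0,0)}W(y,s)\le\limsup_{(y,s)\to(x_0,0)}W(y,s)\le\overline w(x_0,0)=u_0(x_0)+\delta,
\]
and sending $\delta\to0$ gives $\lim_{(y,s)\to(x_0,0)}W(y,s)=u_0(x_0)$; thus $W$ extends continuously to $\{t=0\}$ with $W(\cdot,0)=u_0$, so in particular $W_*(\cdot,0)=u_0\ge u_0$ and $W\in\mathcal A$, which makes $W$ a supersolution of \eqref{ParabolicPb} and, being $\le w$ for all $w\in\mathcal A$, the smallest one. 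Lower semicontinuity follows because $W_*$ is a viscosity supersolution of \eqref{PE} — this is literally the assertion ``$W$ is a supersolution of \eqref{PE}'', which tests $W_*$ — while $W_*\ge\psi$ by continuity of $\psi$ and $W_*(\cdot,0)=u_0$; hence $W_*\in\mathcal A$, so $W\le W_*$, and since always $W_*\le W$ we conclude $W=W_*$.

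The step I expect to be the main obstacle is checking that the elementary quadratic barriers really are admissible: that $\overline w$ dominates $\psi$ for every $t>0$ and dominates $u_0$ on $\{t=0\}$ \emph{globally} in $x$ (not merely near $x_0$), which is exactly where the boundedness of $\psi$ and $u_0$ and the quantitative choice of $A$ and $M$ are used, and dually that $\underline w$ is a bona fide subsolution of the obstacle problem so that Theorem \ref{comp.pp} applies to it. Everything else — reading the viscosity inequalities for $\phi_{\pm}$ off their classical derivatives via Proposition \ref{eprop1}, the stability of sub/supersolutions under $\sup/\inf$ from Lemma \ref{elemma1}, and the $t\to0$ squeeze — is routine.
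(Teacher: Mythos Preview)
Your proof is correct and takes a genuinely different route from the paper's. The paper builds its upper barrier by composing the elementary supersolution $h_z(x,t)=\alpha|x-z|^2+\beta t$ with a nondecreasing function $H_z$ tailored so that $H_z\circ h_z(\cdot,0)\ge u_0$, invoking the \emph{geometric} structure of $\mathcal L$ (Proposition~\ref{eprop1}~v)) via \cite[Theorem 5.2]{CGG}; it then sets $w^+=\inf_z H_z\circ h_z\in\mathcal A$ and obtains $W(\cdot,0)=u_0$ from the one-sided squeeze $u_0\le W(\cdot,0)\le w^+(\cdot,0)=u_0$, using only the trivial lower bound $w\ge\psi$ for well-definedness. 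By contrast, you build $\overline w,\underline w$ directly from truncated quadratics and replace the CGG composition by the elementary eigenvalue bound of Proposition~\ref{eprop1}~ii); the price is that you must invoke Theorem~\ref{comp.pp} to place $\underline w$ below every $w\in\mathcal A$, but the payoff is a two-sided squeeze that yields genuine continuity of $W$ up to $\{t=0\}$ (not just the pointwise identity $W(x,0)=u_0(x)$) and a clean argument for lower semicontinuity through $W_*\in\mathcal A$ --- a conclusion the paper states but does not write out. Your approach is more elementary and arguably more complete; the paper's is more self-contained within the Perron step (no appeal to comparison) and showcases the geometric invariance of $\mathcal L$.
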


\begin{proof}
We divide the proof in several steps.

	\textsc{First step.} We begin by constructing an upper barrier $w^{+}.$ We look for a function $w^{+}\in\mathcal{A}$ such that $w^{+}(\cdot,0)=u_{0}$. Consider two constants $\alpha$ and $\beta$ such that  $\beta>2\alpha>0$. Fix $z\in\mathbb{R}^N$. We define the function
	\begin{equation}
		h_{z}(x,t):=\alpha|x-z|^{2}+\beta t
	\end{equation}
	for $(x,t)\in\mathbb{R}^{N}\times (0,\infty)$. Observe that, since $D^{2}h_{z}=-2\alpha I_N$, we have that $\mathcal{L}_{*}(\nabla h_{z},D^{2}h_{z})=\mathcal{L}^{*}(\nabla h_{z},D^{2}h_{z})=-2\alpha$ in $\mathbb{R}^{N}\times (0,\infty)$ due to the item \textit{ii)} in Proposition \ref{eprop1}. Then, since 
	\begin{equation}
		\partial_t h_{z}(x,t)+ \mathcal{L}(\nabla h_{z}(x,t),D^{2}h_{z}(x,t))=\beta-2\alpha>0	\end{equation}
for every $(x,t)\in\mathbb{R}^{N}\times (0,\infty)$,
	we have that  $h_{z}$ is a supersolution to \eqref{PE} in  $\mathbb{R}^{N}\times (0,\infty)$ for every $z\in\mathbb{R}^{N}$. Now, our goal is to construct a supersolution to \eqref{PE}  based on $h_z$  that is ordered with the initial datum at $t=0$. Since $u_{0}$ is bounded and continuous, we consider
	\begin{equation*}
		H_{z}(s):=\max \Big\{u_{0}(y)\,:\;\;y\in\mathbb{R}^{N}\;\;\textrm{satisfies that}\;\; \alpha|y-z|^{2}\leq s \Big\},\qquad\;s\in\mathbb{R}.
	\end{equation*}
	 Since  $\mathcal{L}$ is degenerate elliptic and geometric (see Proposition ~\ref{eprop1}) and $H_{z}$ is a nondecreasing continuous function and $h_{z}$ is a supersolution to  \eqref{PE} for all $z\in\mathbb{R}^{n}$, then, by \cite[Theorem 5.2.]{CGG}, the function 
	 	\begin{equation}
	 	H_{z}\circ h_{z}(x,t)=\max\Big\{u_{0}(y): \;\;\alpha(|y-z|^{2}-|x-z|^{2})\leq \beta t \Big\}, \quad (x,t)\in\mathbb{R}^{n}\times (0,\infty)
	 \end{equation} is, endeed, a supersolution to  \eqref{PE} for all $z\in\mathbb{R}^{n}$.  
	 Moreover,  since $H_{z}\circ h_{z}\geq u_{0}$  and $u_{0}\geq \psi$ , we have that $H_{z}\circ h_{z}\geq \psi$ for all $z\in\mathbb{R}^{n}$. Therefore, $H_{z}\circ h_{z}\in \mathcal{A}$ for every $z\in\mathbb{R}^{N}$, so $\mathcal{A}$ is not empty.
	
	Now, to construct a function in $\mathcal{A}$ that takes the initial datum $u_{0}$ at $t=0$, we define
	\begin{equation*}
		w^{+}(x,t):=\inf_{z\in\mathbb{R}^{N}}\Big\{H_{z}\circ h_{z}(x,t)
		\Big\},
	\end{equation*}
	for $(x,t)\in\mathbb{R}^{N}\times[0,\infty)$. Observe that, by construction, $\|H_{z}\circ h_{z}\|_{\infty}\leq \|u_{0}\|_{\infty}$ for every $z\in\mathbb{R}^{N}$. Moreover, since $H_{z}\circ h_{z}$ is a supersolution to \eqref{PE} in $\mathbb{R}^{N}\times (0,\infty)$ for every $z\in \mathbb{R}^{N}$, by Lemma \ref{elemma1}, we get that $w^{+}$ is also a supersolution of the parabolic equation \eqref{PE} in $\mathbb{R}^{N}\times (0,\infty)$.
	
	Finally, since that $H_{z}\circ h_{z}\geq \psi$ and $H_{z}\circ h_{z}(\cdot,0)\geq u_{0}(\cdot)$ for every $z\in\mathbb{R}^{N}$, we obtain that $w^{+}$ satisfies
		\begin{equation}
		w^{+}(x,t)\geq \psi (x)\;\;\textrm{and}\;\;w^{+}(\cdot,0)\geq u_{0}(\cdot).
	\end{equation}
	In addition, $w^{+}(\cdot,0)=u_{0}(\cdot)$ due to the fact that
	\begin{equation*}
		u_{0}(x)\leq w^{+}(x,0)\leq H_{x}\circ h_{x}(x,0)=\max\{u_{0}(y)\,:\;\;\alpha|y-x|^{2}\leq 0\}=u_{0}(x)
	\end{equation*}
	for every $x\in\mathbb{R}^{N}$. Hence, we conclude that $w^{+}\in\mathcal{A}$ and $w^{+}(\cdot, 0)=u_{0}$.
	
	\textsc{Second step.}  We will prove that $W$ is well defined, $W\in\mathcal{A}$ and $W(\cdot,0)=u_{0}(\cdot)$.
	
	First, observe that since $w^{+}\in \mathcal{A}$, $w^{+}\leq \|u_{0}\|_{\infty}$ and $w\geq\psi\geq -\|\psi\|_{\infty}$ for every $w\in\mathcal{A}$, the infimum in $\mathcal{A}$, $W$, is well defined and satisfies $-\|\psi\|_{\infty}\leq W\leq \|u_{0}\|_{\infty}$. Furthermore, by Lemma~\ref{elemma1}, $W$ is also a supersolution to \eqref{PE} in $\mathbb{R}^{N}\times (0,\infty)$.
	
	On the other hand, since $w\geq\psi$  and $w(\cdot,0)\geq u_{0}
	(\cdot)$ for every $w\in\mathcal{A}$, we have that $W\geq\psi$  and $W(\cdot,0)\geq u_{0}(\cdot)$. In addition, it holds that
	\begin{equation}
		u_{0}(x)\leq W(x,0)\leq w^{+}(x,0)=u_{0}(x)
	\end{equation}
	for every $x\in\mathbb{R}^{N}$.
\end{proof}

Now, we show that $W$ is also a subsolution to our obstacle problem.

\begin{proposition}\label{eprop3'}
	The function defined by \eqref{e9}, $W$, is a  subsolution to the obstacle problem \eqref{ParabolicPb} in $\mathbb{R}^{N}\times(0,\infty)$ with $W(\cdot,0)=u_{0}(\cdot)$.
\end{proposition}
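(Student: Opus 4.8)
The plan is to run the standard Perron bump argument. By Proposition \ref{eprop2} we already know that $W$ is lower semicontinuous on $\mathbb{R}^{N}\times(0,\infty)$, is a supersolution to \eqref{PE} there, satisfies $W(\cdot,0)=u_{0}$, and lies below the upper barrier $w^{+}$ constructed in that proof. Inspecting $w^{+}=\inf_{z}H_{z}\circ h_{z}$ with the help of the modulus of continuity of $u_{0}$ gives $(w^{+})^{*}(x,0)=u_{0}(x)$, hence $W^{*}(x,0)\le u_{0}(x)$; this disposes of the initial-condition requirement in the definition of subsolution, so it only remains to establish the interior viscosity inequality for $W$ on $\mathbb{R}^{N}\times(0,\infty)$.

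Suppose, for contradiction, that it fails: there are $(x_{0},t_{0})\in\mathbb{R}^{N}\times(0,\infty)$ and $\phi\in C^{2,1}$ with $\phi-W^{*}$ attaining a strict minimum at $(x_{0},t_{0})$ such that $W(x_{0},t_{0})>\psi(x_{0})$ (the only nontrivial case, since $W\ge\psi$ everywhere by Proposition \ref{eprop2}) and
\[
\partial_{t}\phi(x_{0},t_{0})+\mathcal{L}_{*}\big(\nabla\phi(x_{0},t_{0}),D^{2}\phi(x_{0},t_{0})\big)>0 .
\]
Because $\mathcal{L}_{*}$ is lower semicontinuous (Proposition \ref{eprop1}) and $\phi\in C^{2,1}$, the map $(x,t)\mapsto\partial_{t}\phi(x,t)+\mathcal{L}_{*}(\nabla\phi(x,t),D^{2}\phi(x,t))$ is lower semicontinuous, hence positive on a space-time ball $B_{r}:=B_{r}(x_{0},t_{0})$; shrinking $r$ we may also require $r<t_{0}$ and, using $\phi(x_{0},t_{0})=W^{*}(x_{0},t_{0})\ge W(x_{0},t_{0})>\psi(x_{0})$ together with continuity of $\phi$ and $\psi$, that $\phi-\psi\ge\rho>0$ on $B_{r}$. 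Since $\mathcal{L}^{*}\ge\mathcal{L}_{*}$, $\phi$ is then a classical, hence viscosity, supersolution of \eqref{PE} on $B_{r}$.

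Next let $\eta_{r}:=\min_{\partial B_{r}}(\phi-W^{*})$, which is attained and strictly positive because $\phi-W^{*}$ is lower semicontinuous and positive on the compact sphere $\partial B_{r}$. Fix $\kappa\in(0,\min\{\eta_{r},\rho\})$ and set
\[
w(x,t):=
\begin{cases}
\min\{W(x,t),\,\phi(x,t)-\kappa\}, & (x,t)\in B_{r},\\[3pt]
W(x,t), & \text{otherwise.}
\end{cases}
\]
On $\partial B_{r}$ we have $\phi-\kappa>W^{*}\ge W$, and since $\{\phi-\kappa-W^{*}>0\}$ is open and contains the compact set $\partial B_{r}$, the same inequality holds on a neighbourhood of $\partial B_{r}$; thus $w=W$ there, $w$ is lower semicontinuous, and it coincides with $W$ off $B_{r}$. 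As the minimum of two supersolutions of \eqref{PE} on $B_{r}$ (the minimum of supersolutions of a degenerate elliptic equation is again a supersolution, and this localizes) and equal to the supersolution $W$ elsewhere, $w$ is a supersolution of \eqref{PE} on all of $\mathbb{R}^{N}\times(0,\infty)$. Moreover $w\ge\psi$ everywhere (inside $B_{r}$ because $W\ge\psi$ and $\phi-\kappa\ge\psi$, outside because $W\ge\psi$) and $w(\cdot,0)=W(\cdot,0)=u_{0}$ since $t_{0}-r>0$; hence $w\in\mathcal{A}$.

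Finally, pick a sequence $(x_{n},t_{n})\to(x_{0},t_{0})$ with $W(x_{n},t_{n})\to W^{*}(x_{0},t_{0})=\phi(x_{0},t_{0})$. For $n$ large, $(x_{n},t_{n})\in B_{r}$ and, by continuity of $\phi$, $\phi(x_{n},t_{n})-\kappa<\phi(x_{0},t_{0})-\tfrac{\kappa}{2}<W(x_{n},t_{n})$, so $w(x_{n},t_{n})=\phi(x_{n},t_{n})-\kappa<W(x_{n},t_{n})$, contradicting $W=\inf\mathcal{A}\le w$. Hence no such $(x_{0},t_{0},\phi)$ exists and $W$ is a subsolution to \eqref{ParabolicPb}; combined with Proposition \ref{eprop2} and the comparison principle (Theorem \ref{comp.pp}) this completes the proof of Theorem \ref{etheoremExistencePerron}. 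The delicate step is the bump construction itself — choosing $r$ and $\kappa$ so that $w$ is simultaneously a global supersolution of \eqref{PE}, stays above $\psi$, and keeps the initial datum $u_{0}$ — which is precisely where the lower semicontinuity of $\mathcal{L}_{*}$ and the strictness of the touching are used.
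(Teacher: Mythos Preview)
Your proof is correct and follows the same Perron bump strategy as the paper: assume the subsolution inequality fails at a touching point, build a competitor in $\mathcal{A}$ that dips strictly below $W$ nearby, and contradict the infimum definition. The only real difference is in the construction of the bump. The paper extracts the jet $(a_0,p_0,X_0)$ from the touching and builds an explicit quadratic $u_{\gamma,\sigma}$ with an added penalty $\sigma(|t-t_0|+|x-x_0|^2)$, then tunes $\sigma$ large so that $W\le u_{\gamma,\sigma}$ on an annulus; you instead keep the smooth test function $\phi$ itself, subtract a small constant $\kappa$, and use the \emph{strictness} of the minimum of $\phi-W^{*}$ to get $\phi-\kappa>W^{*}\ge W$ near $\partial B_r$. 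Both variants are standard, and yours is arguably cleaner since no extra parameter has to be chosen. You also make explicit the check $W^{*}(\cdot,0)\le u_0$ via the barrier $w^{+}$, which the paper's proof of this proposition does not spell out.
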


\begin{proof}
	Let $(x_{0},t_{0})\in \mathbb{R}^{N}\times (0,\infty)$ such that $W(x_{0},t_{0})-\psi(x_{0})\geq \gamma >0.$ We want to prove that, for every $(a,p,X)\in \mathcal{P}^{2,+}W^{*}(x_{0},t_{0})$, it holds that 
	\begin{equation*}
		a+\mathcal{L}_{*}(p,X)\leq 0.
	\end{equation*}
	We will argue by contradiction. Suppose that there exists  $(a_{0},p_{0},X_{0})\in \mathcal{P}^{2,+}W^{*}(x_{0},t_{0})$ such that 
	\begin{equation}
		\label{e11'} a_{0}+\mathcal{L}_{*}(p_{0},X_{0})\geq \gamma >0.
	\end{equation}
Our goal is to construct a supersolution $U$ to \eqref{PE} that is strictly smaller than $W$ at $(x_{0},t_{0})$. Notice that, by the definition of $W$
as the infimum of supersolutions, this leads to the desired contradiction. 
	
	First, we define
	\begin{align}
		u_{\mu,\sigma}(x,t)&:=W(x_{0},t_{0})-\frac{\gamma}{3}+ a_{0}(t-t_{0})+\langle p_{0},x-x_{0}\rangle\\& \qquad +\frac{1}{2}\langle X_{0}(x-x_{0}),x-x_{0}\rangle+\sigma(|t-t_{0}|+|x-x_{0}|^{2}).
	\end{align}	
	where $\mu, \sigma>0$ and they will be chosen later.
	Observe that, since $W(x_{0},t_{0})>\psi(x_{0})+\gamma$, we have that
	there is a $r_{0}$ depending on $\gamma$, $|a_{0}|$, $|p_{0}|$, $|X_{0}|$ and $\psi$ such that
	\begin{equation}
		W(x_{0},t_{0})-\frac{\mu}{3}+ a_{0}(t-t_{0})+\langle p_{0},x-x_{0}\rangle+\frac{1}{2}\langle X_{0}(x-x_{0}),x-x_{0}\rangle\geq\psi(x)+\frac{\gamma}{3}
	\end{equation}
	for every $(x,t)\in B_{r_{0}}(x_{0},t_{0})$ (here we use the assumption that $\psi$ is continuous). Moreover, thanks to the fact that $\sigma>0$, the above implies that
	\begin{equation}
		u_{\mu,\sigma}(x,t)\geq \psi(x)+\frac{\gamma}{3}\;\;\textrm{for all}\;\;(x,t)\in B_{r_{0}}(x_{0},t_{0}).
	\end{equation}
	In addition, by \eqref{e11'}, we have
	$$a_{0}+\mathcal{L}^{*}(p_{0},X_{0})\geq a_{0}+\mathcal{L}_{*}(p_{0},X_{0})\geq\gamma>0.$$ Then,
	there exists $0<r_{1}\leq r_{0}$ where $r_{1}$  does not depend on $\sigma$ such that  $u_{\mu,\sigma}$ is supersolution to $u_{t}+\mathcal{L}u=0$ in $B_{r_{1}}(x_{0},t_{0})$.
	
	On the other hand, we claim that we can choose $\sigma>0$ 
	large enough such that  
	\begin{equation}
		W(x,t)\leq u_{\mu,\sigma}(x,t)\qquad \;\textrm{for all}\;\;\frac{r_{0}}{2}\leq |(x,t)-(x_{0},t_{0})|\leq r_{0}.
	\end{equation}
	Considering 
	\begin{equation*}
		\tau:=\max\limits_{B_{r_{0}}(x_0,t_0)}W^{*}-\min\limits_{B_{r_{0}}(x_0,t_0)}W\geq 0,
	\end{equation*}
	we have that  $W(x,t)\leq W(x_{0},t_{0})+\tau$ and, as a consequence, 
	\begin{equation*}
	\begin{array}{l}
	\displaystyle
		W(x,t)\leq u_{\mu,\sigma}(x,t)+\frac{\gamma}{3}+\tau-a_{0}(t-t_{0})-\langle p_{0},x-x_{0}\rangle \\[8pt]
		\qquad \qquad \displaystyle -\frac{1}{2}\langle X_{0}(x-x_{0}),x-x_{0}\rangle-\sigma(|t-t_{0}|+|x-x_{0}|^{2})
		\end{array}
	\end{equation*}
	for every $(x,t)\in B_{r_{0}}(x_{0},t_0)$. Now, suppose that $r_{0}/2\leq |(x,t)-(x_0,t_0)|\leq r_0$.  Then, taking 
	\begin{equation*}
		\sigma= \frac{2(\gamma /3+\tau+ |a_{0}|r_0+|p_{0}|r_{0}+|X_{0}|r_{0}^{2}/2)}{r_{0}+r_{0}^{2}}
	\end{equation*}
	 it holds that 
	\begin{equation}
		W(x,t)\leq u_{\mu,\sigma}(x,t)+\frac{\gamma}{3}+\tau+ |a_{0}|r_0+|p_{0}|r_{0}+\frac{|X_{0}|}{2}r_{0}^{2}-\frac{\sigma}{2}(r_{0}+r_{0}^{2})=u_{\gamma,\sigma}(x,t).
	\end{equation}
	Then the claim is proven.

	Now, we define 
	\begin{equation}
		U(x,t):=\begin{cases}
			\min\{W(x,t),u_{\delta,\sigma}(x,t)\}\;\;&\textrm{if}\;\;|(x,t)-(x_{0},t_{0})|\leq r_{1},\\[8pt]
			W(x,t) \;\;&\textrm{otherwise.}
		\end{cases}
	\end{equation}
	Observe that, by Lemma \ref{elemma1} and the previous steps,  $U$ is a supersolution to $u_{t}+\mathcal{L}u=0$ in $\mathbb{R}^{N}\times (0,\infty)$ such that $U\geq \psi$ and $U(\cdot,0)=u_{0}(\cdot)$ (taking $r_{1}$ small enough such that $U(\cdot,0)=W(\cdot,0)$). That is, $U\in \mathcal{A}$ with $\mathcal{A}$ defined in \eqref{e10}. As a consequence, by the definition of $W$ in \eqref{e9} and using that $W$ is lower semicontinuous (see Proposition \ref{eprop2}), we get
	\begin{equation}
		W(x_{0},t_{0})\leq U_{*}(x_{0},t_{0}).
	\end{equation}
	 However,  
	\begin{align}
		U_{*}(x_{0},t_{0})&=\min \Big\{W(x_{0},t_{0}),u_{\gamma,\sigma}(x_{0},t_{0})\Big\}_{*}\\&=\min \Big\{W(x_{0},t_{0}),W(x_{0},t_{0})-\frac{\gamma}{3}\Big\}=W(x_{0},t_{0})-\frac{\gamma}{3}<W(x_{0},t_{0})
	\end{align}
	which is a contradiction with the previous inequality. Then,
	we conclude that $W$ is a subsolution to the obstacle problem \eqref{ParabolicPb}.
\end{proof}

Now we are ready to conclude the proof of Theorem \ref{etheoremExistencePerron}.

\begin{proof}[Proof of Theorem \ref{etheoremExistencePerron}.]
	By Proposition \ref{eprop2}, the function  $W$ is a viscosity supersolution of \eqref{ParabolicPb} such that $W(x,0)=u_{0}$ for all $x\in\mathbb{R}^{N}.$ On the other hand, by Proposition~\ref{eprop3'}, $W$ is a subsolution to~\eqref{ParabolicPb}. Therefore,  thanks to the Comparison Principle~\ref{comp.pp},  $W^{*}\leq W_{*}$ in $ \mathbb{R}^{N}\times [0,\infty)$. Hence, $W$ is a continuous function in $ \mathbb{R}^{N}\times [0,\infty)$ and $W$ is the unique solution to the obstacle problem \eqref{ParabolicPb}.
\end{proof}

\section{The limit of the value functions for the game is the solution
to the parabolic equation}\label{sect.GameAndEq}

In this section, we will obtain that the unique solution $u$ to the obstacle problem \eqref{ParabolicPb} is the locally uniform limit of the value functions $u^{\varepsilon}$ of the $\varepsilon$-game as $\varepsilon$ goes to zero, see Theorem  \ref{maintheorem.juego.conv}.
Our strategy is to prove that the functions 
\begin{equation} \label{subsuperviscositysols}
	\overline{u}(x,t) = \limsup_{\substack{\varepsilon \to 0^+ \\ (y,s) \to (x,t)}} u^\varepsilon(y,s), \quad \underline{u}(x,t) = \liminf_{\substack{\varepsilon \to 0^+ \\ (y,s) \to (x,t)}} u^\varepsilon(y,s),
\end{equation} 
are,  respectively, a viscosity subsolution and a viscosity supersolution to the problem \eqref{PE} with $\overline{u}(x,0) = \underline{u}(x,0) =u_{0}(x)$. Therefore, by the Comparison Principle, Theorem \ref{comp.pp}, we get that $\overline{u}\leq\underline{u}.$ Since by definition $\underline{u}\leq\overline{u}$, we then obtain that $\overline{u}=\underline{u}$. Hence, the limit of the family $u^{\varepsilon}$ exists and it is the unique solution $u$ to \eqref{PE}. Finally, as a consequence of the convergence
$u^{\varepsilon}\to u$, for each $t>0$ we will be able to approximate the set where $u(\cdot,t)$ is positive by the set where $u^{\varepsilon}(\cdot,t)$
is positive for $\varepsilon$ small enough, see Corollary \ref{cor1}. 

Notice that the final payoff is uniformly bounded,
$$
\sup_{x\in \mathbb{R}^N} \max \Big\{ |u_0 (x) |, |\psi_\eps (x) | \Big\} \leq C
$$
for a constant $C$ independent of $\eps$. Therefore, we
conclude that the value function of the game, $u^\eps$ is uniformly bounded. 

\begin{lemma} \label{lema.cota.uniforme}
There exists a constant $C$, independent of $\eps$, such that
\begin{equation} \label{cota.uniforme}
\sup_{(x,t)\in \mathbb{R}^N\times [0,\infty) } |u^\eps (x,t) |\leq C.
\end{equation}
\end{lemma}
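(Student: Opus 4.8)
The plan is to read the bound straight off the Dynamic Programming Principle \eqref{DPP}, the underlying point being that a play of the game can only ever produce, as its outcome, a value of $u_0$ or a value of $\psi_\eps$, and both families are bounded uniformly in $\eps$. The uniform bound on the terminal cost was already recorded in the paragraph preceding the statement; to be self-contained one checks it from \eqref{psi-eps}: on $K_\eps$ one has $\psi(x)\le\psi_\eps(x)\le\max\{\psi(x),C_\eps\}$, and on $\mathbb{R}^N\setminus K_\eps$ one has $\min\{\psi(x),-C_\eps\}\le\psi_\eps(x)\le\psi(x)$, so $\|\psi_\eps\|_\infty\le\|\psi\|_\infty+C_\eps\le\|\psi\|_\infty+1$ once $\eps$ is small enough that $C_\eps\le1$. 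Hence there is a constant $C:=\max\{\|u_0\|_\infty,\,\|\psi\|_\infty+1\}$, independent of $\eps$, with $\max\{|u_0(x)|,|\psi_\eps(x)|\}\le C$ for every $x\in\mathbb{R}^N$ and every small $\eps$.

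I would then prove $|u^\eps(x,t)|\le C$ on $\mathbb{R}^N\times[0,\infty)$ by induction on the number of rounds remaining, $n(t):=\lceil 2t/\eps^2\rceil$. If $t\le 0$, then \eqref{DPP} gives $u^\eps(x,t)=u_0(x)$ and $|u^\eps(x,t)|\le C$; this is the base case. For $t>0$, assume the bound holds at every space--time point whose remaining-round count is strictly smaller than $n(t)$. Since $\lceil 2(t-\tfrac12\eps^2)/\eps^2\rceil=n(t)-1$, the inductive hypothesis applies at each $(x+bv\eps,\,t-\tfrac12\eps^2)$ with $|v|=1$, $b=\pm1$. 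The first line of \eqref{DPP} then writes $u^\eps(x,t)$ as the $\max$ of $\psi_\eps(x)$ and $\sup_{|v|=1}\min_{b=\pm1}u^\eps(x+bv\eps,\,t-\tfrac12\eps^2)$; every number occurring there lies in $[-C,C]$, and taking $\max$, $\sup$ and $\min$ of numbers in $[-C,C]$ keeps one in $[-C,C]$, so $|u^\eps(x,t)|\le C$. As $C$ does not depend on $\eps$, this is exactly \eqref{cota.uniforme}.

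An equivalent, more game-theoretic phrasing of the same argument: for any pair of strategies of Paul and Carol the resulting play terminates after at most $\lceil 2t/\eps^2\rceil$ rounds (earlier, if Paul exercises the stopping rule), producing an outcome equal to $\psi_\eps(x_i)$ for some stopping round $i$ or to $u_0(x_n)$; in all cases this outcome lies in $[-C,C]$, hence so does the value $u^\eps(x,t)$, obtained by optimizing the outcome over the two players' strategies. I do not foresee any genuine obstacle in this lemma; the only item that deserves a moment's attention is the uniform-in-$\eps$ bound on $\psi_\eps$, which is immediate from \eqref{psi-eps} once one notes that $\psi_\eps$ is squeezed between $\psi$ and $\psi\pm C_\eps$ with $C_\eps\to 0$ as $\eps\to 0$.
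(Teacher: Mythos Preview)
Your proof is correct and follows essentially the same approach as the paper: the paper's entire argument is the sentence preceding the lemma, namely that the final payoff $\max\{|u_0|,|\psi_\eps|\}$ is uniformly bounded by some $C$ independent of $\eps$, and hence so is the value function. You have simply fleshed out both halves of this reasoning---the uniform bound on $\psi_\eps$ from \eqref{psi-eps} and the passage from payoff bound to value bound via induction on the DPP (or equivalently via the game-theoretic observation that every outcome is a value of $u_0$ or $\psi_\eps$)---with more detail than the paper provides.
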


 Next, we prove that the half-relaxed limits 
 of the family $u^\eps$, $\overline{u}$ and $\underline{u}$, given by \eqref{subsuperviscositysols},
 attain the initial condition.

\begin{proposition} \label{ppDatoInicial}
	Let $\overline{u}$ and $\underline{u}$ be defined as in \eqref{subsuperviscositysols}. Then  $$\overline{u}(x,0) = \underline{u}(x,0) =u_{0}(x), \qquad \mbox{ for every } x\in \mathbb{R}^N.$$
\end{proposition}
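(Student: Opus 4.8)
The plan is to prove the two inequalities $\overline{u}(x,0)\le u_0(x)$ and $\underline{u}(x,0)\ge u_0(x)$ separately, using that $u_0 \ge \psi_\varepsilon$ (so the obstacle never forces the value above $u_0$ near $t=0$) together with a barrier argument that controls how far the game position can travel in a short time. Since $\underline u \le \overline u$ always holds and both are easily seen to satisfy $\underline u(x,0),\overline u(x,0)\ge u_0(x)$ — Paul can simply stop immediately and collect $\psi_\varepsilon(x_i)$, or if time is already zero he collects $u_0$; more precisely, starting from $(y,s)$ with $s$ small, after $n=\lceil 2s/\varepsilon^2\rceil$ rounds the position has moved at most $n\varepsilon \le (2s/\varepsilon^2+1)\varepsilon$, which is \emph{not} automatically small, so the lower bound also needs the barrier — the crux is really the upper bound $\overline u(x,0)\le u_0(x)$.

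First I would fix $x_0 \in \mathbb{R}^N$ and build, for each $\eta>0$, a smooth spatial function $\Phi(x)$ of the form $\Phi(x) = u_0(x_0) + \eta + L|x-x_0|^2 + m|x-x_0|$ truncated/regularized so that $\Phi(x) \ge u_0(x)$ for all $x$ (possible since $u_0$ is bounded and continuous: choose $L,m$ large using the modulus of continuity of $u_0$ near $x_0$ and the uniform bound $|u_0|\le C$ far away) and $\Phi(x) \ge \psi_\varepsilon(x)$ for all $x$ and all small $\varepsilon$ (possible since $\psi_\varepsilon \le u_0 \le \Phi$). Then I would consider $\Psi(x,t) := \Phi(x) + \Lambda t$ for a large constant $\Lambda$. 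The key computation is that, because $D^2\Psi$ has bounded operator norm $\le C(L,m)$ (away from $x_0$; near $x_0$ the $|x-x_0|$ term is the delicate spot and one smooths it), one has $-\partial_t \Psi - \mathcal{L}(\nabla\Psi, D^2\Psi) \le -\Lambda + C(L,m) \le 0$ once $\Lambda$ is large enough, so $\Psi$ is a (smooth, classical hence viscosity) supersolution of \eqref{PE}; and $\Psi(\cdot,t)\ge \Phi \ge \psi_\varepsilon$, $\Psi(\cdot,0)=\Phi\ge u_0$, so $\Psi \in \mathcal{A}$.

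Next I would transfer this to the game level: I claim $u^\varepsilon(y,s) \le \Psi(y,s) + o(1)$ as $\varepsilon\to 0$, uniformly for $(y,s)$ near $(x_0,0)$. This follows from the DPP \eqref{DPP} by the standard one-step estimate — evaluating $\Psi$ (which is $C^{2,1}$) one gets, exactly as in the formal computation \eqref{DPP-22}–\eqref{DPP-55} in the introduction but now rigorously with Taylor's theorem with remainder, that
\[
\max\Big\{\psi_\varepsilon(x),\ \sup_{|v|=1}\min_{b=\pm1}\Psi\big(x+bv\varepsilon,t-\tfrac12\varepsilon^2\big)\Big\} \le \Psi(x,t) + C\varepsilon^3
\]
for all $(x,t)$ in a neighborhood, where $C$ depends on the $C^{2,1}$ norm of $\Psi$ there (the $\psi_\varepsilon$ branch is harmless since $\psi_\varepsilon \le \Phi \le \Psi$, and in the $\sup\min$ branch the supersolution inequality kills the leading term while the stopping-time/boundary terms are absorbed). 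Iterating this DPP inequality over the $n=\lceil 2s/\varepsilon^2\rceil$ rounds gives $u^\varepsilon(y,s) \le \Psi(y,s) + C\varepsilon^3 \cdot n \le \Psi(y,s) + C\varepsilon s \le \Psi(y,s) + C\varepsilon$, provided the whole game trajectory stays in the neighborhood where the estimate holds — which it does for $(y,s)$ close enough to $(x_0,0)$ since the position moves at most $n\varepsilon \approx 2s/\varepsilon$, hmm, that is again not small; so instead I keep the neighborhood fixed of radius $\rho$ and note the iteration only needs $C\varepsilon^3$ per step \emph{while inside}, and once outside we use the global bound $|u^\varepsilon|\le C$ from Lemma~\ref{lema.cota.uniforme} together with the fact that leaving a ball of radius $\rho$ requires time $\gtrsim \rho^2$, so for $s$ small this cannot happen. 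Then taking $\limsup$ as $\varepsilon\to 0$ and $(y,s)\to(x_0,0)$ yields $\overline u(x_0,0) \le \Psi(x_0,0) = \Phi(x_0) = u_0(x_0)+\eta$, and letting $\eta\to 0$ gives $\overline u(x_0,0)\le u_0(x_0)$.

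For the reverse inequality $\underline u(x_0,0)\ge u_0(x_0)$ I would run the symmetric argument with a smooth subsolution $\widetilde\Psi(x,t) = \widetilde\Phi(x) - \Lambda t$ where $\widetilde\Phi \le u_0$, $\widetilde\Phi(x_0) = u_0(x_0)-\eta$; the only extra subtlety is that a subsolution of the \emph{obstacle} problem need not bound the game value from below, but here we only use the plain PDE \eqref{PE} because we are estimating $u^\varepsilon$ via the DPP directly (Paul's stopping option only helps Paul, i.e. only pushes $u^\varepsilon$ \emph{up}, so dropping it still gives a lower DPP one-step estimate $\sup_{|v|}\min_b \widetilde\Psi(x+bv\varepsilon,\cdot) \ge \widetilde\Psi(x,t) - C\varepsilon^3$), and iterate as before. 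I expect the main obstacle to be exactly this control of the trajectory over $\approx \varepsilon^{-2}$ rounds: one must be careful that the per-step error $C\varepsilon^3$ stays summable and that the test function's derivative bounds hold on the (fixed-size, not shrinking) region actually visited before time $s$, which is handled by combining the short-time trapping estimate (a ball of radius $\rho$ cannot be exited in time less than $c\rho^2$, a consequence of the $|x_i - x_0| \le n\varepsilon$ bound being the \emph{worst} case but the typical/worst relevant case being that exiting needs $\ge \rho/\varepsilon$ steps hence time $\ge \rho\varepsilon/2$ — and since we send $\varepsilon\to 0$ after fixing $\rho$ and then $s\to 0$, for fixed small $s$ and small $\varepsilon$ no trajectory of length $n\varepsilon$ with $n = \lceil 2s/\varepsilon^2\rceil$... ) with the uniform bound from Lemma~\ref{lema.cota.uniforme}. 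I would isolate this trapping fact as a short sublemma before the main estimate.
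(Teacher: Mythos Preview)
Your barrier approach is salvageable, but the paper takes a completely different and much shorter route, and the part you flag as the ``main obstacle'' is in fact a non-issue.

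The paper's proof is purely game-theoretic. Fix the target point $z$. For the lower bound Paul never stops and plays $v_k\perp(x_{k-1}-z)$; then, whatever Carol does, $|x_k-z|^2=|x_0-z|^2+k\varepsilon^2$. For the upper bound Carol picks the sign so that $\langle b_k v_k,x_{k-1}-z\rangle\le 0$; then, whatever Paul does, $|x_k-z|^2\le|x_0-z|^2+k\varepsilon^2$. In either case every position visited in a game started at $(x_0,t)$ stays in the ball $B_{\sqrt{|x_0-z|^2+2t}}(z)$. Sending $(x_0,t)\to(z,0)$ and $\varepsilon\to 0$ this ball shrinks to $\{z\}$, and continuity of $u_0,\psi$ together with $\psi\le u_0$ gives $u_0(z)\le\underline u(z,0)\le\overline u(z,0)\le\max\{\psi(z),u_0(z)\}=u_0(z)$. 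This concentric strategy is exactly the trapping estimate you were looking for, obtained in one line, and it is the same tool the paper uses again in Section~\ref{sect.LongTimeBehaviour}.

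Your route can be completed, but the trajectory-localization worry is self-inflicted. Your $\Phi$ (with the $|x-x_0|$ term smoothed, say to $m\sqrt{|x-x_0|^2+\sigma^2}$) has $\|D^2\Phi\|_\infty<\infty$ globally, and then the one-step bound
\[
\sup_{|v|=1}\min_{b=\pm1}\Phi(x+bv\varepsilon)
\ \le\ \sup_{|v|=1}\tfrac12\big(\Phi(x+v\varepsilon)+\Phi(x-v\varepsilon)\big)
\ \le\ \Phi(x)+\tfrac12\varepsilon^2\|D^2\Phi\|_\infty
\]
holds for \emph{every} $x$, with no neighbourhood restriction and no $\varepsilon^3$ remainder. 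Taking $\Lambda\ge\|D^2\Phi\|_\infty$, a straight induction on the DPP gives $u^\varepsilon(y,s)\le\Phi(y)+\Lambda\lceil 2s/\varepsilon^2\rceil\tfrac{\varepsilon^2}{2}$ for all $(y,s)$, no tracking of trajectories needed; the lower bound is symmetric (Paul plays $v\perp\nabla\tilde\Phi(x_{k-1})$). So the ``trapping sublemma'' and the discussion of how far the position can travel in $n$ rounds are unnecessary once you notice your barrier is globally $C^2$ with bounded Hessian.
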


\begin{proof} We give strategies for both Paul and Carol that will yield the two inequalities needed for the proof. 
	
	\textsc{Paul's strategy.}
	In this strategy Paul will never stop playing and collect the amount given by the obstacle. Fix $z \in \RR^N$. Paul chooses vectors $v_k$ perpendicular to $x_{k-1}-z$ for $k = 1, \cdots n$. We let Carol choose the sign freely. This implies that, independently of Carol's choice,
	\begin{equation}
		|x_n - x_0|^2 = |x_0 - z|^2 + n \varepsilon^2 .
	\end{equation}
	Let $t = n\varepsilon^2/2$. Then, we have that 
	\begin{equation}
		|x_n - x_0| = \sqrt{|x_0 - z|^2 + 2t}.
	\end{equation}
	Since Paul has been choosing the vectors, we get a bound from below 
	\begin{equation}
		u_0(x_n) \leq u^\varepsilon (x_0, t), \qquad \mbox{ with }
		|x_n - x_0| = \sqrt{|x_0 - z|^2 + 2t}.
	\end{equation}
	
	\textsc{Carol's strategy.}
	Now we let Paul choose freely the possibility to stop and obtain the payoff given by the obstacle and the direction he wishes to point to, but Carol chooses the sign $b_k$ for $k = 1,\cdots, n$ so that each $x_k$ stays as near as possible to $x_0$. The furthest that each position $\widetilde{x}_k$ can be from $x_0$ is, following the previous computation, $\sqrt{|x_0 - z|^2 + 2t}$. So we get
	\begin{equation}
		|\widetilde{x}_n - x_0| \leq \sqrt{|x_0 - z|^2 + 2t}.
	\end{equation}
	Notice that Paul could choose to stop at the obstacle at any moment. Since we selected a particular strategy for Carol, what we get in this case is that 
	\begin{equation}
		u^\varepsilon(x_0, t) \leq \max\bigg\{\psi(x_0), \max\Big\{\psi(\widetilde{x}_1), \cdots \max\big\{\psi(\widetilde{x}_{n-1}), u_0(\widetilde{x}_n) \big\}\Big\}\bigg\}.
	\end{equation} 
	
	\textsc{Attaining the initial condition.}
	From our previous computations we have that
	\begin{equation}
		u_0(x_n) \leq u^\varepsilon(x_0, t) \leq \max\bigg\{\psi(x_0), \max\Big\{\psi(\widetilde{x}_1), \cdots \max\big\{\psi(\widetilde{x}_{n-1}), u_0(\widetilde{x}_n) \big\}\Big\}\bigg\}
	\end{equation}
	with 
	\begin{equation}
		|x_n-z| = \sqrt{|x_0 - z|^2 + 2t}, \qquad  |\widetilde{x}_k-z| \leq \sqrt{|x_0 - z|^2 + 2t}.
	\end{equation}
	for $k = 1,\cdots n$. Now, taking the $\limsup$ and $\liminf$ when $\varepsilon \to 0$, $x_0\to z$ and $t\to 0$, using the continuity of $u_0$ and $\psi$ the we get 
	\begin{equation}
		u_0(z) \leq \underline{u}(z,0) \leq \overline{u}(z,0) \leq \max\big\{\psi(z), u_0(z) \big\}
	\end{equation}
	To conclude, observe that $u_0 (z)\geq \psi(z)$ for every $z\in \RR^N$. So, we have obtained that $$\underline{u}(z,0) = \overline{u}(z,0) = u_0(z)$$ for an arbitrary $z\in \RR^N$, that is what we wanted to prove.
\end{proof}

\begin{remark} {\rm
When $u_0$ is a Lipschitz function, from the previous estimates
in the proof of Proposition \ref{ppDatoInicial}, we get that
\begin{equation}
		u_0(x_n) \leq u^\varepsilon(x_0, t) \leq \max\bigg\{u_0(x_0), u_0(\widetilde{x}_1), \cdots u_0(\widetilde{x}_{n-1}), u_0(\widetilde{x}_n) \bigg\},
	\end{equation}
	with
	\begin{equation}
		|x_n-z| = \sqrt{|x_0 - z|^2 + 2t}, \qquad  |\widetilde{x}_k-z| \leq \sqrt{|x_0 - z|^2 + 2t}.
	\end{equation}
	for $k = 1,\cdots n$. Hence, using that $u_0$ is Lipschitz we obtain
	an explicit bound of the form
	$$
	|u_0(z) - u^\varepsilon(x_0, t)| \leq 
	C (|x_0 - z| + \sqrt{2t}).	
	$$
	}
\end{remark}

Now, let us show that the half-relaxed limits are sub and supersolutions to our parabolic obstacle problem. Recall that $\psi$ is a continuous and bounded function such that 
\[K= \{x\in \RR^N : \psi(x) > 0\},\]
and that we have characterized the enlarged set $K_\varepsilon = K + {B_{N\varepsilon}} (0)$ in a similar way, that is, there exists a function $\psi_\varepsilon$, such that
we have 
\[K_\varepsilon = \{x\in \RR^N : \psi_\varepsilon(x) > 0\}\]
and $\psi_\varepsilon \rightarrow \psi$ uniformly when $\varepsilon \rightarrow 0$.

\begin{theorem} \label{11}
	The functions $\overline{u}$ and $\underline{u}$  defined in  \eqref{subsuperviscositysols} are respectively viscosity subsolution and supersolutions to the obstacle problem \eqref{ParabolicPb}.
\end{theorem}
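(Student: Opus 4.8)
The plan is to verify, separately, the three ingredients in the definition of a sub/supersolution of \eqref{ParabolicPb} for $\overline{u}$ and $\underline{u}$: the obstacle constraint, the attainment of the initial datum, and the differential inequality at interior points of $\mathbb{R}^N\times(0,\infty)$. The initial condition $\overline{u}(\cdot,0)=\underline{u}(\cdot,0)=u_0$ is exactly the content of Proposition \ref{ppDatoInicial}. The obstacle constraint is immediate: the stopping rule forces $u^\eps\geq\psi_\eps$ pointwise, and $\psi_\eps\to\psi$ uniformly as $\eps\to0$, so passing to the half-relaxed limits in \eqref{subsuperviscositysols} gives $\underline{u}\geq\psi$ and $\overline{u}\geq\psi$ on all of $\mathbb{R}^N\times[0,\infty)$. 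It therefore remains to establish the differential inequalities, and for this I would combine the Dynamic Programming Principle \eqref{DPP} with the classical machinery of half-relaxed limits.

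For the supersolution property of $\underline{u}$: I would take $\phi\in C^{2,1}$ touching $\underline{u}$ strictly from below at $(x_0,t_0)$ and invoke the standard lemma on half-relaxed limits (see, e.g., \cite{CIL}) to produce $\eps_j\to0$ and $(x_j,t_j)\to(x_0,t_0)$ at which $u^{\eps_j}-\phi$ attains a local minimum and $u^{\eps_j}(x_j,t_j)\to\underline{u}(x_0,t_0)$. Since $N\geq2$, for each $j$ I can choose a unit vector $v_j\perp\nabla\phi(x_j,t_j)$. Discarding the obstacle term in \eqref{DPP} (which only makes $u^{\eps_j}(x_j,t_j)$ larger) and using $u^{\eps_j}\geq\phi+(u^{\eps_j}-\phi)(x_j,t_j)$ near $(x_j,t_j)$, one gets $\phi(x_j,t_j)\geq\min_{b=\pm1}\phi(x_j+bv\eps_j,t_j-\tfrac12\eps_j^2)$ for every unit $v\perp\nabla\phi(x_j,t_j)$; the $C^{2,1}$ Taylor expansion, in which the first-order term drops because $v\perp\nabla\phi(x_j,t_j)$ (the rigorous version of the passage \eqref{DPP-44}--\eqref{DPP-55}), gives after dividing by $\tfrac12\eps_j^2$ and taking the supremum over such $v$
\begin{equation*}
0\geq\sup_{\substack{|v|=1\\ v\perp\nabla\phi(x_j,t_j)}}\langle D^2\phi(x_j,t_j)v,v\rangle-\partial_t\phi(x_j,t_j)+o(1)=-\mathcal{L}\big(\nabla\phi(x_j,t_j),D^2\phi(x_j,t_j)\big)-\partial_t\phi(x_j,t_j)+o(1).
\end{equation*}
Letting $j\to\infty$ and using $\limsup_j\mathcal{L}(\nabla\phi(x_j,t_j),D^2\phi(x_j,t_j))\leq\mathcal{L}^*(\nabla\phi(x_0,t_0),D^2\phi(x_0,t_0))$ by definition of the u.s.c.\ envelope (and $\mathcal{L}=\mathcal{L}^*$ at non-vanishing gradients, by Proposition \ref{eprop1}) yields $\partial_t\phi(x_0,t_0)+\mathcal{L}^*(\nabla\phi(x_0,t_0),D^2\phi(x_0,t_0))\geq0$, which is the required inequality.

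For the subsolution property of $\overline{u}$: let $\phi\in C^{2,1}$ touch $\overline{u}$ strictly from above at $(x_0,t_0)$. Since $\overline{u}\geq\psi$, if $\overline{u}(x_0,t_0)=\psi(x_0)$ the obstacle term $\psi(x_0)-\overline{u}(x_0,t_0)$ already vanishes and nothing needs to be proved, so I may assume $\overline{u}(x_0,t_0)>\psi(x_0)$. Extracting $\eps_j\to0$, $(x_j,t_j)\to(x_0,t_0)$ with $u^{\eps_j}-\phi$ having a local maximum at $(x_j,t_j)$ and $u^{\eps_j}(x_j,t_j)\to\overline{u}(x_0,t_0)$, and using $\psi_{\eps_j}(x_j)\to\psi(x_0)<\overline{u}(x_0,t_0)$, for $j$ large the maximum in \eqref{DPP} is realized by its second argument, giving $\phi(x_j,t_j)\leq\sup_{|v|=1}\min_{b=\pm1}\phi(x_j+bv\eps_j,t_j-\tfrac12\eps_j^2)$. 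Taking a maximizer $v_j$ (attained by compactness) and Taylor expanding, the bound $0\leq-\eps_j|\langle\nabla\phi(x_j,t_j),v_j\rangle|+\tfrac12\eps_j^2\langle D^2\phi(x_j,t_j)v_j,v_j\rangle-\tfrac12\eps_j^2\partial_t\phi(x_j,t_j)+o(\eps_j^2)$ forces $|\langle\nabla\phi(x_j,t_j),v_j\rangle|\leq C\eps_j$, so along a subsequence $v_j\to v_0$ with $|v_0|=1$ and $v_0\perp\nabla\phi(x_0,t_0)$. Dividing by $\tfrac12\eps_j^2$, dropping the non-positive first-order term, and letting $j\to\infty$ gives $\partial_t\phi(x_0,t_0)\leq\langle D^2\phi(x_0,t_0)v_0,v_0\rangle$; this is $\leq-\mathcal{L}_*(\nabla\phi(x_0,t_0),D^2\phi(x_0,t_0))$ when $\nabla\phi(x_0,t_0)\neq0$ (continuity of $\mathcal{L}$), and $\leq\Lambda(D^2\phi(x_0,t_0))=-\mathcal{L}_*(0,D^2\phi(x_0,t_0))$ when $\nabla\phi(x_0,t_0)=0$ (Proposition \ref{eprop1} and the corollary pinning $\mathcal{L}_*(0,X)=\lambda(-X)$). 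Either way $\partial_t\phi(x_0,t_0)+\mathcal{L}_*(\nabla\phi(x_0,t_0),D^2\phi(x_0,t_0))\leq0$.

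The hard part will be the book-keeping around the degeneracy of $\mathcal{L}$ at $\nabla\phi=0$: one must keep precise track of whether $\mathcal{L}_*$ or $\mathcal{L}^*$ appears and lean on the envelope estimates $\lambda(-X)\leq\mathcal{L}_*(p,X)\leq\mathcal{L}^*(p,X)\leq\Lambda(-X)$ together with the identification $\mathcal{L}_*(0,X)=\lambda(-X)$. The other genuinely new point, compared with the formal derivation in the introduction, is the rigorous justification that the near-maximizing directions $v_j$ become asymptotically orthogonal to $\nabla\phi(x_0,t_0)$; this rests on the observation that a component of $v_j$ along $\nabla\phi$ of size much larger than $\eps_j$ would make the first-order term $-\tfrac{2}{\eps_j}|\langle\nabla\phi(x_j,t_j),v_j\rangle|$ dominant and negative after dividing by $\tfrac12\eps_j^2$, contradicting the sign of the inequality.
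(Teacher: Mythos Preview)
Your proposal is correct and follows essentially the same route as the paper: pass to half-relaxed limits, feed a test function into the DPP at approximate (or exact) extrema, Taylor-expand, and control the degeneracy at $\nabla\phi=0$ via the envelopes of $\mathcal{L}$. The only noticeable variation is in the supersolution half: the paper fixes the limiting direction $v_0\perp\nabla\phi(x_0,t_0)$ first and then projects it back onto $\{\nabla\phi(x_\eps,t_\eps-\tfrac12\eps^2)\}^\perp$ through an explicit correction $w_\eps$, whereas you work at level $j$ with arbitrary $v\perp\nabla\phi(x_j,t_j)$ and close via the single envelope inequality $\limsup_j\mathcal{L}(p_j,X_j)\leq\mathcal{L}^*(p_0,X_0)$; your variant is slightly more direct and sidesteps the projection construction.
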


\begin{proof} First, remark that
$\overline{u}$ is upper semicontinuous and $\underline{u}$ is lower semicontinuous. 
	We know that $u^\varepsilon(x,t) \geq \psi(x)$ thanks to the DPP. This implies that both $\overline{u}$ and $\underline{u}$ satisfy the same inequality. 
	
	Let us write the Dynamic Programming Principle of our parabolic game as
	\[0 = \max \left\{ \psi_\varepsilon (x) - u^\varepsilon(x,t) , \sup_{|v|=1} \min_{b = \pm 1} \left\{  u^\varepsilon(x + bv\varepsilon, t- \frac12\varepsilon^2 ) - u^\varepsilon(x,t) \right\}  \right\}. \]
	The use of this equation is the key to pass to the limit
	in the viscosity sense.
	
	Let us show that $\overline{u}$ is a viscosity subsolution. 
	To this end, 
	let $\phi \in C^{2,1}(\mathbb{R}^N \times \mathbb{R})$ be a function such as  $\phi-\overline{u}$ achieves a strict minimum at $(x_{0},t_{0})$ in $\overline{B}_{R_0}(x_0,t_0)$. We want to prove that 
	\[\min\Big\{ \partial_t \phi(x_0,t_0)  + \mathcal{L}(\phi(x_0,t_0)) , \overline{u}(x_0,t_0) - \psi(x_0) \Big\} \leq  0.\]
	For every $\varepsilon>0$, we can choose $(x_\varepsilon, t_\varepsilon) \in \overline{B}_{R_0}(x_0,t_0)$ such that 
	\[\phi(x_\varepsilon, t_\varepsilon) - u^\varepsilon(x_\varepsilon, t_\varepsilon) - \varepsilon^3\leq \inf_{\overline{B}_{R_0}(x_0,t_0)}\{\phi(x,t)-u^\varepsilon(x,t)\} \leq \phi(x,t) - u^\varepsilon(x,t), \]
	for every $(x,t) \in \overline{B}_{R_0}(x_0,t_0)$.
	As $\overline{B}_{R_0}(x_0,t_0)$ is a compact set, $(x_\varepsilon, t_\varepsilon)$ converge, up to a subsequence, to some $(\overline{x}, \overline{t})$ as $\varepsilon \to 0^+$. We have that $(\overline{x}, \overline{t}) = (x_0, t_0)$, since $(x_0,t_0)$ is the only maximum of $(\overline{u}-\phi)$ in $B_{R_0}(x_0, t_0)$ by construction. Rearranging the previous expression, we get
	\[u^\varepsilon(y,s)  - u^\varepsilon(x_\varepsilon, t_\varepsilon) \leq \phi(y,s) - \phi(x_\varepsilon, t_\varepsilon)  + \varepsilon^3,  \]
	for every $(y,s) \in \overline{B}_{R_0}(x_0,t_0)$.
	
	Evaluating the DPP expression at $(x,t) = (x_\varepsilon, t_\varepsilon)$ we obtain 
	\[\begin{aligned}
		0 &= \max \left\{ \psi_\varepsilon (x_\varepsilon) - u^\varepsilon(x_\varepsilon,t_\varepsilon) , \sup_{|v|=1} \min_{b = \pm 1} \left\{  u^\varepsilon(x_\varepsilon + bv\varepsilon, t_\varepsilon - 
		\frac12\varepsilon^2 ) - u^\varepsilon(x_\varepsilon,t_\varepsilon) \right\}  \right\} \\
		&\leq \max \left\{ \psi_\varepsilon (x_\varepsilon) - u^\varepsilon(x_\varepsilon,t_\varepsilon) , \sup_{|v|=1} \min_{b = \pm 1} \left\{  \phi(x_\varepsilon + bv\varepsilon, t_\varepsilon - \frac12 \varepsilon^2 ) - \phi(x_\varepsilon,t_\varepsilon) \right\}  + \varepsilon^3 \right\} .
	\end{aligned}\]
	This says that either
	\[\psi_\varepsilon(x_\varepsilon) \geq u^\varepsilon(x_\varepsilon, t_\varepsilon) \quad \text{or} \quad \sup_{|v|=1} \min_{b = \pm 1} \left\{  \phi(x_\varepsilon + bv\varepsilon, t_\varepsilon - \frac12 \varepsilon^2 ) - \phi(x_\varepsilon,t_\varepsilon) \right\}  + \varepsilon^3 \geq 0. \]
	Since $u^\varepsilon(x,t) \geq \psi_\varepsilon(x)$, we can rewrite this as 
	\[\psi_\varepsilon(x_\varepsilon) = u^\varepsilon(x_\varepsilon, t_\varepsilon) \quad \text{or} \quad \sup_{|v|=1} \min_{b = \pm 1} \left\{  \phi(x_\varepsilon + bv\varepsilon, t_\varepsilon - \frac12 \varepsilon^2 ) - \phi(x_\varepsilon,t_\varepsilon) \right\}  + \varepsilon^3 \geq 0, \]
	which, dividing by $\varepsilon^2$ the inequality, is equivalent to 
	\[\psi_\varepsilon(x_\varepsilon) = u^\varepsilon(x_\varepsilon, t_\varepsilon), \quad \text{or} \quad \sup_{|v|=1} \min_{b = \pm 1} \left\{ \frac{ \phi(x_\varepsilon + bv\varepsilon, t_\varepsilon - \frac12\varepsilon^2 ) - \phi(x_\varepsilon,t_\varepsilon) }{\varepsilon^2} \right\}  + \varepsilon \geq 0, \]
	that is, 
	\[0 \leq \max \left\{ \psi_\varepsilon (x_\varepsilon) - u^\varepsilon(x_\varepsilon,t_\varepsilon) , \sup_{|v|=1} \min_{b = \pm 1} \left\{ \frac{ \phi(x_\varepsilon + bv\varepsilon, t_\varepsilon - \frac12 \varepsilon^2 ) - \phi(x_\varepsilon,t_\varepsilon)}{\varepsilon^2} \right\}  + \varepsilon \right\}. \]
	
	Now, we divide the proof into two cases. First, assume that 
	there is a sequence $(x_\varepsilon, t_\varepsilon)$ that satisfies that $\psi_\varepsilon(x_\varepsilon) = u^\varepsilon(x_\varepsilon, t_\varepsilon)$. In this case, we have that $\psi_\varepsilon(x_\varepsilon) = u^\varepsilon(x_\varepsilon, t_\varepsilon)$ for a sequence $\varepsilon \to 0$. Thus, passing to the limit in
	\[u^\varepsilon(y,s)  \leq
	\psi (x_\varepsilon) + \phi(y,s) - \phi(x_\varepsilon, t_\varepsilon)  + \varepsilon^3,  \]
that holds 	for every $(y,s) \in \overline{B}_{R_0}(x_0,t_0)$,
	we obtain $\psi(x_0) = \bar u(x_0, t_0)$. 
	
	For the second case, we can assume that $\psi_\varepsilon(x_\varepsilon, t_\varepsilon)< u^\varepsilon(x_\varepsilon, t_\varepsilon)$ and, therefore, we have
	\begin{equation} \label{subepsilon}
		\sup_{|v|=1} \min_{b = \pm 1} \left\{ \frac{ \phi(x_\varepsilon + bv\varepsilon, t_\varepsilon - \frac12\varepsilon^2 ) - \phi(x_\varepsilon,t_\varepsilon)}{\varepsilon^2} \right\}  + \varepsilon \geq 0.
	\end{equation}
	Let us analyze this expression. We have
	\[\begin{aligned}
		&\phi \Big(x_\varepsilon +bv\varepsilon, t_\varepsilon - \frac12 \varepsilon^2 \Big) - \phi(x_\varepsilon,t_\varepsilon)   \\
		& = \phi \Big(x_\varepsilon, t_\varepsilon - \frac12 \varepsilon^2\Big) - \phi(x_\varepsilon,t_\varepsilon) + \phi\Big(x_\varepsilon + bv\varepsilon, t_\varepsilon - \frac12\varepsilon^2 \Big) - \phi \Big(x_\varepsilon, t_\varepsilon - \frac12 \varepsilon^2 \Big).
	\end{aligned}\]
	The first couple of terms can be estimated with a first order Taylor expansion
	\[\begin{aligned}
		&\phi \Big(x_\varepsilon, t_\varepsilon - \frac12\varepsilon^2 \Big) - \phi(x_\varepsilon, t_\varepsilon ) = - \frac12 \varepsilon^2\partial_t \phi(x_\varepsilon,t_\varepsilon ) + o_\eps (\varepsilon^2),
	\end{aligned}\]
	while the second couple requires a second order Taylor expansion
	\[\begin{aligned}
		&\phi \Big(x_\varepsilon + bv\varepsilon, t_\varepsilon - \frac12 \varepsilon^2 \Big) - \phi(x_\varepsilon,t_\varepsilon - \frac12\varepsilon^2) \\
		&= b \varepsilon \Big\langle \nabla \phi \Big(x_\varepsilon, t_\varepsilon- \frac12\varepsilon^2\Big) , v \Big\rangle + \frac12 \varepsilon^2 \langle D^2\phi(x_\varepsilon, t_\varepsilon- \frac12\varepsilon^2)v,v \rangle + o_\varepsilon(\varepsilon^2).
	\end{aligned}\]
	Choose $v_\varepsilon \in \mathbb{S}^{N-1}$ such that 
	\[\min_{b = \pm 1} \left\{ \frac{ \phi(x_\varepsilon +bv_\varepsilon\varepsilon, t_\varepsilon - \frac12 \varepsilon^2 ) - \phi(x_\varepsilon,t_\varepsilon)}{\varepsilon^2} \right\} = \sup_{|v|=1} \min_{b = \pm 1} \left\{ \frac{ \phi(x_\varepsilon + bv\varepsilon, t_\varepsilon - \frac12\varepsilon^2 ) - \phi(x_\varepsilon,t_\varepsilon)}{\varepsilon^2} \right\}, \]
such a $v_\varepsilon \in \mathbb{S}^{N-1}$ exists due to the continuity of $\phi$. Then,
	\[\begin{aligned}
		&\sup_{|v|=1} \min_{b = \pm 1} \left\{ \frac{ \phi(x_\varepsilon + bv\varepsilon, t_\varepsilon - \frac12 \varepsilon^2 ) - \phi(x_\varepsilon,t_\varepsilon)}{\varepsilon^2} \right\} \\
		&= - \frac12\partial_t \phi(x_\varepsilon,t_\varepsilon) + \frac12\langle D^2\phi(x_\varepsilon, t_\varepsilon- \frac12\varepsilon^2)v_\varepsilon,v_\varepsilon \rangle + \min_{b = \pm 1} \left\{\frac{b}{\varepsilon} \Big\langle \nabla \phi \Big(x_\varepsilon, t_\varepsilon- \frac12 \varepsilon^2\Big) , v_\varepsilon \Big\rangle \right\} + o_\varepsilon(1).
	\end{aligned}\]
	
	Since $|v_\varepsilon|=1$, up to a subsequence, we can suppose that $v_\varepsilon \to v_0$ for some $v_0 \in \mathbb{S}^{N-1}$ when $\varepsilon \to 0^+$. This limit direction $v_0$ is going to satisfy that $v_0 \perp \nabla \phi (x_0, t_0)$. If it did not, for $\varepsilon$  small enough we have that
	\[c\langle \nabla \phi(x_0, t_0) , v_0\rangle \leq \Big\langle \nabla \phi \Big(x_\varepsilon, t_\varepsilon- \frac12 \varepsilon^2\Big) , v_\varepsilon \Big\rangle \leq C\langle \nabla \phi(x_0, t_0) , v_0\rangle \]
	for two positive constants $c,C>0$ independent of $\varepsilon$. But if this was the case and, for instance, $\langle \nabla \phi(x_0, t_0) , v_0\rangle>0$, we would have that 
	\[\lim_{\varepsilon \to 0^+} \min_{b = \pm 1} \left\{\frac{b}{\varepsilon} \Big\langle \nabla \phi \Big(x_\varepsilon, t_\varepsilon- \frac12\varepsilon^2\Big) , v_\varepsilon \Big\rangle \right\} \leq \lim_{\varepsilon \to 0^+} -c\frac{1}{\varepsilon} \langle \nabla \phi(x_0, t_0) , v_0\rangle = -\infty . \]
	Assuming that $\langle \nabla \phi(x_0, t_0) , v_0\rangle < 0$ we arrive to the same conclusion (since we can choose the opposite sign, $b=+1$). This contradicts \eqref{subepsilon}. 
	
	With the perpendicularity condition in mind, $\langle \nabla \phi(x_0, t_0) , v_0\rangle = 0$, and taking into account that 
	\[\min_{b = \pm 1}\left\{\frac{b}{\varepsilon} \Big\langle \nabla \phi \Big(x_\varepsilon, t_\varepsilon- \frac12\varepsilon^2\Big) , v_\varepsilon \Big\rangle \right\} \leq 0,\]
	we can take limits and obtain
	\[0 \leq -\frac12\partial_t \phi(x_0,t_0) + \frac12\langle D^2\phi(x_0, t_0)v_0,v_0 \rangle \leq  - \frac12\partial_t \phi(x_0,t_0)  + \sup_{\substack{|v|=1 \\ v \perp \nabla \phi(x_0,t_0)}} \frac12\langle D^2\phi(x_0, t_0)v,v \rangle.  \]
	Hence, since $\overline{u}(\cdot,0)= u_{0}(\cdot)$ (see Proposition \ref{ppDatoInicial}), we have that $\overline{u}$ is a viscosity subsolution for the parabolic obstacle problem, \eqref{ParabolicPb}. 
	
	To show that $\underline{u}$ is a supersolution, the structure of the proof remains basically the same (with the appropriate changes in the inequalities), but the last step in this case is simpler, since we arrive to
	\[\begin{aligned}
		& 0 \geq \sup_{|v|=1} \min_{b = \pm 1} \left\{ \frac{ \phi(x_\varepsilon + bv\varepsilon, t_\varepsilon - \frac12\varepsilon^2 ) - \phi(x_\varepsilon,t_\varepsilon)}{\varepsilon^2} \right\}  \\
		& \qquad \geq \sup_{\substack{|v|=1 \\ v \perp \nabla \phi(x_\varepsilon,t_\varepsilon - \frac12\varepsilon^2)}} \min_{b = \pm 1} \left\{ \frac{ \phi(x_\varepsilon + bv\varepsilon, t_\varepsilon - \frac12\varepsilon^2 ) - \phi(x_\varepsilon,t_\varepsilon)}{\varepsilon^2} \right\} \\
		&\qquad = -\frac12 \partial_t \phi(x_\varepsilon,t_\varepsilon) +  \sup_{\substack{|v|=1 \\ v \perp \nabla \phi(x_\varepsilon,t_\varepsilon - \frac12\varepsilon^2)}} \Big\{ \frac12 \langle D^2\phi(x_\varepsilon, t_\varepsilon- \frac12 \varepsilon^2)v,v \rangle  + o_\varepsilon(1) \Big\}.
	\end{aligned}\]
	Choose $v_0$ such that 
	\[\langle D^2\phi(x_0, t_0)v_0,v_0 \rangle = \sup_{\substack{|v|=1 \\ v \perp \nabla \phi(x_0,t_0)}}  \langle D^2\phi(x_0, t_0)v,v \rangle. \]  
	Define the vector $w_\varepsilon$ as
	\[w_\varepsilon = \left\langle v_0, \nabla \phi\left(x_\varepsilon, t_\varepsilon - \frac12 \varepsilon^2\right) \right\rangle \frac{\nabla \phi\left(x_\varepsilon, t_\varepsilon - \frac12 \varepsilon^2\right)}{\left|\nabla \phi\left(x_\varepsilon, t_\varepsilon - \frac12 \varepsilon^2\right)\right|^2}  \]
	if $\nabla \phi\left(x_\varepsilon, t_\varepsilon - \frac12 \varepsilon^2\right)  \neq 0$, and $w_\varepsilon = 0$
	otherwise. Observe that $$(v_0 - w_\varepsilon) \perp \nabla \phi\left(x_\varepsilon, t_\varepsilon - \frac12 \varepsilon^2\right).$$ With this in mind,  we get
	\[\begin{aligned}
		 & \sup_{\substack{|v|=1 \\ v \perp \nabla \phi(x_\varepsilon,t_\varepsilon - \frac12 \varepsilon^2)}} \Big\{ \frac12 \Big\langle D^2\phi \Big(x_\varepsilon, t_\varepsilon- \frac12\varepsilon^2\Big)v,v \Big\rangle + o_\varepsilon(1)\Big\} \\
		 & \qquad \geq \left\langle \frac12 D^2\phi\Big(x_\varepsilon, t_\varepsilon- \frac12 \varepsilon^2\Big)\frac{v_0-w_\varepsilon}{\|v_0-w_\varepsilon\|}, \frac{v_0-w_\varepsilon}{\|v_0-w_\varepsilon\|} \right\rangle + o_\varepsilon(1) \\
		&\qquad \qquad \to \frac12 \langle D^2\phi(x_0, t_0)v_0,v_0 \rangle 
		= \sup_{\substack{|v|=1 \\ v \perp \nabla \phi(x_0,t_0)}} \frac12  \langle D^2\phi(x_0, t_0)v,v \rangle,
	\end{aligned}\]
	since $w_\varepsilon \to 0$. Therefore, we found what we wanted, that is,
	\[
		 0 \geq 
		  - \frac12 \partial_t \phi(x_0 ,t_0) +  \sup_{\substack{|v|=1 \\ v \perp \nabla \phi(x_0,t_0 )}} \Big\{ \frac12 \langle D^2\phi(x_0, t_0)v,v \rangle \Big\}.
\] 
	Thus, since  $\underline{u}(\cdot,0)= u_{0}(\cdot)$ (see Proposition~\ref{ppDatoInicial}) and that we have $\underline{u}\geq \psi$, we obtain that $\underline{u}$ is a viscosity supersolution to \eqref{ParabolicPb}.
\end{proof}

Our aim is to establish the convergence of the functions $u^{\varepsilon}$ given by the deterministic game to the solution to the parabolic problem \eqref{ParabolicPb}. Having proved that the the half relaxed limits satisfy that $\overline{u}$ is a viscosity subsolution and $\underline u$ is a viscosity supersolution to \eqref{ParabolicPb}, our next goal is to prove that   $\overline u$ and $\underline u$ coincide.  We will achieve this using the Comparison Principle, Theorem \ref{comp.pp}.

\begin{proof}[Proof of Theorem \ref{maintheorem.juego.conv} ] 
	Consider the upper semicontinuous function $\overline{u}$ and  lower semicontinuous function $\underline{u}$ defined by \eqref{subsuperviscositysols}. 
	
	On the one hand, by definition, we have that $\underline{u}(x,t)\leq \overline{u}(x,t)$  for all $(x,t)\in\mathbb{R}^{N}\times [0,\infty)$.
	
	By Proposition \ref{11}, $\overline{u}$ is subsolution and $\underline{u}$ is a supersolution of problem \eqref{ParabolicPb}.  Then, $\underline{u}\geq \overline{u}$  in $ \mathbb{R}^{N}\times [0,\infty)$ due to the Comparison Principle, Theorem \ref{comp.pp}. Therefore, we 
	conclude that $$\overline{u}\equiv\underline{u}$$ which implies that $u:=\overline{u}=\underline{u}$ is a continuous function in $ \mathbb{R}^{N}\times [0,\infty)$. Moreover, $u$ is the unique solution to the obstacle problem \eqref{ParabolicPb} and $u^{\varepsilon}$ converges to $u$ pointwise in $\mathbb{R}^{N}\times (0,\infty)$. This convergence is locally uniform.
	Let us show this by a contradiction argument. Assume there is a compact set $A\subset \mathbb{R}^{N}\times [0,\infty)$ such that the uniform convergence is not satisfied there. Then, there exist a positive number $\mu>0$, and sequences $\{\varepsilon_{j}\}_{j\in\mathbb{N}}$, $\{(x_j, t_j)\}_{j\in\mathbb{N}} \subset A$ such that $\varepsilon_j \to 0$ as $j\to \infty$ and
	\[|u^{\varepsilon_{j}}(x_{j},t_j)-u(x_j,t_j)|\geq \mu \quad \text{for all }j\in \mathbb{N}.\]
	Then, taking a subsequence if necessary, there exists $(x_{\infty},t_{\infty})\in A$ so that $(x_j,t_j) \to (x_{\infty},t_{\infty})$ as $j \to \infty$. Since $u$ is continuous at $(x_\infty, t_\infty)$, and we know by the previous part of this result that 
	\begin{equation}
		\lim
		_{\substack{\varepsilon \to 0^+ \\ (y,s) \to (x_0,t_0)}} u^\varepsilon(y,s) = u(x_0,t_0),
	\end{equation}
	we obtain that
	\[\mu \leq |u^{\varepsilon_{j}}(x_{j},t_j)-u(x_j,t_j)| \leq |u^{\varepsilon_{j}}(x_{j},t_j) - u(x_\infty, t_\infty)| + |u(x_j, t_j) - u(x_\infty, t_\infty) | \to 0 \]
	when $j\to \infty$, which is a contradiction. 
	\end{proof}

Now, recall that we we want to study the positivity sets 
of $u^{\varepsilon}$ and $u$,
\begin{equation}
	\label{OmegaSets.22}	\Omega^{\varepsilon}_{t}:=\{x\,:\;\;u^{\varepsilon}(x,t)>0\}\;\;\textrm{and}\;\;\Omega_{t}:=\{x\,:\;\;u(x,t)>0\}.
	\end{equation}
 Next,  as a consequence of the convergence of $u^{\varepsilon}$ to $u$, we will prove that for each fixed $t>0$,
 	\begin{equation*}
 	\Omega_t\subset \liminf_{\varepsilon \to 0} \Omega^\varepsilon_t \subset \limsup_{\varepsilon \to 0} \Omega^\varepsilon_t\subset \overline{\Omega}_t.
 \end{equation*}

\begin{proof}[Proof of Corollary \ref{cor1}]
	Fix $t>0$. We start proving that $$\Omega_t\subset \liminf_{\varepsilon \to 0} \Omega^\varepsilon_t.$$ Suppose that $x\in \Omega_t $. Since 
	\begin{equation*}
		\liminf_{\varepsilon \to 0} \Omega^\varepsilon_t=\bigcup_{\varepsilon_{0}>0}\bigcap_{0<\varepsilon\leq \varepsilon_{0}} \Omega^{\varepsilon}_{t},
	\end{equation*}
	we want to prove that there exists $\varepsilon_{0}>0$ such that $x\in \Omega^{\varepsilon}_{t}$ for all $0<\varepsilon\leq\varepsilon_{0}.$ 
	
	Using that $x\in\Omega_{t}$, we get that there is $\mu>0$ such that $u(x,t)\geq \mu. $ By Theorem \ref{maintheorem.juego.conv} ,   $u^{\varepsilon}\to u$. Then, there is $\varepsilon_{0}>0$ such that $u^{\varepsilon}(x,t) \geq \mu/2$ for each $0<\varepsilon\leq \varepsilon_{0}$. Thus, $x\in  \Omega^{\varepsilon}_{t}$ for all $0<\varepsilon\leq\varepsilon_{0}$.

	On the other hand,  suppose that  $x\not\in \overline{\Omega_{t}}$. Our aim is to prove that   $$x\not \in \limsup_{\varepsilon\to 0}\Omega^{\varepsilon}_{t}.$$ Since 
	\begin{equation*}
		\limsup_{\varepsilon \to 0} \Omega^\varepsilon_t=\bigcap_{\varepsilon_{0}>0}\bigcup_{0<\varepsilon\leq \varepsilon_{0}} \Omega^{\varepsilon}_{t},
	\end{equation*} the above is equivalent to prove that there exists $\varepsilon_{0}>0$ for which $x_{0}\not\in \Omega^{\varepsilon}_{t}$ for all $0<\varepsilon\leq \varepsilon_{0}$.
	Due to the fact that $x\not\in\overline{\Omega_{t}}$, we get that there exists $\mu>0$ such that $u(x,t)\leq-\mu$. Using again the convergence of $u^{\varepsilon}$ to $u$, we obtain that there exists $\varepsilon_{0}>0$ such that $u^{\varepsilon}(x,t)\leq -\mu/2$ for every $0<\varepsilon\leq \varepsilon_{0}$, that is, for each $0<\varepsilon\leq \varepsilon_{0}$,  $x\not\in\Omega^{\varepsilon}_{t}$.
\end{proof}

\section{Long time behaviour of the value function of the game and of the solution to the parabolic problem} \label{sect.LongTimeBehaviour}

The main goal of this section is to prove Theorem \ref{cor2}, that is, to prove that in the the asymptotic limit as $t\to +\infty$ we 
recover the convex hull of the obstacle as the limit of the positivity set 
of the solution to the parabolic problem, that is,
$$\textrm{co}(K) = \lim_{t \to \infty} \Omega_t.$$
In other words, we expect that the set where the solution stays positive is close to the convex hull of $K$ for large times. This holds for any 
open set $K$ as long as the condition on $\Omega_0$ and $K$
that we called \eqref{(G)} is satisfied.
However, we can also work with a more general $K$, not necessarily open
(but always assuming \eqref{(G)}), and in this case the best that we can conclude is
\[ \mbox{co}(K) \subset \liminf_{t\to \infty} \Omega_t \subset \limsup_{t\to \infty} \Omega_t \subset \overline{\mbox{co}(K)}.\]

The game is going to be our main tool in this section, so we will also study the value of the game $u^\varepsilon(x,t)$ for large times before taking the limit in $\varepsilon$. To this 
end, we need to introduce a different notion of convexity that takes into account that there is a fixed length for the steps that one can make in the game. With this notion of convexity (that we will call $\eps$-convexity), we define a better suited concept of ``convex hull'' that is adapted to the discrete nature of the game: the $\varepsilon$-convex hull of a set.

\subsection{Defintion of $\varepsilon$-convexity and preliminaries}

Let us first present some previous definitions to understand $\varepsilon$-convexity and see how they reproduce the expected properties of convexity. We will write $\ell_{x,y}$ as the standard segment between $x$ and $y$ (including both $x$ and $y$),
\[\ell_{x,y}:= \Big\{ z=\theta x + (1-\theta)y\;:\;\; 0\leq \theta \leq 1\Big\}.  \]

\begin{definition}
	For $0<\varepsilon<1$, we define the $\varepsilon$-segment between $x$ and $y$ as
	\begin{equation}
		\ell_{x,y}^{\varepsilon}:=
		\begin{cases}
			\displaystyle \Big\{z= \frac{k}{M} x + (1-\frac{k}{M} )y\,:\;\; j = 0, 1,...,M  \Big\}& \displaystyle  \;\;\textrm{if}\;\;\frac{|x-y|}{\varepsilon}=M\in\mathbb{N},\\
			\displaystyle  \Big\{x,y \Big\}& \displaystyle \;\;\textrm{if}\;\;\frac{|x-y|}{\varepsilon}\not\in\mathbb{N}.
		\end{cases}
	\end{equation}
\end{definition}
Notice that when $|x-y|/\varepsilon\not\in\mathbb{N}$ the $\varepsilon$-segment contains just the two points $x$ and $y$. 

With this definition of $\varepsilon$-segment, we can define the concept of $\varepsilon$-convexity.

\begin{definition} {\rm
	$A\subset\mathbb{R}^{N}$ is a \emph{$\varepsilon$-convex} set if for any $x,y \in A$, $\ell^\varepsilon_{x,y} \subset A$.
	}
\end{definition}

\begin{remark}
	The arbitrary intersection of $\varepsilon$-convex sets is $\varepsilon$-convex.
\end{remark}

The reason behind using $\varepsilon$-convexity and not usual convexity is that due to the discrete nature of the game, Carol could force Paul to exit the convex hull of $K$, but not the $\varepsilon$-convex hull. Observe, for example, that if the obstacle was $K = \{a,b\}$ (without enlarging it) and $\ell^\varepsilon_{a,b} \subset \Omega_0$, Paul would win if the game starts at $x \in \ell^\varepsilon_{a,b}$
no matter how many plays the game last, but would loose if $x\in \ell_{a,b} \setminus \ell^\varepsilon_{a,b}$
if they play a large number of times.

Thanks to the fact that 
the intersection of $\varepsilon$-convex sets is $\varepsilon$-convex we can introduce the $\varepsilon$-convex hull of a set.

\begin{definition}
	We define the $\varepsilon$-convex hull of a set $A$, $co_{\varepsilon}(A)$, as the smallest $\varepsilon$-convex set that contains $A$, that is,
	\begin{equation}
		\mbox{co}_{\varepsilon}(A):=\bigcap\limits_{\{C\;\;\varepsilon\text{-convex}\,:\;\;A\subset C\}} C.
	\end{equation}
\end{definition}
Note that, thanks to the previous remark, the set $\mbox{co}_{\varepsilon}(A)$ is $\varepsilon$-convex. 

With these definitions at hand, what we aim to prove is Theorem \ref{Th.Cadenas}, that is, 
\begin{equation} \label{eq:reducedchain}
	\mbox{co}(K) \subset \liminf_{t\to \infty} \Omega^\eps_t \subset \limsup_{t\to \infty} \Omega^\eps_t \subset \mbox{co}(K_\eps) \subset \mbox{co}(K) + B_{N\eps} (0).
\end{equation}
Next, we will deduce Theorem \ref{cor2} from this result. 

Under the hypothesis that $\mbox{co}(K) \subset \Omega_0$ we will be able to prove a stronger statement, in fact, under this condition, we have
\begin{equation} \label{eq:extendedchain}
	\mbox{co}(K) \subset \mbox{co}_\varepsilon (K_\eps) \subset \liminf_{t\to \infty} \Omega^\eps_t \subset \limsup_{t\to \infty} \Omega^\eps_t \subset \mbox{co}(K_\eps) \subset \mbox{co}(K) + B_{N\eps} (0).
\end{equation}

Now, let us state a result concerning classical convex hulls that proves the last inclusion in the above chain.

\begin{lemma}  It holds that
	$$ co(K_\varepsilon) \subset co(K) + B_{N\varepsilon}(0). $$ 
	\end{lemma}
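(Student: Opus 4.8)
The plan is to exhibit $\mathrm{co}(K) + B_{N\varepsilon}(0)$ as a \emph{convex} set that already contains $K_\varepsilon$, and then invoke the minimality of the convex hull. The only structural fact needed is that a Minkowski sum of convex sets is convex.

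First I would recall that $\mathrm{co}(K)$ is convex by definition and that the open ball $B_{N\varepsilon}(0)$ is convex. Hence their Minkowski sum $\mathrm{co}(K) + B_{N\varepsilon}(0) = \{\, p + q : p \in \mathrm{co}(K),\ q \in B_{N\varepsilon}(0)\,\}$ is convex: given two points $p_1+q_1$ and $p_2+q_2$ in the sum and $\theta \in [0,1]$, one writes $\theta(p_1+q_1) + (1-\theta)(p_2+q_2) = \big(\theta p_1 + (1-\theta)p_2\big) + \big(\theta q_1 + (1-\theta)q_2\big)$, and the first summand lies in $\mathrm{co}(K)$ while the second lies in $B_{N\varepsilon}(0)$, by convexity of each factor. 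A one-line display handles this; there is no real computation.

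Next I would observe the trivial inclusion $K_\varepsilon = K + B_{N\varepsilon}(0) \subset \mathrm{co}(K) + B_{N\varepsilon}(0)$, which follows immediately from $K \subset \mathrm{co}(K)$ by adding $B_{N\varepsilon}(0)$ to both sides. Combining this with the previous step, $\mathrm{co}(K) + B_{N\varepsilon}(0)$ is a convex set containing $K_\varepsilon$. Since $\mathrm{co}(K_\varepsilon)$ is, by definition, the smallest convex set containing $K_\varepsilon$, we conclude $\mathrm{co}(K_\varepsilon) \subset \mathrm{co}(K) + B_{N\varepsilon}(0)$, which is exactly the claim.

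There is essentially no obstacle here; the statement is a soft set-theoretic/geometric fact. The only point worth stating carefully is the convexity of the Minkowski sum (and, if one wants the statement with the closed ball or a closed hull, a remark that the sum of a compact convex set and a closed ball is closed), but nothing delicate is involved and no result beyond the definitions of convex hull and Minkowski sum is required.
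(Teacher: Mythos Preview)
Your proof is correct and follows essentially the same approach as the paper: both show that $\mathrm{co}(K)+B_{N\varepsilon}(0)$ is convex (via the convexity of each summand and the Minkowski-sum argument) and contains $K_\varepsilon$, then conclude by the minimality of the convex hull.
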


 \begin{proof}
 	Let us call $U:=co(K)+B_{N\eps}(0)$. We will prove that $U$ is convex and $K_\eps\subset U$. Then $\textrm{co}(K_\eps)\subset U$ because $\textrm{co}(K_\eps)$ is the intersection of all convex sets that contains $K_\eps$.
	
 	On the  one hand, $U$ is convex. By definition $U=\{y=x+z : x\in \mbox{co}(K) \ \mbox{and} \ z\in B_{N\eps}\}$. Given $y_1 , y_2\in U$, $y_1=x_1+z_1$ and $y_2=x_2+z_2$ with $x_1,x_2\in \mbox{co}(K)$ and $z_1,z_2\in B_{N\eps}$. Given $0\le \theta\le 1$ let us consider 
 	$$
 	\theta y_1+(1-\theta)y_2=\big(\theta x_1+(1-\theta)x_2\big)+\big(\theta z_1+(1-\theta)z_2\big).
 	$$
 	Using that $\mbox{co}(K)$ and $B_{N\eps}(0)$ are convex we get that $(\theta y_1+(1-\theta)y_2)\in co(K)+B_{N\eps}(0)$.
	
 	On the other hand, it holds that $K_{\varepsilon}\subset U$ because $K_\eps=\{y=x+z : x\in K \ \mbox{and} \ z\in B_{N\eps} (0) \}$ and $K\subset \mbox{co}(K)$.
 \end{proof}

Next, we want to characterize the $\varepsilon$-convex hull of a set by means of the $\varepsilon$-segments that we introduced. We need the following auxiliary definitions. 

\begin{definition}
	Let $A \subset \RR^N$. Let us consider $\ell^0(A) := A$ and $\ell^{0,\varepsilon}(A) := A$. We define $\ell^j(A)$ and $\ell^{j,\varepsilon}(A)$ for $j \in \mathbb{N}$ as 
	\begin{align}
		\ell^{j}(A) &:= \Big\{x\in\mathbb{R}^{N}:\;\;\exists \,a,b\in \ell^{j-1}(A)\;\;\textrm{such that}\;\;x\in\ell_{a,b} \Big\}, \\
		\ell^{j,\varepsilon}(A) &:=\Big\{x\in\mathbb{R}^{N}:\;\;\exists \,a,b\in \ell^{j-1,\varepsilon}(A)\;\;\textrm{such that}\;\;x\in\ell^{\varepsilon}_{a,b} \Big\}.
	\end{align}
\end{definition}

\begin{remark} \label{re:increasingsets}
	Observe that the sequences $\left\{ \ell^j(A) \right\}^\infty_{j=1}$ and $\left\{ \ell^{j, \varepsilon}(A) \right\}^\infty_{j=1}$ are increasing sequences. We also have that $\ell^j(A) = \ell^1\left( \ell^{j-1}(A)\right)$ and $\ell^{j,\varepsilon}(A) = \ell^{1,\varepsilon}\left( \ell^{j-1,\varepsilon}(A)\right)$.
\end{remark}

Notice that what the previous definitions do is add to a set the segments whose endpoints lie in the set. A naive first impression may suggest that $\mbox{co}(A) = \ell^1 (A)$, but if the dimension of the ambient space is bigger that one, this is not true in general. Next, we are going to characterize both $\mbox{co}(A)$ and $\mbox{co}_\varepsilon(A)$ of a set $A\subset \RR^N$ with the families $\{\ell^j(A)\}$ and $\{\ell^{j,\varepsilon}(A)\}$, $j\in \mathbb{N}$.

\begin{proposition} \label{re:ellcharac}
	Let $A \subset \RR^N$. Then
	\begin{equation} \label{eq:ccharacterizacion}
	\mbox{co}_{\varepsilon}(A)=\bigcup_{j=0}^{\infty}\ell^{j,\varepsilon}(A), \qquad 	\mbox{co}(A)=\ell^{N}(A).
	\end{equation}
\end{proposition}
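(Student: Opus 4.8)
The plan is to prove the two characterizations in \eqref{eq:ccharacterizacion} separately, handling the classical convex hull $\mbox{co}(A) = \ell^N(A)$ first and then adapting the idea to the $\varepsilon$-convex setting. For the classical statement, I would first observe that each $\ell^j(A)$ is contained in $\mbox{co}(A)$: this follows by induction on $j$, since $\mbox{co}(A)$ is convex and contains $A$, so if $\ell^{j-1}(A)\subset \mbox{co}(A)$ then any segment with endpoints in $\ell^{j-1}(A)$ stays inside $\mbox{co}(A)$. In particular $\ell^N(A)\subset\mbox{co}(A)$. For the reverse inclusion I would invoke Carath\'eodory's theorem: every point of $\mbox{co}(A)$ is a convex combination of at most $N+1$ points of $A$. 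The key combinatorial step is then to show that a convex combination of $m+1$ points of $A$ lies in $\ell^{\lceil \log_2 m\rceil}(A)$, or more crudely in $\ell^{m}(A)$, by repeatedly pairing up points: writing $\sum_{i=0}^m \theta_i a_i$, group the points into pairs, each pair spanning a segment whose relevant convex combination lands in $\ell^1(A)$, halving the number of points at each stage. A careful bookkeeping of the weights shows $N$ iterations suffice for $N+1$ points (one can also just use that $\ell^j(A)$ is increasing and that $\ell^{j}(A)=\ell^1(\ell^{j-1}(A))$, as noted in Remark \ref{re:increasingsets}, to absorb the ceiling). Since $\{\ell^j(A)\}$ is increasing and stabilizes once it equals a convex set, and $\ell^N(A)$ already contains all of $\mbox{co}(A)$ by the Carath\'eodory argument while being contained in it, we get $\ell^N(A)=\mbox{co}(A)$; moreover $\ell^N(A)$ is itself convex, so the union $\bigcup_j \ell^j(A)$ also equals $\mbox{co}(A)$ and stabilizes at $j=N$.

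For the $\varepsilon$-convex hull, I would argue that $U := \bigcup_{j=0}^\infty \ell^{j,\varepsilon}(A)$ is $\varepsilon$-convex and contains $A$, hence contains $\mbox{co}_\varepsilon(A)$; and conversely that $U\subset \mbox{co}_\varepsilon(A)$. The inclusion $\mbox{co}_\varepsilon(A)\subset U$: by induction on $j$, each $\ell^{j,\varepsilon}(A)\subset\mbox{co}_\varepsilon(A)$, because $\mbox{co}_\varepsilon(A)$ is $\varepsilon$-convex and contains $A$, so $\varepsilon$-segments between its points stay inside; taking the union gives $U\subset\mbox{co}_\varepsilon(A)$. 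The inclusion $\mbox{co}_\varepsilon(A)\subset U$ (equivalently $A\subset U$ and $U$ $\varepsilon$-convex): $A=\ell^{0,\varepsilon}(A)\subset U$ is immediate. For $\varepsilon$-convexity of $U$, take $x,y\in U$; then $x\in\ell^{j_1,\varepsilon}(A)$ and $y\in\ell^{j_2,\varepsilon}(A)$ for some $j_1,j_2$, and since the sequence $\{\ell^{j,\varepsilon}(A)\}$ is increasing (Remark \ref{re:increasingsets}) we may take $j_1=j_2=j$; then $\ell^\varepsilon_{x,y}\subset \ell^{j+1,\varepsilon}(A)\subset U$ by definition of $\ell^{j+1,\varepsilon}$. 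Hence $U$ is an $\varepsilon$-convex set containing $A$, so $\mbox{co}_\varepsilon(A)\subset U$, and combined with the reverse inclusion we get equality.

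The main obstacle I anticipate is the quantitative Carath\'eodory step for the classical hull: making precise that $N$ rounds of the segment operation suffice to realize every convex combination of $N+1$ points. The naive pairing argument gives $\lceil \log_2(N+1)\rceil$ rounds, which is even better than $N$, but one must be careful that the partial convex combinations produced at each stage are genuine points of the already-constructed set $\ell^{j,\varepsilon}$-analogue — i.e. that intermediate weights are handled so that the endpoints used at stage $j+1$ indeed lie in $\ell^j(A)$. A clean way is: given $p=\sum_{i=0}^{m}\theta_i a_i$ with $m\le N$, pair $a_0$ with $a_1$, $a_2$ with $a_3$, etc.; each pair contributes a point $\frac{\theta_{2k}}{\theta_{2k}+\theta_{2k+1}}a_{2k}+\frac{\theta_{2k+1}}{\theta_{2k}+\theta_{2k+1}}a_{2k+1}\in\ell^1(A)$ (with the convention that a pair of total weight zero is dropped, and an unpaired leftover point simply passes through, which is legitimate since $\ell^j$ is increasing), and then $p$ is the convex combination with weights $\theta_{2k}+\theta_{2k+1}$ of these $\lceil (m+1)/2\rceil$ points; iterating, after $\lceil\log_2(m+1)\rceil\le N$ steps we reach a single point, so $p\in\ell^N(A)$. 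Since $\mbox{co}(A)=\bigcup_j\ell^j(A)$ will follow once we know this equals a convex set, the two displayed identities in \eqref{eq:ccharacterizacion} are established.
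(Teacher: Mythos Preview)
Your proof is correct and follows essentially the same approach as the paper. For the $\varepsilon$-convex hull, both you and the paper argue the two inclusions exactly as you describe (note you have a harmless label swap: the paragraph headed ``The inclusion $\mbox{co}_\varepsilon(A)\subset U$'' actually proves $U\subset\mbox{co}_\varepsilon(A)$). For the classical hull, the paper simply invokes Carath\'eodory's theorem and leaves the passage from ``convex combination of $N+1$ points'' to ``lies in $\ell^N(A)$'' implicit; your pairing argument makes this explicit and even yields the sharper bound $\lceil\log_2(N+1)\rceil$, though the simpler sequential reduction $p=\theta_0 a_0+(1-\theta_0)q$ with $q$ a combination of one fewer point (which you allude to as ``more crudely in $\ell^m(A)$'') already gives exactly $N$ steps and would suffice.
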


\begin{proof} 
	We begin proving the result for the $\varepsilon$-convex hull. First, note that the family $\{\ell^{j,\varepsilon}(A)\}_{j=1}^{\infty}$ is an increasing sequence of sets as we mentioned in Remark \ref{re:increasingsets}. Then, if $x,y\in 	\bigcup_{j=0}^{\infty}\ell^{j,\varepsilon}(A)$ are such that $|x-y|/\varepsilon\in \mathbb{N}$, there exists $k\in \mathbb{N}$ such that $x,y\in \ell^{k,\varepsilon}(A)$ which implies $\ell^{\varepsilon}_{x,y}\in\ell^{k+1,\varepsilon}(A)$ and therefore, $\bigcup_{j=0}^{\infty}\ell^{j,\varepsilon}(A)$ is $\varepsilon$-convex. Since $A=\ell^{0,\varepsilon}(A)$, we obtain
	\begin{equation}
		\textrm{co}_{\varepsilon}(A)\subset\bigcup_{j=0}^{\infty}\ell^{j,\varepsilon}(A).
	\end{equation}
	
	Next, we want prove that $\ell^{n,\varepsilon} (A)\subset \textrm{co}_{\varepsilon}(A)$ for every $n\in\mathbb{N}$. For $n=1$, if $x\in \ell^{1,\varepsilon}(A)$ there exist $a,b\in A$ such that $x\in\ell^{\varepsilon}_{a,b}$. This implies that $x\in\textrm{co}_{\varepsilon}(A)$ because $\textrm{co}_{\varepsilon}(A)$ is $\varepsilon$-convex. Now, arguing inductively, suppose that $\ell^{n-1,\varepsilon}(A)\subset\textrm{co}_{\varepsilon}(A)$. Let $x\in \ell^{n,\varepsilon}(A)\setminus \ell^{n-1,\varepsilon}(A).$ Therefore, there exists $a,b\in \ell^{n-1,\varepsilon}(A)$ such that $x\in \ell^{\varepsilon}_{a,b}$ and, applying the inductive hypothesis, we obtain that $x\in\textrm{co}_{\varepsilon}(A)$ because $a,b\in \textrm{co}_{\varepsilon}(A)$ and $\textrm{co}_{\varepsilon}(A)$ is $\varepsilon$-convex. Thus,
	\begin{equation}
		\textrm{co}_{\varepsilon}(A)\supset\bigcup_{j=0}^{\infty}\ell^{j,\varepsilon}(A).
	\end{equation}

	The last part of the proposition, namely, the fact that $\textrm{co}(A) = \ell^N(A)$ (notice that the dimension of the ambient space plays a role here), is just a simple consequence of Caratheodory's Theorem. This is a standard result for the convex hull of a set, which states that for any element $x \in \textrm{co}(A)$, $x$ is the result of a convex combination of at most $N+1$ points in $A$. 
\end{proof}

Now, under our hypothesis on the relationship between $\textrm{co}(K)$ and $\Omega_0$ we will be able to prove the chain of inclusions \eqref{eq:reducedchain}, and, with further hypotheses, \eqref{eq:extendedchain}. 

First, we introduce two important strategies (a version for each player) that  will help us to understand the asymptotic behaviour of the value of the game.
 
\begin{definition} \label{ConcentricStrategie}Let 
	\begin{equation}
	\{(v_{n},b_{n})\}_{n\in\mathbb{N}}\subset\mathbb{S}^{N-1}\times\{1,-1\}
	\end{equation} 
be a family of choices for Paul and Carol. 

We say that the choice $\{(v_{n},b_{n})\}_{n\in\mathbb{N}}$ follows a concentric strategy at $x$ for the $\varepsilon$-game if for some  $z\in\mathbb{R}^{N}$ the sequence 
\begin{equation*}
	x_{1}:= x+b_{1}v_{1},\quad x_{n}=x_{n-1}+b_{n}v_{n},\quad n\in\mathbb{N}
\end{equation*}
satisfies that 
\begin{equation}
	\label{g7}|x_{n}-z|^{2}\geq |x-z|^{2}+n\varepsilon^{2}, \quad\textrm{for every }n\in\mathbb{N}.
\end{equation}

For each starting point $x$ and centre $z$, each player can implement a concentric strategy regardless the choices of the other player
(if Paul plays without stopping at the obstacle).

\begin{itemize}
	 \item \emph{Concentric strategy for Paul:} Here, at each round
	 of the game, Paul 
	 chooses to continue playing and a vector $v_{n}$ such that $v_{n}\perp x_{n-1}-z$ for $n\geq 2$ and  $v_{1}\perp x-z$.  With these choices, no matter which election of sign Carol makes, we have that 
	 \begin{equation*}
	 	|x_{1}-z|^{2}=|x-z|^{2}+\varepsilon^{2},\quad 	|x_{n}-z|^{2}=|x_{n-1}-z|^{2}+\varepsilon^{2}\quad n\geq 2
	 \end{equation*}
	 Then,  \eqref{g7} hols for the sequence of positions of the game $\{x_{n}\}_{n\in\mathbb{N}}$.
	 
	 \medskip
	 
	 	\item \emph{Concentric strategy for Carol:} 
		Assume that Paul chooses to play at every round. Let $v_{1}$ be the choice of Paul at the first round. Carol chooses $b_{1}=\pm$ such that $x_{1}\not \in B_{R_{1}}(z)$
	 with $R_{1}=|x-z|$. Then,  $|x_{1}-z|^{2}\geq |x-z|^{2}+\varepsilon^{2}$ with equality if and only if $v_{1}\perp x-z$. For the 
	 $n$-th round, given Paul's selection of a vector, $v_{n}$, Carol chooses again  the sign $b_{n}=\pm$ that satisfies that $x_{n}\not \in B_{R_{n}}(z)$ with $R_{n}=|x_{n-1}-z|.$ By an iterative argument, one can see that the sequence $\{x_{n}\}_{n\in \mathbb{N}}$ satisfies \eqref{g7}.
	\end{itemize}
\end{definition}

Although Paul and Carol can use a concentric strategy for each $x$, this approach may not always optimal. However, in some cases, merely having the option to execute this strategy determines the winner of the game. The final outcome heavily depends on the position of the starting point $x$ relative to $\textrm{co}(K)$ and the time $t$ at which the game
starts (this determines the maximum number of plays). For an example, see for instance the proof of the following result.

\begin{lemma} \label{glemma2} 
	For every $\varepsilon>0$, it holds that
	\begin{equation}
	 \limsup_{t\to \infty} 	\Omega^{\tilde{\eps}}_{t}\subset
		\overline{\text{co}(K_{\varepsilon})}\qquad \;\textrm{for all}\;\;0<\tilde{\varepsilon}\leq \varepsilon.
	\end{equation}
\end{lemma}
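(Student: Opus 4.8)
The plan is to show that if a point $x$ lies outside $\overline{\text{co}(K_\varepsilon)}$, then for $t$ large enough Carol can guarantee the game ends with a nonpositive payoff, so that $x \notin \Omega^{\tilde\varepsilon}_t$ eventually. Fix $\tilde\varepsilon \leq \varepsilon$ and fix $x \notin \overline{\text{co}(K_\varepsilon)}$. Since $\overline{\text{co}(K_\varepsilon)}$ is closed and convex and $K_\varepsilon \subset \overline{\text{co}(K_\varepsilon)}$, there is a hyperplane strictly separating $x$ from $\overline{\text{co}(K_\varepsilon)}$; equivalently, there is a closed ball $B = \overline{B_\rho(z)}$ with $\overline{\text{co}(K_\varepsilon)} \subset B$ but $x \notin B$, say $|x-z| = \rho + \delta$ for some $\delta > 0$. (A ball suffices because one can take a huge sphere almost flat near the separating hyperplane; alternatively argue directly with the half-space, but the ball is what makes the concentric strategy bite.)

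The key step is to let Carol play her \emph{concentric strategy with centre $z$} as in Definition \ref{ConcentricStrategie}. By that definition, whatever Paul does (as long as he keeps playing), the positions satisfy $|x_n - z|^2 \geq |x-z|^2 + n\tilde\varepsilon^2 \geq (\rho+\delta)^2 + n\tilde\varepsilon^2$ for all $n$. In particular every position $x_n$ (for $n \geq 0$) lies outside $B$, hence outside $K_\varepsilon$, so $\psi_\varepsilon(x_n) \leq 0$: if Paul chooses to stop at any round, he collects a nonpositive payoff. It remains to handle the terminal payoff $u_0(x_n)$ at the final round $n = \lceil 2t/\tilde\varepsilon^2\rceil$. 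Here I use that $u_0(y) < 0$ for $y \notin \overline{\Omega_0}$ together with $\lim_{|y|\to\infty} u_0(y) = \mu < 0$: there is $r_0$ such that $u_0(y) \leq \mu/2 < 0$ whenever $|y - z| \geq r_0$. Since $|x_n - z| \geq \sqrt{(\rho+\delta)^2 + n\tilde\varepsilon^2} \to \infty$ as $n \to \infty$, there is $T_0$ so that for $t \geq T_0$ the final position satisfies $|x_n - z| \geq r_0$, hence $u_0(x_n) < 0$. Thus for $t \geq T_0$, no matter how Paul plays, Carol's concentric strategy forces an outcome $\leq 0$, so $u^{\tilde\varepsilon}(x,t) \leq 0$ and $x \notin \Omega^{\tilde\varepsilon}_t$.

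Therefore $x \notin \bigcap_{T>0}\bigcup_{t \geq T}\Omega^{\tilde\varepsilon}_t = \limsup_{t\to\infty}\Omega^{\tilde\varepsilon}_t$, which gives the claimed inclusion $\limsup_{t\to\infty}\Omega^{\tilde\varepsilon}_t \subset \overline{\text{co}(K_\varepsilon)}$. The main obstacle is the bookkeeping at the terminal round: one must be careful that the concentric bound on $|x_n - z|$ is uniform enough (it only improves with $n$) and that the threshold $T_0$ can be chosen independently of Paul's play, which it can since the bound $|x_n - z| \geq \sqrt{(\rho+\delta)^2 + n\tilde\varepsilon^2}$ holds for \emph{every} admissible sequence of Paul's moves. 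A secondary point worth a line is justifying the reduction from ``outside $\overline{\text{co}(K_\varepsilon)}$'' to ``outside a ball containing $\overline{\text{co}(K_\varepsilon)}$'': this follows from strict separation plus the observation that a spherical cap of sufficiently large radius separates as well, and one only needs $x$ strictly outside the closed ball while $\overline{\text{co}(K_\varepsilon)}$ sits inside it.
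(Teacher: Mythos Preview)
Your proposal is correct and follows essentially the same approach as the paper: find a ball containing $\overline{\text{co}(K_\varepsilon)}$ with $x$ outside, then let Carol run the concentric strategy centred at that ball so that all positions stay outside $K_\varepsilon$ (making the stopping payoff nonpositive) and drift to infinity (making the terminal payoff eventually negative). Two minor remarks: the obstacle in the $\tilde\varepsilon$-game is $\psi_{\tilde\varepsilon}$, not $\psi_\varepsilon$, but since $K_{\tilde\varepsilon}\subset K_\varepsilon$ your conclusion $\psi_{\tilde\varepsilon}(x_n)\le 0$ still holds; and the paper's proof also records that the threshold time $\tau_x$ can be taken uniformly in $0<\tilde\varepsilon\le\varepsilon$ (because $n\tilde\varepsilon^2\ge 2t$), a fact used later in Theorem~\ref{SecondPartTheorem1.5} but not part of the lemma's statement.
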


	\begin{proof}
Fix $\varepsilon>0$ and choose $x \notin \overline{\mbox{co}(K_{\varepsilon})}$. We will prove that 
		\begin{equation}
		\label{g2} x\notin \limsup_{t\to \infty} 	\Omega^{\tilde{\eps}}_{t}=\bigcap_{t\geq 0} \bigcup_{s\geq t}\Omega^{\tilde{\eps}}_{s}\;\;\textrm{for every}\;\;0<\tilde{\eps}\leq \varepsilon.
		\end{equation}
		That is, we want to prove that there exists $t\geq0$ such that  $x\notin\Omega^{\tilde{\eps}}_{s}$ for all $s\ge t$ and all $0<\tilde{\eps}\leq \varepsilon$.
		
		We claim that there exists a time $\tau_{x}\geq 0$, 
		that may depend on $x$,  such that 
		\begin{equation}
		\label{g3}	u^{\tilde{\eps}}(x,t)\leq -\mu/2\;\;\textrm{for all}\;\;t\geq \tau_{x}\;\;\textrm{and}\;\;0<\tilde{\eps}\leq\varepsilon,
		\end{equation}
		where $\mu>0$ is such that $u_{0}(x)\to-\mu$ as $|x|\to\infty.$

		Using the claim, since $\Omega_{s}^{\tilde{\eps}}=\{x\in\mathbb{R}^{N}:\;u^{\tilde{\eps}}(x,s)>0\}$, we have that $x\not\in\Omega^{\tilde{\eps}}_{s}$ for all $s\geq \tau_{x}$ and $0<\tilde{\eps}<\varepsilon$. Then, we have  just proved \eqref{g2}.
		
		Now, let us prove the claim.  Since $u_{0}$ is a continuous function such that  $u_{0}(x)\to -\mu$ as $|x|\to\infty$ for some $\mu>0$, there exists  $R_{1}>0$ such that $\{x\in\mathbb{R}^{N}:\;u_{0}(x)\geq -\mu/2\}\subset B_{R_{1}}(0)$. 
	
	On the other hand, since $x \notin \overline{\mbox{co}(K_{\varepsilon})}$, there exist $z \in \RR^N$ and $R_2>0$ such that $\mbox{co}(K_{\varepsilon}) \subset B_{R_2} (z)$ and $x \in \partial B_{R_2}(z)$ due to the fact that  $\overline{\mbox{co}(K_{\varepsilon})}$ is a bounded, closed and convex set. In this situation,  using the concentric strategy, playing the $\tilde{\eps}$-game, (with  $0<\tilde{\eps}<\varepsilon$), Carol wins. No matter which choice of  $v\in \mathbb{S}^{N-1}$ Paul makes,  Carol can choose the next position $x_{1}=x\pm v\tilde{\eps}$ such that $|x_1-z|^{2}\ge R_{2}^{2}+\tilde{\eps}^2$ with equality if and only if $ x-z\perp v $.  For the next position, $x_{2}$,  for any vector that Paul could choose, Carol choose the sign that increases the distance between the position and $z$, that is, she chooses the sign that ensures that  $|x_{2}-z|^{2}\geq |x_{1}-z|^{2}+\tilde{\eps}^{2}$. Then, $|x_{2}-z|^{2}\geq R_{2}^{2}+2\tilde{\eps}^{2}$. Iterating this strategy, after $n$ rounds, we have that $ |x_{n}-z|^{2}\geq R_{2}^{2}+n\tilde{\eps}^{2}$
and then we get
\begin{equation*}
	|x_{n}|^{2}\geq R_{2}^{2}+n\tilde{\eps}^{2}-|z|^{2}.
\end{equation*}
Taking, $\tau_{x}$ such that $\tau_{x}\geq R_{1}^{2}+|z|^{2}$, we have 
\begin{equation}
		|x_{n}|^{2}\geq R_{2}^{2}+n\tilde{\eps}^{2}-|z|^{2}\geq n\tilde{\eps}^{2}-|z|^{2}\geq R_{1}^{2}\;\;\textrm{for all}\;\; n\tilde{\eps}^{2}\geq \tau_{x}.
\end{equation}
Therefore, $x_{n}\not\in B_{R_{1}}(0)$ for all $n\tilde{\eps}^{2}\geq \tau_{x}$ which implies, by the choice of $R_{1}$, that
\begin{equation*}
	u^{\tilde{\eps}}(x,t)\leq \sup\big\{u_{0}(y)\,:\;\;y\not\in B_{1}(0)\big\}\leq -\mu/2\;\;\textrm{for all}\;\;t\geq \tau_{x}\;\;\textrm{and}\;\;0<\tilde{\eps}\leq \varepsilon.
\end{equation*}
This ends the proof.
\end{proof}

Let us improve the previous result and show that if $x \not \in \textrm{co}(K_{\varepsilon})$, then Carol has a strategy that allows her to leave the domain $\Omega_0$
if they play a large number of times (and hence Paul looses).

\begin{lemma}
It holds that	
\begin{equation}
		\limsup_{t \to \infty} \Omega^\eps_t \subset 
		\textrm{co}(K_{\varepsilon}).
	\end{equation}
\end{lemma}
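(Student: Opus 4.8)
The plan is to refine the argument in the previous lemma (which gave
$\limsup_{t\to\infty}\Omega^\eps_t\subset\overline{\mathrm{co}(K_\eps)}$)
by showing that Carol can actually push the position \emph{strictly outside}
$\Omega_0$, hence eventually outside $\overline{\Omega_0}$, so that Paul's
terminal payoff $u_0(x_n)$ becomes negative. The point is that the closed
convex set $\mathrm{co}(K_\eps)$ is the intersection of the closed half-spaces
containing it, so any $x\notin\mathrm{co}(K_\eps)$ can be strictly separated
from $\mathrm{co}(K_\eps)$ by a hyperplane; moreover, because $\mathrm{co}(K_\eps)$
is bounded, we may even separate $x$ from $\mathrm{co}(K_\eps)$ by a sphere
$\partial B_{R}(z)$ with $\mathrm{co}(K_\eps)\subset B_R(z)$ and
$|x-z|=R+\rho$ for some $\rho>0$ (here $\rho$ plays the role of the positive
``gap'' that the earlier lemma did not have, since there $x$ was allowed to sit
exactly on the boundary sphere). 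So the first step is: \emph{fix $x\notin
\mathrm{co}(K_\eps)$ and produce $z\in\mathbb{R}^N$, $R>0$ and $\rho>0$ with
$\mathrm{co}(K_\eps)\subset \overline{B_R(z)}$ and $|x-z|\ge R+\rho$.}

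\smallskip

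Next I would let Carol play her concentric strategy centred at $z$ (Definition
\ref{ConcentricStrategie}). As long as Paul does not stop, this guarantees
$$|x_n-z|^2\ge |x-z|^2+n\eps^2\ge (R+\rho)^2+n\eps^2,$$
so $|x_n-z|\to\infty$ and in particular, after finitely many rounds,
$|x_n|\ge R_0$ where $R_0$ is chosen so that $u_0(y)<0$ for all $|y|\ge R_0$
(such $R_0$ exists since $u_0(y)\to\mu<0$ as $|y|\to\infty$). The only way Paul
can avoid the negative terminal payoff is to \emph{stop} at some position
$x_k$ and collect $\psi_\eps(x_k)$. But Carol's concentric strategy keeps
$|x_k-z|\ge R+\rho>R$ for \emph{every} $k\ge 1$, so every position the game
visits after the start lies outside $\overline{B_R(z)}\supset \mathrm{co}(K_\eps)
\supset K_\eps$, hence outside $K_\eps=\{x:\psi_\eps(x)>0\}$; therefore
$\psi_\eps(x_k)\le 0$ at every stopping opportunity. (If $x$ itself is in
$K_\eps$ this cannot happen because $K_\eps\subset\mathrm{co}(K_\eps)$, so the
starting point satisfies $|x-z|>R$ as well, meaning even stopping immediately
gives $\psi_\eps(x)\le 0$; actually $|x-z|\ge R+\rho$ forces $x\notin
\overline{B_R(z)}$, consistent with $x\notin\mathrm{co}(K_\eps)$.)

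\smallskip

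Putting this together: against Carol's concentric strategy, whatever Paul does,
the outcome of a game of length $n$ with $n\eps^2/2\ge \tau_x$ (where $\tau_x$
is large enough that $(R+\rho)^2+n\eps^2-|z|^2\ge R_0^2$) is at most
$\max\{\sup_{y\notin B_{R_0}(0)}u_0(y),\ 0\}$ if we are careless, but in fact
at most $\sup_{y\notin B_{R_0}(0)}u_0(y)$ once we note the stopping payoff is
also $\le 0$; choosing $R_0$ so that $\sup_{y\notin B_{R_0}(0)}u_0(y)\le
\mu/2<0$ gives $u^\eps(x,t)\le \mu/2<0$ for all $t\ge\tau_x$. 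Hence $x\notin
\Omega^\eps_s$ for all $s\ge\tau_x$, so
$x\notin\limsup_{t\to\infty}\Omega^\eps_t=\bigcap_{t\ge0}\bigcup_{s\ge t}
\Omega^\eps_s$. Since $x\notin\mathrm{co}(K_\eps)$ was arbitrary, we conclude
$\limsup_{t\to\infty}\Omega^\eps_t\subset\mathrm{co}(K_\eps)$.

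\smallskip

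The main obstacle, and the only genuinely new point compared to Lemma
\ref{glemma2}, is the separation step giving a strictly positive gap $\rho$:
one must use that $\mathrm{co}(K_\eps)$ is convex \emph{and} compact (closure of
a bounded set) so that a point in its complement can be enclosed away from it by
a ball, and then verify that the concentric strategy keeps \emph{all}
subsequent positions strictly outside that ball, which is what kills the
stopping option $\psi_\eps(x_k)\le 0$. Everything else is a rerun of the
distance-growth estimate $|x_n-z|^2\ge|x-z|^2+n\eps^2$ from Definition
\ref{ConcentricStrategie} together with the behaviour of $u_0$ at infinity,
exactly as in the proof of Lemma \ref{glemma2}.
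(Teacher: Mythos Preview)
Your separation step is where the argument breaks down. You claim that for any $x\notin\mathrm{co}(K_\eps)$ one can find $z$, $R>0$ and $\rho>0$ with $\mathrm{co}(K_\eps)\subset\overline{B_R(z)}$ and $|x-z|\ge R+\rho$. But $\mathrm{co}(K_\eps)$ is an \emph{open} set (the convex hull of an open set is open), so ``$x\notin\mathrm{co}(K_\eps)$'' includes boundary points $x\in\partial\,\mathrm{co}(K_\eps)$. For such $x$ your inclusion $\mathrm{co}(K_\eps)\subset\overline{B_R(z)}$ forces $\overline{\mathrm{co}(K_\eps)}\subset\overline{B_R(z)}$ and hence $|x-z|\le R$, contradicting $|x-z|\ge R+\rho$. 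In other words, you never get a strictly positive gap $\rho$ at the boundary, and your argument collapses exactly at the points that distinguish this lemma from Lemma~\ref{glemma2}. (Your remark that $\mathrm{co}(K_\eps)$ is compact is also incorrect for the same reason: it is bounded and convex but open.)

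The paper handles the boundary case by a genuinely different mechanism. At $x\in\partial\,\mathrm{co}(K_\eps)$ one takes a supporting hyperplane $H$; since $\mathrm{co}(K_\eps)$ is open it lies in the open half-space, so $H\cap K_\eps=\emptyset$. Carol then splits according to Paul's move: if Paul's vector $v$ is not in $H$, Carol picks the sign that moves strictly into the far half-space, landing at positive distance from $\overline{\mathrm{co}(K_\eps)}$, and the concentric argument of Lemma~\ref{glemma2} finishes the job; if Paul keeps choosing $v\in H$, Carol runs a concentric strategy \emph{inside the hyperplane} $H$ (where the obstacle is absent), eventually escaping $H\cap\Omega_0$ and hence $\Omega_0$. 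This dimensional reduction is the missing idea in your proposal.
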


\begin{proof}
	We are going to describe a strategy for Carol so that if $x$ is in $\partial \mbox{co} (K)$, then it forces Paul to eventually move to one position that is in $\RR^N \setminus \overline{\mbox{co}(K)}$. Then, we can apply our previous result to 
	conclude.

Since $x \not \in \mbox{co}(K)$, there exists a hyperplane that separates $x$ from $\mbox{co}(K)$. If Paul chooses a vector $v$ not included in the hyperplane, Carol may choose the direction that will make $x_1$ 
go to a positive distance to $\mbox{co}(K)$. Then, from that point, Carol may apply a concentric strategy just like in the previous result and exit the domain $\Omega_0$. 
	
Now, if we assume that Paul always chooses vectors included in the hyperplane $H$, Carol may perform a concentric strategy inside the hyperplane so that she can also exit the domain $\Omega_0$. In this case, Carol treats $H\cap\Omega_0$ as the new domain and $H \cap \overline{co(K)}$ as the new obstacle. Carol just needs to apply the concentric strategy to move out of $H \cap \Omega_0$. 
\end{proof}

\begin{remark} \label{remarksinengordado}
	In this proof we only used that $K_\varepsilon$ was an open set, but we are not using that it is an enlarged version of $K$. Recall that we are assuming $K$ to be open. Then, if we consider the same game but using $K$ instead of the enlarged version of $K$, this result would hold just the same, that is, we can deal with $x \in \partial \text{co}(K)$
	playing the game without enlarging the obstacle.
	This fact will be used to obtain the inclusion
	\begin{equation}
		\limsup_{t \to \infty} \Omega_t \subset 
		\textrm{co}(K)
	\end{equation}
	that we prove next. 
	
	Notice that this inclusion cannot be deduced taking the limit as $\eps\to 0$ in the previous result, $ \displaystyle\limsup_{t \to \infty} \Omega^\eps_t \subset 
		\textrm{co}(K_{\varepsilon})$. 
\end{remark}

\begin{theorem}\label{SecondPartTheorem1.5}
	We have that
		\begin{equation*}
		\limsup_{t\to\infty}\Omega_{t}\subset \mbox{co}(K).
	\end{equation*}
\end{theorem}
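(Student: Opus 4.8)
The plan is to deduce $\limsup_{t\to\infty}\Omega_t\subset\mathrm{co}(K)$ from the corresponding statement for the game, $\limsup_{t\to\infty}\Omega_t^\varepsilon\subset\mathrm{co}(K_\varepsilon)$, combined with the convergence $u^\varepsilon\to u$ (Theorem \ref{maintheorem.juego.conv}) and Corollary \ref{cor1}. The subtle point, already flagged in Remark \ref{remarksinengordado}, is that one cannot simply pass to the limit $\varepsilon\to0$ in $\limsup_{t\to\infty}\Omega_t^\varepsilon\subset\mathrm{co}(K_\varepsilon)$, since the order of the two limits ($t\to\infty$ and $\varepsilon\to0$) cannot be freely interchanged. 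Instead I would argue directly, exploiting the crucial observation in Remark \ref{remarksinengordado}: the proof of the inclusion $\limsup_{t\to\infty}\Omega_t^\varepsilon\subset\mathrm{co}(K_\varepsilon)$ only uses that the obstacle set is open, not that it is the enlarged set $K_\varepsilon$. Hence, playing the $\varepsilon$-game with obstacle $K$ itself (and obstacle function $\psi$), the same argument shows that if $x\notin\mathrm{co}(K)$ then Carol wins for all large times, i.e.\ the value function $\widehat u^\varepsilon$ of this modified game satisfies $\limsup_{t\to\infty}\{x:\widehat u^\varepsilon(x,t)>0\}\subset\mathrm{co}(K)$.

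The key steps, in order, would be: \textbf{(1)} Introduce the $\varepsilon$-game with obstacle $K$ (function $\psi$) and final payoff $u_0$, and call its value $\widehat u^\varepsilon$. Since $\psi\leq\psi_\varepsilon$ pointwise inside $K_\varepsilon$... actually more care is needed here because $\psi_\varepsilon$ and $\psi$ differ in sign on $K_\varepsilon\setminus K$; what is true and what I would use is that $\psi$ is continuous, $K=\{\psi>0\}$ is open, and the lemmas preceding this theorem (in particular the concentric-strategy arguments and Remark \ref{remarksinengordado}) apply verbatim to give $\limsup_{t\to\infty}\widehat\Omega_t^\varepsilon\subset\mathrm{co}(K)$ where $\widehat\Omega_t^\varepsilon=\{x:\widehat u^\varepsilon(x,t)>0\}$. \textbf{(2)} Verify that $\widehat u^\varepsilon$ converges locally uniformly, as $\varepsilon\to0$, to the unique viscosity solution $u$ of \eqref{ParabolicPb} with obstacle $\psi$ and datum $u_0$: this is exactly Theorem \ref{maintheorem.juego.conv} applied with the constant family $\psi_\varepsilon\equiv\psi$ (which trivially satisfies $\psi_\varepsilon\to\psi$ uniformly), so nothing new is needed. \textbf{(3)} By Corollary \ref{cor1} (whose proof uses only the local uniform convergence, and so applies to the family $\widehat u^\varepsilon$), for each fixed $t>0$ we have $\limsup_{\varepsilon\to0}\widehat\Omega_t^\varepsilon\subset\overline{\Omega_t}$; but we want to go the other way. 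So instead I would use the half of Corollary \ref{cor1} that gives $\Omega_t\subset\liminf_{\varepsilon\to0}\widehat\Omega_t^\varepsilon$: if $x\in\Omega_t$ then $u(x,t)>0$, hence $\widehat u^\varepsilon(x,t)>0$ for all small $\varepsilon$, i.e.\ $x\in\widehat\Omega_t^\varepsilon$ for all small $\varepsilon$.

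\textbf{(4)} Now conclude. Suppose $x\in\limsup_{t\to\infty}\Omega_t$, i.e.\ there is a sequence $t_j\to\infty$ with $x\in\Omega_{t_j}$, meaning $u(x,t_j)>0$. Fix $\varepsilon>0$. By the local uniform convergence of $\widehat u^\varepsilon$ to the continuous function $u$ near each $(x,t_j)$, and by step (3), we get $\widehat u^\varepsilon(x,t_j)>0$ for all small enough $\varepsilon$ (depending a priori on $j$; a diagonal/compactness argument is needed to make this uniform, which is where one must be slightly careful). Hence $x\in\widehat\Omega_{t_j}^\varepsilon$ for arbitrarily large $t_j$, so $x\in\limsup_{t\to\infty}\widehat\Omega_t^\varepsilon\subset\mathrm{co}(K)$ by step (1). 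Since this holds for (a sequence of) $\varepsilon$, and $\mathrm{co}(K)$ does not depend on $\varepsilon$, we obtain $x\in\mathrm{co}(K)$, which is the claim.

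\textbf{Main obstacle.} The delicate point is the interchange of limits in step (4): $x$ being in $\Omega_{t_j}$ for a fixed sequence $t_j\to\infty$ does not immediately give, for a single fixed $\varepsilon$, that $x\in\widehat\Omega_{t_j}^\varepsilon$ for infinitely many $j$, because the threshold $\varepsilon$ below which $\widehat u^\varepsilon(x,t_j)>0$ could in principle shrink as $j\to\infty$. The cleanest way around this, which I would carry out, is to avoid the two-parameter argument altogether and instead run the concentric-strategy proof of Lemma \ref{glemma2} and the two lemmas after it \emph{directly at the PDE level} for the limit solution $u$ — that is, repeat the argument of Remark \ref{remarksinengordado} in the continuous setting: for $x\notin\mathrm{co}(K)$, a separating-hyperplane plus the stationarity of $\mathrm{co}(K)$ under minimal curvature flow (together with the comparison principle, Theorem \ref{comp.pp}, against an explicit supersolution supported outside a slightly shrunk half-space) forces $u(x,t)\to\mu<0$ as $t\to\infty$, hence $x\notin\limsup_{t\to\infty}\Omega_t$. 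In the write-up I would present whichever of these two routes is shortest given the lemmas already available; the hyperplane-comparison route is self-contained modulo Theorem \ref{comp.pp}, while the game route is shorter but requires the uniformity lemma above.
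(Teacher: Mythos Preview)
You have correctly identified the central difficulty --- the interchange of the limits $t\to\infty$ and $\varepsilon\to0$ --- and you are right that one cannot simply conclude from $\limsup_{t\to\infty}\widehat\Omega_t^\varepsilon\subset\mathrm{co}(K)$ and $\Omega_t\subset\liminf_{\varepsilon\to0}\widehat\Omega_t^\varepsilon$ that $\limsup_{t\to\infty}\Omega_t\subset\mathrm{co}(K)$. Your step (4) as written does not close, and the diagonal argument you mention would not work without additional uniformity.

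What you are missing is that this uniformity is already available, and the paper exploits it directly. The claim \eqref{g3} inside the proof of Lemma~\ref{glemma2} is not merely the set inclusion $\limsup_{t\to\infty}\Omega_t^{\tilde\varepsilon}\subset\overline{\mathrm{co}(K_\varepsilon)}$; it is the quantitative statement that for $x\notin\overline{\mathrm{co}(K_{\varepsilon_0})}$ there exists $\tau_x$ (coming from the concentric strategy, depending only on geometric quantities and not on the step size) such that
\[
u^{\varepsilon}(x,t)\leq -\mu/2\qquad\text{for all }t\geq\tau_x\ \text{and all }0<\varepsilon\leq\varepsilon_0.
\]
This bound is uniform in \emph{both} parameters simultaneously. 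Hence, for $x\notin\overline{\mathrm{co}(K)}$, choose $\varepsilon_0$ small enough that $x\notin\overline{\mathrm{co}(K_{\varepsilon_0})}$, apply the bound, and send $\varepsilon\to0$ using the locally uniform convergence $u^\varepsilon\to u$ (Theorem~\ref{maintheorem.juego.conv}): you get $u(x,t)\leq-\mu/2$ for all $t\geq\tau_x$, so $x\notin\limsup_{t\to\infty}\Omega_t$. No limit interchange is needed because you never pass through the positivity sets. The boundary case $x\in\partial\mathrm{co}(K)$ is then handled, as you noted, via Remark~\ref{remarksinengordado} by playing with the non-enlarged obstacle $K$ and repeating the argument.

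Your second proposed route (building an explicit half-space supersolution and applying the comparison principle) would also succeed and is conceptually clean, but it is more work than necessary given that the uniform game estimate is already on the table.
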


\begin{proof}
	 Let $x\not\in \overline{\mbox{co}(K)}$. Then, there exists $\varepsilon_{0}>0$,  depending on $x$, such that  $x\not\in \overline{\mbox{co}(K_{\varepsilon_{0}})}$.
	By  the claim~\eqref{g3} in the proof of  Lemma \ref{glemma2}, there is a time $\tau_{x}>0$ such that
$u^{\varepsilon}(x,t)<-\mu/2$ for all $t\geq \tau_{x}$ and $0<\varepsilon<\varepsilon_{0}$ where $\mu>0$ is the constant such that $u_{0}(x)\to -\mu$ as $|x|\to\infty$.
	Since $u^{\varepsilon}$ converges locally uniformly to $u$ (see Theorem \ref{maintheorem.juego.conv}), we have that $u(x,t)\leq -\mu/2$ for all $t\geq \tau_{x}$ and every $\eps$ small enough and therefore, $\displaystyle x\not\in \limsup_{t\to\infty}\Omega_{t}$.

	If $x \in \partial \text{co}(K)$ then the result follows from Remark \ref{remarksinengordado} and the previous computations.
\end{proof}

Now we describe some strategies that Paul may use to win. As we already mentioned, it is convenient to divide our arguments into different situations. We present our ideas from the most to the least favorable
situation for Paul.

\subsection{Convex hull of $K$ included in $\Omega_0$}

For this whole subsection we assume that $$\mbox{co}(K) \subset \Omega_0 \qquad 
\mbox{and} \qquad K_\eps \subset \Omega_0 \mbox{ (for $\eps$ small)}.$$ 
Notice that $K_\eps \subset \Omega_0$ for $\eps$ small follows from $\overline{K} \subset \Omega_0$.
Observe that when $\Omega_0$ is bigger Paul has
more opportunities to win. 
We are going to prove \eqref{eq:extendedchain}. 

We need the next result for the first part of the chain of inclusions \eqref{eq:extendedchain}. 
In the next lemma we use our choice of the enlarged obstacle as 
$K_\eps = K + B_{N\eps}(0)$.

\begin{lemma}
	Let $K \subset \RR^N$ be an open set and let
	$K_\eps = K + B_{N\eps} (0)$. Then, it holds that
	\begin{equation}
        \mbox{co}(K) \subset \mbox{co}_\varepsilon(K_\eps).
    \end{equation}
\end{lemma}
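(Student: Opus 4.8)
The goal is to show $\mbox{co}(K) \subset \mbox{co}_\varepsilon(K_\eps)$ where $K_\eps = K + B_{N\varepsilon}(0)$. The key obstacle is that an ordinary segment $\ell_{x,y}$ between two points of $K_\eps$ need not be contained in $\mbox{co}_\varepsilon(K_\eps)$, because the $\varepsilon$-segment $\ell^\varepsilon_{x,y}$ only carries information when $|x-y|/\varepsilon \in \mathbb{N}$. The point of enlarging $K$ by a ball of radius $N\varepsilon$ (rather than a smaller ball) is precisely to give enough room to slide the endpoints of any segment so that its length becomes an integer multiple of $\varepsilon$, while staying inside $K_\varepsilon$.

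The plan is as follows. By Proposition \ref{re:ellcharac}, $\mbox{co}(K) = \ell^N(K)$ and $\mbox{co}_\varepsilon(K_\eps) = \bigcup_{j\ge 0}\ell^{j,\varepsilon}(K_\eps)$, so it suffices to prove by induction on $j$ that $\ell^j(K) \subset \bigcup_{i\ge 0}\ell^{i,\varepsilon}(K_\eps)$, since then $\ell^N(K)$ is contained in the $\varepsilon$-convex hull. The base case $j=0$ is $K \subset K_\eps = \ell^{0,\varepsilon}(K_\eps)$, which is immediate. For the inductive step, suppose $\ell^{j-1}(K)\subset \mbox{co}_\varepsilon(K_\eps)$ and take $x \in \ell^j(K)$; then $x \in \ell_{a,b}$ for some $a,b \in \ell^{j-1}(K) \subset \mbox{co}_\varepsilon(K_\eps)$. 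We must show that the whole ordinary segment $\ell_{a,b}$ lies in $\mbox{co}_\varepsilon(K_\eps)$.

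The core lemma to establish is therefore: \emph{if $a, b \in \mbox{co}_\varepsilon(K_\eps)$, then $\ell_{a,b} \subset \mbox{co}_\varepsilon(K_\eps)$} — i.e., the $\varepsilon$-convex hull of $K_\eps$ is actually convex in the ordinary sense. To prove this, fix $z \in \ell_{a,b}$; I want to exhibit $z$ as a point of some $\ell^{k,\varepsilon}$. The idea is to approximate: pick points $a', b'$ near $a$ and $b$ respectively such that $|a'-b'|/\varepsilon \in \mathbb{N}$ and such that $z \in \ell^\varepsilon_{a',b'}$ (i.e. $z$ is one of the lattice points $a' + m\varepsilon \frac{b'-a'}{|b'-a'|}$). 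This is a lattice/packing argument on the segment: first place lattice points of spacing $\varepsilon$ along the direction of $b-a$ passing through $z$, then perturb $a$ and $b$ by amounts strictly less than $N\varepsilon$ (in fact $\sqrt{N}\,\varepsilon$ suffices, or even $\varepsilon$ along the line plus a transversal adjustment) to land on two such lattice points with $z$ among them; crucially, since $a, b \in \mbox{co}_\varepsilon(K_\eps) = K_\eps + (\text{stuff built from }\varepsilon\text{-segments})$, one shows these perturbed points $a', b'$ still lie in $\mbox{co}_\varepsilon(K_\eps)$ — this is where one invokes that $\mbox{co}_\varepsilon(K_\eps)$ "inherits" the thickening: any point of $K_\eps$ can be moved by less than $N\varepsilon$ and stay in $K_\eps$ only near the "core" $K$, so more carefully one argues directly for $a, b \in \ell^{j-1}(K)$, not arbitrary points of the $\varepsilon$-hull, using that such $a,b$ are convex combinations of points of $K$ and each point of $K$ has a full ball $B_{N\varepsilon}$ around it inside $K_\eps$.

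The main difficulty — and the step I would spend the most care on — is the perturbation/lattice argument: showing that given $a \in \ell^{j-1}(K)$ (a convex combination of points of $K$) and a target lattice point $a'$ with $|a-a'| < N\varepsilon$, one has $a' \in \mbox{co}_\varepsilon(K_\eps)$. Near $K$ this is easy because balls of radius $N\varepsilon$ around $K$-points are inside $K_\eps$; the subtlety is at iterated midpoints of $K$, where one must propagate the thickening through the $\ell^j$ construction. I expect the clean way is a direct induction showing $\ell^j(K) + B_{N\varepsilon}(0) \subset \mbox{co}_\varepsilon(K_\eps)$ for all $j$, whose inductive step bundles together "add the ordinary segment" and "keep the $N\varepsilon$-collar" using convexity of balls; then taking $j = N$ and using $\mbox{co}(K) = \ell^N(K)$ gives the claim with room to spare.
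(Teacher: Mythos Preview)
Your overall plan is right and is essentially the paper's approach: reduce to the characterization $\mbox{co}(K)=\ell^{N}(K)$, and prove by induction on $j$ that $\ell^{j}(K)$ lies in $\mbox{co}_\varepsilon(K_\eps)$ by perturbing endpoints so that their distance becomes an integer multiple of~$\varepsilon$. The perturbation idea (slide $b$ to a nearby $\hat b$ with $|a-\hat b|\in\varepsilon\mathbb{N}$, use that $B_{N\varepsilon}(b)\subset K_\eps$) is exactly what the paper does.

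The gap is in your ``clean induction'': the hypothesis $\ell^{j}(K)+B_{N\varepsilon}(0)\subset \mbox{co}_\varepsilon(K_\eps)$ with the \emph{same} radius $N\varepsilon$ at every step cannot be pushed through by the argument you sketch. The step ``using convexity of balls'' would need that whenever $B_{N\varepsilon}(a),B_{N\varepsilon}(b)\subset \mbox{co}_\varepsilon(K_\eps)$ one has $\ell_{a,b}+B_{N\varepsilon}(0)\subset \mbox{co}_\varepsilon(K_\eps)$; but $\mbox{co}_\varepsilon(K_\eps)$ is only $\varepsilon$-convex, not convex, so the tube $\mbox{co}\bigl(B_{N\varepsilon}(a)\cup B_{N\varepsilon}(b)\bigr)$ does not automatically sit inside it. Concretely, in the inductive step you must first move $\hat b$ by up to $\varepsilon/2$ along the line to hit the $\varepsilon$-lattice, which already costs $\varepsilon/2$ of collar, and then the chain of balls of the reduced radius $\delta=r_{j-1}-\varepsilon/2$ only guarantees a collar of $\delta-\varepsilon/2=r_{j-1}-\varepsilon$ around the new segment. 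So the collar necessarily drops by $\varepsilon$ at each iteration. (Note also that your fixed-collar claim, taken at $j=N$, would force $\mbox{co}_\varepsilon(K_\eps)=\mbox{co}(K_\eps)$, a statement the paper neither needs nor asserts.)

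The paper's proof is precisely the shrinking-collar version of your induction. For $a,b\in K$ it chooses $\hat b$ with $|b-\hat b|\le\varepsilon/2$ and $|a-\hat b|\in\varepsilon\mathbb{N}$, sets $\delta=N\varepsilon-\varepsilon/2$, and observes that translates of $B_\delta$ centred at the $\varepsilon$-lattice points between $a$ and $\hat b$ all lie in $\ell^{1,\varepsilon}(K_\eps)$; a Pythagoras computation ($h^2=\delta^2-(\varepsilon/2)^2=N(N-1)\varepsilon^2>0$) shows consecutive balls overlap, so the full ordinary segment $[a,b]$ is covered. Iterating, one gets $\ell^{k}(K)\subset\ell^{k,\varepsilon}(K_\eps)$ with the available ball radius decreasing by $\varepsilon$ at each step; starting from $N\varepsilon$ leaves exactly enough room for the $N$ iterations needed to reach $\ell^{N}(K)=\mbox{co}(K)$. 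This is the reason the enlargement radius is $N\varepsilon$, and it is the piece your sketch was missing.
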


\begin{proof}
    First, we show that $\ell^1(K) \subset \ell^{1,\varepsilon}(K_\eps)$. Choose $a,b \in K$.  Let us assume that $|a-b|/\varepsilon \not\in \mathbb{N}$, since the proof in the case that $|a-b|/\varepsilon \in \mathbb{N}$ is simpler and follows exactly the same ideas.
    
    If $|a-b|$ is not a natural multiple of $\varepsilon$, there exists $\hat b$ in the line formed by $a,b$ such that $|a-\hat b|$ is a multiple of $\varepsilon$ and the distance between $b$ and $\hat b$ is at most $\varepsilon/2$ (if it was larger than $\varepsilon/2$ forward, it would be less than $\varepsilon/2$ backwards and vice versa). See Figure \ref{Fig5}. Notice that if $|a-b|$ was a natural multiple of $\varepsilon$, the only necessary change to the proof would be choosing $\hat b = b$. 
    
    \begin{figure}[H]
        \centering
        \includegraphics[width=9cm]{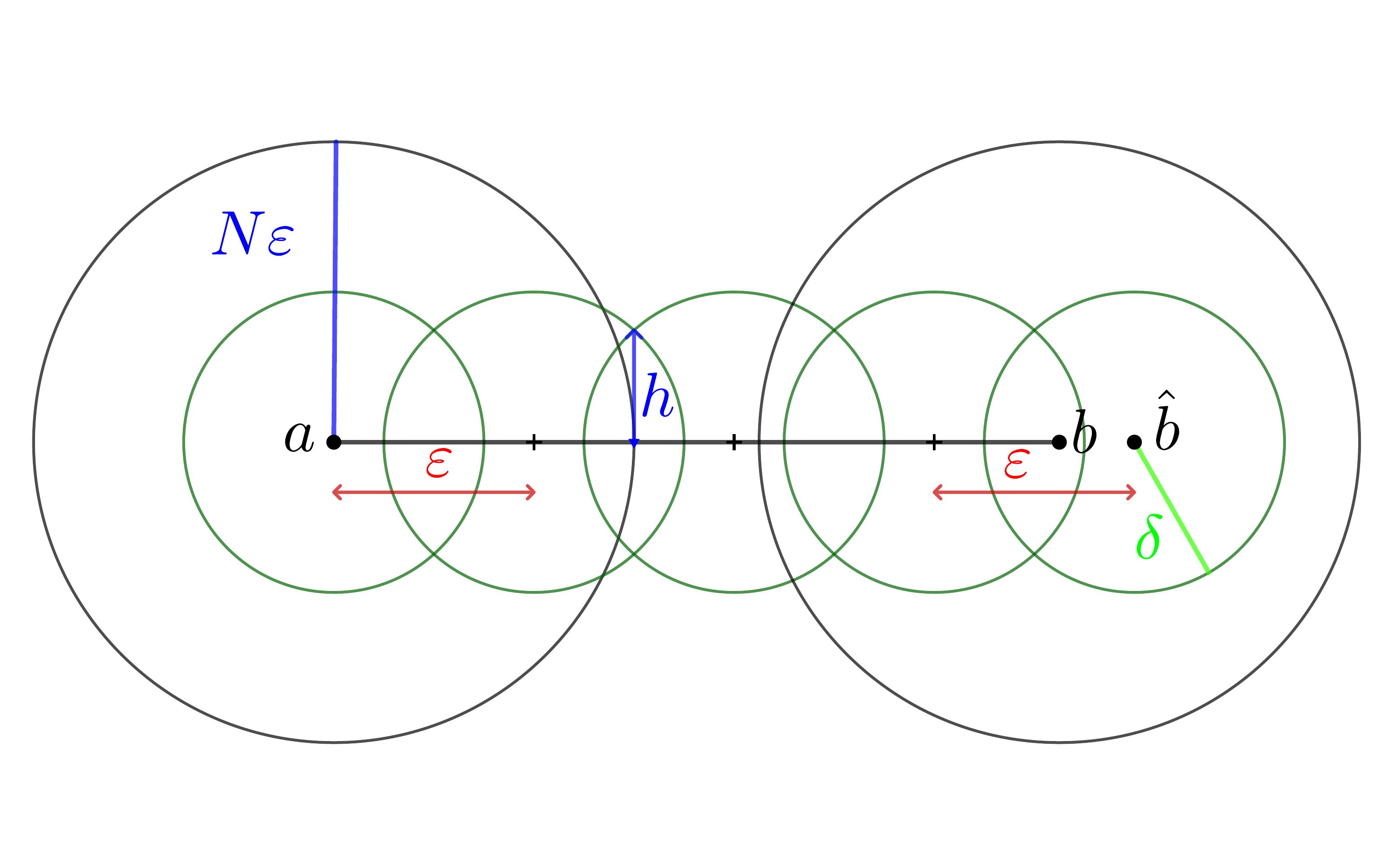}
        \caption{ }
        \label{Fig5}
    \end{figure}

 Since $a,b \in K$, then $B_{N\eps}(a), B_{N\eps}(b) \subset K_\eps$. We know that the maximum distance between $b$ and $\hat b$ is $\varepsilon/2$, so choose $\delta = {N\eps}-\varepsilon/2< {N\eps}$ and we get that $B_\delta(\hat b) \subset B_{N\eps}(b)$. From the construction of $\ell^{1,\varepsilon}(K_{\eps})$, we know that it contains copies of the set $B_\delta$ from $a$ to $\hat b$ centered at each natural multiple of $\varepsilon$ between them. Then, we have that these $B_\delta$ balls overlaps. Let us compute this overlap. 

    \begin{figure}[H]
        \centering
        \includegraphics[width=9cm]{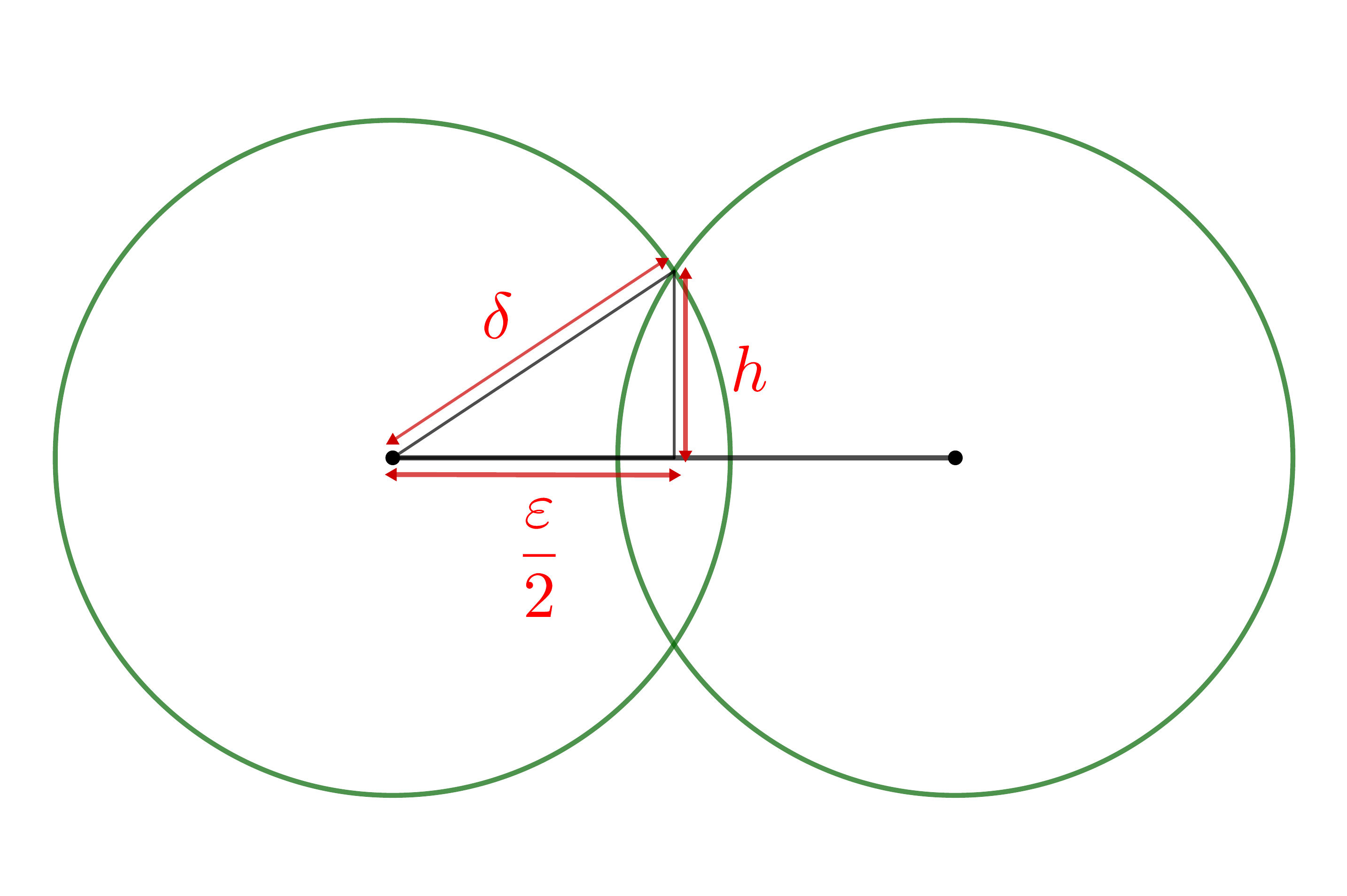}
        \caption{ }
        \label{Fig6}
    \end{figure}

    Define $h$ as the radius of the overlap (see Figure \ref{Fig6}). Then, using Pythagoras Theorem we get that 
    \begin{equation}
        h^2 = \delta^2 - \left(\frac{\varepsilon}{2} \right)^2 =\left( {N\eps} - \frac{\varepsilon}{2} \right)^2 - \left(\frac{\varepsilon}{2} \right)^2 =  N(N-1)\eps^{2}>0.
    \end{equation}
    Then, we have that the segment $[a,\hat b]$ is 
    included in $\ell^{1,\varepsilon}(K_{\eps})$, and in particular the segment $[a,b]$ we were interested in is inside $\ell^{1,\varepsilon}(K_\eps)$. This proves that $\ell^1(K) \subset \ell^{1,\varepsilon}(K_\eps)$. 

    To prove that $\mbox{co}(K) \subset \mbox{co}_\varepsilon(K_\eps)$, we are going to iterate the previous argument to obtain that $\ell^k(K) \subset \ell^{k,\varepsilon}(K_\eps)$. Since by Proposition \ref{re:ellcharac} we know that $\mbox{co}(K) = \ell^N(K)$, there is a finite amount of iterations and our choice of $K_\eps$ as
    the set $K + B_{N\eps}(0)$ is enough (notice that in $\RR^N$ we need $N-1$ iterations).
\end{proof}

Let us treat first the situation in which $\Omega_0$ is $\varepsilon$-convex. This particular case is interesting because the set $\Omega_0$ preserves the $\varepsilon$-convexity through its evolution via the game,
that is, the positivity set of the value of the game after $k$ rounds of the game is
still $\varepsilon$-convex. To simplify the notation let us call $\Omega^\eps_{k}$  (instead of $\Omega^{\varepsilon}_{t=k\eps^{2}/2}$) the positivity set of the value of the game, $u^\eps$, after $k$ plays.

\begin{remark}\label{remark1}
	Note that, when $\overline{K}\subset \Omega_{0}$,  it holds that $K_{\varepsilon}\subset \Omega^{\varepsilon}_{k}$ for all $\varepsilon>0$ small enough and all $k\in\mathbb{N}$. It is due to the fact that, if $x\in K_{\varepsilon}$, Paul can always choose to stop the game and receives the positive payment $\psi_{\varepsilon}(x)$.
\end{remark}

\begin{lemma}[Preservation of $\varepsilon$-convexity]\label{9} 
	Let $\Omega_0$ be $\varepsilon$-convex. Then, $\Omega^\eps_{k}$ is $\varepsilon$-convex for any $k \in \mathbb{N}$ and moreover, 
	\[K_{\eps}\subset \mbox{co}_{\varepsilon}(K_{\varepsilon}) \subset \cdots \subset \Omega^\eps_{k} \subset \Omega^\eps_{(k-1)} \subset \cdots \subset \Omega_0.  \]
\end{lemma}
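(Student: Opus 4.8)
The plan is to run an induction on the number of plays $k$, establishing \emph{simultaneously} the two assertions $(P_k)$: ``$\Omega^\eps_k$ is $\eps$-convex'', and $(Q_k)$: ``$\Omega^\eps_k\subset\Omega^\eps_{k-1}$'' for $k\ge 1$. Two ingredients are used repeatedly. First, the one-step form of the Dynamic Programming Principle \eqref{DPP}: since
\[
u^\eps\Big(x,\tfrac{k}{2}\eps^2\Big)=\max\Big\{\psi_\eps(x),\ \sup_{|v|=1}\min_{b=\pm1}u^\eps\Big(x+b\eps v,\tfrac{k-1}{2}\eps^2\Big)\Big\},
\]
a point $x$ belongs to $\Omega^\eps_k$ if and only if either $x\in K_\eps$, or there is a unit vector $v$ with $x+\eps v\in\Omega^\eps_{k-1}$ and $x-\eps v\in\Omega^\eps_{k-1}$ (a supremum that is strictly positive is strictly positive along some admissible $v$). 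Second, the stopping rule gives $K_\eps\subset\Omega^\eps_k$ for every $k$, since Paul may halt and collect $\psi_\eps(x)>0$; and $\Omega^\eps_0=\{u_0>0\}=\Omega_0$.

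Now the induction. The base case $(P_0)$ is precisely the hypothesis that $\Omega_0$ is $\eps$-convex. Assuming $(P_{k-1})$, I would first deduce $(Q_k)$: let $x\in\Omega^\eps_k$. If $x\in K_\eps$ then $x\in K_\eps\subset\Omega^\eps_{k-1}$; otherwise there is $v$ with $x\pm\eps v\in\Omega^\eps_{k-1}$, and since $|(x+\eps v)-(x-\eps v)|=2\eps$ is an integer multiple of $\eps$, $(P_{k-1})$ forces the whole $\eps$-segment $\ell^\eps_{x+\eps v,\,x-\eps v}=\{x+\eps v,\ x,\ x-\eps v\}$ into $\Omega^\eps_{k-1}$, so in particular $x\in\Omega^\eps_{k-1}$. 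This gives $\Omega^\eps_k\subset\Omega^\eps_{k-1}$. With $(Q_k)$ in hand I would then prove $(P_k)$: take $x,y\in\Omega^\eps_k$; if $|x-y|/\eps\notin\N$ then $\ell^\eps_{x,y}=\{x,y\}\subset\Omega^\eps_k$ and nothing is to prove, so assume $|x-y|=M\eps$ with $M\in\N$, set $v=(x-y)/|x-y|$ and $w_j=y+j\eps v$, so $\ell^\eps_{x,y}=\{w_0,\dots,w_M\}$. By $(Q_k)$ we have $x,y\in\Omega^\eps_{k-1}$, hence by $(P_{k-1})$ the entire $\eps$-segment $\ell^\eps_{x,y}$ lies in $\Omega^\eps_{k-1}$; in particular $w_{j-1},w_{j+1}\in\Omega^\eps_{k-1}$ for each interior $j$. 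Since $w_j\pm\eps v=w_{j\pm1}\in\Omega^\eps_{k-1}$, the one-step characterization used with the direction $v$ yields $w_j\in\Omega^\eps_k$; the endpoints $w_0=y$ and $w_M=x$ lie in $\Omega^\eps_k$ by assumption. Thus $\ell^\eps_{x,y}\subset\Omega^\eps_k$, which is $(P_k)$.

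Finally I would assemble the chain: iterating $(Q_k)$ gives $\Omega^\eps_k\subset\Omega^\eps_{k-1}\subset\cdots\subset\Omega^\eps_0=\Omega_0$; and since each $\Omega^\eps_k$ is $\eps$-convex by $(P_k)$ and contains $K_\eps$, minimality in the definition of $\mbox{co}_\eps$ gives $K_\eps\subset\mbox{co}_\eps(K_\eps)\subset\Omega^\eps_k$. The delicate point, and the only place where $\eps$-convexity of $\Omega_0$ is genuinely used, is the coupling of $(P)$ and $(Q)$: one cannot obtain $\Omega^\eps_k\subset\Omega^\eps_{k-1}$ without already knowing $\Omega^\eps_{k-1}$ is $\eps$-convex (a point won in $k$ rounds by one step ``along'' a segment sits at the midpoint of two points won in $k-1$ rounds, and only $\eps$-convexity lets us pull it back), and one cannot obtain $\eps$-convexity of $\Omega^\eps_k$ without first knowing $\Omega^\eps_k\subset\Omega^\eps_{k-1}$. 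Organizing the induction so that $(Q_k)$ is derived \emph{before} $(P_k)$ within a single step is what makes the argument close; the remaining verifications are direct readings of \eqref{DPP}.
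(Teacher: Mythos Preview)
Your proof is correct and follows essentially the same approach as the paper: an induction on $k$ in which, assuming $\Omega^\eps_{k-1}$ is $\eps$-convex, one first establishes the inclusion $\Omega^\eps_k\subset\Omega^\eps_{k-1}$ via the DPP and the midpoint property of $\eps$-segments, and then uses that inclusion together with the $\eps$-convexity of $\Omega^\eps_{k-1}$ to push interior points of any $\eps$-segment in $\Omega^\eps_k$ back into $\Omega^\eps_k$. Your explicit labeling of the two intertwined claims $(P_k)$ and $(Q_k)$, and your remark on why the order $(Q_k)$-then-$(P_k)$ is forced, make the logical dependence clearer than in the paper, but the argument is the same.
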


\begin{proof}
 Assume $\Omega^\eps_{k-1}$ is $\varepsilon$-convex. Let us show that $\Omega^\eps_k$ is $\varepsilon$-convex and that $\Omega^\eps_k \subset \Omega^\eps_{k-1}$. Let us start with the latter. If $x \in \Omega^\eps_{k}$, we have that $u(x, k\varepsilon^2/2)>0$. According to the DPP, this implies that either
	\[0<\sup_{|v|=1} \min_{\pm } u(x\pm \varepsilon v, (k-1)\varepsilon^2/2) \]
	or 
	\[0<\psi_\varepsilon(x). \]
	This second case directly implies that $x \in K_\varepsilon \subset \Omega^{\eps}_{k}$ due to Remark \ref{remark1}.  Then, let us assume that we are in the first case. If that supremum is bigger than zero, we have that there is some vector $v_0$ such that 
	\[0 <\min_{\pm } u(x\pm \varepsilon v_0, (k-1)\varepsilon^2/2).\]
	According to this expression, both $x+v\varepsilon,x-\varepsilon v$ belong to $\Omega^\eps_{k-1}$. Since $\Omega^\eps_{k-1}$ is $\varepsilon$-convex by assumption, we also have that $x \in \Omega^\eps_{k-1}$. Thus, $\Omega^\eps_{k} \subset \Omega^\eps_{k-1}$. 
	
	Now let us check that $\Omega^\eps_k$ is also 
	$\varepsilon$-convex. Choose $x,y \in \Omega^\eps_k$, and $z\in \ell^\varepsilon_{x,y}  \setminus  ( \{x\} \cup \{y\} )$ (if this set is empty, the conclusion is trivial). We want to check that $z \in \Omega^\eps_k$. Since $\Omega^\eps_{k} \subset \Omega^\eps_{k-1}$, both $x,y \in \Omega^\eps_{k-1}$. Using the $\varepsilon$-convexity of this set, we get that $\ell^\varepsilon_{x,y} \subset \Omega^\eps_{k-1}$. Choosing $v_0 = \frac{x-y}{|x-y|}$, this implies that
	\[ u(z,k\varepsilon^2) \geq \max_{|v|=1} \min_{\pm} u(z \pm \varepsilon v,(k-1)\varepsilon^2/2) \geq \min_{\pm} u(z \pm \varepsilon v_0,(k-1)\varepsilon^2/2) > 0 \]
	Hence, we conclude that $z \in \Omega^\eps_k$, making the set $\Omega^\eps_k$ $\varepsilon$-convex. 
\end{proof}

Now we prove the main result of this subsection.
\begin{lemma}\label{8}
	Suppose that $\textrm{co}_{\varepsilon}(K_{\varepsilon})\subset\Omega_{0}.$ Then,
	\begin{equation}
		\textrm{co}_{\varepsilon}(K_{\varepsilon})\subset\Omega_{k}^{\varepsilon}\;\;\textrm{for every}\;\;k\in\mathbb{N}.
	\end{equation} 
\end{lemma}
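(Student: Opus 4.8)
The plan is to argue by induction on $k$, the number of rounds played, using the characterization $\textrm{co}_\varepsilon(K_\varepsilon) = \bigcup_{j=0}^\infty \ell^{j,\varepsilon}(K_\varepsilon)$ from Proposition \ref{re:ellcharac} together with the Dynamic Programming Principle \eqref{DPP} and the fact (Remark \ref{remark1}) that $K_\varepsilon \subset \Omega_k^\varepsilon$ for every $k$. The base case $k=0$ is exactly the hypothesis $\textrm{co}_\varepsilon(K_\varepsilon) \subset \Omega_0 = \Omega_0^\varepsilon$. For the inductive step, assume $\textrm{co}_\varepsilon(K_\varepsilon) \subset \Omega_{k-1}^\varepsilon$; I want to show $\textrm{co}_\varepsilon(K_\varepsilon) \subset \Omega_k^\varepsilon$.

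The key observation is that $\textrm{co}_\varepsilon(K_\varepsilon)$ is itself $\varepsilon$-convex, so for any $x \in \textrm{co}_\varepsilon(K_\varepsilon)$ we can always find a direction in which both one-step moves stay inside $\textrm{co}_\varepsilon(K_\varepsilon)$. Concretely: if $x \in K_\varepsilon$, Paul stops and collects $\psi_\varepsilon(x) > 0$, so $x \in \Omega_k^\varepsilon$ directly. Otherwise $x \in \ell^{j,\varepsilon}(K_\varepsilon) \setminus \ell^{j-1,\varepsilon}(K_\varepsilon)$ for some $j \geq 1$, which by definition means there exist $a, b \in \ell^{j-1,\varepsilon}(K_\varepsilon) \subset \textrm{co}_\varepsilon(K_\varepsilon)$ with $x \in \ell^\varepsilon_{a,b}$. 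Since $x$ is strictly between $a$ and $b$ on an $\varepsilon$-segment, we have $|a-b|/\varepsilon \in \mathbb{N}$ and $|a-b| \geq 2\varepsilon$, and the two neighbours of $x$ on that $\varepsilon$-segment, namely $x \pm \varepsilon v_0$ with $v_0 = (a-b)/|a-b|$, both lie in $\ell^\varepsilon_{a,b} \subset \ell^{j,\varepsilon}(K_\varepsilon) \subset \textrm{co}_\varepsilon(K_\varepsilon)$. By the inductive hypothesis, $x \pm \varepsilon v_0 \in \Omega_{k-1}^\varepsilon$, i.e. $u^\varepsilon(x \pm \varepsilon v_0, (k-1)\varepsilon^2/2) > 0$. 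Plugging $v_0$ into the DPP,
\[
u^\varepsilon(x, k\varepsilon^2/2) \geq \sup_{|v|=1} \min_{b=\pm 1} u^\varepsilon(x + bv\varepsilon, (k-1)\varepsilon^2/2) \geq \min_{b = \pm 1} u^\varepsilon(x + bv_0\varepsilon, (k-1)\varepsilon^2/2) > 0,
\]
so $x \in \Omega_k^\varepsilon$, completing the induction.

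One point that needs a little care is the endpoint case: if $x = a$ or $x = b$ (or more generally $x \in K_\varepsilon$), the argument above via $v_0$ may fail because one of the two neighbours could leave $\textrm{co}_\varepsilon(K_\varepsilon)$; this is precisely why the stopping rule is invoked separately, and since every point of $\ell^{0,\varepsilon}(K_\varepsilon) = K_\varepsilon$ is handled by stopping, while every point of $\ell^{j,\varepsilon}(K_\varepsilon)$ with $j \geq 1$ that is an endpoint of its defining $\varepsilon$-segment already belongs to $\ell^{j-1,\varepsilon}(K_\varepsilon)$, a straightforward secondary induction on $j$ (within the fixed $k$) closes this gap: every $x \in \textrm{co}_\varepsilon(K_\varepsilon)$ is either in $K_\varepsilon$ or is a strict interior point of some $\varepsilon$-segment with endpoints in $\textrm{co}_\varepsilon(K_\varepsilon) \subset \Omega_{k-1}^\varepsilon$. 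I do not expect a serious obstacle here — the only thing to be careful about is bookkeeping with the two nested inductions (outer on $k$, inner on the level $j$ in the union defining $\textrm{co}_\varepsilon$), and making sure the $\varepsilon$-segment structure gives genuine lattice neighbours $x \pm \varepsilon v_0$ rather than just the two endpoints. The hypothesis $\textrm{co}_\varepsilon(K_\varepsilon) \subset \Omega_0$ is used only to start the outer induction; the monotonicity $\Omega_k^\varepsilon \subset \Omega_{k-1}^\varepsilon$ from Lemma \ref{9} is not actually needed for this particular statement, though it is consistent with it.
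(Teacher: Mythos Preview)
Your proof is correct and rests on the same key observation as the paper's: any $x \in \textrm{co}_\varepsilon(K_\varepsilon) \setminus K_\varepsilon$ lies strictly inside some $\varepsilon$-segment $\ell^\varepsilon_{a,b}$ with $a,b \in \textrm{co}_\varepsilon(K_\varepsilon)$, so Paul can point along that segment and both neighbours $x\pm\varepsilon v_0$ stay in $\textrm{co}_\varepsilon(K_\varepsilon)$. The organization differs, however. The paper builds Paul's full multi-round strategy explicitly, tracking what happens when the token reaches an endpoint $a$ or $b$: Paul then switches to a new segment one level down in the $\ell^{j,\varepsilon}$ hierarchy, and the argument shows directly that the position never leaves $\textrm{co}_\varepsilon(K_\varepsilon)$ over all $k$ rounds. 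Your outer induction on $k$ collapses this to a one-step DPP verification, which is cleaner --- the endpoint-switching never has to be made explicit, because the inductive hypothesis $\textrm{co}_\varepsilon(K_\varepsilon) \subset \Omega_{k-1}^\varepsilon$ already absorbs whatever happens after the first move. Your remark about a secondary induction on $j$ is fine but not strictly needed: taking the \emph{minimal} $j$ with $x \in \ell^{j,\varepsilon}(K_\varepsilon)$ automatically forces $x \neq a,b$ (since $a,b \in \ell^{j-1,\varepsilon}(K_\varepsilon)$), so $x$ is a genuine interior lattice point of $\ell^\varepsilon_{a,b}$ and both one-step neighbours lie in that segment.
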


\begin{proof}
To prove that $\text{co}_{\varepsilon}(K_{\varepsilon})\subset\Omega_{k}^{\varepsilon}$ for each $k\in\mathbb{N}$ is 
equivalent to show that for every time $t=k\varepsilon^{2}/2$, there exists a strategy for Paul such that he receives a positive payment regardless of Carol's choices for $x\in \textrm{co}_{\varepsilon}(K_{\varepsilon})$, as a consequence of the existence of this strategy, playing with the optimal strategy Paul also wins which implies that  $u^{\varepsilon}(x,k\varepsilon^{2}/2)>0.$

	Suppose $x\in\textrm{co}_{\varepsilon}(K_{\varepsilon})\setminus  K_{\varepsilon}$. Otherwise, if $x\in K_{\varepsilon}$, Paul automatically wins (he can stop the game at this point) and receives the payment of the obstacle. Then, if $x\in\textrm{co}_{\varepsilon}(K_{\varepsilon})\setminus  K_{\varepsilon}$, by the characterization in Proposition~\ref{re:ellcharac} of the $\varepsilon$-convex hull, there exists an integer $n>0$ such that $x\in\ell^{n,\varepsilon}(K_{\varepsilon}).$ Then, there are $a,b\in \ell^{n-1,\varepsilon}(K_{\varepsilon})\subset\textrm{co}_{\varepsilon}(K_{\varepsilon})$ such that $x\in\ell^{\varepsilon}_{a,b}$. Based on this information, Paul selects the vector $v=(b-a)/|b-a|$ because that choice ensures the next point $x_{1}\in \ell^{\varepsilon}_{a,b}$ no matter what Carol does. Then, if Paul and Carol only play once, $k=1$, Paul will receive a positive payoff because $x_{1}\in \textrm{co}_{\varepsilon}(K_{\varepsilon})$. That implies Paul will either get money from the obstacle if $x_{1}\in K_{\varepsilon}$ or from the initial data, since $\textrm{co}_{\varepsilon}(K_{\varepsilon})\subset\Omega_{0}.$
	
	If $k\not = 1$, Paul repeats the same strategy for $x_{1}$ choosing $v=(b-a)/|b-a|$. Suppose that Carol always chooses the sign for the next the movement such that $x_{j}\in \ell^{\varepsilon}_{a,b}\setminus\{a,b\}$ for each $j\in\{1,...,k\}.$ Consequently, at the end of the game Paul will receive a positive payment since $x_{j}\in\ell^{n}_{a,b}\subset\Omega_{0}.$ On the other hand, suppose that for some $j\in\{1,....,k-1\}$, $x_{j}=a$ (or $x_{j}=b$), then $x_{j}\in \ell^{n-1,\varepsilon}(K_{\varepsilon}).$ Thus, in the next turn, Paul's choice will be to select the vector $v=(b_{j}-a_{j})/|b_{j}-a_{j}|$ where $a_{j},b_{j}\in\ell^{n-2,\varepsilon}(K_{\varepsilon}$ and $x_{j}\in\ell^{\varepsilon}_{a_{j},b_{j}}\subset \textrm{co}_{\varepsilon}(K_{\varepsilon})$. Once again, if the game concludes at this stage, Paul will obtain a positive reward since $x_{j+1}\in \ell^{\varepsilon}_{a_{j},b_{j}}\subset \textrm{co}_{\varepsilon}(K_{\varepsilon})$. However, if the game continues and $x_{j+1}\not\in K_{\varepsilon}$ either Paul can keep the same strategy ensuring he wins when the time expires or Paul reaches $K_{\varepsilon}$ and he also wins in this case.
\end{proof}

As a consequence we get the following corollary.

\begin{corollary} Assume that 
$\textrm{co}_{\varepsilon}(K_{\varepsilon})\subset\Omega_{0},$	then
$$
		\textrm{co}_{\varepsilon}(K_{\varepsilon})\subset
		\liminf_{k\to\infty}\Omega^\eps_{k}.
	$$ 
Moreover, when $\Omega_{0}$ is $\varepsilon$-convex, it holds that
	\begin{equation}
	\textrm{co}_{\varepsilon}(K_{\varepsilon})\subset	 \lim_{k\to\infty}\Omega^\eps_{k}.
	\end{equation}
\end{corollary}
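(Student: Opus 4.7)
The plan is to derive this corollary as a direct consequence of the two preceding lemmas (Lemma \ref{8} and Lemma \ref{9}) combined with elementary properties of $\liminf$ and $\lim$ of set sequences. Since the hard work is already done, the proof is essentially a bookkeeping exercise.

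First, I would observe that Lemma \ref{8} gives, under the hypothesis $\textrm{co}_{\varepsilon}(K_{\varepsilon})\subset\Omega_{0}$, the pointwise inclusion
\[
\textrm{co}_{\varepsilon}(K_{\varepsilon})\subset\Omega_{k}^{\varepsilon}\qquad\text{for every } k\in\mathbb{N}.
\]
Recalling that $\displaystyle\liminf_{k\to\infty}\Omega_{k}^{\varepsilon}=\bigcup_{n\geq 1}\bigcap_{k\geq n}\Omega_{k}^{\varepsilon}$, and that any set contained in every $\Omega_{k}^{\varepsilon}$ lies in $\bigcap_{k\geq 1}\Omega_{k}^{\varepsilon}\subset\liminf_{k\to\infty}\Omega_{k}^{\varepsilon}$, the first inclusion follows at once.

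For the second statement, I would use Lemma \ref{9} (preservation of $\varepsilon$-convexity). This lemma guarantees that, when $\Omega_{0}$ is $\varepsilon$-convex, the family $\{\Omega_{k}^{\varepsilon}\}_{k\in\mathbb{N}}$ is a \emph{decreasing} sequence of sets. For any decreasing sequence of sets the limit exists in the sense of Kuratowski and coincides with both $\liminf$ and $\limsup$; explicitly,
\[
\lim_{k\to\infty}\Omega_{k}^{\varepsilon}=\liminf_{k\to\infty}\Omega_{k}^{\varepsilon}=\limsup_{k\to\infty}\Omega_{k}^{\varepsilon}=\bigcap_{k\geq 1}\Omega_{k}^{\varepsilon}.
\]
Combining this identity with the inclusion from Lemma \ref{8} immediately yields $\textrm{co}_{\varepsilon}(K_{\varepsilon})\subset\lim_{k\to\infty}\Omega_{k}^{\varepsilon}$.

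Since both parts reduce to verified facts, I do not anticipate any real obstacle; the only subtlety worth underlining is that the hypothesis $\textrm{co}_{\varepsilon}(K_{\varepsilon})\subset\Omega_{0}$ is exactly what allows us to invoke Lemma \ref{8}, while the additional $\varepsilon$-convexity of $\Omega_{0}$ is what upgrades $\liminf$ to a genuine limit via Lemma \ref{9}. Otherwise the argument is purely set-theoretic.
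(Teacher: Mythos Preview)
Your proof is correct and follows essentially the same approach as the paper: invoke Lemma \ref{8} to get $\textrm{co}_{\varepsilon}(K_{\varepsilon})\subset\Omega_{k}^{\varepsilon}$ for all $k$ and hence the $\liminf$ inclusion, then appeal to the monotonicity from Lemma \ref{9} to upgrade $\liminf$ to an actual limit in the $\varepsilon$-convex case. Your write-up is in fact slightly more explicit about the set-theoretic identities than the paper's version.
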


\begin{proof} Since we have
$\textrm{co}_{\varepsilon}(K_{\varepsilon})\subset\Omega_{0},$ 
thanks to Lemma \ref{8}, we get
	$$
		\textrm{co}_{\varepsilon}(K_{\varepsilon})\subset 	\liminf_{k\to\infty}\Omega^\eps_{k}. $$
Moreover, if $\Omega_{0}$ is $\varepsilon$-convex, by Lemma \ref{9}, using that $\{\Omega_{k}^{\eps}\}_{k\in\mathbb{N}}$ is a monotone sequence of sets, we obtain that
	\begin{equation}
		 \liminf_{k\to\infty}\Omega^\eps_{k}=
		 \limsup_{k\to\infty}\Omega^\eps_{k}.
	\end{equation}
\end{proof}
Observe that we are not able to prove that $\displaystyle \limsup_{k\to\infty}\Omega^{\varepsilon}_{k}\subset \textrm{co}_{\varepsilon}(K_\eps)$ with a fixed $\varepsilon$. However, in the next subsection, we will be able to prove that $\displaystyle \limsup_{t\to\infty}\Omega_{t} \subset \textrm{co}(K)$ and conclude $\displaystyle \lim_{t\to\infty}\Omega_{t} =\textrm{co}(K)$.

\subsection{Convex hull of $K$ not included in $\Omega_0$} For an open obstacle $K$ and an initial open set $\Omega_0$, we 
defined in the introduction of this paper the graph $G=(\mathcal{V},\mathcal{E})$ as follows:  the set of vertices is given by
$$
	\mathcal{V}:= \Big\{ V \subset \mathbb{R}^N : V \text{ is a connected component of } K \Big\},
	$$
	and the set of edges by 
	$$
	\mathcal{E}:= \Big\{ \langle V_1, V_2 \rangle : V_1, V_2 \in \mathcal{V} \text{ and } \ell_{x,y} \subset \Omega_0 \text{ for some } x\in V_1 \text{ and some } y\in V_2  \Big\}. 
$$
Let us also define the open set 
\begin{equation}
	L = \Big\{ x \in \ell_{a,b} : a,b \in K, \ell_{a,b} \subset \Omega_0 \Big\},
\end{equation}
which is a set analogue of the graph $G$. It is clear that $\mbox{co}(K) = \mbox{co}(L)$, since we have that $K \subset L\subset \ell^1(K)$. Observe that if $G$ is connected, then $L$ is connected. That the graph $G$ is connected is going to be the main hypothesis of this subsection.

We start by showing the following elementary result. 

\begin{lemma} \label{lemmacountablecomponents}
	Let $K \subset \RR^N$ be an open set. Then, $K$ can only have at most a countable amount of connected components. 
\end{lemma}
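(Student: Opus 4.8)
The plan is to prove the statement via a classical separability argument: $\RR^N$ is a separable metric space, so it has a countable dense subset $D = \{q_1, q_2, \ldots\} \subset \RR^N$ (for instance $D = \Q^N$). The idea is that every connected component of an open set $K$ is itself open, and any nonempty open set must contain at least one point of $D$; since distinct components are disjoint, this gives an injection from the set of components into $D$.

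First I would recall why each connected component of $K$ is open. If $C$ is a connected component of $K$ and $x \in C$, then since $K$ is open there is a ball $B_r(x) \subset K$. The ball $B_r(x)$ is connected, contains $x$, and is contained in $K$, hence it lies entirely in the connected component of $x$, namely $C$. Therefore $C$ is open. Next, since $C$ is a nonempty open subset of $\RR^N$ and $D$ is dense, $C \cap D \neq \emptyset$; pick some $q_{n(C)} \in C \cap D$. Because two distinct connected components of $K$ are disjoint, the assignment $C \mapsto n(C)$ is injective: if $C_1 \neq C_2$ then $q_{n(C_1)} \in C_1$ and $q_{n(C_2)} \in C_2$ with $C_1 \cap C_2 = \emptyset$, forcing $n(C_1) \neq n(C_2)$. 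Thus the collection of connected components of $K$ injects into the countable set $\N$, so it is at most countable.

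I do not anticipate any genuine obstacle here; this is a standard point-set topology fact and the only thing to be careful about is stating cleanly that components of an open subset of a locally connected (indeed separable) space are open, and invoking separability of $\RR^N$. The proof is short enough that one can simply write it out directly rather than citing a reference, which also keeps the exposition self-contained. If the authors prefer, the same argument works verbatim with "second countable" in place of "separable," but separability via $\Q^N$ is the most elementary phrasing.
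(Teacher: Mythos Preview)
Your proof is correct and follows essentially the same approach as the paper: pick a point of $\Q^N$ in each (open) connected component and use disjointness of components to obtain an injection into a countable set. Your version is simply more detailed, spelling out why components are open and why the assignment is injective, whereas the paper states this in two sentences.
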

\begin{proof}
	For each connected component of $K$, which are open, choose an element of $\mathbb{Q}^N$ belonging to the connected component. Since every one of these rational elements can only belong to one connected component, there can be at most a countable amount of connected components.
\end{proof}

The proof of the result in \cite{Misu} requires to choose a sufficiently small $\varepsilon$, depending on the starting position $x$: the reason for doing that is that if the jumps are large enough, it may happen that you jump over the set you want to land on and then the proof would not work anymore. This fact (that one has to choose $\varepsilon$ depending on the starting position $x$) prevents the proofs in \cite{Misu} to shed some light on the asymptotic behaviour for large times of the sets $\Omega^\eps_t$.
At this crucial point we have to take into account that we enlarged the obstacle set, since this will help Paul to win.

What we are going to show now is that 
\begin{equation} \label{first}
    \mbox{co}(K) \subset \liminf_{t \to \infty} \Omega_t^\varepsilon.
\end{equation}
What we would actually like to prove is that 
\begin{equation}
    \mbox{co}_{\varepsilon}(K_{\varepsilon}) \subset \liminf_{t \to \infty} \Omega_t^\varepsilon,
\end{equation}
but there are several difficulties in order to show this.  
We have proved \eqref{first} in the previous subsection under the additional assumption that $\mbox{co}(K) \subset \Omega_0$. When this hypothesis is not in force, we want an argument that takes into the account both the concentric strategy used in \cite{Misu} and the $\varepsilon$-convex hull of $K_{\varepsilon}$, something that is missing for the moment.  

\begin{remark}
    Once we prove that Paul has a winning strategy in a certain region, we can may assume without loss of generality that that region is part of the obstacle. For example, we may assume the connected components of the obstacle $K$ to be convex, or that $L$ is the obstacle, instead of $K$, and thus we can consider the whole obstacle as a connected
    set. 
\end{remark}

From this point onwards, we have to differentiate between the proof for $N=2$ and the proof for $N\geq 3$. This is due to the fact that the proof for $N=2$ is more natural for this game, while the proof for $N\geq 3$ is a generalization of the two-dimensional one. In fact, when $N=2$ we will be able to say more than in higher dimensions, since in this case we will be able to choose a fixed $\varepsilon$, uniform for every $x \in \textrm{co}(K)$. The two-dimensional proof is a slight refinement of what is done in \cite{Misu}, while the proof in higher dimensions is completely new. 

Let us start with the case of $N=2$. We state a well-known property of the convex hull of a path-connected set in two dimensions.

\begin{lemma}\label{re:pathconnectedconvex}
    If $L \subset \RR^2$ is a path-connected set, then $\mbox{co}(L) = \ell^1(L)$.
\end{lemma}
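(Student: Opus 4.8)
The plan is to show the nontrivial inclusion $\mathrm{co}(L) \subset \ell^1(L)$, since $\ell^1(L) \subset \mathrm{co}(L)$ is immediate (every point of $\ell^1(L)$ lies on a segment joining two points of $L \subset \mathrm{co}(L)$, and $\mathrm{co}(L)$ is convex). So fix $x \in \mathrm{co}(L)$. By Carath\'eodory's theorem in $\mathbb{R}^2$, $x$ is a convex combination of at most three points $p_1, p_2, p_3 \in L$, say $x = \sum_{i=1}^3 \theta_i p_i$ with $\theta_i \geq 0$ and $\sum \theta_i = 1$. If one of the $\theta_i$ vanishes then $x \in \ell_{p_j, p_k} \subset \ell^1(L)$ and we are done, so assume $x$ lies in the (possibly degenerate) triangle $T$ with vertices $p_1, p_2, p_3$. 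If the three points are collinear, $x$ again lies on a segment between two of them. Otherwise $T$ is a genuine triangle containing $x$ in its interior or on an edge.

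The key step is to use path-connectedness of $L$ to produce the segment through $x$. First I would reduce to $x$ in the interior of $T$. Then consider the line $r$ through $x$ and $p_1$; it exits the triangle through the opposite edge $\ell_{p_2,p_3}$ at a point $q$, so $x \in \ell_{p_1, q}$. Now $q$ need not be in $L$, but $q \in \ell_{p_2,p_3} \subset \ell^1(L)$. This is not quite enough — we would only get $x \in \ell^2(L)$. To get $x \in \ell^1(L)$ directly I would instead argue as follows: since $L$ is path-connected, take a path $\gamma : [0,1] \to L$ from $p_2$ to $p_3$. For each $s$, the point $\gamma(s)$ together with $p_1$ spans a segment $\ell_{p_1,\gamma(s)}$; as $s$ varies these segments sweep out a connected region, and I would show that this region covers the whole triangle $T$ (or at least the sub-triangle $p_1 p_2 p_3$), hence contains $x$. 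Concretely: the map $s \mapsto$ (the direction of $\gamma(s)-p_1$, viewed as an angle) is continuous, so its image is an interval of angles containing both the direction of $p_2 - p_1$ and the direction of $p_3 - p_1$; therefore some $s_0$ gives a direction pointing exactly at $x$ from $p_1$, i.e. $x \in \ell_{p_1, \gamma(s_0)}$ with $\gamma(s_0) \in L$ and $p_1 \in L$, so $x \in \ell^1(L)$. One must check the angular interval really contains the direction toward $x$: since $x$ is a convex combination of $p_1, p_2, p_3$ with positive weight on $p_1$, the ray from $p_1$ through $x$ has direction strictly between the directions of $p_2 - p_1$ and $p_3 - p_1$, which is the interval swept, by the intermediate value theorem applied to the (continuous, real-valued after choosing a local branch) angle function along $\gamma$.

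I would then handle the degenerate cases (two of the $p_i$ coincide, or the triangle is flat, or $x$ sits on an edge) as trivial sub-cases, and note the argument is symmetric in the choice of which vertex plays the role of $p_1$ (pick any vertex with positive barycentric weight). The conclusion is $\mathrm{co}(L) \subset \ell^1(L)$, hence equality.

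The main obstacle I anticipate is the branch-of-angle bookkeeping: the direction function along the path $\gamma$ takes values in the circle $\mathbb{S}^1$, and one needs that along $\gamma$ the segment from $p_1$ never passes \emph{through} $p_1$ (so directions stay well-defined) and that a consistent real-valued branch exists on the relevant arc so that the intermediate value theorem applies. This is handled by first shrinking to the closed triangle interior where $\gamma$ stays at positive distance from $p_1$ is not automatic — instead one observes that $p_1 \notin \ell_{p_2,p_3}$ since the three points are affinely independent, and more care may be needed if $\gamma$ wanders near $p_1$; the cleanest fix is to replace $\gamma$ by the path in $L$ and use that the \emph{closed} triangle minus $\{p_1\}$ is where directions live, together with a compactness/continuity argument, or simply to invoke that this is a classical fact (it is, essentially, the statement that a path-connected planar set has the same convex hull after one "segment-closure" step, which is standard). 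Given that the paper treats this as a "well-known property," a short, clean write-up citing Carath\'eodory plus the intermediate value theorem for the sweeping rays should suffice.
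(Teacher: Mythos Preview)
The paper does not prove this lemma --- it records it as a well-known planar fact and moves on --- so I assess your argument on its own. There is a genuine gap at the step marked ``i.e.'': having the direction of $\gamma(s_0)-p_1$ equal to that of $x-p_1$ only places $\gamma(s_0)$ on the \emph{ray} from $p_1$ through $x$, not necessarily beyond $x$, so $x\in\ell_{p_1,\gamma(s_0)}$ need not hold. Concretely, take $p_1=(0,0)$, $p_2=(2,2)$, $p_3=(2,-2)$, $x=(1,0)$, and route $\gamma$ from $p_2$ to $p_3$ so that its only crossing of the $x$-axis occurs at $(\tfrac12,0)$; then your $s_0$ gives $\gamma(s_0)=(\tfrac12,0)$ and $x\notin\ell_{p_1,\gamma(s_0)}=[(0,0),(\tfrac12,0)]$. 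Switching the pivot to $p_2$ or $p_3$ does not obviously rescue the argument, since the connecting paths in $L$ can all be arranged to misbehave in the same way, and this issue is independent of your (valid) worry about $\gamma$ passing through $p_1$.

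The clean repair is to pivot at $x$ rather than at a vertex. For $x\in\mathrm{co}(L)\setminus L$ the map $a\mapsto (a-x)/|a-x|$ is continuous on $L$, so its image $D\subset\mathbb{S}^1$ is connected, hence an arc. If $D$ contains no antipodal pair then (up to a borderline half-open arc of length exactly $\pi$, disposed of by a standard supporting-line argument) $D$ lies in an open half-circle, so $L$ lies in an open half-plane whose boundary passes through $x$; that half-plane is convex, hence $\mathrm{co}(L)$ lies in it as well, contradicting $x\in\mathrm{co}(L)$. An antipodal pair $v,-v\in D$ then furnishes $a,b\in L$ with $x\in\ell_{a,b}$, which is exactly $x\in\ell^1(L)$.
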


This next auxiliary result will be needed in the proof. 

\begin{lemma} \label{re:cortasegmentos}
    Let $K_1, K_2 \subset \RR^2$ be two connected sets, and let $(a,b) \in K_1 \times K_2$. If $r$ is a line that intersects $\ell_{a,b} \setminus (K_1 \cup K_2)$, then for any pair $(x,y) \in K_1 \times K_2$ the line $r$ intersects the segment $\ell_{x,y}$. 
\end{lemma}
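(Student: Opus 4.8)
## Proof plan for Lemma~\ref{re:cortasegmentos}

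\textbf{Setup and reduction.} The plan is to argue that the line $r$ ``separates'' $K_1$ and $K_2$ in a strong enough sense that \emph{every} connecting segment must cross it. First I would fix a point $p \in \ell_{a,b} \setminus (K_1\cup K_2)$ lying on $r$, and note that $p$ is an interior point of the segment $\ell_{a,b}$ (it is in neither endpoint set, and $a\in K_1$, $b\in K_2$). Thus $a$ and $b$ lie on opposite sides of $p$ along the line through $a,b$; the key question is whether they lie on opposite (closed) half-planes determined by $r$. If $r$ coincided with the line through $a$ and $b$, then $r$ would contain $a\in K_1$ and $b\in K_2$, and one checks directly that $r$ then meets every $\ell_{x,y}$ trivially if $x$ or $y$ lies on $r$; the substantive case is when $r$ is transverse to $\ell_{a,b}$ at $p$. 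In that case $a$ and $b$ are in the two \emph{open} half-planes $H^+, H^-$ bounded by $r$.

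\textbf{Each $K_i$ stays on one side.} The core step: I claim $K_1 \subset \overline{H^+}$ and $K_2 \subset \overline{H^-}$ (after relabelling). Suppose not, say $K_1$ meets both open half-planes $H^+$ and $H^-$. Since $K_1$ is connected, its image under the affine functional defining $r$ is a connected subset of $\RR$ meeting both $(0,\infty)$ and $(-\infty,0)$, hence it meets $\{0\}$: there is a point $q_1 \in K_1 \cap r$. Doing the same, if $K_2$ also met both sides we would get $q_2\in K_2\cap r$. Now here is where I would use the hypothesis more carefully: the hypothesis says $r$ intersects $\ell_{a,b}\setminus(K_1\cup K_2)$, but by itself ``$K_1$ meets both sides'' need not be false. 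The honest route is: we do \emph{not} need $K_1$ to lie entirely on one side; what we need is weaker. Let me restructure — see next paragraph.

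\textbf{The robust argument via the separating point.} Rather than half-plane containment, I would use the point $p\in r$ directly. Parametrize $r$ and consider, for a candidate pair $(x,y)\in K_1\times K_2$, whether $\ell_{x,y}$ meets $r$. If it does not, then $x$ and $y$ lie strictly on the same side of $r$, say both in $H^+$. But $a$ (connected to $x$ within $K_1$) may lie in $H^-$. Using connectedness of $K_1$: if $x\in H^+$ and $a\in H^-$, then the connected set $K_1$ meets $r$ at some point $a'$; similarly if $a\in H^+$ already, take $a'=a$. In all cases we obtain $a'\in K_1\cap \overline{H^+}$ connected to $x$, and symmetrically $b'\in K_2\cap\overline{H^+}$ connected to $y$ — this does not yet give a contradiction. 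The genuinely working idea: the segment $\ell_{a,b}$ crosses $r$ \emph{at an interior point not in $K_1\cup K_2$}, which forces $a$ and $b$ into \emph{opposite closed} half-planes with neither on $r$; hence $a\in H^+$, $b\in H^-$ (strictly). If some $\ell_{x,y}$ avoided $r$, then $x,y$ are strictly on one side, WLOG $x,y\in H^+$; then $y\in H^+$ while $b\in H^-$, so the connected set $K_2$ crosses $r$, giving $c\in K_2\cap r$; and $x\in H^+$ while... this still needs the segment $\ell_{a,c}$ or $\ell_{x,b}$ to land in the bad set. I would close it by applying the argument to the \emph{modified} pair: replace $b$ by this crossing point $c\in K_2\cap r$, so now $\ell_{a,c}$ has an endpoint on $r$ hence meets $r$; iterate/conclude that consistency of ``which side'' around the connected sets $K_1,K_2$ is impossible unless every $\ell_{x,y}$ meets $r$.

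\textbf{Expected main obstacle.} The delicate point — and where I expect to spend the most care — is precisely pinning down why a single transverse intersection of $r$ with $\ell_{a,b}\setminus(K_1\cup K_2)$ propagates to \emph{all} connecting segments; the clean formulation is to show $K_1$ and $K_2$ are contained in the two closed half-planes of $r$ respectively, using connectedness plus the fact that the crossing point of $\ell_{a,b}$ lies strictly between the half-planes and outside both sets, so neither $K_i$ can ``wrap around'' $r$ without the segment $\ell_{a,b}$ itself being forced onto the wrong side. Once that containment is established, any $(x,y)\in K_1\times K_2$ has $x\in\overline{H^+}$, $y\in\overline{H^-}$, so $\ell_{x,y}$ meets $\overline{H^+}\cap\overline{H^-}=r$, which is the claim. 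I would present the write-up in exactly that order: (1) locate $p$, handle the degenerate case $r=\mathrm{line}(a,b)$; (2) prove the half-plane containment of $K_1,K_2$ by connectedness; (3) deduce the intersection for arbitrary pairs.
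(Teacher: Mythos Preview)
Your eventual strategy---show that $K_1$ and $K_2$ lie in the two closed half-planes determined by $r$, then apply the intermediate value theorem to any segment $\ell_{x,y}$---is exactly the paper's approach. The paper's proof is one short paragraph: take $r$ to be the $x$-axis, \emph{assume} that neither $K_1$ nor $K_2$ meets $r$, observe that connectedness then forces each $K_i$ into a single open half-plane, note that they cannot be the same half-plane (otherwise $\ell_{a,b}$ would miss $r$, contradicting the hypothesis), and conclude by Bolzano. So your outline (1)--(3), restricted to the case $K_i\cap r=\emptyset$, \emph{is} the paper's argument; all of your exploratory detours in the middle paragraphs are unnecessary once you make that restriction.

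Where you hesitate---step (2) in the general case---you are right to hesitate, and your instinct that ``neither $K_i$ can wrap around $r$'' is in fact wrong. The lemma as stated is false without the extra hypothesis $(K_1\cup K_2)\cap r=\emptyset$. Counterexample: let $K_1$ be the arc of the unit circle from angle $-\pi/4$ to $5\pi/4$ (so $K_1$ is connected and crosses the $x$-axis at $(\pm 1,0)$), let $K_2=\{(0,-2)\}$, let $a=(0,1)$, $b=(0,-2)$, and let $r$ be the $x$-axis. Then $r\cap\ell_{a,b}=\{(0,0)\}\subset \ell_{a,b}\setminus(K_1\cup K_2)$, yet for $x=(\tfrac{1}{\sqrt 2},-\tfrac{1}{\sqrt 2})\in K_1$ and $y=b\in K_2$ the segment $\ell_{x,y}$ lies entirely in the lower half-plane and misses $r$. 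So the half-plane containment you plan to prove in step~(2) is simply not true in general; the paper's proof has the same gap, it just does not acknowledge the missing case. For a correct write-up you should add the hypothesis that $r$ is disjoint from $K_1\cup K_2$ (which is what the paper tacitly uses and what suffices for its application) and then give the clean one-paragraph half-plane argument, dropping the inconclusive ``robust'' detour entirely.
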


\begin{proof}
    Assume without loss of generality that the line $r$ is the $x$-axis, $\{(x,y) \subset \RR^2: y=0\}$. 
    \begin{figure}[H]
        \centering
        \includegraphics[width=11cm]{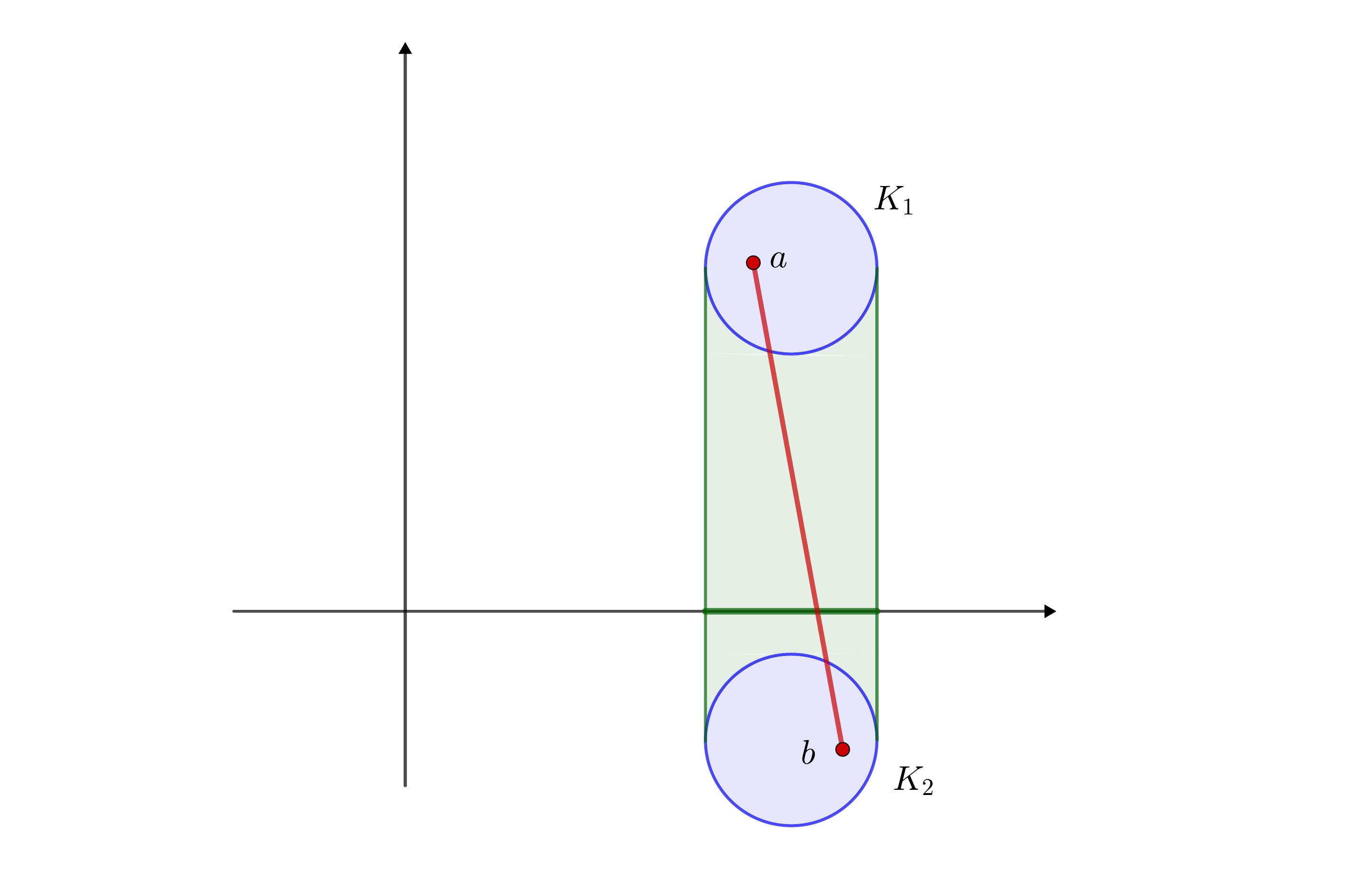}
        \caption{Scheme of the proof.}
    \end{figure}
    Then, if $K_1$ and $K_2$ do not touch the $x$-axis, they must lie either inside $H^+ = \{(x,y) \in \RR^2: y>0\}$ or inside $H^- \{(x,y) \in \RR^2: y<0\}$. If both of them are in the same set, it is impossible that any segment  would intersect the $x$-axis. So we can assume that $K_1 \subset H^+$ and $K_2 \subset H^-$. Therefore, due to Bolzano's Theorem, for any couple of points $(a,b) \in K_1 \times K_2$, the segment $\ell_{a,b}$ joining them must intersect the $x$-axis. 
\end{proof}

Let us now write the main result of this section for two space dimensions, $N=2$. 
\begin{proposition} \label{misudim2}
    Assume we are in $\RR^2$ and condition \eqref{(G)} be satisfied, that is, let the graph $G$ be connected. Then, 
    \begin{equation}
        \mbox{co}(K) \subset \liminf_{t \to \infty} \Omega_t^\varepsilon,
    \end{equation}
    for every $\eps$ small enough.
\end{proposition}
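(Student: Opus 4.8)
The plan is to show that, for every $x \in \mbox{co}(K)$ and every $\varepsilon$ small enough, Paul has a strategy that guarantees a positive payoff no matter how long the game lasts, so that $x \in \Omega_t^\varepsilon$ for all $t$ large, hence $x \in \liminf_{t\to\infty}\Omega_t^\varepsilon$. By the Remark allowing us to absorb into the obstacle any region where Paul already wins, and by Lemma \ref{re:pathconnectedconvex}, it suffices to treat $x \in \ell^1(L)$, that is $x \in \ell_{a,b}$ for some $a,b \in K$ with $\ell_{a,b}\subset\Omega_0$; in fact, enlarging the connected components of $K$ to be convex, we may assume $L$ itself is connected and take $a, b \in L$. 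The key point — and this is where enlarging the obstacle to $K_\varepsilon = K + B_{N\varepsilon}(0)$ is essential — is that around the segment $\ell_{a,b}$ there is a tube of radius $N\varepsilon \geq 2\varepsilon$ contained in $\Omega_0 \cup K_\varepsilon$ (for $\varepsilon$ small, using that $\overline{\mbox{co}(L)}$ relative to the connectedness structure sits inside $\Omega_0$ together with $K_\varepsilon$), so Paul can afford to drift laterally without leaving the winning region.

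The main step is the construction of Paul's strategy. Starting from $x_0 = x \in \ell_{a,b}$, Paul plays the direction $v = (b-a)/|b-a|$ (or its reverse), so each move is $x_i = x_{i-1} \pm \varepsilon v$, keeping the position on the line through $a$ and $b$. If $|a-b|/\varepsilon \notin \mathbb{N}$ the position may overshoot $a$ or $b$; here we invoke the quantitative estimate from the preceding lemma (the Pythagoras computation giving overlap radius $h^2 = N(N-1)\varepsilon^2 > 0$): because the balls $B_{N\varepsilon}(a), B_{N\varepsilon}(b) \subset K_\varepsilon$, any point on the line within distance $\varepsilon$ beyond $a$ or $b$ still lies in $K_\varepsilon$, so once Paul's position reaches the neighborhood of an endpoint he can stop and collect $\psi_\varepsilon > 0$. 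Thus, along this one-dimensional shuttle strategy, either Carol keeps the position strictly between $a$ and $b$ — in which case the position stays in $\ell_{a,b} \subset \Omega_0$, and when time runs out Paul collects $u_0 > 0$ — or Carol pushes the position to an endpoint region, where it enters $K_\varepsilon$ and Paul stops with a positive obstacle payoff. Either way Paul wins, so $u^\varepsilon(x, t) > 0$ for every $t$. Since the segment $\ell_{a,b}$ and the tube of radius $N\varepsilon$ around it remain inside $\Omega_0 \cup K_\varepsilon$ for all $\varepsilon \le \varepsilon_0$ (with $\varepsilon_0$ depending only on the compact set $\overline{L}$ and $\mathrm{dist}(\overline{L}, \partial\Omega_0)$, not on $x$), this $\varepsilon_0$ is uniform over $x \in \ell^1(L) = \mbox{co}(K)$, which is the two-dimensional gain over \cite{Misu}. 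The auxiliary Lemma \ref{re:cortasegmentos} is used to guarantee that when $x$ lies on a segment joining two distinct connected components, the relevant transversal line actually meets every competing segment, so the reduction to a connected $L$ is legitimate and the tube construction is unobstructed by other components.

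The hard part is verifying uniformity of $\varepsilon_0$ and handling the interplay between the two reductions — absorbing winning regions into the obstacle and using path-connectedness of $L$ — so that the inductive-in-$j$ argument (move along $\ell_{a,b}$, and if forced to an endpoint, switch to a segment at the previous level $\ell^{j-1}$, exactly as in Lemma \ref{8}) terminates after finitely many levels with the conclusion $x \in \liminf_{t\to\infty}\Omega_t^\varepsilon$. Because in $\RR^2$ we have $\mbox{co}(L) = \ell^1(L)$ by Lemma \ref{re:pathconnectedconvex}, only one level is really needed once $L$ is connected, which is why the statement can be made clean here; I expect the bookkeeping of the obstacle-enlargement-versus-tube-width inequality ($N\varepsilon - \varepsilon/2$ against the distance to $\partial\Omega_0$) to be the only genuinely delicate estimate.
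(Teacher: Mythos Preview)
Your argument handles only the easy case $x\in L$ and conflates it with the main case $x\in\mbox{co}(K)\setminus L$. You write that it suffices to treat $x\in\ell^1(L)$, ``that is $x\in\ell_{a,b}$ for some $a,b\in K$ with $\ell_{a,b}\subset\Omega_0$,'' but that description is exactly the definition of $L$, not of $\ell^1(L)$. A point $x\in\ell^1(L)=\mbox{co}(K)$ lies on a segment $\ell_{a,b}$ with $a,b\in L$, and there is \emph{no reason} for that segment to be contained in $\Omega_0$; indeed, if every such segment were in $\Omega_0$ we would have $\mbox{co}(K)\subset\Omega_0$, which is precisely the hypothesis this subsection drops. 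Consequently your shuttle strategy along $\ell_{a,b}$ fails: Carol can keep the position inside $\ell_{a,b}\setminus(K_\eps\cup\{a,b\})$ but outside $\Omega_0$, and when time runs out Paul collects $u_0<0$.

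The paper's proof deals with $x\in\mbox{co}(K)\setminus L$ by a completely different mechanism that you have not identified. One fixes, once and for all, a polygonal path $\Gamma\subset L$ connecting the components of $K$ (this is where Lemma~\ref{re:cortasegmentos} is actually used: to show the endpoints $a,b$ can be taken on $\Gamma$). Then $\Gamma$ together with an arc through $x$ forms a closed Jordan curve bounding a region $D_\gamma$, and Paul plays a \emph{concentric} strategy centred at the arc's centre $z$, which forces the position outward through $D_\gamma$ until it lands in a fixed tube $B_{3\delta}(\Gamma)\subset L$. Because $\Gamma$ is compact in the open set $L$, the radius $\delta>0$ is independent of $x$, and any $\eps<\delta$ works---this is the source of the uniformity, not the quantity $\mathrm{dist}(\overline{L},\partial\Omega_0)$ you invoke (which may well be zero). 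The Jordan-curve-plus-concentric-strategy reduction is the missing idea.
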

\begin{proof} 
    The proof follows the ideas in \cite{Misu}, but with a few changes to take into account that the enlarged obstacle set, $K_\varepsilon$, is going to help us. The idea is that we want to prove that the asymptotic limit $t\to \infty$ can be taken for a fixed $\varepsilon$ small enough, independently of the starting position $x \in \textrm{co}(K)$. 

    \textsc{Case $x \in K$.} In this case it is clear that 
    \[u^\varepsilon(x,t)\geq \min_{x \in K}\psi_\varepsilon(x)>0\quad \text{ for } t>0.\] 

   \textsc{Case  $x \in L\setminus K$.} In this case, by the definition of $L$, there exists $a,b$ in two different connected components of $K$, such that $x\in \ell_{a,b}\subset \Omega_0$. With this in mind, an clear strategy for Paul is to move along the segment $\ell_{a,b}$. The only consideration needed is to observe that, since $\varepsilon< 2\eps$, Paul will not jump over the enlarged set $K_\eps$ and can always stop inside. We get that 
    \[u^\varepsilon(x,t) \geq \min\left\{\min_{y \in K + B_\varepsilon} \psi_\varepsilon(x), \min_{y \in \ell_{a,b}} u_0(y)  \right\} > 0 \quad \text{ for } t>0.\] 
    So far it has not been necessary to take $\varepsilon$ to be small.

	\textsc{Case  $x \in \mbox{co}(K) \setminus L$.} In this the case we have to slightly alter the arguments from the ones used in~\cite{Misu}. Recall that $\mbox{co}(K) = \mbox{co}(L) = \ell^1(L)$, see~Lemma~\ref{re:pathconnectedconvex}. 
	
	Let us give a sketch of the argument. Since $x \in \ell^1(L)$, there exists $a,b \in L$ such that $x \in \ell_{a,b}$. Now, the essential part of the argument lies in the fact that you can build a polygonal path in $L$ between $a$ and $b$ and, this polygonal, together with the segment $\ell_{a,b}$, form a closed curve. We now perform a concentric argument for Paul so that he moves away from the segment $\ell_{a,b}$ and towards the polygonal. When the position lies close enough to the polygonal, Paul wins, since the problem reduces to one of the previous cases. 
	
	This approach has one disadvantage, which is that for every $x$ the polygonal chosen may be different. And if this polygonal lies too close to the boundary of $\Omega_0$, we have to take the step $\varepsilon$ appropiately small enough so that Paul does not jump over the set. Therefore, the step $\varepsilon$ depends on the initial position $x$. If we chose a polygonal path that is valid for every $x$, then we avoid this problem and the $\varepsilon$ step, if chosen small enough, can be valid for every $x$. 

    \begin{figure}[H]
        \centering
        \includegraphics[width=9.6cm]{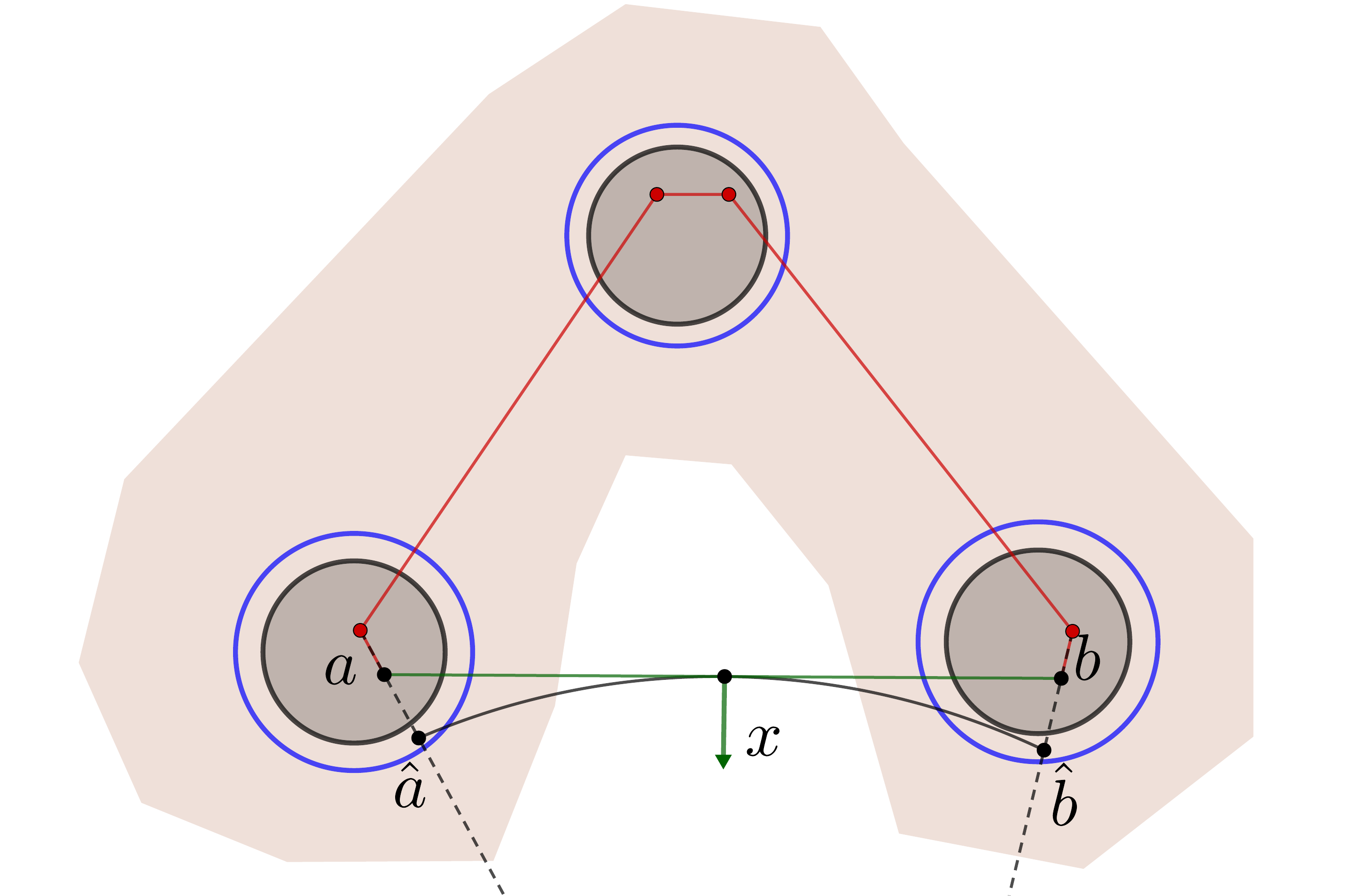}
        \caption{Scheme of the proof. }
    \end{figure}

    To build this polygonal, recall that $K$ only has a countable amount of connected components $K_j$ with $j \in \mathbb{N}$ (see Lemma \ref{lemmacountablecomponents}). Then for each pair of connected components $K_i$, $K_j$ with $i \neq j$ such that there is a segment $\ell_{y_i, y_j} \subset \Omega_0$ with $y_i \in K_i$ and $y_j \in K_j$, take this segment $\ell_{y_i, y_j}$ and consider the set $\Gamma_L$ of all the points in these segments, that is, $\Gamma_L = \cup \ell_{y_i, y_j}$.

	This $\Gamma_L$ connects all of the connected components $K_j$, but inside of each $K_j$ it is not necessarily connected. We add the required polygonal paths inside of each $K_j$, which we call $\Gamma_K$, to make $\Gamma \coloneq \Gamma_L  \cup \Gamma_K$ a connected polygonal path. 
	The polygonal $\Gamma$ is a compact subset of $L$, so there 
    exists $\delta>0$ such that $B_{3\delta}(\Gamma) \subset L$. Now we have to choose $\varepsilon < \delta$ to make sure Paul cannot jump over the set. Let us also assume that $\delta < 2
	\eps$. Notice that
    $\delta $ is choosed depending on $\eps$ and that such a choice is possible for every $\eps$ small enough.
    
    As we mentioned before, for any $x \in \mbox{co}(K)\setminus L$ there exists $a,b \in L$ such that $x \in \ell_{a,b}$ due to $\textrm{co}(K)=\ell^{1}(L)$. We are going to show that $a,b$ can be chosen either in $\Gamma$ or in $K$, that is, $x\in \ell^1(\Gamma \cup K)$. Assume, $a$ is in $L \setminus K$. Then $a$ belongs to a segment joining two connected components without intersecting them. Hence, thanks to Lemma \ref{re:cortasegmentos}, we can change $a$ to be  in $\Gamma$, and we still have $x \in \ell_{a,b}$. We perform the same change for $b$ if necessary. If either $a$ or $b$ lie in $K$, we add the necessary segments inside of $K$ to the polygonal $\Gamma$ so that it connects with them, and discard unnecessary segments so that the polygonal path starts in $a$ and ends in $b$. 

	Now we have the polygonal path joining $a,b$ and lying inside of $L$ ready. Notice that, without loss of generality, we may assume that $\Gamma$ and $\ell_{a,b}$ only intersect at $a$ and $b$. If we modify $\Gamma$ extend the segment joining $\Gamma$ with $a$ and $b$ with a length $\delta$, we obtain two new corresponding endpoints, $\hat a$ and $\hat b$. We define $\hat \Gamma$ as the modified polygonal and $C$ as the arc that passes through the points $\hat a$, $x$ and $\hat b$. Bear in mind that if $a$ or $b$ were in $K$, $\hat a$ or $\hat b$ may lie in $K_\varepsilon$. Observe that $\hat \Gamma \cup C$ form a closed curve. Therefore, there is a Jordan closed curve $\gamma \subset \hat \Gamma \cup C$, such that $x \in \gamma$. Then, thanks to the Jordan curve theorem, there exists a bounded open set $D_\gamma$ enclosed by the curve $\gamma$. Note that the exterior normal vector of $C$ points towards the set $D_\gamma$.

  	From this point the proof follows using a concentric argument for Paul. The arc $C$ has a center $z$ which we take as the center for the concentric strategy. Since $D_\gamma$ is a bounded set, it takes at most finite time $\tau$ such that $D_\gamma \subset B_{R_\tau}(z)$, for 
	\begin{equation}
		R_\tau = \sqrt{|x-z|^2 + 2\tau}
	\end{equation}
	Before this time $\tau$, we can be sure that Paul has arrived at a neighbourhood of $\Gamma$ and thus lies inside of $L$. Therefore, we are now in one of the previous cases and have concluded the proof. 
\end{proof}

Now, let us try to generalize the previous proof for $N=2$ to any dimension $N\geq 3$. The fundamental idea lies in the fact that Paul can always play in a two-dimensional plane as if we were in $\RR^2$ by choosing vectors contained in that plane. In order to use this, we develop a different approach to the convex hull of a set that allows us to extend the proof from $\RR^2$ to $\RR^N$.

Let us try to give a sketch of the proof in $\RR^3$. The simplest geometrical example in $\RR^2$ where we can see how the previous proof works is a triangle. If we lie inside a triangle without one of the sides, then Paul will be able to move from the interior of the triangle to one of the other sides. Hence, starting from the convex hull of a triangle without one of the sides, Paul can find a strategy to win. In $\RR^3$, if we think of a tetrahedron, the idea is to move from the interior to one of the faces, and then reduce to two dimensions and move to the sides. Observe that thinking about the previous argument in terms of triangles greatly simplifies the geometry of the problem. The intuition behind our adaptation relies on this idea.  

We mention again one important difference between the results of $N=2$ and $N\geq 3$: in two dimensions we can give the result for a fixed $\varepsilon>0$, but for higher dimensions we cannot. This is the main difference between Theorems \ref{Th.Cadenas} and \ref{cor2}, and it occurs because in the proof of Proposition~\ref{misudim2} we can choose the same polygonal path for every $x$. This type of argument clearly does not work for $N \geq 3$.  

Let us define, for a set $\Gamma$, the new set
\begin{equation}
	\mathcal{T}^1(\Gamma) := \Big\{z\in \mathbb{R}^N: z \in \ell_{x,y}, \text{ with } x \in \ell_{a,b}\subset \Gamma \text{ and } y\in \ell_{a,c} \subset \Gamma, \text{ for } a\neq b \text{ and } a \neq c \Big\}.
\end{equation}
This definition is relevant when the set $\Gamma$ is polygonally connected. It is easy to observe from this construction that $\mathcal{T}(\Gamma)$ is open whenever $\Gamma$ is open. 

 \begin{figure}[H]
	\centering
	\includegraphics[width=13cm]{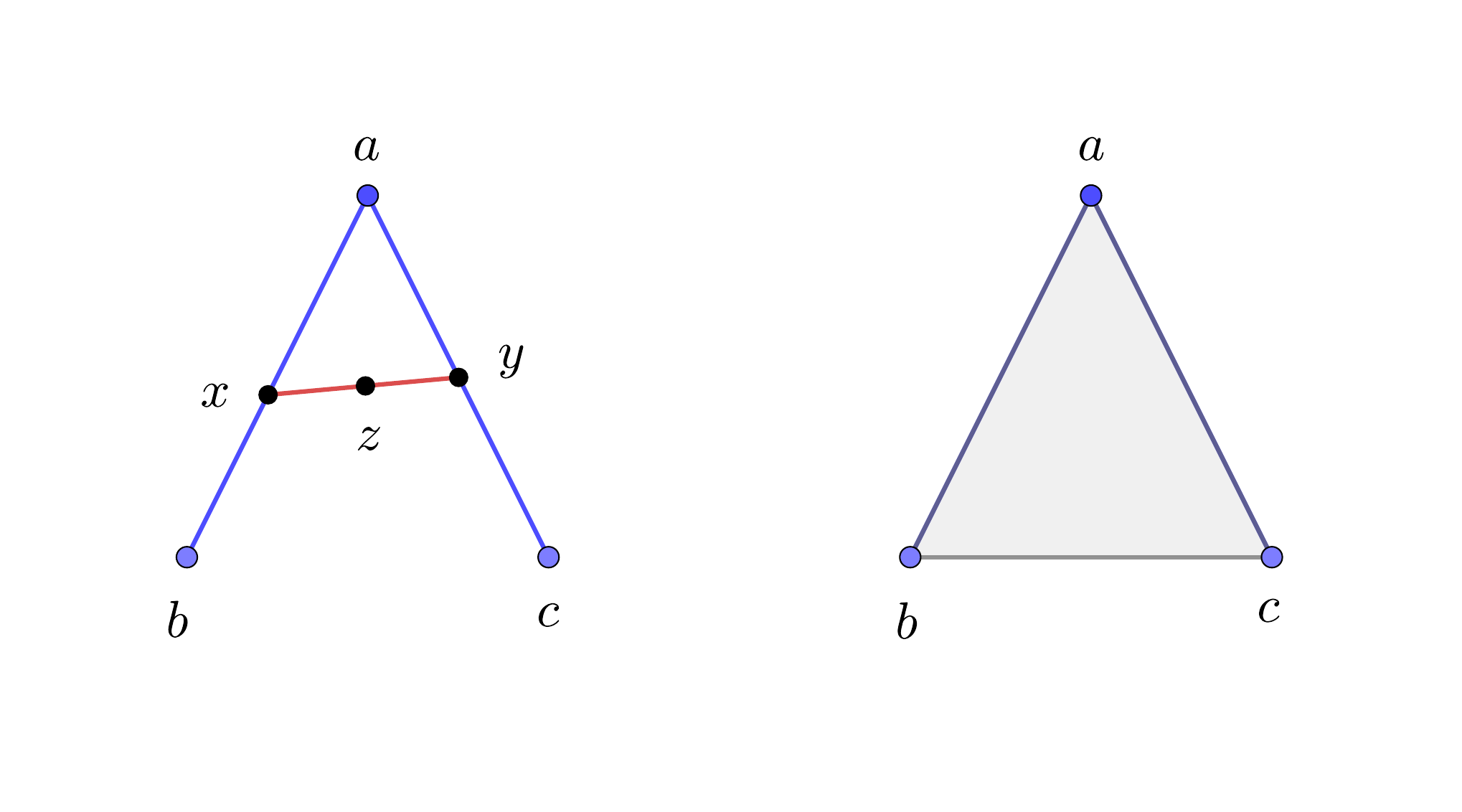}
	\caption{Construction of $\mathcal{T}^1(\Gamma)$.}
\end{figure}

Let us define $\mathcal{T}^2 (\Gamma) = \mathcal{T}^1(\mathcal{T}^1(\Gamma))$. Also define $\mathcal{T}^0(\Gamma) = \Gamma$. Let 
$$\mathcal{T}(\Gamma) =  \bigcup_{n=0}^\infty \mathcal{T}^n(\Gamma).$$ 
We will show that if Paul starts in $\textrm{co}(K)$ plays with a strategy that bears in mind the iterations $\mathcal{T}^n(K)$, he will be able to win the game. It is immediate to observe that $\mathcal{T}^n(\Gamma) \subset \mbox{co}(\Gamma)$, since this is a more restrictive construction than the one of the convex hull, so $\mathcal{T}(\Gamma) \subset \mbox{co}(\Gamma)$. What we need to prove now is that $\mbox{co}(\Gamma) \subset \mathcal{T}(\Gamma)$. 

First, let us show that these sets preserve polygonal connectedness.

\begin{lemma} \label{polygonallemma}
	Let $\Gamma$ be a polygonally connected set. Then, $\mathcal{T}^1(\Gamma)$ is poligonally connected.
\end{lemma}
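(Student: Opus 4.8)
The plan is to prove that $\mathcal{T}^1(\Gamma)$ inherits polygonal connectedness from $\Gamma$ by showing that any point of $\mathcal{T}^1(\Gamma)$ can be joined by a polygonal path inside $\mathcal{T}^1(\Gamma)$ to a fixed base point. Since $\Gamma \subset \mathcal{T}^1(\Gamma)$ (take $x = a = y$ in the definition, or rather note each point $p\in\Gamma$ lies on a degenerate segment), it suffices to connect an arbitrary $z \in \mathcal{T}^1(\Gamma)$ to some point of $\Gamma$ by a polygonal path lying in $\mathcal{T}^1(\Gamma)$, and then use the polygonal connectedness of $\Gamma$ itself to finish.

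First I would unwind the definition: if $z \in \mathcal{T}^1(\Gamma)$, then $z \in \ell_{x,y}$ with $x \in \ell_{a,b} \subset \Gamma$ and $y \in \ell_{a,c} \subset \Gamma$ for some $a \neq b$, $a \neq c$. The key observation is that the whole ``triangle'' spanned by $a$, and the points moving along $\ell_{a,b}$ and $\ell_{a,c}$, is contained in $\mathcal{T}^1(\Gamma)$: concretely, for every $x' \in \ell_{a,b}$ and $y' \in \ell_{a,c}$ the segment $\ell_{x',y'}$ lies in $\mathcal{T}^1(\Gamma)$ directly by the definition. So the filled triangle $\triangle(a,b,c) := \bigcup_{x'\in\ell_{a,b},\,y'\in\ell_{a,c}} \ell_{x',y'}$ is a convex (hence polygonally connected) subset of $\mathcal{T}^1(\Gamma)$ containing $z$ and containing $a \in \Gamma$. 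Thus $z$ can be joined to $a \in \Gamma$ by a segment (indeed $\ell_{z,a} \subset \triangle(a,b,c) \subset \mathcal{T}^1(\Gamma)$, since $\triangle(a,b,c)$ is convex and contains both endpoints).

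Then, given two arbitrary points $z_1, z_2 \in \mathcal{T}^1(\Gamma)$, the above produces points $a_1, a_2 \in \Gamma$ with polygonal (in fact straight) segments $\ell_{z_1,a_1}, \ell_{z_2,a_2} \subset \mathcal{T}^1(\Gamma)$; since $\Gamma$ is polygonally connected, there is a polygonal path from $a_1$ to $a_2$ inside $\Gamma \subset \mathcal{T}^1(\Gamma)$, and concatenating the three pieces gives a polygonal path from $z_1$ to $z_2$ inside $\mathcal{T}^1(\Gamma)$. This proves $\mathcal{T}^1(\Gamma)$ is polygonally connected.

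The only step requiring a bit of care — and the one I expect to be the main (minor) obstacle — is verifying that the filled triangle $\triangle(a,b,c)$ is genuinely contained in $\mathcal{T}^1(\Gamma)$ and is convex; this is where one must check that the definition of $\mathcal{T}^1$ is applied with the correct vertex, namely that every $x' \in \ell_{a,b}$ and $y' \in \ell_{a,c}$ produce an admissible pair (they do, since $\ell_{a,b}, \ell_{a,c} \subset \Gamma$ are exactly the segments appearing in the definition), and that $\ell_{z,a}$ stays in this triangle. Convexity of $\triangle(a,b,c)$ is immediate because it is the convex hull of $\{a,b,c\}$ (it equals $\mathrm{co}(\{a,b,c\})$). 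If $a = b$ or $a = c$ forced a degenerate case, one handles it trivially since then $z$ lies on a single segment of $\Gamma$ and the claim is obvious. Everything else is routine concatenation of paths.
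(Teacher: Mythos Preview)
Your proof is correct and follows essentially the same approach as the paper's: connect an arbitrary point of $\mathcal{T}^1(\Gamma)$ to a point of $\Gamma$ by a segment lying in $\mathcal{T}^1(\Gamma)$, then use the polygonal connectedness of $\Gamma$ and concatenate. The paper's proof is extremely terse (two sentences) and simply asserts that such a connecting segment exists; your version supplies the justification the paper omits, namely that the filled triangle $\triangle(a,b,c)=\mathrm{co}(\{a,b,c\})$ sits inside $\mathcal{T}^1(\Gamma)$ by definition, so $\ell_{z,a}\subset\mathcal{T}^1(\Gamma)$ by convexity.
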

\begin{proof}
	Let $x,y \in \mathcal{T}^1(\Gamma)$. This implies that there is a segment joining $x$ with a point $b_x \in \Gamma$ and a segment joining $y$ with a point $b_y$ in $\Gamma$. By hypothesis, there is a polygonal joining both $b_x$ and $b_y$, so we can extend this polygonal to join $x$ and $y$.
\end{proof}

Now let us prove an auxiliary lemma that will be useful. 

\begin{lemma} \label{firstllemma}
	Let $\Gamma$ be a polygonally connected set. Then, $$\ell^1(\Gamma) \subset \mathcal{T}(\Gamma).$$
\end{lemma}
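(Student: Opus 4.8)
The plan is to show that any point $x$ lying on a segment $\ell_{a,b}$ with $a,b\in\Gamma$ already belongs to $\mathcal{T}^1(\Gamma)$, hence a fortiori to $\mathcal{T}(\Gamma)=\bigcup_{n\ge 0}\mathcal{T}^n(\Gamma)$. So fix $x\in\ell^1(\Gamma)$ and choose $a,b\in\Gamma$ with $x\in\ell_{a,b}$. If $a=b$ there is nothing to prove (then $x=a\in\Gamma=\mathcal{T}^0(\Gamma)\subset\mathcal{T}(\Gamma)$), so assume $a\neq b$. The idea is to realize $x$ as a point on a segment $\ell_{x',y'}$ whose endpoints each sit on a segment emanating from a common third vertex of $\Gamma$, which is exactly the defining condition of $\mathcal{T}^1(\Gamma)$.

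The key step is to exploit that $\Gamma$ is polygonally connected: there is a polygonal path inside $\Gamma$ joining $a$ to $b$. Pick any intermediate vertex $c$ of this path (if the path is a single segment $\ell_{a,b}\subset\Gamma$, then $x\in\ell_{a,b}\subset\Gamma$ and we are done again). Then both segments $\ell_{a,c}$ and $\ell_{c,b}$ lie in $\Gamma$ up to further subdivision; more precisely, by following the polygonal path one produces points $c$ such that $\ell_{a,c}\subset\Gamma$ and $\ell_{c,b}\subset\Gamma$ — if no such single $c$ exists because the path bends, one argues inductively along the polygonal, splitting $\ell_{a,b}$ into finitely many pieces each of which is handled by a single bend. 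For the basic bend case: we have $a\neq c$, $c\neq b$, and $x\in\ell_{a,b}$. Write $x=\theta a+(1-\theta)b$. Now pick the point $y' = c$ itself, which lies on $\ell_{a,c}\subset\Gamma$ (take the trivial segment $\ell_{c,c}$, or rather observe $c\in\ell_{a,c}$), and we want $x'\in\ell_{?,?}\subset\Gamma$ with $x\in\ell_{x',y'}$. Concretely: the segment from $a$ through $x$ to $b$ together with the vertex $c$ spans a (possibly degenerate) triangle with vertices $a,c,b$; every point of this triangle, and in particular $x$ (which lies on the side $ab$), is on a segment $\ell_{x',y'}$ with $x'\in\ell_{a,c}$ and $y'\in\ell_{a,b}$... but $\ell_{a,b}$ need not be in $\Gamma$. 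The correct choice is $x'\in\ell_{a,c}$ and $y'\in\ell_{c,b}$, both inside $\Gamma$: the triangle $\{a,c,b\}$ is covered by segments joining $\ell_{a,c}$ to $\ell_{c,b}$ (fan them out from $c$... no, fan from the side), precisely $\mathcal{T}^1$ of the two segments $\ell_{c,a}$ and $\ell_{c,b}$ sharing the vertex $c$ is the full triangle $\mathrm{co}\{a,c,b\}$, which contains $x$. Since $\ell_{c,a},\ell_{c,b}\subset\Gamma$, this gives $x\in\mathcal{T}^1(\Gamma)$.

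Thus the main obstacle is purely bookkeeping: handling the case where the polygonal path from $a$ to $b$ has several bends, so that no single triangle $\mathrm{co}\{a,c,b\}$ stays inside $\Gamma$. This is resolved by an easy induction on the number of edges of the path. If the path is $a=p_0,p_1,\dots,p_k=b$, then by Lemma~\ref{polygonallemma} each $\mathcal{T}^n(\Gamma)$ is polygonally connected, and one shows by induction on $k$ that the segment $\ell_{a,b}$ is contained in $\mathcal{T}^{k-1}(\Gamma)$: the base case $k=1$ is $\ell_{a,b}\subset\Gamma$; for the inductive step, apply the induction hypothesis to the subpath $p_1,\dots,p_k$ to get $\ell_{p_1,b}\subset\mathcal{T}^{k-2}(\Gamma)$, then the triangle $\mathrm{co}\{a,p_1,b\}$ is covered by segments from $\ell_{a,p_1}\subset\Gamma\subset\mathcal{T}^{k-2}(\Gamma)$ to $\ell_{p_1,b}\subset\mathcal{T}^{k-2}(\Gamma)$, both sharing the vertex $p_1$, hence $\ell_{a,b}\subset\mathrm{co}\{a,p_1,b\}\subset\mathcal{T}^1(\mathcal{T}^{k-2}(\Gamma))=\mathcal{T}^{k-1}(\Gamma)$. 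Since $\mathcal{T}^{k-1}(\Gamma)\subset\mathcal{T}(\Gamma)$, we conclude $x\in\ell_{a,b}\subset\mathcal{T}(\Gamma)$, which is the desired inclusion $\ell^1(\Gamma)\subset\mathcal{T}(\Gamma)$.
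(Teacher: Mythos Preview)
Your final paragraph is correct and is precisely the argument the paper has in mind: induct on the number $k$ of edges in a polygonal path in $\Gamma$ from $a$ to $b$, using that the triangle $\mathrm{co}\{a,p_1,b\}$ lies in $\mathcal{T}^1$ of any set containing both $\ell_{p_1,a}$ and $\ell_{p_1,b}$, to conclude $\ell_{a,b}\subset\mathcal{T}^{k-1}(\Gamma)$. The paper's own proof is just the one-line sketch ``by an inductive argument the segment joining $x$ and $y$ is in $\mathcal{T}^{k-1}(\Gamma)$''; you have filled in exactly the details it omits.

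One remark on presentation: your opening claim that $x\in\ell_{a,b}$ ``already belongs to $\mathcal{T}^1(\Gamma)$'' is not true in general (it only holds when the polygonal path has a single bend), and the second paragraph reads as thinking-aloud with abandoned attempts. Since the clean induction in your third paragraph stands on its own, you should delete the first two paragraphs and keep only that.
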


\begin{proof}
	Let $x, y \in \Gamma$. Since $\Gamma$ is polygonally connected, there is a polygonal joining $x$ and $y$. Let $k$ be the number of segments in the polygonal joining both points. We can see by an inductive argument that the segment joining $x$ and $y$ is necessarily in $\mathcal{T}^{k-1}(\Gamma) \subset \mathcal{T}(\Gamma)$. Thus, $\ell^{1}(\Gamma) \subset \mathcal{T}(\Gamma)$. 
\end{proof}

\begin{lemma} \label{dobleiteracion}
	Let $\Gamma$ a polygonally connected set. Then, 
	$$\mathcal{T}(\Gamma) = \mathcal{T}(\mathcal{T}(\Gamma)).$$
\end{lemma}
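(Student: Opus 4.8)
The inclusion $\mathcal{T}(\Gamma) \subset \mathcal{T}(\mathcal{T}(\Gamma))$ is immediate, since $\Gamma = \mathcal{T}^0(\Gamma) \subset \mathcal{T}(\Gamma)$ and the operator $\mathcal{T}^1$ is monotone with respect to set inclusion, so $\mathcal{T}^n(\Gamma) \subset \mathcal{T}^n(\mathcal{T}(\Gamma)) \subset \mathcal{T}(\mathcal{T}(\Gamma))$ for every $n$; taking the union over $n$ gives the claim. So the content of the lemma is the reverse inclusion $\mathcal{T}(\mathcal{T}(\Gamma)) \subset \mathcal{T}(\Gamma)$, and the plan is to prove this by establishing that $\mathcal{T}^1(\mathcal{T}(\Gamma)) \subset \mathcal{T}(\Gamma)$, from which $\mathcal{T}^n(\mathcal{T}(\Gamma)) \subset \mathcal{T}(\Gamma)$ follows by an easy induction on $n$ (using Remark-style identities $\mathcal{T}^{n}(\,\cdot\,) = \mathcal{T}^1(\mathcal{T}^{n-1}(\,\cdot\,))$), and then the union over $n$ finishes the argument.

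To prove $\mathcal{T}^1(\mathcal{T}(\Gamma)) \subset \mathcal{T}(\Gamma)$, take $z \in \mathcal{T}^1(\mathcal{T}(\Gamma))$. By definition of $\mathcal{T}^1$ there are points $a, b, c$ with $a \neq b$, $a \neq c$, and segments $\ell_{a,b} \subset \mathcal{T}(\Gamma)$, $\ell_{a,c} \subset \mathcal{T}(\Gamma)$, together with $x \in \ell_{a,b}$, $y \in \ell_{a,c}$ and $z \in \ell_{x,y}$. Since $\mathcal{T}(\Gamma) = \bigcup_n \mathcal{T}^n(\Gamma)$ and the sequence $\{\mathcal{T}^n(\Gamma)\}_n$ is increasing (each $\mathcal{T}^1$ adds points and contains its argument, so monotonicity in $n$ holds just as in Remark \ref{re:increasingsets} for the $\ell^j$), and since each of $\ell_{a,b}$ and $\ell_{a,c}$ is a compact connected set contained in a union of the increasing family, one shows there is a single index $m$ with both $\ell_{a,b} \subset \mathcal{T}^m(\Gamma)$ and $\ell_{a,c} \subset \mathcal{T}^m(\Gamma)$. (For the compactness point: if the segments are not already inside a single $\mathcal{T}^m(\Gamma)$ one can instead argue directly on the two endpoints $b \in \mathcal{T}^{m_1}(\Gamma)$, $a \in \mathcal{T}^{m_2}(\Gamma)$, $c \in \mathcal{T}^{m_3}(\Gamma)$ and take $m = \max\{m_1, m_2, m_3\}$, noting $\ell_{a,b}, \ell_{a,c} \subset \mathcal{T}^{1}(\mathcal{T}^{m-1+1}(\Gamma))$-type relations; but the cleanest route is to use Lemma \ref{polygonallemma} to know $\mathcal{T}^m(\Gamma)$ is polygonally connected, hence the segments with endpoints in it lie in $\mathcal{T}^1(\mathcal{T}^m(\Gamma)) = \mathcal{T}^{m+1}(\Gamma)$.) Then directly from the definition of $\mathcal{T}^1$ applied to $\mathcal{T}^{m}(\Gamma)$ (or $\mathcal{T}^{m+1}(\Gamma)$) we get $z \in \mathcal{T}^1(\mathcal{T}^{m}(\Gamma)) = \mathcal{T}^{m+1}(\Gamma) \subset \mathcal{T}(\Gamma)$, as desired.

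The main obstacle — and the only subtle point — is the bookkeeping that allows the two segments $\ell_{a,b}$ and $\ell_{a,c}$ witnessing $z \in \mathcal{T}^1(\mathcal{T}(\Gamma))$ to be placed inside a \emph{common} finite stage $\mathcal{T}^m(\Gamma)$; this is where one invokes that the stages form an increasing chain together with either a compactness argument on the segments or, more slickly, Lemma \ref{polygonallemma} (preservation of polygonal connectedness) so that whole segments between points of $\mathcal{T}^m(\Gamma)$ already sit in $\mathcal{T}^{m+1}(\Gamma)$. Once this is in place, everything else is a routine induction on the iteration index together with taking the countable union, exactly paralleling the treatment of $\mathrm{co}_\varepsilon$ via the $\ell^{j,\varepsilon}$ in Proposition \ref{re:ellcharac}.
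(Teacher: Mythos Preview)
Your proposal is correct and follows essentially the same route as the paper: reduce to showing $\mathcal{T}^1(\mathcal{T}(\Gamma))\subset\mathcal{T}(\Gamma)$, unpack the definition to get the two witnessing segments $\ell_{a,b},\ell_{a,c}\subset\mathcal{T}(\Gamma)$, and use the increasing-chain structure together with Lemma~\ref{polygonallemma} (and the idea behind Lemma~\ref{firstllemma}) to place both segments in a common finite stage $\mathcal{T}^m(\Gamma)$, whence $z\in\mathcal{T}^{m+1}(\Gamma)\subset\mathcal{T}(\Gamma)$. The only minor imprecision (shared with the paper) is that a segment between two points of the polygonally connected set $\mathcal{T}^m(\Gamma)$ lands in some $\mathcal{T}^{m+j}(\Gamma)$ with $j$ depending on the polygonal length, not necessarily in $\mathcal{T}^{m+1}(\Gamma)$; this does not affect the argument, since any finite index suffices.
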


\begin{proof}
	That $\mathcal{T}(\Gamma) \subset \mathcal{T}(\mathcal{T}(\Gamma))$ is obvious. We want to prove the reciprocal. It suffices to prove that $\mathcal{T}^1(\mathcal{T}(\Gamma)) = \mathcal{T}(\Gamma)$. If $x\in \mathcal{T}^1(\mathcal{T}(\Gamma))$, then this implies there exist $y,z \in \mathcal{T}(\Gamma)$ such that $x$ is in $\ell_{y,z}$, and there are $a,b,c \in \mathcal{T}(\Gamma)$, $b\not =a\not =c$, such that $y \in \ell_{a,b} \subset \mathcal{T}(\Gamma)$ and $z\in \ell_{a,c} \subset \mathcal{T}(\Gamma)$. But if $a,b \in \mathcal{T}(\Gamma)$, then there exist  $k_{1},k_{2} \in \mathbb{N}$ such that $a,b \in \mathcal{T}^{k_{1}}(\Gamma)$ and $a,c \in \mathcal{T}^{k_{2}}(\Gamma)$ . Recall that Lemma ~\ref{polygonallemma} implies $\mathcal{T}^{k}(\Gamma)$ is polygonally connected for any $k\in\mathbb{N}.$ Then, just like in the proof of  Lemma~\ref{firstllemma}, there is a polygonal in $\mathcal{T}^{k_{1}}(\Gamma)$ joining $a$ and $b$ and another joining $a$ and $c$ in $\mathcal{T}^{k_{2}}(\Gamma)$, so we know that for $m\in\mathbb{N}$ such that $m\geq  \max\{k_{1},k_{2}\}$ both $\ell_{a,b}$ and $\ell_{a,c}$ lie inside $\mathcal{T}^m(\Gamma)$.  Hence, $x\in \mathcal{T}^1\left( \mathcal{T}^m(\Gamma) \right) = \mathcal{T}^{m+1}(\Gamma) \subset \mathcal{T}(\Gamma)$. 
\end{proof}

With these two lemmas we are ready to show that $\mbox{co}(\Gamma) \subset \mathcal{T}(\Gamma)$.

\begin{corollary} \label{re:convexhullsubset}
	Let $\Gamma$ be a polygonally connected set. Then, $$\textrm{co}(\Gamma) \subset \mathcal{T}(\Gamma).$$
\end{corollary}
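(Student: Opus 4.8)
The plan is to combine the three preceding lemmas with Carath\'eodory's characterization $\textrm{co}(\Gamma)=\ell^{N}(\Gamma)$ from Proposition \ref{re:ellcharac}. The key intermediate claim I would isolate is that $\mathcal{T}(\Gamma)$ is closed under the operation $\ell^{1}$, that is, $\ell^{1}(\mathcal{T}(\Gamma))\subset\mathcal{T}(\Gamma)$. Once this is available the corollary follows immediately: since $\Gamma\subset\mathcal{T}(\Gamma)$ and, by Remark \ref{re:increasingsets}, $\ell^{N}(\Gamma)=\ell^{1}(\ell^{1}(\cdots\ell^{1}(\Gamma)))$ ($N$ iterations), applying $\ell^{1}$ repeatedly keeps us inside $\mathcal{T}(\Gamma)$, so $\textrm{co}(\Gamma)=\ell^{N}(\Gamma)\subset\mathcal{T}(\Gamma)$.

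To prove $\ell^{1}(\mathcal{T}(\Gamma))\subset\mathcal{T}(\Gamma)$ I would first check that $\mathcal{T}(\Gamma)$ is polygonally connected. By Lemma \ref{polygonallemma} and induction on $k$, each $\mathcal{T}^{k}(\Gamma)$ is polygonally connected; moreover the sets $\mathcal{T}^{k}(\Gamma)$ form an increasing chain (by monotonicity of $\mathcal{T}^{1}$ together with the elementary observation $\Gamma\subset\mathcal{T}^{1}(\Gamma)$, since every point of a polygonally connected $\Gamma$ lies on a segment $\ell_{a,b}\subset\Gamma$ and is therefore of the form required in the definition of $\mathcal{T}^{1}$). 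Hence any two points of $\mathcal{T}(\Gamma)=\bigcup_{k=0}^{\infty}\mathcal{T}^{k}(\Gamma)$ lie in a common $\mathcal{T}^{k}(\Gamma)$, which is polygonally connected, and the joining polygonal path stays inside $\mathcal{T}(\Gamma)$. With polygonal connectedness of $\mathcal{T}(\Gamma)$ in hand, Lemma \ref{firstllemma} applied to the set $\mathcal{T}(\Gamma)$ gives $\ell^{1}(\mathcal{T}(\Gamma))\subset\mathcal{T}(\mathcal{T}(\Gamma))$, and Lemma \ref{dobleiteracion} identifies the right-hand side with $\mathcal{T}(\Gamma)$, which is exactly the claim.

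The only genuinely delicate point is the verification that $\mathcal{T}(\Gamma)$ is polygonally connected, and within that, that the chain $\{\mathcal{T}^{k}(\Gamma)\}$ is increasing; everything else is a formal consequence of the lemmas already proved. If $\Gamma$ consists of a single point the statement is trivial, so one may assume $\Gamma$ has at least two points when running this argument. Altogether the proof is short: establish polygonal connectedness of $\mathcal{T}(\Gamma)$, deduce $\ell^{1}$-closedness from Lemmas \ref{firstllemma} and \ref{dobleiteracion}, and finish with Carath\'eodory via Proposition \ref{re:ellcharac}.
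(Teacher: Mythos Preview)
Your proof is correct and uses the same ingredients as the paper (Proposition \ref{re:ellcharac}, Lemmas \ref{polygonallemma}, \ref{firstllemma}, \ref{dobleiteracion}), with only a minor reorganization: you apply Lemma \ref{firstllemma} to $\mathcal{T}(\Gamma)$ itself (after verifying it is polygonally connected via the increasing chain $\{\mathcal{T}^k(\Gamma)\}$) to obtain $\ell^1(\mathcal{T}(\Gamma))\subset\mathcal{T}(\Gamma)$, whereas the paper applies Lemma \ref{firstllemma} to $\ell^{k-1}(\Gamma)$ at each step, deducing $\mathcal{T}(\Gamma)=\mathcal{T}(\ell^1(\Gamma))$ and iterating. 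Both routes yield $\ell^N(\Gamma)\subset\mathcal{T}(\Gamma)$ in the same way.
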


\begin{proof}
	We know from Proposition \ref{re:ellcharac} that $\mbox{co}(\Gamma) = \ell^{N}(\Gamma)$. Recall $\ell^2(\Gamma) = \ell^1(\ell^1(\Gamma))$. Notice that, thanks to  Lemma \ref{firstllemma} and Lemma \ref{dobleiteracion} we get $\mathcal{T}(\Gamma) = \mathcal{T}(\ell^1(\Gamma))$, using that $\mathcal{T}(\ell^1(\Gamma)) \subset \mathcal{T}\left(\mathcal{T}(\Gamma) \right)$. Then, thanks to  Lemma \ref{firstllemma} again, we get that $\ell^2(\Gamma) \subset \mathcal{T}(\Gamma)$. Iterating this argument we obtain that $\ell^k(\Gamma) \subset \mathcal{T}(\Gamma)$, so $\mbox{co}(\Gamma)\subset \mathcal{T}(\Gamma)$. 
\end{proof}

\begin{figure}[H]
	\centering
	\includegraphics[width=16cm]{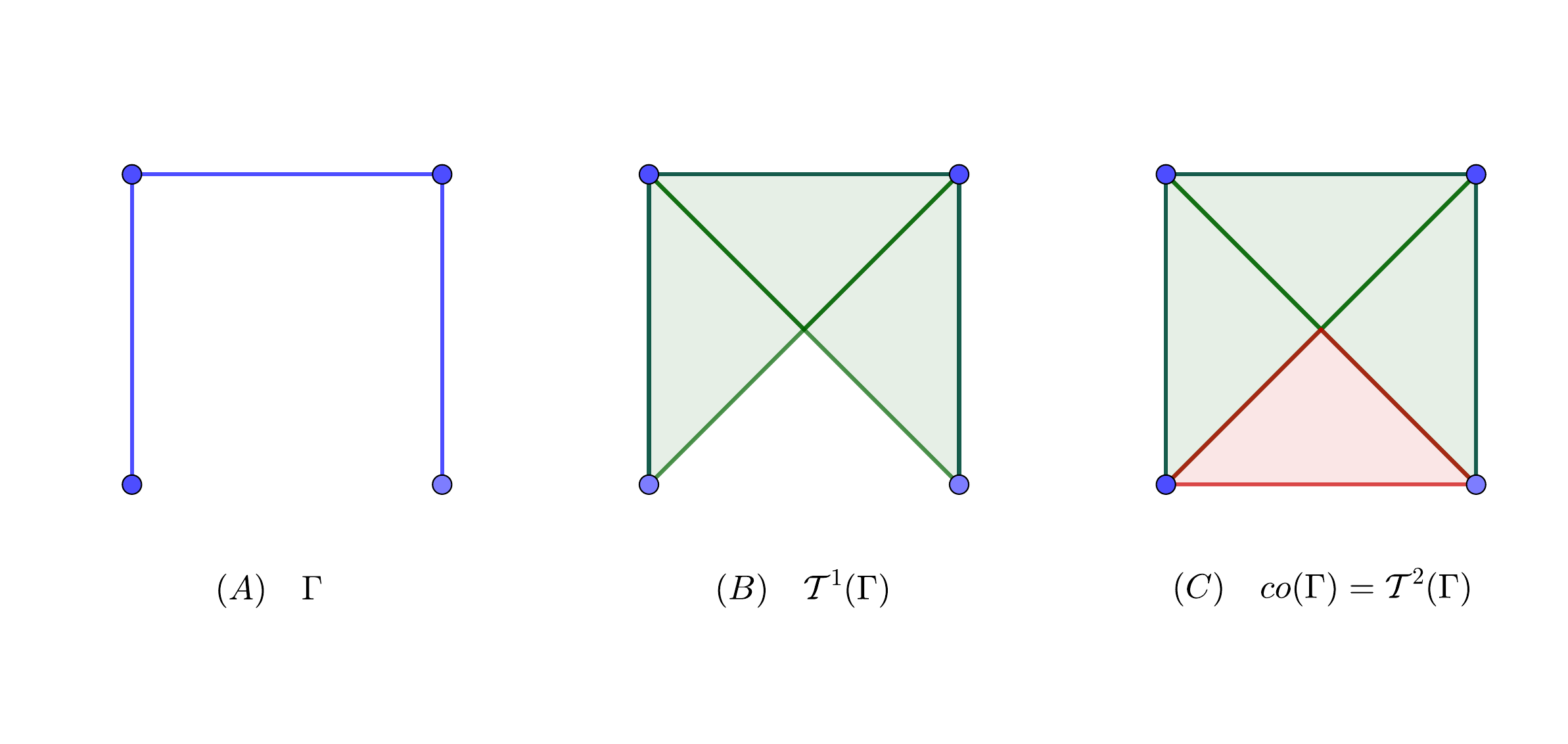}
\caption{Recovering $\text{co}(\Gamma)$ through iterations of $\mathcal{T}^k(\Gamma)$.}
\end{figure}

We now have the tools to prove the main result concerning the asymptotic behaviour of our problem, that is, $\textrm{co}(K)\subset\liminf_{t \to \infty}\Omega_{t}$, under condition \eqref{(G)}. Recall that in this case we cannot fix a small $\varepsilon$ for every starting point $x$, since the argument in Proposition \ref{misudim2} relies on choosing a polygonal valid for every $x \in \text{co}(K)$, but this can only be done in $\RR^2$.   

\begin{theorem} \label{ThmMisu1}
	Let condition \eqref{(G)} be satisfied, that is, let the graph $G$ be connected. Then 
	\begin{equation}
		co(K) \subset \liminf_{t \to \infty} \Omega_t.
	\end{equation}
\end{theorem}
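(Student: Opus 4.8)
The goal is to prove $\mathrm{co}(K) \subset \liminf_{t\to\infty}\Omega_t$ under hypothesis \eqref{(G)}, for arbitrary dimension $N\geq 3$ (the case $N=2$ being already covered by Proposition \ref{misudim2}, and the case $\mathrm{co}(K)\subset\Omega_0$ by the previous subsection). The strategy is to first transfer the problem from the continuous solution $u$ to the value functions $u^\varepsilon$, and then to build, for each fixed $x_0\in\mathrm{co}(K)$, a winning strategy for Paul that works for all sufficiently large times $t$, provided $\varepsilon=\varepsilon(x_0)$ is chosen small enough. Concretely, I would fix $x_0 \in \mathrm{co}(K)$ and aim to show there exist $\varepsilon_0>0$ and $\tau_{x_0}>0$ such that $u^\varepsilon(x_0,t)>0$ for all $0<\varepsilon\leq\varepsilon_0$ and all $t\geq\tau_{x_0}$; then, by the locally uniform convergence $u^\varepsilon\to u$ of Theorem \ref{maintheorem.juego.conv}, we get $u(x_0,t)\geq 0$ for $t\geq\tau_{x_0}$, and a standard strict-positivity refinement (using that Paul can actually guarantee a uniformly positive payoff on a compact region) upgrades this to $u(x_0,t)>0$, i.e. $x_0\in\liminf_{t\to\infty}\Omega_t$.

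\textbf{Construction of Paul's strategy.} By the Remark preceding the statement, once Paul has a winning strategy on a region we may absorb it into the obstacle; so we may replace $K$ by the polygonally connected set $L$ (connected because $G$ is connected, by the discussion after the definition of $L$), and even assume $L$ is a polygonally connected open set. Now by Corollary \ref{re:convexhullsubset}, $\mathrm{co}(K)=\mathrm{co}(L)\subset\mathcal{T}(L)=\bigcup_{n\geq 0}\mathcal{T}^n(L)$, so there is a smallest $n$ with $x_0\in\mathcal{T}^n(L)$. The proof goes by induction on $n$. For $n=0$, $x_0\in L$ and Paul wins by moving along a segment $\ell_{a,b}\subset\Omega_0$ through $x_0$ with endpoints in connected components of $K$ — exactly the $N=2$ argument, which is dimension-free — stopping at the enlarged obstacle $K_\varepsilon$ (here $\varepsilon<2N\varepsilon$ prevents jumping over $K_\varepsilon$). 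For the inductive step, $x_0\in\ell_{y,z}$ with $y\in\ell_{a,b}\subset\mathcal{T}^{n-1}(L)$ and $z\in\ell_{a,c}\subset\mathcal{T}^{n-1}(L)$; the union $\ell_{a,b}\cup\ell_{a,c}\cup\ell_{y,z}$ bounds a triangle-like planar region $D$ in the two-dimensional affine plane $\Pi$ spanned by $a,b,c$. Paul plays \emph{entirely inside} $\Pi$ by choosing all his vectors $v_i\in\Pi$ (the game allows any unit vector, and restricting to $\Pi$ is legal), and runs the two-dimensional concentric-arc argument of Proposition \ref{misudim2} \emph{inside} $\Pi$: taking the center $z_*$ of an arc through $\hat a, x_0, \hat b$ and playing a concentric strategy (Definition \ref{ConcentricStrategie}, executable regardless of Carol) forces the position, in finite time $\tau$, into a neighborhood of $\ell_{a,b}\cup\ell_{a,c}$, hence into $\mathcal{T}^{n-1}(L)$ — at which point we are in a strictly smaller induction level and invoke the inductive hypothesis.

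\textbf{The main obstacle.} The delicate points are two. First, the neighborhoods: $\mathcal{T}^{n-1}(L)$ is open, so around the compact polygonal structures traced by the arc one has a tube of some radius $3\delta_{n-1}>0$; for the position of the game, which moves in $\varepsilon$-steps and may overshoot, to be guaranteed to land inside $\mathcal{T}^{n-1}(L)$ we need $\varepsilon<\delta_{n-1}$, and these $\delta_{n-1}$ shrink along the induction chain — this is exactly why $\varepsilon$ must depend on $x_0$ (the number of iterations $n$ and the geometry both depend on $x_0$), and why, unlike the $N=2$ case, we cannot fix a uniform $\varepsilon$. Second, bookkeeping the finite time: at each of the $n$ levels the concentric phase consumes a finite number of rounds bounded in terms of the diameters of the regions $D$ and of $\Omega_0$; since $n$ is finite, the total time $\tau_{x_0}$ is finite, and once time exceeds $\tau_{x_0}$ Paul can afford to execute the whole cascade and still reach $K_\varepsilon$ or end inside $\Omega_0$ with a positive terminal payoff. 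One must be careful that Carol cannot sabotage a level by driving the position out of $\Omega_0$ before Paul reduces to the next level; this is handled because the concentric strategy keeps the position inside the bounded region $D\subset\mathcal{T}^n(L)\subset\mathrm{co}(K)\subset$ (the relevant enlargement of) $\Omega_0$ until it reaches the target tube — here one uses that, after absorbing lower levels into the obstacle, the region swept at level $n$ is contained in $\Omega_0$ up to the enlargement, exactly as in the $N=2$ proof. Assembling these estimates and invoking the convergence $u^\varepsilon\to u$ and Corollary \ref{cor1} to pass from $\Omega^\varepsilon_t$ back to $\Omega_t$ completes the argument.
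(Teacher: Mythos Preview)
Your proposal follows essentially the same route as the paper: reduce via Corollary~\ref{re:convexhullsubset} to the chain $\mathcal{T}^n(L)$, induct on $n$, and at each level play the two-dimensional concentric argument of Proposition~\ref{misudim2} inside the plane through $a,b,c$ to drive the position from $\ell_{y,z}$ into a tube around $\ell_{a,b}\cup\ell_{a,c}\subset\mathcal{T}^{n-1}(L)$. You are in fact more explicit than the paper about the passage $u^\varepsilon\to u$ and about why $\varepsilon$ must depend on $x_0$.

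There is, however, one wrong justification in your ``main obstacle'' paragraph. You assert that Carol cannot sabotage because the position stays in $D\subset\mathcal{T}^n(L)\subset\mathrm{co}(K)\subset$ ``(the relevant enlargement of) $\Omega_0$''. But this subsection is precisely the case $\mathrm{co}(K)\not\subset\Omega_0$; that containment is the hypothesis of the \emph{previous} subsection, and there is no ``enlargement of $\Omega_0$'' in the paper. The good news is that this claim is unnecessary: intermediate positions of the game are unconstrained --- only the final payoff matters. What the concentric strategy actually guarantees is that the position stays in the bounded Jordan region $D_\gamma$ (in the plane) until it exits through the polygonal side, hence lands in the $\delta$-tube around $\ell_{a,b}\cup\ell_{a,c}$; since $\mathcal{T}^{n-1}(L)$ is open and these segments are compact subsets of it, that tube lies in $\mathcal{T}^{n-1}(L)$ for $\delta$ small, and the induction proceeds. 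Boundedness of $D_\gamma$ is what controls the time budget, not containment in $\Omega_0$. Once you replace that sentence with this correct reason, the argument matches the paper's.
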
   

\begin{proof}
	First, recall that $G$ being connected implies that $L$ is connected. Since $L$ is also open, it follows that it is polygonally connected. Let $x \in \mbox{co}(K) = \mbox{co}(L)$. This implies, due to  Corollary~\ref{re:convexhullsubset} that $x\in \mathcal{T}(L)$, which means $x\in \mathcal{T}^k(L)$ for some large enough $k \in \mathbb{N}$. Let us show that if $x\in \mathcal{T}^k(L)$ for $k \in \mathbb{N}$, then the Proposition \ref{misudim2} shows that starting from $x$ Paul can win. We just need to prove it for $\mathcal{T}^{1}(L)$, since $\mathcal{T}^{k}(L) = \mathcal{T}^{1}\left( \mathcal{T}^{k-1}(L) \right)$. If $x \in \mathcal{T}^{1}(L)$, $x$ is inside a segment with extreme points $y$ and $z$, where $y \in \ell_{a,b}$ and $z \in \ell_{a,c}$, with $a,b,c \in L$. Then the arguments done for $\RR^2$ imply that from $x$ Paul can move a finite number of steps and end up as close as needed to either $\ell_{a,b}$ or $\ell_{a,c}$. Recall that $L$ is open, so we can end up in $L$ after a finite amount of steps choosing $\varepsilon$ sufficiently small. Since $L\subset \Omega_0$, we just proved Paul has a strategy to win starting from $x$, provided that the time is large enough.
\end{proof}

\begin{proof}[Proof of Theorem \ref{cor2}]
Since  $\limsup_{t \to \infty}\Omega_{t}\subset\textrm{co}(K)$ was already proved in Theorem \ref{SecondPartTheorem1.5}, the previous Theorem  \ref{ThmMisu1} concludes the whole proof of Theorem \ref{cor2}.  Concerning the asymptotic behaviour of the minimal curvature flow with an obstacle, under the hypotheses of the Comparison Principle  Theorem \ref{comp.pp} and condition \eqref{(G)}, we have that 
\begin{equation}
	\text{co}(K)\subset\liminf\limits_{t\to\infty}\Omega_{t}\subset\limsup\limits_{t\to\infty}\Omega_{t}\subset \text{co}(K),
\end{equation}
and we conclude that the limit exists,
\begin{equation}
	\lim_{t\to \infty} \Omega_{t} = \text{co}(K),
\end{equation}
as we wanted to show.
\end{proof}

	{\bf Acknowledgments}
	
	I. Gonzálvez and J. Ruiz-Cases were partially supported by the European Union's Horizon 2020 research and innovation program under the Marie Sklodowska-Curie grant agreement No.\,777822, and by grants CEX2019-000904-S, PID-2019-110712GB-I00, PID-2020-116949GB-I00, and RED2022-134784-T, by MCIN/AEI (Spain).

	A. Miranda and J. D. Rossi were partially supported by 
			CONICET PIP GI No 11220150100036CO
(Argentina), PICT-2018-03183 (Argentina) and UBACyT 20020160100155BA (Argentina).


\begin{thebibliography}{99} \itemsep10pt 

\bibitem{Almeida}
L. Almeida, A. Chambolle and M. Novaga. {\it Mean curvature flow with obstacles}. Ann. Inst.
H. Poincare Anal. Non Lineaire, 29(5), (2012), 667--681.


\bibitem{BR} P. Blanc and J. D. Rossi. {\it Games for eigenvalues of the Hessian and concave/convex envelopes.} 
J. Math. Pures Appl., 127, (2019), 192--215.

\bibitem{BRLibro} P. Blanc and J. D. Rossi. {Game Theory and Partial Differential Equations.}
De Gruyter Series in Nonlinear Analysis and Applications. Vol. 31. 2019.
ISBN 978-3-11-061925-6.
ISBN 978-3-11-062179-2 (eBook).


\bibitem{2} K. A. Brakke. The motion of a surface by its mean curvature. Princeton University Press, Princeton, N.J., 1978.
	
	
	\bibitem{CIL} 
	M.G. Crandall,  H. Ishii and  P.L. Lions,
	{\it User's guide to viscosity solutions of second order partial differential equations}. 
	Bull. Amer. Math. Soc., 27 (1992), 1--67.
	
	

	\bibitem{ES} 
	L.C. Evans, J. Spruck, 
	{\it Motion of level sets by mean curvature. I}. 
	J. Differential Geom., 33(3) (1991), 635--681.


	\bibitem{CGG} 
	Y.G. Chen,  Y. Giga and  S. Goto,
	{\it Uniqueness and existence of viscosity solutions of generalized
	mean curvature flow equations}. 
	J. Differential Geom., 33(3) (1991), 749--786.
	
	\bibitem{5} M. Gage and R. S. Hamilton. {\it The heat equation shrinking convex plane curves}. J. Differential Geom., 26 (1986), 69--96.
	
	\bibitem{Giga} Y. Giga. Surface evolution equations. A level set approach. Birkhauser Verlag, Basel, 2006.

	\bibitem{GGIS} 
	Y. Giga and  S. Goto, H. Ishii, M.-H. Sato
	{\it Comparison principle and convexity preserving properties for
	singular degenerate parabolic equations on unbounded domains}. 
	Indiana Univ. Math. J., 40(2) (1991), 443--470.
	
	\bibitem{6} Y. Giga and Q. Liu. {\it A billiard-based game interpretation of the Neumann problem for the
curve shortening equation}. Adv. Differential Equations, 14(3-4) (2009),
201--240.

\bibitem{8} Y. Giga, H. Mitake, and H. V. Tran. {\it On asymptotic speed of solutions to level-set mean curvature flow equations with driving and source terms}. SIAM J. Math. Anal., 48, (2016), 3515--3546.

\bibitem{9} Y. Giga, H. V. Tran, and L. Zhang. {\it On obstacle problem for mean curvature flow with driving force}. Geometric flows, 4, (2019), 9--29.
	
	\bibitem{10} M. A. Grayson. {\it The heat equation shrinks embedded plane curves to round points}. J. Differential Geom., 26 (1987), 285--314.
	
	\bibitem{KS} R.V. Kohn and S. Serfaty.
	{\it A deterministic-control-based approach to motion by curvature}. 
	Comm. Pure Appl. Math., 59(3), (2006), 344--407.

\bibitem{KS2} R.Kohn and S. Serfaty. {\it A deterministic-control-based approach to fully nonlinear parabolic and elliptic equations}. Comm. Pure Appl. Math., 
10, (2010), 1298--1350.

\bibitem{Lewicka} M. Lewicka. A Course on Tug-of-War Games with Random Noise.
Introduction and Basic Constructions. Universitext book series. Springer, (2020).

\bibitem{ML} M. Lewicka and J. J. Manfredi. {\it The obstacle problem for the $p-$laplacian via optimal stopping of tug-of-war games.}
Prob. Theory Rel. Fields, 167(1-2), (2017), 349--378.


\bibitem{15} Q. Liu. {\it Fattening and comparison principle for level set equations of mean curvature type}. SIAM J. Control Optim., 49, (2011), :2518--2541.


\bibitem{16} Q. Liu and N. Yamada. {\it An obstacle problem arising in large exponent limit of power mean curvature flow equation}. Trans. Amer. Math. Soc., 372 (2019),
2103--2141.

\bibitem{Luiro}
H. Luiro, M. Parviainen, and E. Saksman.
{\it Harnack's inequality for p-harmonic functions via stochastic games.}
Comm. Partial Differential Equations, 38(11), (2013), 1985--2003.


	\bibitem{MS2}
A.~P. Maitra and W.~D. Sudderth.
\newblock {\em Discrete gambling and stochastic games}, volume~32 of {\em
  Applications of Mathematics (New York)}.
\newblock Springer-Verlag, New York, 1996.


\bibitem{MPR} J. J. Manfredi, M. Parviainen and J. D. Rossi. {\it An asymptotic mean value characterization for $p-$harmonic functions}. Proc. Amer. Math. Soc., 138(3), (2010), 881--889. 


\bibitem{MPR2} J. J. Manfredi, M. Parviainen and J. D. Rossi.
\textit{On the definition and properties of p-harmonious functions.}
Ann. Scuola Nor. Sup. Pisa, 11, (2012), 215--241.


\bibitem{17} J. J. Manfredi, J. D. Rossi, and S. J. Somersille. {\it An obstacle problem for tug-of-war games}. Commun. Pure Appl. Math., 14 (2015), 217--228.


	\bibitem{Mercier}
	G. Mercier
	{\it Mean curvature flow with obstacles: a viscosity approach}. 
	https://arxiv.org/abs/1409.7657v3, (2014).
	
	\bibitem{19} G. Mercier and M. Novaga. {\it Mean curvature flow with obstacles: Existence, uniqueness and regularity of solutions}. Inter. Free Bound. 17,
	(2015), 399--426.
	
	\bibitem{MiRo} A. Miranda and J. D. Rossi. {\it Games for the two membranes problem.}
Orbita Math. 1(1), (2024), 59--101.


	\bibitem{Misu}
	K. Misu
	{\it A game-theoretic approach to the asymptotic behavior of solutions to an obstacle problem for the mean curvature flow
	equation}. 
	http://hdl.handle.net/2115/87822, (2023).
	
	\bibitem{PSSW} Y. Peres, O. Schramm, S. Sheffield and D. Wilson,
{\it Tug-of-war and the infinity Laplacian.} J. Amer. Math. Soc.,
22, (2009), 167--210.


\bibitem{PS} Y. Peres and S. Sheffield, {\it Tug-of-war with noise:
a game theoretic view of the $p$-Laplacian}, Duke Math. J., 145(1), (2008), 91--120.

\bibitem{R} J. D. Rossi. {\it Tug-of-war games and PDEs.} Proc.
Royal Soc. Edim. 141A, (2011), 319--369.


\end{thebibliography}
\end{document}